\newtheorem{lems}{Lemma}
\newtheorem{cors}{Corollary}
\newtheorem{theorem}{Theorem}
\newlength{\widebarargwidth}
\newlength{\widebarargheight}
\newlength{\widebarargdepth}
\DeclareRobustCommand{\widebar}[1]{%
  \settowidth{\widebarargwidth}{\ensuremath{#1}}%
  \settoheight{\widebarargheight}{\ensuremath{#1}}%
  \settodepth{\widebarargdepth}{\ensuremath{#1}}%
  \addtolength{\widebarargwidth}{-0.3\widebarargheight}%
  \addtolength{\widebarargwidth}{-0.3\widebarargdepth}%
  \makebox[0pt][l]{\hspace{0.3\widebarargheight}%
    \hspace{0.3\widebarargdepth}%
    \addtolength{\widebarargheight}{0.3ex}%
    \rule[\widebarargheight]{0.95\widebarargwidth}{0.1ex}}%
  {#1}}
\long\def\@makecaption#1#2{
        \vskip 0.8ex
        \setbox\@tempboxa\hbox{\small {\bf #1:} #2}
        \parindent 1.5em  
        \dimen0=\hsize
        \advance\dimen0 by -3em
        \ifdim \wd\@tempboxa >\dimen0
                \hbox to \hsize{
                        \parindent 0em
                        \hfil 
                        \parbox{\dimen0}{\def\baselinestretch{0.96}\small
                                {\bf #1.} #2
                                } 
                        \hfil}
        \else \hbox to \hsize{\hfil \box\@tempboxa \hfil}
        \fi
        }
\long\def\comment#1{}
\newcommand{\mns}{\mkern-2mu}  
\newcommand{\matsnorm}[2]{\ensuremath{|\mns|\mns| #1 |\mns|\mns|_{{#2}}}}
\newcommand{\vecnorm}[2]{\| #1\|_{#2}}
\newcommand{\inprod}[2]{\ensuremath{\langle #1 , \, #2 \rangle}}
\newcommand{\Exs}{\ensuremath{{\mathbb{E}}}}
\newcommand{\Prob}{\ensuremath{{\mathbb{P}}}}
\newcommand{\numobs}{\ensuremath{n}}
\DeclareMathOperator{\diag}{diag}
\newcommand{\Xspace}{\ensuremath{\mathcal{X}}}
\newcommand{\R}{\mathbb{R}}
\newcommand{\real}{\R}
\newcommand{\E}{\mathbb{E}}
\newcommand{\tr}{\operatorname{tr}}
\newcommand{\F}{\mathcal{F}}
\newcommand{\eps}{\varepsilon}
\newcommand{\argmax}{\operatorname{argmax}}
\newcommand{\widgraph}[2]{\includegraphics[keepaspectratio,width=#1]{#2}}
\newcommand{\stdBasis}{e} 
\newcommand{\zerovec}{0} 
\newcommand{\linkfun}{g}
\newcommand{\Id}{\ensuremath{\mathbf{I}}}
\newcommand{\Regressor}{{x}}             
\newcommand{\Nuisance}{{z}}      
\newcommand{\Noise}{\varepsilon}       
\newcommand{\Response}{y}       
\newcommand{\TargetDim}{\ensuremath{d_{T}}}      
\newcommand{\NuisanceDim}{\ensuremath{d_{N}}}    
\newcommand{\LinearNoiseVar}{\sigma^2}    
\newcommand{\EstLinearNoiseVar}{{\widehat{\sigma}^2}}    
\newcommand{\TrueTargetPar}{\theta^*} 
\newcommand{\TrueNuisancePar}{{\eta}^{*}}  
\newcommand{\TrueFunPar}{h^{*}}        
\newcommand{\EstFunPar}{\widehat{h}}    
\newcommand{\FunPar}{h}             
\newcommand{\TargetPar}{\theta}       
\newcommand{\NuisancePar}{{\eta}}        
\newcommand{\EstTargetPar}{{\widehat{\theta}}}  
\newcommand{\EstNuisancePar}{{\widehat{{\eta}}}}    
\newcommand{\SoluTargetPar}{{\widetilde\theta}}   
\newcommand{\AuxiNuisancePar}{{\widebar\theta}}  
\newcommand{\AllNuisancePar}{\omega}             
\newcommand{\TrueAllNuisancePar}{\omega^{*}}     
\newcommand{\EstAllNuisancePar}{\widehat\omega}  
\newcommand{\TargetSpace}{\Theta}     
\newcommand{\NuisanceSpace}{\mathcal{H}}   
\newcommand{\Covar}{\SigMat}   
\newcommand{\InvSqrtCovar}{\OmegaMat}  
\newcommand{\History}{\F}    
\newcommand{\indep}{\perp \!\!\!\! \perp}
\newcommand{\iRegressor}{{x}_{i}}  
\newcommand{\iNuisance}{{z}_{i}}   
\newcommand{\iResponse}{y_{i}}   
\newcommand{\jRegressor}{{x}_{j}}  
\newcommand{\jNuisance}{{z}_{j}}   
\newcommand{\jResponse}{y_{j}}   
\newcommand{\iNoise}{\varepsilon_{i}} 
\newcommand{\iHistory}{\F_{i-1}}      
\newcommand{\mymatrix}[1]{\ensuremath{\mathbf{#1}}}
\newcommand{\IdMat}{\mymatrix{I}}
\newcommand{\SigMat}{\ensuremath{\mymatrix{\Sigma}}}
\newcommand{\CovMat}{\SigMat}
\newcommand{\OmegaMat}{\ensuremath{\mymatrix{\Omega}}}
\newcommand{\GamMat}{\ensuremath{\mymatrix{\Gamma}}}
\newcommand{\iCovar}{{\CovMat}_{i}}   
\newcommand{\iEstCovar}{{\widehat\CovMat}_{i}}   
\newcommand{\LinScaleVec}{v}    
\newcommand{\EstLinScaleVec}{{\widehat{v}}}    
\newcommand{\LinScaleVecOneArm}{w}  
\newcommand{\ScalePreCondVec}{A}   
\newcommand{\SelectProb}{p}
\newcommand{\EstSelectProb}{{\widehat{p}}}
\newcommand{\GlmMeanVec}{m}     
\newcommand{\DiagProbMatrix}{{\bf D}_\SelectProb}   
\newcommand{\GlmVar}{\nu^2}         
\newcommand{\GlmVarDeri}{(\GlmVar)'}  
\newcommand{\InvCompOne}{{\bf C}}  
\newcommand{\InvCompTwo}{{\bf \Delta}}  
\newcommand{\InvCompDiag}{{\bf B}}  
\newcommand{\InvCompRow}{{\bf K}}  
\newcommand{\HighOrdMa}{{\bf H}}  
\newcommand{\IndexOne}{I_1}    
\newcommand{\IndexTwo}{I_2}     
\newcommand{\NumIndexOne}{n_1}   
\newcommand{\NumIndexTwo}{n_2}   
\newcommand{\Numobs}{n}          
\newcommand{\EmpMean}{\widehat\E_{\NumIndexTwo}}     
\newcommand{\CondMean}{\widetilde\E_{\NumIndexTwo}}  
\newcommand{\EmpMeanAll}{\widehat\E_{\Numobs}}     
\newcommand{\ScoreFun}{\phi}  
\newcommand{\iMartdiff}{d_i}          
\newcommand{\iMartdiffa}{d_{ai}}        
\newcommand{\iMartdiffb}{d_{bi}}      
\newcommand{\BernVarb}{\sigma^2_{b}}      
\newcommand{\BernBoundb}{b_{b}}      
\newcommand{\OneDirect}{u}
\newcommand{\TrueOneDirectPar}{\TrueTargetPar_{\OneDirect}}
\newcommand{\OneDirectPar}{\TargetPar_{\OneDirect}}
\newcommand{\EstOneDirectPar}{\EstTargetPar_{\OneDirect}}
\newcommand{\SoluOneDirectPar}{\SoluTargetPar_{\OneDirect}}
\newcommand{\SuppDirect}{S_{\OneDirect}}
\newcommand{\iCovariate}{Q_i}
\newcommand{\CovariateMa}{\ensuremath{{\bf Q}}}
\newcommand{\HatMa}{\ensuremath{{\bf H}}}
\newcommand{\SparseLevel}{s}
\newcommand{\SparseSet}{S}
\newcommand{\RegressorMa}{\ensuremath{{\bf{X}}}}
\newcommand{\NuisanceMa}{\ensuremath{{\bf{Z}}}}
\newcommand{\AllSelecProbMa}{\ensuremath{{\bf{P}}}}
\newcommand{\AllParameter}{\beta}
\newcommand{\GlmMeanVecwhat}{\ensuremath{\widehat \GlmMeanVec}}
\newcommand{\InvSqrtCovarwhat}{\ensuremath{\widehat \InvSqrtCovar}}
\newcommand{\ScalePreCondVecwhat}{\ensuremath{\widehat\ScalePreCondVec}}
\newcommand{\TargetParwhat}{\ensuremath{\widehat\TargetPar}}
\newcommand{\NuisanceParwhat}{\ensuremath{\widehat\NuisancePar} }
\newcommand{\AllParameterwhat}{\widehat\AllParameter}
\newcommand{\FunParwbar}{\widebar\FunPar} 
\newcommand{\ScoreFunwtil}{\widetilde\ScoreFun}
\newcommand{\InvSqrtCovarbar}{\bar\InvSqrtCovar}
\newcommand{\GlmMeanVecbar}{\bar \GlmMeanVec}
\newcommand{\FunParbar}{\bar \FunPar}
\newcommand{\cwtil}{\widetilde c}
\newcommand{\Bernbwtil}{\widetilde \BernBoundb}
\newcommand{\epsbar}{\bar\eps}
\newcommand{\MatFun}{{\bf F}}
\newcommand{\EstCovBound}{{B_{\Covar}}}
\newcommand{\powercoef}{{\alpha}}
\newcommand{\frobnorm}[1]{\matsnorm{#1}{F}}
\newcommand{\opnorm}[1]{\matsnorm{#1}{\mathrm{op}}}
\newcommand{\vtwonorm}[1]{\vecnorm{#1}{2}}
\newcommand{\PreCondMean}{\ensuremath{\widetilde\E_{\NumIndexOne}}}
\newcommand{\defn}{\ensuremath{:=}}
\newcommand{\Normal}{\ensuremath{\mathcal{N}}}
\newcommand{\linkFun}{g}
\newcommand{\convdist}{\ensuremath{\stackrel{d}{\rightarrow}}}
\newcommand{\liloh}{\ensuremath{o}}
\newcommand{\bigoh}{\ensuremath{\mathcal{O}}}
\newcommand{\Ctil}{\ensuremath{{\bf\tilde{C}}}}
\newcommand{\Qwtil}{\widetilde Q}
\newcommand{\CoverSet}{\ensuremath{\mathcal{C}}}
\newcommand{\Term}{\ensuremath{T}}
\newcommand{\dummyRV}{v}
\newcommand{\PlainPar}{\ensuremath{\theta}}
\newcommand{\myasslabel}[1]{{\bf{#1}}}
\newcommand{\myassumption}[3]{
  \begin{enumerate}[label={\scaleto{\mbox{(#1)}}{10pt}}, align = left]
  \item \label{#2} {#3}
  \end{enumerate}
}
\newcommand{\subgauss}{\ensuremath{\nu}}
\newcommand{\Pclass}{\ensuremath{\mathcal{P}}}
\newcommand{\DelTil}{\ensuremath{{\bf \tilde{\Delta}}}}
\newcommand{\czerotil}{\ensuremath{\tilde{c}_0}}
\newcommand{\prob}{\Prob}
\newcommand{\state}{\ensuremath{x}}
\newcommand{\EarlyScoreFun}{\ensuremath{\psi}}
\newcommand{\OLS}{\operatorname{OLS}}
\newcommand{\lasso}{\operatorname{Lasso}}
\newcommand{\algtext}[1]{{\texttt{#1}}}
\newcommand{\AlgShortPL}{{\algtext{AdapTZ-PL}}}
\newcommand{\AlgShortGLM}{{\algtext{AdapTZ-GLM}}}
\newcommand{\AlgLong}{{\algtext{AdapTZ}}$\;$}
\newcommand{\AlgLongPL}{{\algtext{AdapTZ-PL}}$\;$}
\newcommand{\AlgLongGLM}{{\algtext{AdapTZ-GLM}}$\;$}
\newcommand{\GLMlasso}{\operatorname{GLMlasso}}
\newcommand{\glmloss}{f}
\begin{document}


\begin{center}

{\bf{\LARGE{Semi-parametric inference based on \\ adaptively collected
      data} }}

\vspace*{.2in}

{\large{
\begin{tabular}{c}
  Licong Lin$^{\dagger}$, Koulik Khamaru$^{\star}$, Martin
  J. Wainwright$^{\diamond, \dagger, \ddagger}$
\end{tabular}

\vspace*{.2in}

 \begin{tabular}{c}
 Department of Electrical Engineering and Computer
 Sciences$^\diamond$ \\
 Department of Statistics$^\dagger$ \\
 UC Berkeley\\
 \end{tabular}

\vspace*{.2in}

\begin{tabular}{c}
 Department of Statistics$^\star$ \\ Rutgers University\\
 \end{tabular}

}}

\vspace*{.2in}

  \begin{tabular}{c}
   Laboratory for Information and Decision Systems$^\ddagger$
   \\
   Statistics and Data Science Center$^\ddagger$ \\
   EECS and Mathematics \\
   Massachusetts Institute of Technology
  \end{tabular}

\vspace*{.2in}
  
\today

\vspace*{.2in}

\begin{abstract}
Many standard estimators, when applied to adaptively collected data,
fail to be asymptotically normal, thereby complicating the
construction of confidence intervals.  We address this challenge in a
semi-parametric context: estimating the parameter vector of a
generalized linear regression model contaminated by a non-parametric
nuisance component.  We construct suitably weighted estimating
equations that account for adaptivity in data collection, and provide
conditions under which the associated estimates are asymptotically
normal.  Our results characterize the degree of ``explorability''
required for asymptotic normality to hold. For the simpler problem of
estimating a linear functional, we provide similar guarantees under
much weaker assumptions.  We illustrate our general theory with
concrete consequences for various problems, including standard linear
bandits and sparse generalized bandits, and compare with other methods
via simulation studies.
\end{abstract}
\end{center}


\addtocontents{toc}{\protect\setcounter{tocdepth}{-1}} 

\section{Introduction} 

A canonical problem in semi-parametric statistics is to estimate a
low-dimensional parameter in the presence of a high-dimensional or
non-parametric nuisance component. A standard goal is to obtain
estimators that are both $\sqrt{\numobs}$-consistent and
asymptotically normal; these properties streamline the task of
designing asymptotically valid confidence intervals and hypothesis
tests.  There is now a rich literature on this topic
(e.g.,~\cite{bickel1982adaptive,bickel1993efficient,robinson1988root,andrews1994asymptotics,robins1995semiparametric,ai2003efficient,van2011targeted,chernozhukov2018double});
however, the bulk of these findings involve datasets consisting of
i.i.d. (or weakly dependent) samples, in which case standard
asymptotic results such as the central limit theorem are in force.

Of interest to us in this paper are settings in which such assumptions
no longer hold.  In particular, we consider a model that allows for
the dataset to have been collected in an \emph{adaptive manner}; in
particular, the distribution of the $(i+1)$-th data point is allowed
to depend on the preceding $i$ samples. Such adaptively collected
datasets arise in various applications, among them bandit
experiments~\cite{lattimore2020bandit}, active
learning~\cite{fontaine2021online}, time series
modeling~\cite{box2015time}, adaptive stochastic approximation
schemes~\cite{deshpande2018accurate,lai1982least}, and dynamic
treatment schemes.

The main contribution of this paper is to propose and analyze a family
of estimators for which asymptotic normality holds even for a data
collection model that allows for fairly general sequential dependence.
We do so within the semi-parametric framework of generalized partial
linear regression.  In such models, a scalar response variable
$\Response$ is linked to a covariate vector $\Regressor \in
\real^{\TargetDim}$ and an auxiliary vector $\Nuisance \in
\real^{\NuisanceDim}$ via the equation
\begin{align}
\label{eqn:general-setup_0}
\iResponse & = g \big(\inprod{\iRegressor}{\TrueTargetPar} +
\TrueFunPar(\iNuisance) \big) + \iNoise.
\end{align}
Here $\{ \iNoise \}_{i \geq 1}$ is an i.i.d. noise sequence; the
function $g: \real \rightarrow \real$ is known as the inverse link;
the vector $\TrueTargetPar \in \R^{\TargetDim}$ is the \emph{target
parameter} of interest; and $\TrueFunPar: \real^{\NuisanceDim}
\rightarrow \real$ is a high-dimensional (or nonparametric) nuisance
component. We assume that the covariate-auxiliary pair $(\iRegressor,
\iNuisance)$ at round $i$ can depend on the set of previous
observations $\big \{ \big(\jRegressor, \jNuisance, \jResponse \big)
\big\}_{j = 1}^{i - 1}$.

As one illustrative example, the partial linear regression model---as
a special case of the general
set-up~\eqref{eqn:general-setup_0}---arises in the treatment
assignment problem
(e.g.,~\cite{tewari2017ads,yom2017encouraging,figueroa2021adaptive,wang2021reinforcement,trella2022reward}).
Given a collection of $\TargetDim$ drugs, the goal is to determine the
most effective one.  In order to do so, we undertake a sequential
experiment involving a collection of $\numobs$ patients, in which our
decision at each round is to either assign one of the $\TargetDim$
drugs, or to provide no treatment (which might correspond to a control
group).  For a given patient index $i \in [\numobs] \defn \{ 1,
\ldots, \numobs \}$, the decision to assign drug $k \in [\TargetDim]$
is encoded by setting the regression vector $\iRegressor = e_k$, the
binary indicator vector with a single one in position $k$. On the
other hand, assignment to the control group is coded by setting
$\iRegressor = {\mathbf{0}}$, corresponding to the all-zeros vector.
With these choices, the response $\iResponse$ is a noisy version of
$\TrueTargetPar_k$ if we assign the drug $k$, or pure noise if we
assign the control group.  Within this set-up, various adaptive
procedures for choosing the covariate vectors are natural.  For
instance, a doctor might decide the treatment of a patient $i$ based
on their personal information $\iNuisance$, and the historical data
from previous patients $\{ (\jRegressor, \jNuisance, \jResponse) \}_{j
  = 1}^{i - 1}$.

\subsection{Visualizing breakdown under adaptivity}

In order to motivate our proposed methodology, it is useful to
visualize how classical guarantees, valid under i.i.d. sampling, can
break down when the data points are collected adaptively.  A simple
example suffices to illustrate this phenomenon: more specifically, let
us consider the linear model
\begin{align}
\label{EqnPartialLinear}
\iResponse & = \inprod{\iRegressor}{\TrueTargetPar} +
\inprod{\iNuisance}{\TrueNuisancePar} + \Noise_i,
\end{align}
involving a target parameter $\TrueTargetPar \in \R^{\TargetDim}$, and
a nuisance parameter $\NuisancePar \in \R^{\NuisanceDim}$.  This is a
special case of our general set-up with the link function $\linkFun(x)
= x$ and the nuisance function \mbox{$\FunPar(\Nuisance) =
  \inprod{\Nuisance}{\TrueNuisancePar}$}.  Given an estimate
$\widehat{\NuisancePar}$ of the nuisance vector $\TrueNuisancePar$, a
standard $Z$-estimate $\SoluTargetPar$ of the target parameter can be
obtained by defining the score function
\begin{subequations}
  \begin{align}
    \label{EqnEarlyScore}
\EarlyScoreFun_i(\iResponse, \iRegressor, \iNuisance, \TargetPar,
{\NuisancePar}) & \defn (\iRegressor - \SelectProb_i) \big \{
\iResponse - \inprod{\iRegressor}{ \TargetPar} -
\inprod{\iNuisance}{{\NuisancePar}} \big \},
  \end{align}
and then solving the estimating equations
\begin{align}
\label{EqnEEIntro}
\sum_{i=1}^{\Numobs} \EarlyScoreFun_i(\iResponse, \iRegressor,
\iNuisance, \SoluTargetPar, \widehat{\NuisancePar}) & = 0.
\end{align}
\end{subequations}
\begin{figure}[ht!]
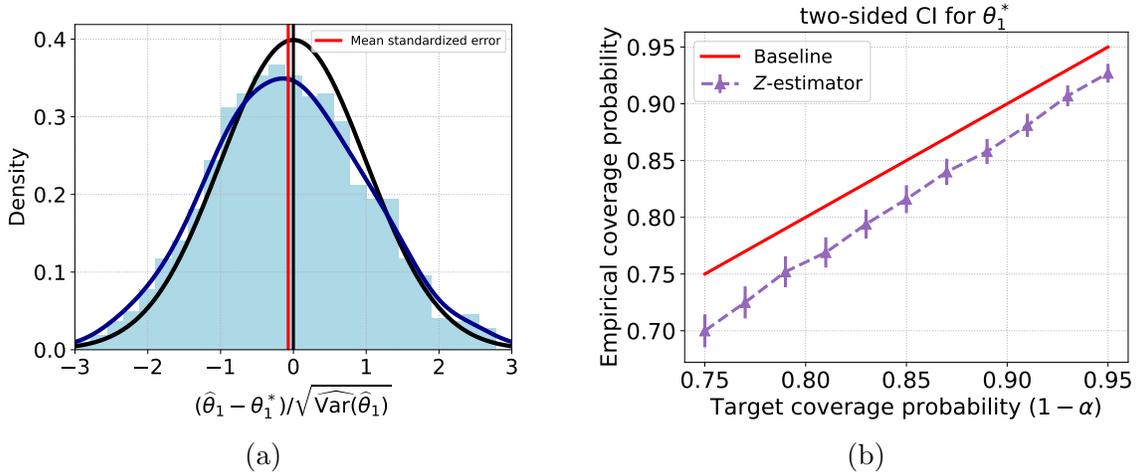

  \begin{center}
\begin{tabular}{ccc}    
\widgraph{0.45 \linewidth}{exp/linear/figures/d0_2_d1_1000_n_950_hist} &&
\widgraph{0.47 \linewidth}{exp/linear/figures/d0_2_d1_1000_n_950_z} \\
(a) && (b)
\end{tabular}
\caption{(a): Standardized estimation error of the
  $Z$-estimator~\eqref{EqnEEIntro} for the first coordinate
  $\TrueTargetPar_1$; shown is a histogram based on $1000$ trials.
  (b): Empirical coverage probability of two-sided confidence
  interval for $\TrueTargetPar_1$ for a simulation for with parameters
  $(\TargetDim, \NuisanceDim, \Numobs) = (2, 1000, 950)$.
  See Section~\ref{sec:exp_linear} for details.}
\label{fig:high_dim_linear_demo}
\end{center}
\end{figure}
In the definition~\eqref{EqnEarlyScore} of the score function
$\EarlyScoreFun_i$, the vector $p_i$ is the conditional mean of
$\iRegressor$ given the past data points.  This $Z$-estimator is a a
well-studied procedure~\cite{robinson1988root}; we refer readers to
Section~\ref{SecBackground} and equation~\eqref{eq:naive_dml_score}
for more details.  When the data points are i.i.d., it can be
shown~\cite{chernozhukov2018double} that the estimate $\SoluTargetPar$
is $\sqrt{\numobs}$-consistent and asymptotically normal.

However, when the data is collected in an adaptive manner, these
attractive guarantees may fail to hold.  To illustrate such a
breakdown, we performed experiments on a linear
model~\eqref{EqnPartialLinear} with \mbox{$(\TargetDim,
  \NuisanceDim)=(2, 1000)$,} and in order to apply the LASSO bandit
algorithm~\cite{oh2021sparsity}, we assumed that the nuisance vector
$\NuisancePar \in \real^{1000}$ was $4$-sparse.  We generated a path
of $\Numobs = 950$ samples using the LASSO bandit procedure to select
the covariates in an adaptive fashion, as applied to the target vector
$\TrueTargetPar = [2, \; 2 ]^T \in \real^2$.

Panel (a) of Figure~\ref{fig:high_dim_linear_demo} shows that the
standardized estimation error associated with $\widehat{\theta}_1$ is
\emph{not} standard Gaussian; instead, the distribution has a downward
\emph{bias}, as reflected by the negative mean $-0.07$ of the
standardized errors.  Thus, we see that asymptotic normality may fail
to hold with adaptively collected data.  Panel (b)
of Figure~\ref{fig:high_dim_linear_demo} shows that confidence intervals
constructed from the unweighted $Z$-estimator fail to provide the
desired target coverage; in particular, the fraction of times that
they cover the true parameter is consistently below the target
coverage. This under-coverage is to be expected given the deviations
of the standardized error from Gaussianity.

To be clear, such distributional anomalies are a wide-spread
phenomenon: they are specific to \emph{neither} the particular
$Z$-estimator \emph{nor} the LASSO bandit algorithm that we have
simulated here.  Similar types of breakdown are well-documented in the
time series and forecasting literature, dating back to the classical work
of Dickey and Fuller~\cite{dickey1979distribution},
White~\cite{white1959limiting}, and Lai and
Wei~\cite{lai1982least}. More recent
work~\cite{deshpande2018accurate,zhang2020inference,khamaru2021near}
has highlighted a similar phenomenon in multi-armed bandit problems
with popular selection algorithms like Thompson sampling, upper
confidence bound (UCB), and $\epsilon$-greedy selection.


\subsection{Related work}

In this section, we survey existing literature on inference using
adaptively collected data and semi-parametric inference that are
relevant to our problem.

\subsubsection{Inference using adaptively collected data}

In their seminal work, Lai and
Wei~\cite{lai1982least,lai1994asymptotic} studied various regression
models in which the covariate-response pairs are collected in an
adaptive fashion.  Among other results, they provided conditions under
which the ordinary least squares (OLS) estimate is asymptotically
normal.  However, their results require a stability condition on the
covariate matrix.  This stability condition fails to hold in various
settings, among them certain types of autoregressive
models~\cite{dickey1979distribution,white1959limiting,lai1982least},
the UCB and related online procedures for
bandits~\cite{lattimore2020bandit}, as well as offline procedures for
multi-armed bandit problems with adaptively collected data
(e.g.,~\cite{deshpande2018accurate,zhang2020inference}).

In order to address these challenges, Hadad et
al.~\cite{hadad2021confidence} proposed an adaptively weighted version
of the augmented inverse propensity-weighted
(AIPW,~\cite{mannor2004sample}) estimator for multi-armed
bandits. They suggested certain choices of the adaptive weights that
ensure the variance stabilization necessary to apply martingale
central limit theory. Subsequent work by Zhan et
al.~\cite{zhan2021off} and Bibaut et al.~\cite{bibaut2021post} extend
this approach to develop asymptotically normal estimators for
contextual bandits. Zhang et al.~\cite{zhang2021statistical} analyzes
a weighted $M$-estimator for contextual bandit problems.  Syrgkanis et
al.~\cite{syrgkanis2023post} proposes a weighted $Z$-estimator for
estimating the structural parameters in a structural mean nested
model.  All of these works on bandit problems all assume the data
collection algorithm is known, and therefore enables the construction
of weighted estimators based on the selection probability of each
arm. Alternatively, when the bandit algorithm is unknown, Deshpande et
al.~\cite{deshpande2018accurate} and Khamaru et
al.~\cite{khamaru2021near} propose online-debiasing procedures that
lead to asymptotically normal behavior.


\subsubsection{Neyman orthogonality in semi-parametric inference}

Semi-parametric statistics addresses how to estimate low-dimensional
parameters in the presence of high-dimensional or nonparametric
nuisance parameters; it is associated with a rich and evolving
literature
(e.g.,~\cite{bickel1982adaptive,pfanzagl2012contributions,bickel1993efficient,robinson1988root,andrews1994asymptotics,robins1995semiparametric,ai2003efficient,van2011targeted,chernozhukov2018double}).
A key concept is that of Neyman orthogonality of the score
function~\cite{neyman1959optimal}, which formalizes the first-order
effect of perturbations in the nuisance terms on the target
estimator. Neyman orthogonality has played an important role in
semi-parametric
estimation~\cite{andrews1994asymptotics,newey1994asymptotic}; targeted
learning~\cite{van2011targeted}; as well as inference for
high-dimensional linear
models~\cite{zhang2014confidence,belloni2014pivotal,belloni2016post,javanmard2014confidence}.
Sample splitting methods, in which different portions of the dataset
are used to estimate the non-parametric and parametric components, are
also commonly used in the literature
(e.g.,~\cite{bickel1982adaptive,schick1986asymptotically,fan2012variance,kato2021adaptive}).

Chernozhukov et al.~\cite{chernozhukov2018double} combined the notion
of Neyman orthogonality with sample splitting to construct
$Z$-estimators that are asymptotically normal; they referred to this
approach as double/debiased machine learning (DML).  Sample splitting
weakens the requirement of Donsker class conditions on the nuisance
estimators, thereby allowing for the use of more sophisticated
non-parametric procedures.  Other procedures that build upon or are
closely related to the DML approach have been developed for estimating
heterogeneous treatment
effects~\cite{nie2020quasi,kennedy2020towards,fan2022estimation,knaus2022double,semenova2021debiased};
continuous treatment
effects~\cite{colangelo2020double,semenova2021debiased}; tree-based
methods~\cite{wager2018estimation,athey2019generalized,oprescu2019orthogonal};
statistical learning with nuisance
parameters~\cite{foster2019orthogonal}; as well as dynamical treatment
effects~\cite{lewis2021double,bodory2022evaluating,chernozhukov2022automatic}.
Some of this work goes beyond the i.i.d. setting in allowing for
samples drawn from stable Markov chains, but do not address the
general adaptive setting of interest in this paper.

In the i.i.d. setting, Belloni et al.~\cite{belloni2016post} studied
inference in generalized linear models with nuisance parameters,
developing a general framework for inference of a one-dimensional
parameter in the presence of high-dimensional nuisance.  Liu et
al.~\cite{liu2021double} propose an estimator for partially logistic
regression models. Both works exploit Neyman orthogonality, and their
methods involve solving a certain estimating equation, as in this
paper.  In this paper, we focus on a similar problem setting, but
mainly as a vehicle to study the effect of adaptive data collection.


\subsubsection{Non-asymptotic confidence intervals}

As opposed to asymptotic guarantees, an alternative approach is to
exploit concentration inequalities to construct non-asymptotic
confidence regions that are valid uniformly in time.  For instance,
Abbasi et al.~\cite{abbasi2011improved} prove an any-time
self-normalized concentration inequality for bandit problems. These
bounds were further developed for multi-armed
bandits~\cite{jamieson2014lil,kaufmann2016complexity} and for general
sequential experiments~\cite{howard2021time}.  On one hand, these
methods are equipped with non-asymptotic guarantees, and remain
relatively robust to model mis-specification. On the flip side,
however, there are many settings in which these procedures lead to
confidence intervals that are overly conservative relative to those
constructed based on asymptotically normal estimators; for instance,
see Figure 2 in the paper~\cite{hadad2021confidence} for a comparison
of this type.


\subsection{Our contributions and paper organization}

In this paper, we study how to estimate a target parameter
$\TrueTargetPar$ associated with a generalized linear regression model
in presence of both (possibly nonparametric) nuisance components, and
a general model for adaptive data collection.  Due to the sequential
dependence induced by adaptive data collection, many standard
$Z$-estimators may exhibit non-normal asymptotic behavior, and our
main contribution is to rectify this issue.  In order to do so, we
propose and analyze a family of estimators for $\TrueTargetPar$ and
show that under mild conditions these estimators are asymptotically
unbiased and asymptotically normal. These procedures are based on an
adaptive re-weighting of two-stage $Z$-estimators, so that we refer to
them as \AlgLong methods.  In Theorem~\ref{thm:linear_new1}, we
discuss the \AlgLongPL procedure that is tailored to the partial
linear model, whereas Theorem~\ref{thm:glm_new1_temp} provides
guarantees on a more general procedure (\AlgShortGLM) that applies to
generalized linear models. Under certain regularity conditions, both
of these theorems yield an asymptotically valid confidence region for
the parameter vector $\TrueTargetPar$.  Next, we consider the problem
of estimating a linear functional of the form $\OneDirect^\top
\TrueTargetPar$, where $\OneDirect$ is any fixed unit vector in
$\R^{\TargetDim}$. In Theorem~\ref{thm:linear_new3}
and~\ref{thm:glm_new1_temp_one_arm}, we show that, for this simpler
problem, it is possible to obtain asymptotic normality under much
weaker conditions compared to~Theorem~\ref{thm:linear_new1}
and~\ref{thm:glm_new1_temp}. Finally, in
Section~\ref{SecApplications}, we demonstrate the usefulness of our
general theory by developing its consequences for some concrete
classes of semi-parametric models.

\paragraph*{Notation} For any
numbers $\Numobs, \NumIndexOne, \NumIndexTwo \geq 1$ such that
$\Numobs = \NumIndexOne + \NumIndexTwo$ and a sequence of random
variables $\{W_i\}_{i=1}^{\Numobs}$, we use the shorthand
\begin{align*}
\EmpMean f_i(W_i)  = \frac{1}{\NumIndexTwo} \sum_{i = \NumIndexOne +
  1}^{\Numobs} f_i(W_i) \quad \mbox{and} \quad
\CondMean f_i(W_i)  = \frac{1}{\NumIndexTwo} \sum_{i = \NumIndexOne +
  1}^{\Numobs} \Exs f_i(W_i \mid \History_{i - 1})
\end{align*}
We use $\vtwonorm{\cdot}$ to denote the $2$-norm for a vector; for
matrices, we use $\opnorm{\cdot}$ and $\frobnorm{\cdot}$ to denote
their operator and Frobenius norms, respectively.  For vectors $a, b
\in \real^d$, we use $\inprod{a}{b} = \sum_{j=1}^d a_j b_j$ as a
shorthand for their Euclidean inner product.


\section{Main results}
\label{SecMain}

In this section, we first set up the class of problems to be studied
in this paper.  Our focus is asymptotic guarantees for the parameters
of a generalized linear regression model in the presence of a
non-parametric nuisance component.  Our main results are analyses of
two algorithms for estimation in the adaptive generalized
model~\eqref{eqn:general-setup} with nuisance parameters.  We derive
several asymptotic normality guarantees on parameters of interest when
these procedures are applied.  Our first algorithm (\AlgShortPL) is
designed for the partial linear model (i.e., the special case
$\linkfun(x) = x$), whereas the second one (\AlgShortGLM) applies to
more general non-linear link functions $\linkfun$.


\subsection{Problem set-up}
\label{SecBackground}

Suppose that a scalar response variable $\Response$ is linked to a
covariate vector $\Regressor \in \real^{\TargetDim}$ and auxiliary
vector $\Nuisance \in \real^{\NuisanceDim}$ via the equation
\begin{align}
\label{eqn:general-setup}
\Response & = g \big(\inprod{\Regressor}{\TrueTargetPar} +
\TrueFunPar(\Nuisance) \big) + \Noise,
\end{align}
where $\Noise$ is a zero-mean noise variable.  Here $g: \real
\rightarrow \real$ is a known link function, whereas
\mbox{$\TrueTargetPar \in \TargetSpace \subset \real^{\TargetDim}$} is
an unknown \emph{target parameter}, and the function $\TrueFunPar:
\R^{\NuisanceDim} \rightarrow \R$ is also unknown.  We assume that the
target parameter space $\TargetSpace$ is a bounded open subset of
$\R^{\TargetDim}$, whereas $\TrueFunPar$ belongs to some class
$\NuisanceSpace$ of functions that are uniformly bounded in the
supremum norm.

The model~\eqref{eqn:general-setup} is a particular instantiation of a
\emph{semi-parametric model}, as it contains both a parametric and a
non-parametric component.  Of primary interest is the parametric
component $\TrueTargetPar$: our goal is to develop point estimates as
well as confidence sets associated with these estimates.  In this
context, the unknown function $\TrueFunPar$ plays the role of a
\emph{nuisance parameter}.  It needs to be controlled to obtain a good
estimate of $\TrueTargetPar$, but is not of intrinsic interest in its
own right.


\subsubsection{Allowed forms of adaptive data collection}

In order to estimate the target parameter $\TrueTargetPar$, we observe
a collection of $\Numobs$ samples, each of the form $(\iRegressor,
\iResponse, \iNuisance)$ for $i = 1, \ldots, \Numobs$.  We allow the
data collection to be sequentially dependent in the following way.
The samples define a nested sequence of $\sigma$-fields with
$\History_0 = \emptyset$, and
\begin{align}
\iHistory & = \sigma \Big( \{\Regressor_j, \Response_j, \Nuisance_j
\}_{j=1}^{i-1} \Big) \qquad \mbox{for each $i = 2, \ldots, \Numobs$.}
\end{align}
Let $\Pclass$ be a family of distributions on $\R^{\NuisanceDim}$.\footnote{For example, $\Pclass$ can be the set of all distributions on $[0,1]^{\NuisanceDim}$.}
At stage $i = 1, \ldots, \Numobs$, we assume that:
\begin{itemize}
  \item the distribution of the nuisance vector $\iNuisance$ conditioned on $\iHistory$ belongs to  $\Pclass$.
\item the choice of regressor $\iRegressor$ is determined according to
  a known selection function that maps pairs \mbox{$(\iNuisance,
    \iHistory)$} to probabilities \mbox{$\SelectProb_i(\iNuisance,
    \iHistory) \in [0,1]$.}
\end{itemize}
With a slight abuse of notation, we often adopt the shorthand
$\SelectProb_i$ for the function value $\SelectProb_i(\iNuisance,
\iHistory)$.  Throughout this paper, we assume that the selection
functions are known to us; for example, these functions could
correspond to policies in the setting of a contextual bandit.

\paragraph*{Structural assumptions}  Our analysis involves
some structural assumptions on the link function $g$, as well as the
space $\Xspace \subset \real^{\TargetDim}$ in which the covariates
lie.
\begin{itemize}
\item[(a)] In Theorem~\ref{thm:linear_new1} and Theorem~\ref{thm:linear_new3}, we
  provide guarantees for $g(x) = x$, in which case our general
  set-up~\eqref{eqn:general-setup} reduces to the setting of partial
  linear regression.
\item[(b)] In Theorem~\ref{thm:glm_new1_temp}
  and Theorem~\ref{thm:glm_new1_temp_one_arm}, we allow the function $g$
  to be non-linear, requiring only certain smoothness and
  identifiability conditions.
\item[(c)] Throughout the paper, we assume that the
  $\TargetDim$-dimensional regressor vector $\Regressor$ takes values
  in a discrete set that consists of an orthonormal basis of
  $\real^{\TargetDim}$, along with the all-zeros vector.  Without loss
  of generality---rotating as needed---we can assume that the
  orthonormal basis is the standard one $\{\stdBasis_1, \ldots,
  \stdBasis_{\TargetDim}\}$, where $\stdBasis_j \in
  \real^{\TargetDim}$ is the vector with a single one in coordinate
  $j$ (and zeros elsewhere).  This particular setting arises naturally
  for multi-armed bandits and treatment assignment problems.
\end{itemize}

Given the assumed structure of the covariates, the selection functions
are naturally viewed as \emph{selection probabilities}---that is, for
each $i = 1, \ldots, \numobs$ and $j = 1, \ldots, \TargetDim$
\begin{align}
\SelectProb_{ij} & \defn \E \big[ \Regressor_{ij} \mid \iHistory,
  \iNuisance \big]
\end{align}
is the conditional probability that $\iRegressor = e_j$. Thus, the
conditional probability of $\iRegressor = 0$ is given by
$\SelectProb_{i0} \defn 1 - \sum_{j=1}^{\TargetDim}\SelectProb_{ij}$.


\subsection{Guarantees for the partial linear model}
\label{sec:partial_linear}

This section is devoted to a special case of the general set-up:
choosing $g(\Regressor) = \Regressor$ leads to the \emph{partial
linear regression model}
\begin{align}
\label{eqn:partial-linear-model}
\iResponse & = \inprod{\iRegressor}{\TrueTargetPar}  + 
\TrueFunPar(\iNuisance)  +  \iNoise.
\end{align}
We assume that the target parameter $\TrueTargetPar$ lies in a bounded
 open subset $\TargetSpace \subset \real^{\TargetDim}$, whereas the
nuisance function $\TrueFunPar$ belongs to a function class
$\NuisanceSpace$ with bounded $\ell_\infty$-norm.


\subsubsection{Estimating the target parameter $\TrueTargetPar$}

Our procedure is a particular type of \mbox{$Z$-estimator,} in that we
compute the solution to a set of equations based on a
$\TargetDim$-dimensional score function.  Let us introduce some
notation required to define this score function.  The conditional
covariance of the regression vector $\iRegressor$, when conditioned
upon the pair $(\iHistory, \iNuisance)$, is given by
\begin{align}
\label{EqnDefnIcovar}  
\iCovar & \defn \E \big[ (\iRegressor - \SelectProb_i) (\iRegressor -
  \SelectProb_i)^\top \mid \iHistory, \iNuisance \big],
\end{align}
where $\SelectProb \in \real^{\TargetDim}$ is the vector of selection
probabilities previously defined.  Note that this matrix can be
computed at each time $i$, since the selection mechanism is known.
Using this random matrix, we then construct the \emph{score
function}\footnote{Strictly speaking, this score function
$\ScoreFun_i$ also depends on the quadruple $(\iResponse, \iRegressor,
\iNuisance, \iHistory)$, but we omit this dependence for notational
simplicity.}
\begin{align}
\label{eq:linear_score}
\ScoreFun_i(\TargetPar, \FunPar) & \defn \iCovar^{-1/2} (\iRegressor -
\SelectProb_i) \big \{ \iResponse - \inprod{\iRegressor}{\TargetPar} -
\FunPar(\iNuisance) \big \},
\end{align}
An important property of $\ScoreFun_i$ is that it is conditionally
mean zero---viz.
\begin{subequations}
\begin{align}
\label{eqn:score-zero-conditional-mean}
    \E \big[ \ScoreFun_i(\TrueTargetPar, \TrueFunPar) \mid \iHistory
      \big] & = 0.
\end{align}
Moreover, it satisfies the Neyman orthogonality condition,
\begin{align}
\label{eqn:score-nuisance-gradient}     
\E \big[ \partial_{\FunPar} \ScoreFun_i(\TrueTargetPar, \TrueFunPar)
  \{\FunPar - \TrueFunPar\} \mid \iHistory \big] & = 0 \qquad
\mbox{for any $h \in \NuisanceSpace$,}
\end{align}
\end{subequations}
where $\partial_{\FunPar}\ScoreFun_i$ is the Gateaux derivative.  See
Appendix~\ref{neyman_ortho} for more details on this derivative and the associated
orthogonality condition.

The conditional mean property~\eqref{eqn:score-zero-conditional-mean}
is needed to ensure consistency at the population level, whereas the
orthogonality condition~\eqref{eqn:score-nuisance-gradient} guarantees
that---again at the population level---the first-order effect of
perturbing the nuisance parameter vanishes.  With this intuition in
place, we introduce the \AlgShortPL~algorithm, a shorthand for
\emph{adaptive two-stage $Z$-estimation for the partially linear
model.}
\begin{algorithm}[ht!]
\caption{ $\;\;\;\;\;$ \AlgShortPL: partial linear
  model}\label{algo:DML-linear}
\begin{algorithmic}[1]
\STATE{Given $\Numobs$ samples $\big \{ (\iRegressor,
  \iNuisance, \iResponse) \big \}_{i = 1}^{\Numobs}$ from the
  partial linear model~\eqref{eqn:partial-linear-model}}.
\vspace{5pt}
\STATE{Define the index sets $\IndexOne \defn
  \{1,2, \ldots, \NumIndexOne\}$ and $\IndexTwo \defn \{
  \NumIndexOne + 1, \ldots, \Numobs \}$, and set $\NumIndexTwo \defn
  \Numobs - \NumIndexOne$.}
\STATE{ Compute an estimate $\EstFunPar$ of the nuisance function
  $\TrueFunPar$ based on the samples $\{(\iResponse,
  \iRegressor, \iNuisance)\}_{i \in \IndexOne}$.}
\STATE{Based on the samples $\{(\iResponse, \iRegressor,
  \iNuisance)\}_{i \in \IndexTwo}$, form the estimating equations
\begin{align}
\label{eqn:linear-estimating-eqn}
\frac{1}{\NumIndexTwo} \sum \limits_{i \in \IndexTwo}
\ScoreFun_i(\PlainPar, \EstFunPar) & = 0,
\end{align}
and compute a solution $\SoluTargetPar$.  }
\end{algorithmic} 
%
\begin{flushleft}
\noindent {\bf{Note:}} By the definition~\eqref{eq:linear_score} of the
score functions $\ScoreFun_i$, the estimating
equations~\eqref{eqn:linear-estimating-eqn} are linear in the
parameter $\PlainPar$; moreover, our analysis in proving
Theorem~\ref{thm:linear_new1} establishes that this linear system has
a unique solution $\SoluTargetPar$ with probability tending to one as
$\Numobs$ increases.
\noindent
\end{flushleft}
\end{algorithm}


\subsubsection{Asymptotic normality}

The main result of this section is an asymptotic normality guarantee
for the vector $\SoluTargetPar$ computed using the \AlgLongPL
algorithm.  We begin by stating our assumptions and discussing their
role in the theorem.

\myassumption{\mbox{{\bf{NOI}}$(\subgauss,
    \LinearNoiseVar)$}}{assn-lin-noise}{Conditioned upon $(\iRegressor,\iNuisance,
  \iHistory)$, each element of the zero-mean noise sequence
  $\{\iNoise\}_{i = 1}^\Numobs$ is sub-Gaussian with parameter
  $\subgauss$, and has conditional variance \mbox{${\LinearNoiseVar
      \defn \E[\iNoise^2 \mid \iRegressor, \iNuisance, \iHistory]}$.}  } 
\myassumption{{\bf{SEL}}$(t)$}{assn-lin-selection-prob}{ The selection
  probabilities $\SelectProb_{ij}$ at each round $i$ satisfy the lower
  bound
  \begin{align}
\label{eq:a3b}    
  \SelectProb_{ij} & \geq \frac{c_0}{i^{2t}} \quad \mbox{for all $j =
    0, 1, \ldots, \TargetDim$ and $i = 1, 2, \ldots$,}
\end{align}
  for some constant $c_0 > 0$ and exponent $t \in [0, \tfrac{1}{2})$.
  }
  
\myassumption{{\bf{\small{NUI}}}}{assn-lin-nuisance-est}{ Let
  $\Pclass$ be a family of distributions sufficiently rich to contain
  all possible distributions of $\iNuisance$ conditioned on
  $\iHistory$, for all $i \geq 1$.  The estimator $\EstFunPar$
  obtained from Step 3 of the \AlgLongPL procedure satisfies
\begin{align}
\label{eq:ass_nuisance_est}
\sup_{P \in \Pclass}(\E_{\dummyRV\sim
  P}|\EstFunPar(\dummyRV) - \TrueFunPar(\dummyRV)|^2)^{1/2}=o_p(1).
\end{align}
}

Let us clarify the meaning and significance of these assumptions.  The
\emph{noise condition}~\ref{assn-lin-noise} allows us to control the
tail behavior of the noise, and is relatively standard though can be
relaxed\footnote{See Appendix~\ref{proof:lm:thm:linear_new1} for more details.}.  More interesting
is the \emph{selection condition}~\ref{assn-lin-selection-prob}, which
allows the minimum selection probability to decrease as fast as
$\Numobs^{-2t}$ for some $t \in [0, 1/2)$. This is slightly more
  relaxed than those in some past works, such as requiring that the
  selection probabilities be uniformly bounded away from
  zero~\cite{zhang2021statistical}; or converge to some non-random
  limit~\cite{hadad2021confidence,zhan2021off}.  Finally, the
  \emph{nuisance condition}~\ref{assn-lin-nuisance-est} guarantees
  that the estimate $\EstFunPar$ based on the hold-out set is a
  weakly-consistent estimator for the true nuisance function
  $\TrueFunPar$.  {In practice, one can use various procedures to
    estimate $\TrueFunPar$ (e.g., $k$-nearest neighbor estimators,
    random forests, boosting, kernel methods and neural networks). }
  \medskip
  
\noindent With this set-up, we now state our first main result:
\begin{theorem}
\label{thm:linear_new1} 
Suppose that Assumptions~\ref{assn-lin-noise},
~\ref{assn-lin-selection-prob} and~\ref{assn-lin-nuisance-est} are in
force. Then the estimate~$\SoluTargetPar$ obtained from \AlgLongPL
(Algorithm~\ref{algo:DML-linear}) satisfies
\begin{align}
\label{eq:linear_new1_result}
(\sqrt{\NumIndexTwo} \EmpMean \iCovar^{1/2})
(\SoluTargetPar - \TrueTargetPar) &  \convdist 
\Normal(0, \LinearNoiseVar \Id_{\TargetDim}).
\end{align}  
\end{theorem}
\noindent See Appendix~\ref{proof:thm:linear_new1}  for the proof. \\

\noindent A few comments regarding this claim are in order.
\vspace{5pt}

\paragraph*{IID nuisance} Finding a suitable choice of $\Pclass$
for verifying the condition~\eqref{eq:ass_nuisance_est} is non-trivial
in general.  However, when the nuisances $\iNuisance$ are i.i.d. and
independent of $\iHistory$, this condition reduces to $(\E
|\EstFunPar(\iNuisance) - \TrueFunPar(\iNuisance) |^2)^{1/2} =
o_p(1)$, and so is concrete and explicit.

\paragraph*{Linear nuisance function} Suppose that the
nuisance function is linear in $\Nuisance$---that is, say ${
  \TrueFunPar(\Nuisance) = \inprod{\Nuisance}{\TrueNuisancePar}}$ for
some $\TrueNuisancePar \in \R^{\NuisanceDim}$---and that $\E
\|\iNuisance\|_2^2\leq M_\Nuisance < \infty$ for all $i \geq 1$.
Under these conditions, given an estimate $\EstNuisancePar$ with
\mbox{$\vtwonorm{\EstNuisancePar - \TrueNuisancePar} =o_p(1)$,} it
follows that Assumption~\ref{assn-lin-nuisance-est} holds with
$\EstFunPar(\Nuisance) = \inprod{ \Nuisance}{\EstNuisancePar}$, and
$\mathcal P$ given by the set of all distributions with second moment
at most $M_\Nuisance$.


{%
\paragraph*{Extension to continuous regressors} 
As stated, Theorem~\ref{thm:linear_new1} applies to regressors
$\iRegressor$ taking values in the finite cardinality set $\{
\zerovec, e_1, \ldots, e_{\TargetDim}\}$. However, an analogous result can be proved for continuous-valued
regressors as well.  Concretely, suppose that the regressors take values in
the $\ell_2$-ball $\{\Regressor \in \R^{\TargetDim} \, \mid \,
\vecnorm{\Regressor}{2} \leq 1 \}$ according to some known probability
density.  Recalling that $\iCovar$ denotes the conditional covariance
matrix of $x_i$ from equation~\eqref{EqnDefnIcovar}, say that
Assumption~\ref{assn-lin-selection-prob} is replaced by the condition
that $\iCovar\succeq c_0 i^{-2t}$ for all $i$ for some exponent $t \in
[0, 1/2)$ and pre-factor $c_0 > 0$.  Under these conditions, the claim
  of Theorem~\ref{thm:linear_new1} remains valid.  We refer the reader
  to Appendix~\ref{proof:thm:linear_new1} for a more in-depth
  discussion.  }



\paragraph*{Computational complexity} Note that the matrix
$\iCovar$ is the covariance of a multinomial distribution, and a
Cholesky decomposition of such matrices can be carried out in
$\bigoh(\TargetDim^2)$ time.  Therefore, the time complexity of
setting up the estimating equations~\eqref{eqn:linear-estimating-eqn}
scales $\bigoh(\Numobs\TargetDim^2)$. Solving the system of linear
equations requires at most $\bigoh(\TargetDim^2)$ time.


\paragraph*{Inference for the target parameter} 

From Theorem~\ref{thm:linear_new1}, we can construct a confidence region for
the whole parameter vector (e.g., by a $\chi^2$-test). In addition, if the
sequence of random matrices $\EmpMean [\iCovar^{1/2}]$ converge to
some non-random and invertible matrix---say $\GamMat^{1/2}$---then
equation~\eqref{eq:linear_new1_result} implies that
$\sqrt{\NumIndexTwo}(\EstTargetPar - \TrueTargetPar)$ is
asymptotically normal with covariance $\LinearNoiseVar\GamMat^{-1}$.

{\paragraph*{Estimation of the variance $\LinearNoiseVar$} When
  $\LinearNoiseVar$ is unknown, it needs to be estimated.  If the
  sample sizes satisfy the lower bound \mbox{$\NumIndexTwo\geq c
    \Numobs$} for some constant $c > 0$, a consistent estimate is
  given by the plug-in
  \begin{align}
  \label{eq:consistent_var}
    \EstLinearNoiseVar & \defn \EmpMean(\iResponse-\iRegressor^\top
    \SoluTargetPar-\EstFunPar(\iNuisance))^2.
\end{align}
  More precisely, we  have
  $\EstLinearNoiseVar{\to}\LinearNoiseVar$ in probability whenever, in
  addition to the conditions in Theorem~\ref{thm:linear_new1}, the
  fourth moments $\sup_{P \in \Pclass}\E_{\dummyRV\sim
    P}|\EstFunPar(\dummyRV) - \TrueFunPar(\dummyRV)|^4$ and
  \mbox{$\E[\iNoise^4 \mid \iRegressor, \iNuisance, \iHistory]$} are
  bounded by some constant.  See the end of
  Appendix~\ref{proof:thm:linear_new1}  for the proof of this claim.}


{\paragraph*{Adaptive estimation of the nuisance function} The
  procedure described here is based on sample splitting, with the
  first $\NumIndexOne$ samples used to estimate the nuisance
  $\TrueFunPar$.  An alternative
  approach is to sequentially update the estimate $\EstFunPar$ so as
  to achieve better sample efficiency. Namely, instead of solving
  equation~\eqref{eqn:linear-estimating-eqn}, we find $\SoluTargetPar$
  by solving
\begin{align*}
\frac{1}{\Numobs} \sum \limits_{i=1}^\Numobs \ScoreFun_i(\PlainPar,
\EstFunPar_i) & = 0,
\end{align*}
where $\EstFunPar_i$ are nuisance estimates using samples
$\{(\Response_{j},\Regressor_j,\Nuisance_j)\}_{j=1}^{i-1}$.  It can
shown that, under the conditions of Theorem~\ref{thm:linear_new1} and
when the sequence of nuisance estimates satisfy the limiting relation
$\sum_{i=1}^\Numobs\E_{\EstFunPar_i,\Nuisance_i}
(\EstFunPar_i(\iNuisance) - \TrueFunPar(\iNuisance))^2/n \to 0$, then
we have
\begin{align*}
(\sqrt{\Numobs} {\widehat\E_{\Numobs}} \iCovar^{1/2}) (\SoluTargetPar
  - \TrueTargetPar) & \convdist \Normal(0, \LinearNoiseVar
  \Id_{\TargetDim}).
\end{align*}
Intuitively, one might expect good empirical behavior for this
approach since the variance of $\SoluTargetPar$ scales as $1/\Numobs$;
on the flip side, it could be computationally more expensive.  We
refer readers to Appendix~\ref{SecAdaNuisance} for
more details.  }

{\paragraph*{Inference with unknown selection probabilities}
  Theorem~\ref{thm:linear_new1} can also be generalized to the
  scenario where 
   the exact values of the selection
  probabilities $\SelectProb_i$ are unknown, but  only 
  consistent estimates $\EstSelectProb_i$  are available.  See
  Appendix~\ref{SecEstSelectProb} for details.  }

\subsection{Fixed direction inference for the partial linear model}
\label{sec:One-coordinate-inference}
In many applications, one is only interested in estimating linear
functionals of the target parameter vector.  Concretely, given a
unit-norm vector $\OneDirect\in\R^{\TargetDim}$, consider the problem
of providing confidence intervals for the scalar target
$\TrueOneDirectPar \defn \inprod{\OneDirect}{\TrueTargetPar}$;
standard examples include the first coordinate $\TrueTargetPar_{1}$,
or the difference between two coordinates $\TrueTargetPar_1 -
\TrueTargetPar_2$.  We will show that inferential
guarantees for such scalar quantities can be obtained under much weaker
conditions than Theorem~\ref{thm:linear_new1}. Namely, we only require
Assumption~\ref{assn-lin-selection-prob} to hold for coordinates $j$
for which $\OneDirect_j$ is non-zero.


\subsubsection{Constructing the score function}

Suppose that we use the dataset $\big \{ (\iRegressor, \iNuisance,
\iResponse) \big\}_{i=1}^{\NumIndexOne}$ to compute an initial pair of
``crude'' estimates $\EstTargetPar$ and $\EstFunPar$.  Recalling that
$\inprod{\cdot}{\cdot}$ denotes the Euclidean inner product, we
consider the one-dimensional score function
\begin{subequations}
\begin{align}
\label{eq:one-arm-score-fun}    
\ScoreFun_{i1}(\OneDirectPar, \EstTargetPar, \EstFunPar) = \inprod{
  \ScalePreCondVec_{i1}}{\iRegressor - \SelectProb_i} \big \{
\iResponse - \inprod{\iRegressor}{\OneDirect} \OneDirectPar -
\iRegressor^\top (\IdMat_{\TargetDim} - \OneDirect \OneDirect^\top)
\EstTargetPar - \EstFunPar(\iNuisance) \big \},
\end{align}
where the vector $\ScalePreCondVec_{i1}$ is given by
\begin{align*}
\ScalePreCondVec_{i1} = \big( \Covar_{i}^{-1} \OneDirect \big)
\tfrac{1}{ \sqrt{\OneDirect^\top{\Covar^{-1}_{i}} \OneDirect}},
\end{align*}
and the inverse covariance matrix $\iCovar^{-1}$ admits the explicit
expression
\begin{align}
\label{eq:cov_inverse}  
\iCovar^{-1}(\iNuisance, \iHistory) & = \begin{pmatrix}
  \frac{1}{\SelectProb_{i1}} + \gamma_i & \gamma_i &\gamma_i & \cdots
  & \gamma_i \\
\gamma_i & \frac{1}{\SelectProb_{i2}} + \gamma_i &\gamma_i &\cdots &
\gamma_i \\
\gamma_i & \gamma_i&\frac{1}{\SelectProb_{i3}} + \gamma_i &\cdots &
\gamma_i \\
\vdots & \vdots & \vdots & \vdots & \vdots \\ \gamma_i & \gamma_i &
\cdots & \gamma_i & \frac{1}{\SelectProb_{i\TargetDim}} + \gamma_i
\end{pmatrix} \qquad \mbox{where $\gamma_i = 1/\SelectProb_{i0}$.}
\end{align}
This choice of $\ScalePreCondVec_{i1}$ allows us to stabilize the
variance of the score function: concretely, we have $\E
|\inprod{\ScalePreCondVec_{i1}}{\iRegressor - \SelectProb_i}|^2 = 1$.
Our next step is to find $\SoluOneDirectPar$ by solving the linear
system
\begin{align}
\label{eq:one-arm-est-eq2}  
\frac{1}{\NumIndexTwo } \sum_{i=\NumIndexOne + 1}^{\NumIndexTwo }
\ScoreFun_{i1}(\SoluOneDirectPar, \EstTargetPar, \EstFunPar ) = 0
\end{align}
\end{subequations}

\subsubsection{Guarantee of asymptotic normality}

We are now ready to establish a guarantee for the estimate
$\TrueOneDirectPar$.  We do so under the following weaker variant of
our earlier selection condition~\ref{assn-lin-selection-prob}:
\myassumption{{\bf{SEL}}$^\ast(t,u,
  \SuppDirect)$}{assn-lin-selection-prob-weak}{ For some \mbox{$t \in
    [0, \tfrac{1}{2})$,} the selection probabilities are lower bounded
    as
\begin{align}
\label{eq:a4}      
\SelectProb_{ij} \succeq \frac{c_0}{i^{2t}} \quad \mbox{for all $j \in
  \SuppDirect \cup \{0\}$, and for all $i = 1, 2 \ldots$,}
\end{align}
where $\SuppDirect \defn \{ j \mid \OneDirect_j \neq 0\}$ is the
support set of $\OneDirect$.}

\vspace{8pt}

Compared to condition~\ref{assn-lin-selection-prob},
Assumption~\ref{assn-lin-selection-prob-weak} is weaker in the sense
that the lower bound condition is imposed \emph{only} on the support
set of the vector $\OneDirect$, along with the reference point (the
all-zeroes vector).  This difference is significant, for example, when
our goal is to estimate a single coordinate, or the difference of two
coordinates.
\begin{theorem}
\label{thm:linear_new3}
Suppose that Assumptions~\ref{assn-lin-noise},
~\ref{assn-lin-selection-prob-weak} and~\ref{assn-lin-nuisance-est}
are in force.  Then the $Z$-estimate $\SoluOneDirectPar$ computed
from~\eqref{eq:one-arm-est-eq2} using any consistent estimate
$\EstTargetPar$ of $\TargetPar$ satisfies
\begin{align*}
\Big( \EmpMean \tfrac{1}{\sqrt{\OneDirect^\top \iCovar^{-1}
    \OneDirect}} \Big) (\SoluOneDirectPar - \TrueOneDirectPar) \convdist
\Normal(0, \LinearNoiseVar ).
\end{align*}
\end{theorem}
\noindent See Appendix~\ref{proof:thm:linear_new3} for the proof. \\

A few comments regarding Theorem~\ref{thm:linear_new3} are in order.
First, its guarantees hold under a weaker assumption on the selection
probability, albeit at the expense of assuming the \emph{a priori}
existence of a consistent estimator of $\EstTargetPar$. However, since
typically we estimate $\TrueTargetPar$ and $\TrueFunPar$
simultaneously in the partial linear model, we would also obtain a
consistent estimator of $\TrueTargetPar$ if we can find a consistent
estimator of $\TrueFunPar$
(cf. condition~\eqref{eq:ass_nuisance_est}).

Second, suppose that the nuisance function is linear---i.e.,
$\TrueFunPar(\Nuisance) = \inprod{\Nuisance}{\TrueNuisancePar}$ for
some \mbox{$\TrueNuisancePar \in \R^{\NuisanceDim}$.} Similar
to Theorem~\ref{thm:linear_new1}, let $\EstNuisancePar$ be an estimator of
$\TrueNuisancePar$ with \mbox{$\|\EstNuisancePar -
  \TrueNuisancePar\|_2 = o_p(1)$} and assume that $\sup_{i}\E
\|\iNuisance\|_2^2\leq M_\Nuisance<\infty$, then
Assumption~\ref{assn-lin-nuisance-est} is satisfied with
$\EstFunPar(\Nuisance) = \inprod{\Nuisance}{\EstNuisancePar}$ and
$\Pclass$ be the set of distributions with the second moment less than
$M_\Nuisance$.

Observe that Theorem~\ref{thm:linear_new3} allows us to construct an
asymptotically valid level-$\alpha$ confidence interval for
$\TrueOneDirectPar$. Specifically, we have
\begin{align*}
\lim_{\Numobs \to \infty} \Prob \left[\SoluOneDirectPar - \tfrac{q_{1
      - \alpha/2} \sigma}{\sqrt{\NumIndexTwo}} \Big( \EmpMean
  \tfrac{1}{\sqrt{\OneDirect^\top \iCovar^{-1} \OneDirect}} \Big)^{-1}
  \leq \TrueOneDirectPar \leq \SoluOneDirectPar + \tfrac{q_{1 -
      \alpha/2} \sigma}{\sqrt{\NumIndexTwo}} \Big( \EmpMean \tfrac{1}{
    \sqrt{\OneDirect^\top \iCovar^{-1} \OneDirect}} \Big)^{-1} \right]
= 1 - \alpha,
\end{align*}
where $q_{1 - \alpha/2}$ is the $1 - \alpha/2$ quantile of the
standard normal distribution. In particular, if we are interested in
the first co-ordinate $\TrueTargetPar_1$, then setting
$\OneDirect=e_1$ and applying Theorem~\ref{thm:linear_new3} yields
\begin{align*}
\Big( \EmpMean \sqrt{\frac{\SelectProb_{i0}
    \SelectProb_{i1}}{\SelectProb_{i0} + \SelectProb_{i1}}}
\Big)(\SoluTargetPar_1 - \TrueTargetPar_1) \convdist \Normal(0,
\LinearNoiseVar).
\end{align*}

Third, although we have stated the result with $\EstTargetPar$ assumed
to be consistent for the full vector $\TargetPar$, in fact, we require
only that that is consistent for any direction that is orthogonal to
$\OneDirect$, i.e., it suffices to have the slightly weaker
consistency condition $(\IdMat_{\TargetDim} -
\OneDirect\OneDirect^\top)(\EstTargetPar - \TrueTargetPar)
\overset{p}{\to} 0$.

{ Finally, Theorem~\ref{thm:linear_new3} can also be generalized
  to the continuous regressors case as follows.  Suppose that the
  regressors take values in the $\ell_2$-ball $\{\Regressor \in
  \mathbb{R}^{\TargetDim} \mid \vecnorm{\Regressor}{2} \leq 1 \}$
  according to some known probability density.  Then the same
  guarantee holds if we replace
  Assumption~\ref{assn-lin-selection-prob-weak} with the condition
  that there is some exponent $t \in [0, 1/2)$ and pre-factor $c_0 >
    0$ such that ${\vecnorm{v}{2}}/{\sqrt{v^\top \iCovar^{-1} v}}
    \succeq c_0 i^{-t}$ for $v \in \{\OneDirect, \iCovar^{-1/2}
    \OneDirect\}$ for $i = 1, 2, \ldots$.  See
    Appendix~\ref{proof:thm:linear_new3}  for a more detailed
    discussion. }

\subsection{Generalized linear model} 
We now return to the general setting, in which we have a model of the
form
\begin{align}
\label{eq:generalized-linear-model}  
\iResponse & = g \big( \inprod{\iRegressor}{\TrueTargetPar}  + 
\TrueFunPar(\iNuisance) \big)  +  \iNoise,
\end{align}
for a general inverse link function $g$.  We assume that the parameter
$(\TrueTargetPar, \TrueFunPar) \in \TargetSpace
\times \NuisanceSpace$, where the parameter space $\TargetSpace$ is a
bounded open set in $\R^{\TargetDim}$ and $\NuisanceSpace$ is a set of
functions with bounded $\ell_\infty$-norm.

\subsubsection{Estimating the target parameter $\TrueTargetPar$}

We start by constructing a different score function.

introduce an auxiliary nuisance vector $\AuxiNuisancePar$, and define
the score function
\begin{align}
\label{eq:glm_score}  
\ScoreFun_i(\TargetPar, \AuxiNuisancePar, \FunPar) \equiv
\InvSqrtCovar_{i}(\iRegressor - \GlmMeanVec_i) \; \Big \{ \iResponse -
g \big(\inprod{\iRegressor}{\TargetPar}  +  \FunPar(\iNuisance) \big)
\Big \}
\end{align} where 
\begin{subequations}
\begin{align}
\GlmMeanVec_i &\equiv \E((\iRegressor
g'\big(\inprod{\iRegressor}{\AuxiNuisancePar} +
\FunPar(\iNuisance)\big)|\iNuisance, \iHistory)[\E(
  g'\big(\inprod{\iRegressor}{\AuxiNuisancePar} +
  \FunPar(\iNuisance)\big)|\iNuisance, \iHistory)]^{-1},
\label{eqn:glm-mean-vector}
\\
\InvSqrtCovar_i &\equiv {} [\E(\iNoise^2(\iRegressor -
  \GlmMeanVec_i)(\iRegressor - \GlmMeanVec_i)^\top|\iNuisance,
  \iHistory)]^{-1/2} \notag \\ &= [\E(\GlmVar
  \big(g\big(\inprod{\iRegressor}{\AuxiNuisancePar} +
  \FunPar(\iNuisance)\big)\big)(\iRegressor -
  \GlmMeanVec_i)(\iRegressor - \GlmMeanVec_i)^\top|\iNuisance,
  \iHistory)]^{-1/2}
\label{eqn:glm-covmat}
\end{align} 
\end{subequations}
and $\GlmVar(x)\equiv\E
(\iNoise^2|g\big(\inprod{\iRegressor}{\TrueTargetPar} +
\TrueFunPar(\iNuisance)\big)=x)$ is the conditional variance of the
noise $\iNoise$.  When $\AuxiNuisancePar=\TrueTargetPar
(\text{or\,}\EstTargetPar) $ and $\FunPar=\TrueFunPar
(\text{or\,}\EstFunPar) $, we denote the corresponding $\GlmMeanVec_i$
and $\InvSqrtCovar_i$ by
$\GlmMeanVec^*_i~(\text{or\,}\GlmMeanVecwhat_i)$ and
$\InvSqrtCovar^*_i~(\text{or\,}\InvSqrtCovarwhat_i)$
respectively. Intuitively speaking, the vector $\GlmMeanVec_i$ can be
viewed as a weighted conditional expectation of the regressor
$\iRegressor$, while the matrix $\InvSqrtCovar_i$ can be viewed the
inverse square root of a weighted conditional covariance matrix of
$\iRegressor$. When $g(x)=x$, $\GlmMeanVec, \InvSqrtCovar$ does not
depend on $\AuxiNuisancePar$ and the score function in
equation~\eqref{eq:glm_score} reduces to the early one in
equation~\eqref{eq:linear_score} for the partial linear model.

Similar to the partial linear model case, this score function
satisfies a version of the Neyman orthogonality condition. More
specifically, we have
\begin{subequations}
\begin{align}
    \E(\ScoreFun(\TrueTargetPar, \TrueTargetPar, \TrueFunPar
    )|\iHistory) & = 0\label{eq:score_exp},
    \\
\E(\partial_{\AuxiNuisancePar}\ScoreFun_i(\TrueTargetPar ,
\TrueTargetPar, \TrueFunPar )|\iHistory)& = 0,
\text{and} \label{eq:no_theta01} \\
\E(\partial_{\FunPar}\ScoreFun_i(\TrueTargetPar, \TrueTargetPar,
\TrueFunPar )[ \FunPar - \TrueFunPar]|\iHistory) &=0 \text{~~for
  all~~} \FunPar\in\NuisanceSpace. \label{eq:no_theta012}
\end{align}
\end{subequations}
We defer the proof of these equations to Appendix~\ref{neyman_ortho}.

With this set-up, we estimate the target parameter $\TrueTargetPar$
using the following \AlgLongGLM procedure, or \emph{adaptive two-stage
$Z$-estimation for the generalized linear model}.

\begin{algorithm}[ht!]
\caption{ $\;\;\;\;\;$ \AlgShortGLM: generalized linear
  model}\label{algo:DML-glm}
\begin{algorithmic}[1]
\STATE{Given $\Numobs$ samples $\big \{
  (\iRegressor, \iNuisance, \iResponse) \big \}_{i = 1}^{\Numobs}$ from
  the partial linear model~\eqref{eq:generalized-linear-model}}.
\vspace{5pt}
\STATE{Define the index sets $\IndexOne \defn \{1,2, \ldots,
  \NumIndexOne\}$ and $\IndexTwo \defn \{ \NumIndexOne + 1, \ldots,
  \Numobs \}$, and define $\NumIndexTwo = \Numobs - \NumIndexOne$.}
\STATE{Use samples $\{(\iResponse, \iRegressor, \iNuisance)_{i\in
    \IndexOne}\}$ to obtain an estimate $\EstFunPar$ for the nuisance
  function $\TrueFunPar$ and $\EstTargetPar$ for the target parameter
  $\TrueTargetPar$.}
\STATE{Find $\SoluTargetPar$ by solving the equation
\begin{align}
\label{eqn:non-linear-estimating-eqn}
\frac{1}{\NumIndexTwo} \sum \limits_{i \in \IndexTwo}
\ScoreFun_i(\SoluTargetPar, \EstTargetPar, \EstFunPar)=0
\end{align}
}.
\end{algorithmic}

\end{algorithm}

\subsubsection{Asymptotic normality}

We now turn to a result on the asymptotic normality of the estimator
$\SoluTargetPar$ computed using the \AlgLongGLM procedure described
as Algorithm~\ref{algo:DML-glm}.  Let us begin with the underlying assumptions.

\myassumption{{\bf{SEL}}$^\prime(t, \delta)$}{assn-glm-prob}{For some
  $t \in [0, \tfrac{1}{4}]$ and $\delta > 0$, the selection
  probabilities $\SelectProb_{ik}$ are lower bounded as
\begin{align}
\label{eq:glm_assn_prob}    
\SelectProb_{ik} \geq c_i \defn \frac{c_0}{i^{2(t - \delta)}} \quad
\mbox{for all $k = 1, \ldots, \TargetDim$, and $i = 1, 2, \ldots$.}
\end{align}
In addition, the probability of selecting the zero vector satisfies
$\SelectProb_{i0}\geq \czerotil$ for some $\czerotil>0$.  }


\myassumption{{\bf{NUI}}$^\prime$}{assn-glm-nuisance-est}{ Suppose
  that all distributions in $\Pclass$ are supported on a set
  $\mathrm{dom}(\Pclass) \subseteq \R^{\NuisanceDim}$. The estimators
  $\EstTargetPar, \EstFunPar$ obtained in Step 3 of
  Algorithm~\ref{algo:DML-glm} satisfy \mbox{$\|\EstTargetPar -
    \TrueTargetPar\|_2=o_p(\Numobs^{-1/4})$}, and $\sup
  \limits_{\dummyRV\in\mathrm{dom}(\Pclass)}|\EstFunPar(\dummyRV) -
  \TrueFunPar(\dummyRV)|=o_p(\Numobs^{-1/4})$.  }

\myassumption{{\bf{IDE}}}{assn-glm-identifiability}{The model is
  identifiable under our choice of the score function, concretely,
\begin{align*}
 \|\CondMean (\ScoreFun_i(\TargetPar, \TrueTargetPar, \TrueFunPar
 ) - \ScoreFun_i(\TrueTargetPar, \TrueTargetPar, \TrueFunPar ))\|_2 & \geq
c_\ScoreFun \|\CondMean \partial_{\TargetPar}
 \ScoreFun_i(\TrueTargetPar, \TrueTargetPar, \TrueFunPar
 )(\TargetPar - \TrueTargetPar )\|_2\wedge c_\ScoreFun \Numobs^{-1/4}
\end{align*}
almost surely for any $\TargetPar \in \R^{\TargetDim}$ and some constant
$c_\ScoreFun > 0$ .  }

\myassumption{{\bf{\small{EIG}}}}{assn-glm-minimum-eig}{ The minimum singular value
  of the gradient $\CondMean \partial_{\TargetPar}
  \ScoreFun_i(
  \TrueTargetPar, \TrueTargetPar, \TrueFunPar )$ is not too small,
  namely,
\begin{align*}
\lim_{\Numobs\to\infty} \prob(\sigma_{\min}(\CondMean
\InvSqrtCovar^*_{i}(\iRegressor - \GlmMeanVec^*_i)
g'\big(\inprod{\iRegressor}{ \TrueTargetPar} + \TrueFunPar(\iNuisance)
\big)(\iRegressor - \GlmMeanVec^*_i)^\top) \geq
m_{\ScoreFun}\Numobs^{\delta-t}) & \to 1
\end{align*}
for some constant \mbox{$m_{\ScoreFun} > 0$.}  }


\subsubsection{Other standard GLM assumptions}
\label{sec:standard-glm-assumption}
In addition to the above four assumptions, we make the additional
assumptions on our generalized linear model.
\begin{itemize}
\item There exist constants $M_\TargetPar$ and $D_x > 0$ such that
  $\sup_{\TargetPar \in \TargetSpace} \|\TargetPar\|_2 \leq
  M_\TargetPar$, $\|\iRegressor\|_2 \leq D_x$, and $\TrueFunPar$
  satisfies $\|\TrueFunPar(\iNuisance)\|_\infty\leq M_\FunPar$ for
  some $M_\FunPar > 0$.
  %
  \item The conditional variance $\GlmVar(x)$ is three-times
    differentiable, $\GlmVar(x), \GlmVarDeri(x)$ are
    $L_\eps,L_{\eps'}$-Lipschitz respectively for $|x|\leq M_\FunPar +
    D_\Regressor M_\TargetPar$, and there exist some $M_\eps,m_\eps>0$
    such that $M_\eps\geq \GlmVar(x)\geq m_\eps$ for $|x|\leq
    M_\FunPar + D_\Regressor M_\TargetPar$. Furthermore, we assume
    that the zero mean noise $\iNoise$ is sub-Gaussian with parameter
    $\subgauss$ conditioned $\iRegressor, \iNuisance, \iHistory$.
  \item The inverse link function $g$ is three-times differentiable,
    monotone and the functions $g, g', g^{''}$ are $L_g, L_{g'},
    L_{g^{''}}$-Lipschitz continuous, respectively. Moreover,
    \\ \mbox{$\inf_{|x|\leq M_\FunPar + D_\Regressor
        M_\TargetPar}|g'(x)|\geq l_{g}$} for some $l_g > 0$.
\end{itemize}

Assumption~\ref{assn-glm-prob} is slightly stronger than
Assumption~\ref{assn-lin-selection-prob} in the sense that we need to
replace $t$ by $t - \delta$ for some small constant $\delta$ and
restrict $t\in[0,1/4]$.  Assumption~\ref{assn-glm-nuisance-est} is
made on the performance of the pilot estimators. The reason we assume
$\EstTargetPar - \TrueTargetPar, \sup|\EstFunPar(\dummyRV) -
\TrueFunPar(\dummyRV)|=\liloh_p(\Numobs^{-1/4})$ is to ensure
second-order terms in the Taylor expansion of the inverse link $g$
vanish.  
{In contrast, in partial linear models,
  Assumption~\ref{assn-lin-nuisance-est} only requires the nuisance
  estimator to be consistent. Since the first-order Taylor
  approximation of $g$ is exact in the linear case, no assumptions on
  convergence speed are needed to eliminate the approximation error
  terms in Taylor expansion.  }

The conditions~\ref{assn-glm-identifiability}
and~\ref{assn-glm-minimum-eig} ensure that the expectation of score
function is sufficiently away from zero when $\TargetPar$ is away from
$\TrueTargetPar$. In the simple scenario where $g(\state) = \state$,
Assumption~\ref{assn-glm-identifiability}
and~\ref{assn-glm-minimum-eig} are implied by the rest
assumptions. Also, it can be shown that in logistic regression
Assumption~\ref{assn-glm-minimum-eig} holds, and Assumption~\ref{assn-glm-identifiability} holds when $\TargetDim=1$
(see Appendix~\ref{sec:logistic-regression} 
for detailed
derivations). However, due to the adaptive nature of the collected
data, it is in general hard to verify these two assumptions. To
address this issue, in practice, we suggest verifying them with all
$\CondMean$ replaced by $\EmpMean$ instead. Since the empirical mean
concentrates around the conditional expectation, the empirical version
of Assumption~\ref{assn-glm-identifiability}
and~\ref{assn-glm-minimum-eig} hold with high probability when the
assumptions themselves are true. Therefore, we may use the empirical
version as a surrogate for the original assumptions.

Finally, in Section~\ref{sec:standard-glm-assumption} we enlist some
standard assumptions on the GLM. The first condition assumes
boundedness condition in the regressors, the parameter space, and the
true nonlinear function. The second and third condition respectively
puts some smoothness condition on the conditional variance
functional~$\GlmVar(\cdot)$ and the link function~\linkFun.

\begin{theorem}
  \label{thm:glm_new1_temp}
Suppose that
Assumptions~\mbox{\ref{assn-glm-prob}---\ref{assn-glm-minimum-eig}}
and the standard GLM assumptions
from Section~\ref{sec:standard-glm-assumption} are in force.  Then the
estimate $\SoluTargetPar$ obtained from \AlgLongGLM\\
(cf. Algorithm~\ref{algo:DML-glm}) satisfies
 \begin{align}
\label{eq:glm_asym_norm0_temp}   
(\EmpMean \InvSqrtCovarwhat_{i}(\iRegressor - \GlmMeanVecwhat_i)
g'\big( \inprod{\iRegressor}{\EstTargetPar} + \EstFunPar(\iNuisance)
\big) (\iRegressor - \GlmMeanVecwhat_i)^\top) \sqrt{\NumIndexTwo}
(\SoluTargetPar - \TrueTargetPar) \convdist \Normal(0,
\IdMat_{\TargetDim}).
\end{align}
\end{theorem}
\noindent See Appendix~\ref{proof:thm:glm_new1_temp}  for the proof.

\vspace{8pt}

A simple case is when the nuisance function is linear, i.e.,
$\TrueFunPar(\Nuisance)=\inprod{\Nuisance}{\TrueNuisancePar}$ for some
$\TrueNuisancePar\in\NuisancePar$, and $\NuisanceSpace$ is a bounded
set in $\R^{\NuisanceDim}$. In this case, the assumptions from
Section~\ref{sec:standard-glm-assumption} hold if there exist
$M_{\NuisancePar},D_{x}>0$ such that
$\sup_{\NuisancePar\in\NuisanceSpace}\|\NuisancePar\|_2\leq
M_{\NuisancePar}$ and $\|(\iRegressor^\top,
\iNuisance^\top)^\top\|_2\leq D_{x}$. Moreover,
Assumption~\ref{assn-glm-nuisance-est} is satisfied if in addition we
have an estimator $\EstNuisancePar\in\NuisanceSpace$ such that
$\|\EstNuisancePar - \TrueNuisancePar\|_2=\liloh_p(\Numobs^{-1/4})$.

Similar to the partial linear model discussed
in Section~\ref{sec:partial_linear}, we can adaptively estimate the nuisance function $\TrueFunPar$ to achieve better sample efficiency. Also,   Theorem~\ref{thm:glm_new1_temp} allows us to
construct a confidence region for the parameter vector
$\TrueTargetPar$ via a $\chi^2-$ test. Moreover,  if the weighted matrix on
the L.H.S. of equation~\eqref{eq:glm_asym_norm0_temp} converges, 
 then
$\sqrt{\NumIndexTwo}(\SoluTargetPar - \TrueTargetPar)$ is
asymptotically normal, and we can construct a confidence region
(interval) for any subset of the parameter vector $\TrueTargetPar$. In
absence of such convergence, it becomes challenging to construct confidence regions
for fixed directions of $\TrueTargetPar$, i.e., $ \inprod{\OneDirect}{
  \TrueTargetPar}$ in general, without relying on any additional assumption (e.g., a strong Gaussian approximation version of~equation~\ref{eq:glm_asym_norm0_temp}); see Section 3.2.2 in the
paper~\cite{khamaru2021near} for a detailed argument.

Nonetheless, we can provide an asymptotically normal estimate for $
\inprod{\OneDirect}{\TrueTargetPar}$ using a variant of the estimator
discussed in this section. Interestingly, when we are interested only
in confidence intervals for $ \inprod{\OneDirect}{\TrueTargetPar}$ for
a fixed direction $\OneDirect$, we can weaken the conditions
of Theorem~\ref{thm:glm_new1_temp}. We discuss the conditions in details in
our next section.




\subsection{Fixed direction inference for the GLM}

For any direction $\OneDirect\in\R^{\TargetDim}$ such that
$\|\OneDirect\|_2=1$, we can construct a one-dimensional score
function and obtain an asymptotically normal estimator for
$\TrueOneDirectPar \defn \inprod{\OneDirect}{\TrueTargetPar}$. Our
construction follows the same idea as in
equation~\eqref{eq:one-arm-score-fun}. Specifically, we consider a one
dimensional score function
\begin{align}
\ScoreFun_{i1}(\OneDirectPar, \AuxiNuisancePar, \FunPar) \equiv
\ScalePreCondVec_{i1}(\iRegressor - \GlmMeanVec_i)(\iResponse - g \big(
\inprod{\iRegressor}{\OneDirect} \OneDirectPar +
\iRegressor^\top(\IdMat_{\TargetDim} - \OneDirect\OneDirect^\top)\AuxiNuisancePar
+ \FunPar(\iNuisance))), \label{eq:glm_score_one_arm}
\end{align}
where,
\begin{align*}
\ScalePreCondVec_{i1} & \defn \OneDirect^\top
\InvSqrtCovar_i^2/\sqrt{\OneDirect^\top \InvSqrtCovar_i^2 \OneDirect}.
\end{align*} 
Similarly, we can define $\ScalePreCondVecwhat_{i1} \text{\,(or\,}
\ScalePreCondVec^*_{i1})$ by plugging in $(\AuxiNuisancePar,
\FunPar)=(\EstTargetPar, \EstFunPar) \text{\,(or $(\TrueTargetPar,
  \TrueFunPar)$)}$. We point out that,
\mbox{$\E|\ScalePreCondVec_{i1}(\iRegressor - \GlmMeanVec_i)|^2=1$}
which will be useful in the later sections. With these definitions in
hand we estimate the parameter $\OneDirectPar$ using
Algorithm~\ref{algo:DML-glm} but with step 4 replaced by finding
$\SoluOneDirectPar$ that solves
\begin{align}
\label{eq:glm_one_arm_estimating_equation}
\frac{1}{\NumIndexTwo} \sum \limits_{i \in \IndexTwo}
\ScoreFun_{i1}(\SoluOneDirectPar, \EstTargetPar, \EstFunPar)=0.
\end{align}
Likewise, we have asymptotic guarantee for $\SoluOneDirectPar$ under
the following variants of
Assumption~\ref{assn-glm-prob},~\ref{assn-glm-identifiability}
and~\ref{assn-glm-minimum-eig}.

\myassumption{{\bf{SEL}}$^{\prime\ast}(t, \delta, \SuppDirect)$}{assn-glm-prob-weak}{ The selection probabilities $\SelectProb_{ik}$ at each
  round satisfy the lower bound
\begin{align}
\label{eq:glm_assn_prob_one_arm}    
\SelectProb_{ik} \geq c_i \defn \frac{c_0}{i^{2(t - \delta)}} \quad
\text{for all} \;\; i\geq 1, k \in\SuppDirect
\end{align}
for some constant $c_0>0$ and $t \in [0, \tfrac{1}{4}]$ and
$\delta>0$. In addition, the probability of selecting $\zerovec$
satisfies $\SelectProb_{i0}\geq \czerotil$ for some $\czerotil>0$.}
\myassumption{{\bf{IDE}}$^\ast$}{assn-glm-identifiability-weak}{The
  model is identifiable under our choice of the score function,
  concretely,
\begin{align*}\|\CondMean (\ScoreFun_{i1}(\OneDirectPar, \TrueTargetPar, \TrueFunPar ) - \ScoreFun_{i1}(\TrueOneDirectPar, \TrueTargetPar, \TrueFunPar ))\|
&\geq {c_\ScoreFun}\|\CondMean
  \partial_{\OneDirectPar}\ScoreFun_{i1}(\TrueOneDirectPar, \TrueTargetPar,
  \TrueFunPar )(\OneDirectPar - \TrueOneDirectPar)\|_2\wedge c_\ScoreFun
  \Numobs^{-1/4}
 \end{align*} almost surely for any $\TargetPar \in\R^{\TargetDim}$ and some $c_{\ScoreFun}>0$.}

\myassumption{{\bf EIG}$^\ast$}{assn-glm-minimum-eig-weak}{ The gradient $\CondMean
  \partial_{\OneDirectPar}\ScoreFun_{i1}(\TrueOneDirectPar,
  \TrueTargetPar, \TrueFunPar )$ is not too small, namely,
  $\lim_{\Numobs\to\infty}\prob(|\CondMean
  \ScalePreCondVec^*_{i1}(\iRegressor - \GlmMeanVec^*_i)g'\big(\inprod{\iRegressor}{\TrueTargetPar}
  + \TrueFunPar(\iNuisance)
  \big)(\iRegressor - \GlmMeanVec^*_{i})^\top\OneDirect|\geq
  m_{\ScoreFun}\Numobs^{\delta-t})\to 1$ for some constant
  $m_{\ScoreFun}>0$.
}

A few comments regarding the assumptions are in order. 
Assumption~\ref{assn-glm-prob-weak} is weaker than
Assumption~\ref{assn-glm-prob} since we do not have assumptions on the
selection probability of coordinates that are not on the support of the vector 
$\OneDirect$. Assumption~\ref{assn-glm-identifiability-weak}
and~\ref{assn-glm-minimum-eig-weak} are adaptations of
Assumption~\ref{assn-glm-identifiability}
and~\ref{assn-glm-minimum-eig} with the score function $\ScoreFun_i$
replaced by $\ScoreFun_{i1}$. Similarly, both Assumption~\ref{assn-glm-identifiability-weak}
and~\ref{assn-glm-minimum-eig-weak} are implied
by the rest assumptions on GLM when $g(x)=x$. Moreover, 
Assumption~\ref{assn-glm-identifiability-weak}~and~\ref{assn-glm-minimum-eig-weak} can be verified  when $g(x)$ is the logit function (see Appendix~\ref{sec:logistic-regression} 
for details).


\begin{theorem}
\label{thm:glm_new1_temp_one_arm}
In addition to the standard GLM conditions
from Section~\ref{sec:standard-glm-assumption}, suppose that
Assumptions~\ref{assn-glm-nuisance-est},
~\ref{assn-glm-prob-weak},~\ref{assn-glm-identifiability-weak}
and~\ref{assn-glm-minimum-eig-weak} are in force.  Then the estimate
$\SoluOneDirectPar$ from
equation~\eqref{eq:glm_one_arm_estimating_equation} satisfies
\begin{align}
(\EmpMean \ScalePreCondVecwhat_{i1}(\iRegressor - \GlmMeanVecwhat_i)
g'(\inprod{\iRegressor}{\EstTargetPar } + \EstFunPar(\iNuisance)
)(\iRegressor - \GlmMeanVecwhat_{i})^\top\OneDirect)\sqrt{\NumIndexTwo
}(\SoluOneDirectPar - \TrueOneDirectPar) \convdist
\Normal(0,1)\label{eq:glm_asym_norm0_temp_one_arm}.
\end{align}
\end{theorem}

\noindent See Appendix~\ref{proof:thm:glm_new1_temp_one_arm} for the proof. \\

\vspace{5pt}

 Theorem~\ref{thm:glm_new1_temp_one_arm} allows us to construct 
asymptotically valid level $\alpha$ confidence interval for
$\TrueOneDirectPar$. Denote $(\EmpMean
\ScalePreCondVecwhat_{i1}(\iRegressor-
\GlmMeanVecwhat_i)g'\big(\inprod{\iRegressor}{\EstTargetPar} +
\EstFunPar(\iNuisance) \big)(\iRegressor-
\GlmMeanVecwhat_{i})^\top\OneDirect)$ by $v_{cov}$. Concretely, we
have
\begin{align*}
\lim_{\Numobs\to\infty} \Prob \Big[\SoluOneDirectPar
   - \frac{q_{1 - \alpha/2}\sigma}{\sqrt{\NumIndexTwo}v_{{cov}}} \leq
  \TrueOneDirectPar \leq \SoluOneDirectPar +
  \frac{q_{1 - \alpha/2}\sigma}{\sqrt{\NumIndexTwo}v_{{cov}}} \Big] &
=1 - \alpha,
\end{align*}
where $q_{1 - \alpha/2}$ is the $1 - \alpha/2$ quantile of standard normal
distribution.


\section{Some consequences for specific models}
\label{SecApplications}

In this section, we provide several examples in which we can construct
a suitable pilot estimator for the nuisance (and target) parameters.
By making use of such estimates with the \AlgLongPL or \AlgLongGLM
algorithms, we can develop explicit and computationally efficient
procedures that enjoy the guarantees stated in
Theorem~\ref{thm:linear_new1}
through~\ref{thm:glm_new1_temp_one_arm}. Throughout this section, we assume $\NumIndexOne=\Numobs/K$ for some $K\geq 2$.


\subsection{Partitioned linear model with adaptive data collection}
\label{subsec:lowdim_linear}

We begin with the simplest of settings, namely
a partitioned linear model of form
\begin{align*}
\iResponse = \inprod{ \iRegressor}{\TrueTargetPar} +
\inprod{\iNuisance}{\TrueNuisancePar} + \iNoise,
\end{align*}
where $\TrueTargetPar \in \R^{\TargetDim}$ and $\TrueNuisancePar \in
\R^{\NuisanceDim}$.  Suppose that the covariate vectors are collected
in an adaptive fashion, taking values in the set $\{e_1, \ldots,
e_{\TargetDim}, \zerovec \}$, with the selection probability vector
\mbox{$p_i \in \real^{\TargetDim + 1}$} at round $i$ allowed to be a
function of the pair $(\iNuisance, \iHistory)$. Under this set-up, Lai
and Wei~\cite{lai1982least} showed that the ordinary least squares
estimator {$(\TargetParwhat_{\OLS}, \NuisanceParwhat_{\OLS})$} is
consistent even without a stability condition on the design matrix.
Therefore, we can construct an asymptotically normal estimator of
$\TrueTargetPar$ using~\AlgLongPL with the OLS estimator as the pilot
estimator for $\TrueNuisancePar$.  Concretely, we assume that
\begin{align}
\label{eqn:lin-model-extra-assump}
\inf_{P\in\Pclass}\sigma_{\min}(\E_{\iNuisance\sim
  \Pclass}\iNuisance\iNuisance^\top) > 0 \quad \mbox{and} \quad
\|\iNuisance\|_2\leq B \quad \mbox{for some constant $B >
  0$.}
\end{align}
Finally, recalling that $\iCovar$ to denote the conditional covariance
of the regressor at step $i$, we deduce the following corollary from
Theorem~\ref{thm:linear_new1}.
\begin{cors}
\label{corr:linear_model_exm}
Suppose that
Assumptions~\ref{assn-lin-noise}--\ref{assn-lin-nuisance-est} holds
for some {$t\in[0,1/2)$}, and moreover
  condition~\eqref{eqn:lin-model-extra-assump} holds. Then the
  estimate $\SoluTargetPar$, obtained from~\AlgLongPL with
  $(\TargetParwhat_{\OLS}, \NuisanceParwhat_{\OLS})$ as pilot
  estimators, satisfies
\begin{align}
\label{eqn:low-dim-lin-model-cor}
(\sqrt{\NumIndexTwo} \EmpMean \iCovar^{1/2}) (\SoluTargetPar -
\TrueTargetPar) & \convdist \Normal(0, \LinearNoiseVar
\Id_{\TargetDim}).
\end{align} 
\end{cors}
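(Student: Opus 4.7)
The plan is to deduce the conclusion directly from~\Cref{thm:linear_new1} applied with the linear pilot $\EstFunPar(v) := \inprod{v}{\NuisanceParwhat_{\OLS}}$. Assumptions~\ref{assn-lin-noise} and~\ref{assn-lin-selection-prob} hold by hypothesis, so everything hinges on verifying the nuisance condition~\ref{assn-lin-nuisance-est} for this pilot.

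The reduction of~\ref{assn-lin-nuisance-est} to OLS consistency is a one-line Cauchy--Schwarz bound: since $\TrueFunPar(v) = \inprod{v}{\TrueNuisancePar}$ and $\|v\|_2 \leq B$ for any $v$ drawn from any $P \in \Pclass$,
\begin{align*}
\sup_{P \in \Pclass}\bigl(\E_{v\sim P}|\EstFunPar(v) - \TrueFunPar(v)|^2\bigr)^{1/2} \leq B \, \|\NuisanceParwhat_{\OLS} - \TrueNuisancePar\|_2.
\end{align*}
Thus it is enough to show that $\NuisanceParwhat_{\OLS}$, fitted from the first $\NumIndexOne = \Numobs/K$ samples, is consistent for $\TrueNuisancePar$.

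For this consistency I would invoke the adaptive-design theorem of Lai and Wei~\cite{lai1982least}, applied to the joint regressor $X_i := (\iRegressor^\top, \iNuisance^\top)^\top$. The Lai--Wei criterion requires $\lambda_{\min}(\sum_{i \leq \NumIndexOne} X_i X_i^\top) \to \infty$ with $\log \lambda_{\max}/\lambda_{\min} \to 0$ almost surely. The diagonal-block lower bounds come essentially for free: the nuisance block has minimum eigenvalue $\gtrsim \NumIndexOne$ by the assumption~\eqref{eqn:lin-model-extra-assump}, whereas the regressor block is diagonal with entries $\sum_i \Ind[\iRegressor = e_j]$, whose conditional means are at least $\sum_i c_0 i^{-2t} \gtrsim \NumIndexOne^{1-2t}$ under~\eqref{eq:a3b} with $t < 1/2$; boundedness of $\iRegressor$ and $\iNuisance$ gives $\lambda_{\max} = O(\NumIndexOne)$.

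The main obstacle is controlling the off-diagonal blocks $\sum_i \iRegressor \iNuisance^\top$ so as to lift the block-wise diagonal lower bounds to a genuine lower bound on $\lambda_{\min}$ of the joint Gram matrix. Because the data are collected adaptively, classical i.i.d.\ arguments do not apply, but the quantities $\iRegressor \iNuisance^\top - \E[\iRegressor \iNuisance^\top \mid \iHistory]$ form a bounded matrix martingale-difference sequence, so a Freedman-type concentration inequality suffices to show that the cross terms, and the fluctuations of the diagonal blocks around their conditional means, are of smaller order than the $\NumIndexOne^{1-2t}$ lower bound. A Schur-complement argument then delivers the minimum-eigenvalue estimate, Lai--Wei yields $\|\NuisanceParwhat_{\OLS} - \TrueNuisancePar\|_2 = o_p(1)$, and~\Cref{thm:linear_new1} immediately produces~\eqref{eqn:low-dim-lin-model-cor}.
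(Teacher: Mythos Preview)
Your proposal is correct and follows essentially the same route as the paper. The paper likewise reduces to \Cref{thm:linear_new1}, then to $\|\NuisanceParwhat_{\OLS}-\TrueNuisancePar\|_2=o_p(1)$, and invokes Lai--Wei; the minimum-eigenvalue bound on the joint Gram matrix is packaged into a separate lemma (\Cref{lm:linear_model_exm}) whose proof is exactly your Schur-complement/block decomposition with martingale-difference control of the cross block $\sum_i \iRegressor\iNuisance^\top$ and of the diagonal fluctuations (using the elementary second-moment bound of \Cref{lm:tech_md} rather than Freedman, but that is a cosmetic difference).
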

\noindent See Appendix~\ref{proof:corr:linear_model_exm}
 for the proof.


\subsection{Sparse high-dimensional linear model}
\label{subsec:highdim_linear}

Next we consider the high-dimensional linear regression problem
\begin{align*}
\iResponse = \inprod{\iRegressor}{\TrueTargetPar} +
\inprod{\iNuisance}{\TrueNuisancePar} + \iNoise, 
\end{align*}
where $\TrueTargetPar\in\R^{\TargetDim}$ and
$\TrueNuisancePar\in\R^{\NuisanceDim}$.  We allow for a partially
high-dimensional form of asymptotics, in which the target dimension
$\TargetDim$ stays fixed while the nuisance dimension $\NuisanceDim$
is allowed to grow to infinity as $\Numobs \to \infty$. We assume that
the noise variable $\iNoise's$ are sub-Gaussian with parameter
$\subgauss$. We also assume the nuisance vector $\TrueNuisancePar$ is
sparse with $|\{\TrueNuisancePar_i\neq 0\}|=\SparseLevel$. Note that
our theorems allow the nuisance component to vary as long as an
accurate pilot estimator is attainable. We use the $\lasso$ estimates
as pilot estimators---that is
\begin{align}
\label{eqn:lasso}
(\TargetParwhat_{\lasso}, \NuisanceParwhat_{\lasso}) \defn \arg
\min_{\TargetPar, \NuisancePar} \Big \{
\tfrac{1}{2\NumIndexOne}\sum_{i=1}^{\NumIndexOne}(\iResponse -
\inprod{\iRegressor}{\TargetPar} -
\inprod{\iNuisance}{\NuisancePar})^2 +
\lambda_{\NumIndexOne}(\|\TargetPar\|_1 + \|\NuisancePar\|_1) \Big \},
\quad \mbox{where} \\
\notag
\lambda_{\NumIndexOne} \defn 2\subgauss(B' + 1)\sqrt{\tfrac{2[\log
      (\tfrac{2}{\delta_{\NumIndexOne}}) + \log(\TargetDim +
      \NuisanceDim)]}{\NumIndexOne}},  \quad
\delta_{\NumIndexOne} \defn \min\{(\SparseLevel + \TargetDim)
\NumIndexOne^{2t-1/2}, \tfrac{1}{\TargetDim + \NuisanceDim}\}
\end{align} for some constant $B'>0.$
In our result, we assume that the sparsity level is bounded as
\begin{align}
\label{eqn:sparsity-condition}
(\SparseLevel + \TargetDim)\sqrt{\log (\TargetDim +
  \NuisanceDim)}=\liloh_p(\NumIndexOne^{1/2-2t}) \qquad \mbox{for some
  exponent $t \in [0, 1/4)$.}
\end{align}
Moreover, assume the nuisance component satisfies
\begin{align}
\label{eqn:lin-model-extra-assump-2}
\inf_{P\in\Pclass}\sigma_{\min}(\E_{\iNuisance\sim
  \Pclass}\iNuisance\iNuisance^\top) > 0 \quad \mbox{and} \quad
\vecnorm{\iNuisance}{\infty}\leq B' \quad \mbox{for some constant $B'
  > 0$.}
\end{align}
Given this set-up, we can apply Theorem~\ref{thm:linear_new1} so as to
derive the following corollary:
\begin{cors}
\label{corr:linear_model_exm_2}
Suppose Assumptions~\ref{assn-lin-noise}--\ref{assn-lin-nuisance-est}
and the sparsity condition~\eqref{eqn:sparsity-condition} holds for
some $ t\in[0,1/4)$, and Assumption~\eqref{eqn:lin-model-extra-assump-2}
  is in force. Then the estimate $\SoluTargetPar$, obtained
  from~\AlgLongPL with $(\TargetParwhat_{\lasso},
  \NuisanceParwhat_{\lasso})$ as pilot estimators, satisfies
\begin{align}
\label{eqn:sparse-lin-model-cor}
(\sqrt{\NumIndexTwo} \EmpMean \iCovar^{1/2}) (\SoluTargetPar -
\TrueTargetPar) & \convdist \Normal(0, \LinearNoiseVar
\Id_{\TargetDim}).
\end{align} 
\end{cors}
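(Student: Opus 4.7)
The plan is to invoke Theorem~\ref{thm:linear_new1}. Assumption~\ref{assn-lin-noise} is granted by the sub-Gaussian noise assumption, and Assumption~\ref{assn-lin-selection-prob} with $t \in [0, 1/4) \subset [0, 1/2)$ is part of the hypothesis, so the sole remaining task is to verify Assumption~\ref{assn-lin-nuisance-est} for the Lasso-based nuisance estimator $\EstFunPar(\Nuisance) = \inprod{\Nuisance}{\EstNuisancePar_{\lasso}}$. Since $\|\Nuisance\|_2 \leq B$ under~\eqref{eqn:lin-model-extra-assump}, Cauchy--Schwarz yields
\begin{align*}
\sup_{P \in \Pclass}\bigl(\E_{\dummyRV \sim P}|\EstFunPar(\dummyRV) - \TrueFunPar(\dummyRV)|^2\bigr)^{1/2} \; \leq \; B\,\|\EstNuisancePar_{\lasso} - \TrueNuisancePar\|_2,
\end{align*}
so the corollary reduces to an $\ell_2$-consistency statement for the joint Lasso $\AllParameterwhat_{\lasso}$ aimed at the $(\SparseLevel + \TargetDim)$-sparse target $\AllParameter^\ast = (\TrueTargetPar^\top, \TrueNuisancePar^\top)^\top$ against the augmented covariate $\iCovariate = (\iRegressor^\top, \iNuisance^\top)^\top$.

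I would establish this Lasso consistency in three sub-steps. First, lower bound the averaged conditional Gram matrix $\CondMean[\iCovariate \iCovariate^\top]$: the regressor block $\E[\iRegressor \iRegressor^\top \mid \iHistory]$ is diagonal with entries at least $c_0/i^{2t}$ by Assumption~\ref{assn-lin-selection-prob}, the nuisance block $\E[\iNuisance \iNuisance^\top \mid \iHistory]$ has minimum eigenvalue bounded away from zero by~\eqref{eqn:lin-model-extra-assump}, and the cross term is norm-bounded by $B$, so a block Schur-complement argument gives $\sigma_{\min}(\CondMean[\iCovariate \iCovariate^\top]) \gtrsim \NumIndexOne^{-2t}$. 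Second, transfer this curvature from the conditional mean to the empirical Gram matrix restricted to the $(\SparseLevel + \TargetDim)$-sparse cone via a matrix-martingale (Freedman-type) concentration inequality; the sparsity condition~\eqref{eqn:sparsity-condition} ensures that the deviation term is dominated by the averaged curvature, which yields a restricted-eigenvalue constant $\kappa \gtrsim \NumIndexOne^{-2t}$ with probability tending to one. Third, bound $\|\NumIndexOne^{-1}\sum_{i \leq \NumIndexOne} \iCovariate \iNoise\|_\infty \lesssim \lambda_{\NumIndexOne}$ with probability at least $1 - \delta_{\NumIndexOne}$ by a coordinate-wise self-normalized sub-Gaussian martingale tail bound followed by a union bound over the $\TargetDim + \NuisanceDim$ coordinates; this is precisely the calibration built into the stated $\lambda_{\NumIndexOne}$.

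Feeding these three ingredients into a standard Lasso oracle inequality produces a rate of the form
\begin{align*}
\|\AllParameterwhat_{\lasso} - \AllParameter^\ast\|_2 \;=\; O_p\!\Bigl((\SparseLevel + \TargetDim)\sqrt{\log(\TargetDim + \NuisanceDim)}\cdot \NumIndexOne^{2t - 1/2}\Bigr),
\end{align*}
and the sparsity condition~\eqref{eqn:sparsity-condition} forces the right-hand side to be $\liloh_p(1)$. This verifies Assumption~\ref{assn-lin-nuisance-est}, after which Theorem~\ref{thm:linear_new1} delivers~\eqref{eqn:sparse-lin-model-cor}.

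The main obstacle is the second sub-step: establishing a restricted-eigenvalue bound for an \emph{adaptively collected}, growing-dimensional design whose conditional minimum curvature is allowed to shrink as $i^{-2t}$. Off-the-shelf restricted-eigenvalue arguments for i.i.d. or isotropic sub-Gaussian designs do not apply directly; the curvature control has to be pushed through matrix-martingale concentration with $i$-dependent scalings, and one must carefully match the resulting $\NumIndexOne^{-2t}$ curvature exponent against the sparsity exponent built into~\eqref{eqn:sparsity-condition}. Once this restricted-eigenvalue bound is in hand, the score-concentration step and the Lasso oracle inequality itself are routine, and the reduction to Theorem~\ref{thm:linear_new1} is immediate.
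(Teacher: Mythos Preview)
Your high-level reduction is exactly the paper's: invoke Theorem~\ref{thm:linear_new1} and reduce Assumption~\ref{assn-lin-nuisance-est} to $\|\EstNuisancePar_{\lasso} - \TrueNuisancePar\|_2 = o_p(1)$ via the Cauchy--Schwarz bound you wrote. The difference lies in how the Lasso curvature step is executed. The paper does \emph{not} go through a restricted-eigenvalue argument on the sparse cone; instead it proves a full minimum-eigenvalue bound $\sigma_{\min}\bigl(\sum_{i\leq\NumIndexOne}\iCovariate\iCovariate^\top\bigr)\gtrsim \NumIndexOne^{1-2t}$ with probability tending to one (their Lemma~\ref{lm:linear_model_exm}), and then simply observes that this implies the compatibility constant $\phi_{\NumIndexOne}^2\asymp\NumIndexOne^{-2t}$, after which a Lasso oracle inequality from Oh et al.~\cite{oh2021sparsity} gives the $\ell_1$-rate directly. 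The proof of Lemma~\ref{lm:linear_model_exm} is a block Schur-complement computation on the \emph{empirical} Gram matrix (Schur-ing out the nuisance block $\CovariateMa_4=\sum_i\iNuisance\iNuisance^\top$), and the key device is that the \emph{empirical} cross block concentrates: $\sum_i(\iRegressor-\SelectProb_i)\iNuisance^\top$ is a martingale difference with $O(1)$ second moment, hence $O_p(\NumIndexOne^{1/2})$, which is negligible against $\NumIndexOne^{1-2t}$ for $t<1/4$. This one-hot martingale trick is what sidesteps the obstacle you identified.

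By contrast, your first sub-step works at the \emph{conditional} level, and there the Schur complement heuristic is fragile: the conditional cross block $\E[\iRegressor\iNuisance^\top\mid\iHistory]=\E[\SelectProb_i\iNuisance^\top\mid\iHistory]$ has operator norm of order $B$, which can swamp the $i^{-2t}$ curvature of the regressor block for large $i$, so a per-$i$ Schur bound does not immediately give $\sigma_{\min}\gtrsim\NumIndexOne^{-2t}$. You would have to center $\iRegressor$ by $\SelectProb_i$ and push the cross-term cancellation through martingale concentration, which is precisely what the paper does---but on the empirical matrix, where the full (not restricted) eigenvalue bound comes for free and obviates any cone argument. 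Your route is salvageable, but the paper's is shorter and exploits the special structure of the one-hot regressors more cleanly.
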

\noindent See Appendix~\ref{proof:corr:linear_model_exm_2} for the proof.

\vspace{8pt}

\noindent In general, it is non-trivial to develop an asymptotically
valid confidence region for both the target and the nuisance
parameters; however, Corollary~\ref{corr:linear_model_exm_2} illustrates how
many nuisance parameters we are able to tolerate in order to have
valid inference for a fixed number of target parameters.


\subsection{Sparse generalized linear model}
\label{subsec:highdim_glm}

We now consider an extension of the sparse linear model.  Suppose that
we observe triples $(\iRegressor, \iNuisance, \iResponse)$ related via
the model
\begin{align*}
\iResponse = g\big(\inprod{\iRegressor}{\TrueTargetPar} +
\inprod{\iNuisance}{\TrueNuisancePar}\big) + \iNoise.
\end{align*}
We assume that the link function $g$ arises in the usual exponential
family way, so that there is a function $G$ such that $G'(t) = g(t)$.
Thus, the negative log likelihood associated with this model takes the
form
\begin{align*}
\glmloss(\TargetPar, \NuisancePar ; \iRegressor, \iNuisance,
\iResponse) & = G\big(\inprod{\iRegressor}{\TargetPar} +
\inprod{\iNuisance}{\NuisancePar}\big) -
\iResponse(\inprod{\iRegressor}{\TargetPar} +
\inprod{\iNuisance}{\NuisancePar}).
\end{align*}
As a pilot estimator for the \AlgLongGLM procedure (cf.
Algorithm~\ref{algo:DML-glm}), we compute the $\ell_{1}$-regularized
estimate
\begin{align}
\label{eqn-GLM-lasso}
    (\TargetParwhat_{\GLMlasso}, \NuisanceParwhat_{\GLMlasso}) &\defn
\arg\min \limits_{\TargetPar, \NuisancePar} \big \{
\frac{1}{\NumIndexOne}\sum_{i=1}^{\NumIndexOne} \glmloss(\TargetPar,
\NuisancePar ; \iRegressor, \iNuisance, \iResponse) +
\lambda_{\NumIndexOne}(\|\TargetPar\|_1 + \|\NuisancePar\|_1) \big \}.
\end{align}
with the choices
\begin{align*}
\lambda_{\NumIndexOne} \defn 2\subgauss D_x\sqrt{\frac{2[\log
      (2/\delta_{\NumIndexOne}) + \log(\TargetDim +
      \NuisanceDim)]}{\NumIndexOne}},\text{ and }
\delta_{\NumIndexOne} \defn \min\{(\SparseLevel + \TargetDim)
\NumIndexOne^{2t-1/4},\frac{1}{\TargetDim + \NuisanceDim}\},
\end{align*}
where $D_x$ is an upper bound on $\vecnorm{\begin{pmatrix}\iRegressor^\top &\iNuisance^\top\end{pmatrix}}{2}$ for all $i.$

In our analysis, we assume that target dimension $\TargetDim$ is fixed
while the nuisance dimension $\NuisanceDim$ is allowed to go to
infinity as $\Numobs\to\infty$. Again, we assume that the noise
variables $\{\iNoise\}_{i=1}^\numobs$ are independent, each
sub-Gaussian with parameter at most $\subgauss$, and the true nuisance
vector $\TrueNuisancePar$ is sparse with $|\{\TrueNuisancePar_i\neq
0\}|=\SparseLevel$.  Moreover, we assume that the sparsity level
$\SparseLevel$ satisfies the condition
\begin{align}
\label{eqn:sparsity-condition-glm}
(\SparseLevel + \TargetDim)\sqrt{\log
  (\TargetDim +
  \NuisanceDim)}=\liloh_p(\NumIndexOne^{1/4-2t}).
\end{align}
With this set-up, we can apply Theorem~\ref{thm:glm_new1_temp} so as to
obtain the following guarantee:
\begin{cors}
\label{corr:linear_model_exm_3}
Suppose that
Assumptions~\ref{assn-glm-prob}---\ref{assn-glm-minimum-eig} hold for
some $t \in [0, 1/4)$ and
  Assumptions~\eqref{eqn:lin-model-extra-assump}
  and~\eqref{eqn:sparsity-condition-glm} are in force.  Then the
  estimate $\SoluTargetPar$, computing using \AlgLongGLM with the
  pilot estimators~\eqref{eqn-GLM-lasso}, satisfies
\begin{align}
\label{eqn:sparse-GLM-cor}
(\EmpMean \InvSqrtCovarwhat_{i}(\iRegressor - \GlmMeanVecwhat_i)
g'\big( \inprod{\iRegressor}{\EstTargetPar} + \EstFunPar(\iNuisance)
\big) (\iRegressor - \GlmMeanVecwhat_i)^\top) \sqrt{\NumIndexTwo}
(\SoluTargetPar - \TrueTargetPar) \convdist \Normal(0,
\IdMat_{\TargetDim}). 
\end{align} 
\end{cors}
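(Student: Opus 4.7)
} The strategy is to verify that the GLM lasso pilot estimators in~\eqref{eqn-GLM-lasso} satisfy Assumption~\ref{assn-glm-nuisance-est}, after which the conclusion follows immediately from~\Cref{thm:glm_new1_temp}. Since the nuisance is linear, $\TrueFunPar(\Nuisance) = \inprod{\Nuisance}{\TrueNuisancePar}$, the natural surrogate estimator is $\EstFunPar(\Nuisance) = \inprod{\Nuisance}{\NuisanceParwhat_{\GLMlasso}}$, and by Cauchy--Schwarz together with the boundedness $\|(\iRegressor^\top, \iNuisance^\top)^\top\|_2\leq D_x$ we obtain
\begin{align*}
\sup_{\Nuisance \in \mathrm{dom}(\Pclass)} |\EstFunPar(\Nuisance) - \TrueFunPar(\Nuisance)| \leq D_x \vtwonorm{\NuisanceParwhat_{\GLMlasso} - \TrueNuisancePar}.
\end{align*}
Thus it suffices to establish the joint $\ell_{2}$-rate $\vtwonorm{\TargetParwhat_{\GLMlasso} - \TrueTargetPar} + \vtwonorm{\NuisanceParwhat_{\GLMlasso} - \TrueNuisancePar} = \liloh_p(\Numobs^{-1/4})$.

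The bulk of the work is therefore to establish a non-asymptotic oracle inequality for the GLM lasso under the adaptively collected data. I would proceed in three steps. First, from the optimality of $(\TargetParwhat_{\GLMlasso}, \NuisanceParwhat_{\GLMlasso})$, a standard basic-inequality/Taylor expansion argument around $(\TrueTargetPar, \TrueNuisancePar)$ gives
\begin{align*}
\tfrac{1}{2\NumIndexOne}\sum_{i=1}^{\NumIndexOne} G''\big(\xi_i\big) \bigl(\inprod{\iRegressor}{\Delta_\theta} + \inprod{\iNuisance}{\Delta_\eta}\bigr)^2 \leq \bigl\langle \tfrac{1}{\NumIndexOne}\sum \iNoise (\iRegressor^\top, \iNuisance^\top)^\top, \Delta \bigr\rangle + \lambda_{\NumIndexOne}\bigl(\|\TrueTargetPar\|_1 + \|\TrueNuisancePar\|_1 - \|\TargetParwhat\|_1 - \|\NuisanceParwhat\|_1\bigr),
\end{align*}
where $\Delta = (\Delta_\theta, \Delta_\eta)$ is the error vector and $G'' = g'$ is bounded below by $l_g > 0$ on the relevant domain by the standard GLM conditions. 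The martingale linear term is controlled using sub-Gaussianity of $\iNoise$ conditional on $(\iRegressor, \iNuisance, \iHistory)$ together with boundedness of the regressors; with the prescribed choice of $\lambda_{\NumIndexOne}$, it is dominated in the usual fashion, forcing $\Delta$ to lie in a cone of approximately sparse vectors.

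Second, I would establish a restricted eigenvalue (compatibility) condition adapted to adaptive sampling. The regressor component contributes $\EmpMean_{\NumIndexOne}\iRegressor\iRegressor^\top$, whose eigenvalues are lower bounded on $\SuppDirect \cup \{0\}$ by $\min_k \EmpMean_{\NumIndexOne} \SelectProb_{ik} \gtrsim \NumIndexOne^{-2(t-\delta)}$ via a martingale concentration argument, while the nuisance component satisfies a uniform lower bound from the assumption $\inf_P \sigma_{\min}(\E_{\iNuisance\sim P}\iNuisance\iNuisance^\top) > 0$ combined with conditional independence of $\iNuisance$ given $\iHistory$. Combining these two ingredients with the cone constraint yields, with high probability,
\begin{align*}
\vtwonorm{\Delta}^2 \lesssim \frac{(\SparseLevel + \TargetDim)\log(\TargetDim + \NuisanceDim)}{\NumIndexOne^{1-4t}}.
\end{align*}
Under the sparsity condition~\eqref{eqn:sparsity-condition-glm} and $\NumIndexOne = \Numobs/K$, this bound is $\liloh_p(\Numobs^{-1/2})$, hence $\vtwonorm{\Delta} = \liloh_p(\Numobs^{-1/4})$, which verifies Assumption~\ref{assn-glm-nuisance-est}.

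Third, I would invoke~\Cref{thm:glm_new1_temp} to deliver~\eqref{eqn:sparse-GLM-cor}. The standard GLM assumptions of~\Cref{sec:standard-glm-assumption} are part of the hypotheses, Assumptions~\ref{assn-glm-prob}, \ref{assn-glm-identifiability} and~\ref{assn-glm-minimum-eig} are assumed directly, and the verification above supplies Assumption~\ref{assn-glm-nuisance-est}.

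\paragraph{Main obstacle.} The delicate part is the second step: proving a restricted eigenvalue condition for the joint design $(\iRegressor, \iNuisance)$ under adaptive data collection with selection probabilities that can shrink as $i^{-2(t-\delta)}$. In the i.i.d. setting this is classical, but here the usual symmetrization and empirical-process tools do not directly apply. My plan is to combine a Freedman-type concentration for the matrix $\EmpMean_{\NumIndexOne}[\iRegressor\iRegressor^\top - \iCovar \cdot \mathrm{something}]$ with the block structure of the covariates (discrete basis plus zero vector, independent of $\iNuisance$ given $\iHistory$), which reduces the problem to controlling diagonal entries of order $\SelectProb_{ik}$ plus a well-conditioned nuisance block. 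This matches the sparsity budget imposed by~\eqref{eqn:sparsity-condition-glm} and explains the appearance of the $\NumIndexOne^{1/4-2t}$ factor rather than $\NumIndexOne^{1/4}$.
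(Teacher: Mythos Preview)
Your overall strategy is correct and matches the paper's: reduce~\Cref{corr:linear_model_exm_3} to verifying the pilot-estimator rate in Assumption~\ref{assn-glm-nuisance-est} and then invoke~\Cref{thm:glm_new1_temp}. The reduction via $\sup_\Nuisance|\EstFunPar(\Nuisance)-\TrueFunPar(\Nuisance)|\le D_x\|\NuisanceParwhat_{\GLMlasso}-\TrueNuisancePar\|_2$ is exactly what the paper uses.

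Where you diverge is in how you obtain the lasso rate under adaptive data. The paper does not redo the basic inequality or the restricted-eigenvalue argument from scratch. Instead it (i) proves a single technical lemma (\Cref{lm:linear_model_exm}) showing that the \emph{full} Gram matrix $\CovariateMa=\sum_{i\le\NumIndexOne}\iCovariate\iCovariate^\top$ with $\iCovariate=(\iRegressor^\top,\iNuisance^\top)^\top$ satisfies $\sigma_{\min}(\CovariateMa)\gtrsim \NumIndexOne^{1-2t}$ with probability tending to one, and (ii) plugs this into an off-the-shelf compatibility-based oracle inequality for the sparse GLM (Lemma~1 of Oh et al.~\cite{oh2021sparsity}), yielding $\|\AllParameterwhat_{\GLMlasso}-\AllParameter^*\|_1\lesssim (\SparseLevel+\TargetDim)\lambda_{\NumIndexOne}/\phi_{\NumIndexOne}^2$ with $\phi_{\NumIndexOne}^2\asymp \NumIndexOne^{-2t}$. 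The proof of~\Cref{lm:linear_model_exm} is not a Freedman-type bound on the diagonal: it uses the Sherman--Woodbury block-inverse formula to isolate the Schur complement $\CovariateMa_1-\CovariateMa_2\CovariateMa_4^{-1}\CovariateMa_3$, controls the cross term via the martingale-difference structure of $(\iRegressor-\SelectProb_i)\iNuisance^\top$, and reduces the remaining piece to $\sum_i\iCovar$. This is precisely your ``main obstacle,'' and the block-inverse route is how the paper resolves it.

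Two minor notes on your write-up. First, the reference to ``$\SuppDirect\cup\{0\}$'' is out of place here---there is no direction $\OneDirect$ in this corollary; you need the eigenvalue bound for all coordinates, which is what Assumption~\ref{assn-glm-prob} (with exponent $t-\delta$) provides. Second, your $\ell_2$ rate $\|\Delta\|_2^2\lesssim (\SparseLevel+\TargetDim)\log(\TargetDim+\NuisanceDim)/\NumIndexOne^{1-4t}$ is sharper than the paper's (which bounds $\ell_2$ by $\ell_1$ and picks up an extra $\SparseLevel+\TargetDim$), but both suffice under~\eqref{eqn:sparsity-condition-glm} since $(\SparseLevel+\TargetDim)\ge 1$.
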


\noindent See Appendix~\ref{proof:corr:linear_model_exm_3} 
for the proof
of Corollary~\ref{corr:linear_model_exm_3}.

\vspace{8pt}

\noindent Compared with Corollary~\ref{corr:linear_model_exm_2} for
high-dimensional linear models, here we need a stronger assumption on
the sparsity level (i.e., $(\SparseLevel + \TargetDim)\sqrt{\log
  (\TargetDim + \NuisanceDim)}=\liloh_p(\NumIndexOne^{1/4-2t})$) and
restrict $t\in[0,1/8)$. This is due to the need of a
  $\liloh_p(\Numobs^{-1/4})$-consistent pilot estimator. We remark
  that our assumption on the sparsity level is probably not sharp and
  can be improved under stronger assumptions (e.g. when the data are
  i.i.d. collected~\cite{belloni2016post}).

{
\subsection{Partial linear model with nonparametric nuisance}Lastly, we consider a  case where the nuisance component is nonparametric, namely a   partial linear model  given by
\begin{align*}
\iResponse = \inprod{ \iRegressor}{\TrueTargetPar} +
\TrueFunPar(\iNuisance) + \iNoise,
\end{align*}
where $\TrueTargetPar \in \R^{\TargetDim}$, $\iNuisance \in
\R^{\NuisanceDim}$ and $\TrueFunPar: \R^{\NuisanceDim}\mapsto\R$ is some nonparametric function.  Similar to Section~\ref{subsec:lowdim_linear}, suppose the covariate vectors $\iRegressor$ take values in the set $\{e_1, \ldots,
e_{\TargetDim}, \zerovec \}$ with probabilities given by the selection probability vector $\SelectProb_{i}$. Additionally,   assume that
\begin{subequations}
\begin{align}
\label{eqn:lin-model-extra-assump-iid}
&\iNuisance \overset{i.i.d.}{\sim}P~~\text{for some distribution $P$ on $[0,1]^{\NuisanceDim}$ ~and~ } \SelectProb_i\indep\iNuisance\mid \iHistory,
\\
\label{eqn:lin-model-extra-assump-iid-2}
&\iNoise~~\text{is independent of } (\iRegressor,\iNuisance,\iHistory)
\text{~~and~~} \iNoise\overset{i.i.d.}{\sim}Q~~\text{for some
  distribution $Q,$ }
\end{align}
and $\TrueFunPar$ is Lipschitz continuous with parameter $L>0$, i.e., 
\begin{align}
\label{eqn:lin-model-extra-assump-lip}
|\TrueFunPar(v_1)-\TrueFunPar(v_2)|\leq L \vecnorm{v_1-v_2}{2} 
~~~~\text{ for all $v_1,v_2\in[0,1]^{\NuisanceDim}$.}
\end{align}
Under these assumptions, various non-parametric procedures---for
example, a $k$-nearest neighbor
estimate~\cite{gyorfi2002distribution,Tsybakov2008IntroductionTN}---
can be used to find a consistent pilot estimator $\EstFunPar$ for
$\TrueFunPar$.  Given such a pilot estimator, applying
Theorem~\ref{thm:linear_new1} yields:
\end{subequations}
\begin{cors}
\label{corr:linear_model_exm_4}
Suppose that
Assumption~\ref{assn-lin-noise}~and~\ref{assn-lin-selection-prob} hold
for some {$t\in[0,1/2)$}, as well as
  conditions~\eqref{eqn:lin-model-extra-assump-iid}~and~\eqref{eqn:lin-model-extra-assump-lip}. Then
  the estimate $\SoluTargetPar$, obtained from~\AlgLongPL\\ with the
  $k$-nearest neighbor estimate $\EstFunPar$ as the pilot estimator,
  satisfies
\begin{align}
\label{eqn:low-dim-lin-model-cor-2}
(\sqrt{\NumIndexTwo} \EmpMean \iCovar^{1/2}) (\SoluTargetPar -
\TrueTargetPar) & \convdist \Normal(0, \LinearNoiseVar
\Id_{\TargetDim}).
\end{align} 
\end{cors}
See the Appendix~\ref{proof:corr:linear_model_exm_3}
for details of the $k$-nearest neighbor estimate and  the proof
of Corollary~\ref{corr:linear_model_exm_4}.
}
{ Note that in equation~\eqref{eqn:lin-model-extra-assump-iid} we
  require the selection probability to only depend on the history
  $\iHistory$ but not on the nuisance $\iNuisance$. In practice, this
  reflects the scenario where the treatment assignment scheme
  (determining the selection probabilities $\SelectProb_i$) needed to
  be determined before observing the individual context vector
  $\iNuisance$. We have imposed this condition to simplify analysis,
  but note that it can be removed as long as a consistent pilot
  estimator $\EstFunPar$ for $\TrueFunPar$ can be devised.}

  
\section{Numerical results}
  
In this section, we illustrate our theoretical guarantees with a
selection of numerical studies.  We provide results for both an
adaptive linear model as well as an adaptive logistic model.  In
addition to showing results based on our proposed algorithms, we
compare with other existing methods including' (a) maximum likelihood
estimators; (b) methods based on concentration inequalities; and (c)
an existing $Z$-estimator procedure derived from the double machine
learning (DML) approach~\cite{chernozhukov2018double}.


\subsection{Adaptive linear model}
\label{sec:exp_linear}

In this section, we study the semi-parametric problem in a
(potentially) high-dimensional linear model. As in the applications in
Sections~\ref{subsec:lowdim_linear} and~\ref{subsec:highdim_linear},
we consider the linear model
\begin{align*}
\iResponse = \inprod{\iRegressor}{\TrueTargetPar} +
\inprod{\iNuisance}{\TrueNuisancePar} + \iNoise,
\end{align*}
where the triples $(\iResponse, \iRegressor, \iNuisance)$ are
adaptively collected in the following way:
\begin{enumerate}
\item[(1)] The nuisance component $\iNuisance$ has
  i.i.d. $\Normal(0,1)$ entries and is independent of the history
  $\iHistory$.
\item[(2)] Given $(\Response_1, \Regressor_1, \Nuisance_1), \ldots,
  (\Response_{i-1}, \Regressor_{i-1}, \Nuisance_{i-1})$, we solve a
  LASSO (or OLS) problem so as to obtain the estimates
  $\EstTargetPar^{i}$ and $\EstNuisancePar^{i}$.
\item[(3)] From the estimator $\EstTargetPar^{i}$, the algorithm
  selects an arm \mbox{$k_i \defn \argmax_{k}\{\EstTargetPar^{i}_k +
    \sqrt{\frac{C\log \Numobs}{\Numobs^i_{k}}}\}$}, where $C > 0$ is
  some constant and $\Numobs^i_{k}$ is the number of times the arm $k$
  has been chosen up to time $i - 1$.
\item[(4)] Finally, the regressor $\iRegressor$ is chosen according to
  the arm selection probability $\SelectProb_i\in\R^{\TargetDim}$
  (i.e., $\prob(\iRegressor=e_k)=\SelectProb_{ik}$ and we define $e_0
  \defn \zerovec$), where we set
  \begin{align*}
 \SelectProb_{i0}=0.2, \qquad p_{ik}=\min\{\frac{1}{2i^{2t}},
 \frac{0.4}{\TargetDim}\} \quad \mbox{for $k \neq k_i$,} \quad \text{
   and } \quad \SelectProb_{i{k_i}} = 1 - \sum_{0\leq k\leq
   \TargetDim,k\neq k_i} \SelectProb_{ij}.
  \end{align*}
We make observations $\iResponse =
\inprod{\iRegressor}{\TrueTargetPar} +
\inprod{\iNuisance}{\TrueNuisancePar} + \iNoise$ contaminated by noise
$\iNoise \stackrel{i.i.d.}{\sim} \Normal(0,1)$. When $i=1$, we set
$\SelectProb_{i0}=0.2$, $\SelectProb_{ik}=0.8/\TargetDim$ for all
$k\in[\TargetDim]$.
\end{enumerate}

After collecting the data, we apply the \AlgLongPL method
(Algorithm~\ref{algo:DML-linear} with the score function defined in
equation~\eqref{eq:one-arm-score-fun}) to perform inference on the
first coordinate of the target parameter $\TrueTargetPar_1$.

In our first experiment, we choose the target dimension
$\TargetDim=2$, the nuisance dimension $\NuisanceDim=5$ and the number
of samples $n=500$. We consider the no-margin scenario where
$\TrueTargetPar_1=\TrueTargetPar_2=2$. Under these conditions, Zhang
et al.~\cite{zhang2020inference} show that the selection probability
$\SelectProb$ may not converge, so that the stability condition can be
violated. Moreover, we assume the nuisance parameter vector
$\TrueNuisancePar$ is a fixed vector generated from $\mathcal N(0,
\IdMat_{\NuisanceDim})$; we choose the OLS estimator as the pilot
estimator for $\TrueTargetPar, \TrueNuisancePar$ using
$\NumIndexOne=\Numobs/4=125$ samples. The results are shown in
Figure~\ref{fig:low_dim_linear1}.
\begin{figure}[htb]
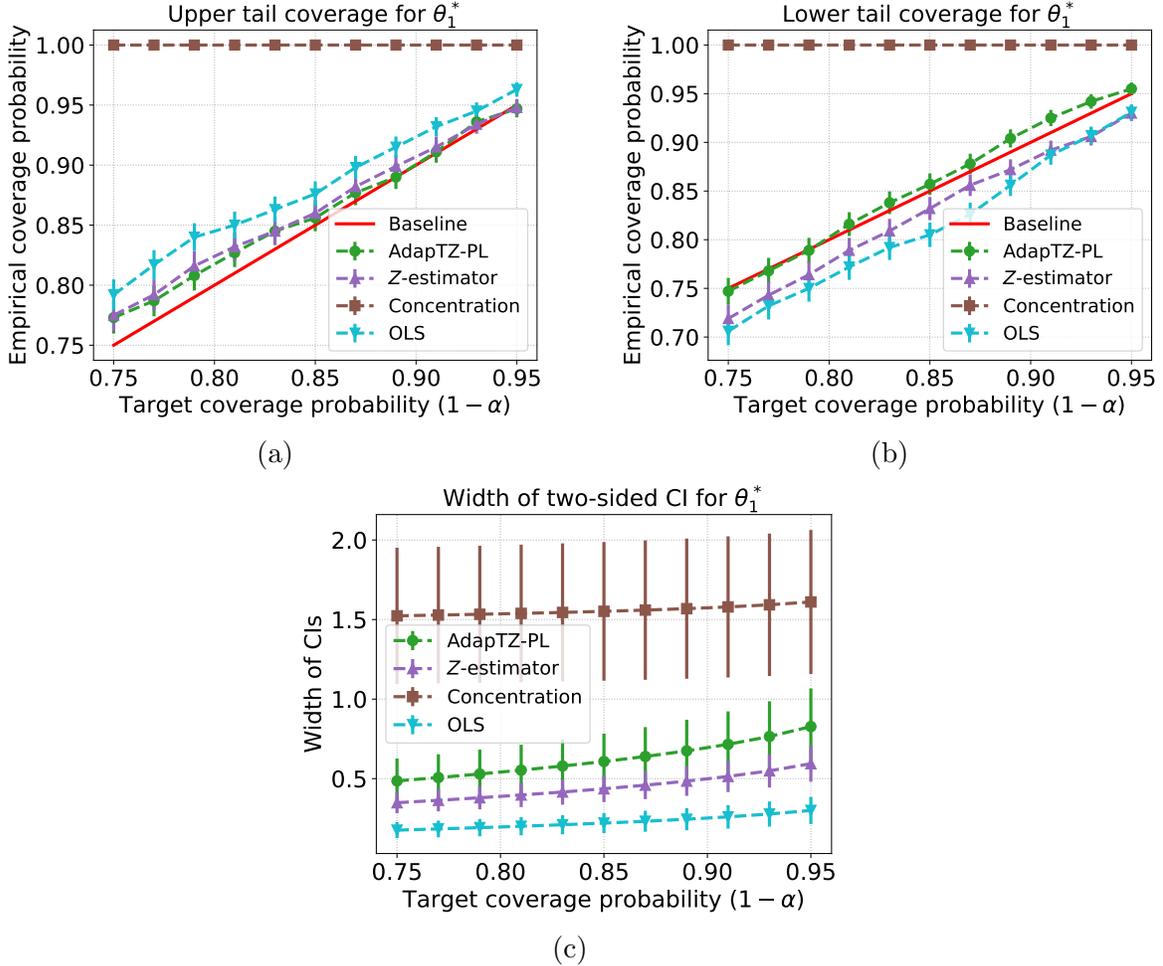

  \begin{center}
\begin{tabular}{ccc}
\widgraph{0.47\linewidth}{exp/linear/figures/d0_2_d1_5_n_500_up} &&
\widgraph{0.47\linewidth}{exp/linear/figures/d0_2_d1_5_n_500_down} \\
(a) && (b)
\end{tabular}
\begin{tabular}{c}
  \widgraph{0.47\linewidth}{exp/linear/figures/d0_2_d1_5_n_500_width}
  \\ (c)
\end{tabular}
\caption{Average coverage and width of confidence intervals for
  $\TrueTargetPar_1$ over $T = 1000$ repetitions of an adaptive linear
  model. The error bars denote $\pm 1$ standard error. Parameters:
  $\TargetDim = 2, \NuisanceDim = 5, \Numobs = 500, \NumIndexOne =
  125$, $C = 2$ and $t = 0.2$. (a) and (b): Coverage of level $1 -
  \alpha$ one-sided confidence intervals for $\TrueTargetPar_1$. (c):
  Width of level $1 - \alpha$ two-sided confidence intervals for
  $\TrueTargetPar_1$.}
\label{fig:low_dim_linear1}
\end{center}
\end{figure}

We compare the \AlgLongPL estimator to three other procedures: (i)
ordinary least squares; (ii) a DML $Z$-estimator based on the
unweighted score function
\begin{align}
\label{eq:naive_dml_score}  
\ScoreFun_{i}(\TargetPar, \FunPar) \defn (\iRegressor -
\SelectProb_i)(\iResponse - \inprod{\iRegressor}{ \TargetPar} -
\FunPar(\iNuisance));
\end{align}
and (iii) a confidence interval derived from a standard concentration
inequality (cf. Theorem 2 in Abbasi-Yadkori et
al.~\cite{abbasi2011improved}.)  Figure~\ref{fig:low_dim_linear1}
shows the empirical coverage probability and width of confidence
intervals obtained from each method. We observe that the \AlgLongPL
method provides appropriate coverage for all confidence
levels. However, while the ordinary least squares estimator and the
$Z$-estimator provide valid upper tail coverage and have shorter
confidence intervals, they are both downward biased~\cite{nie2018why}
and fail to achieve proper lower tail coverage.

\begin{figure}[ht!]
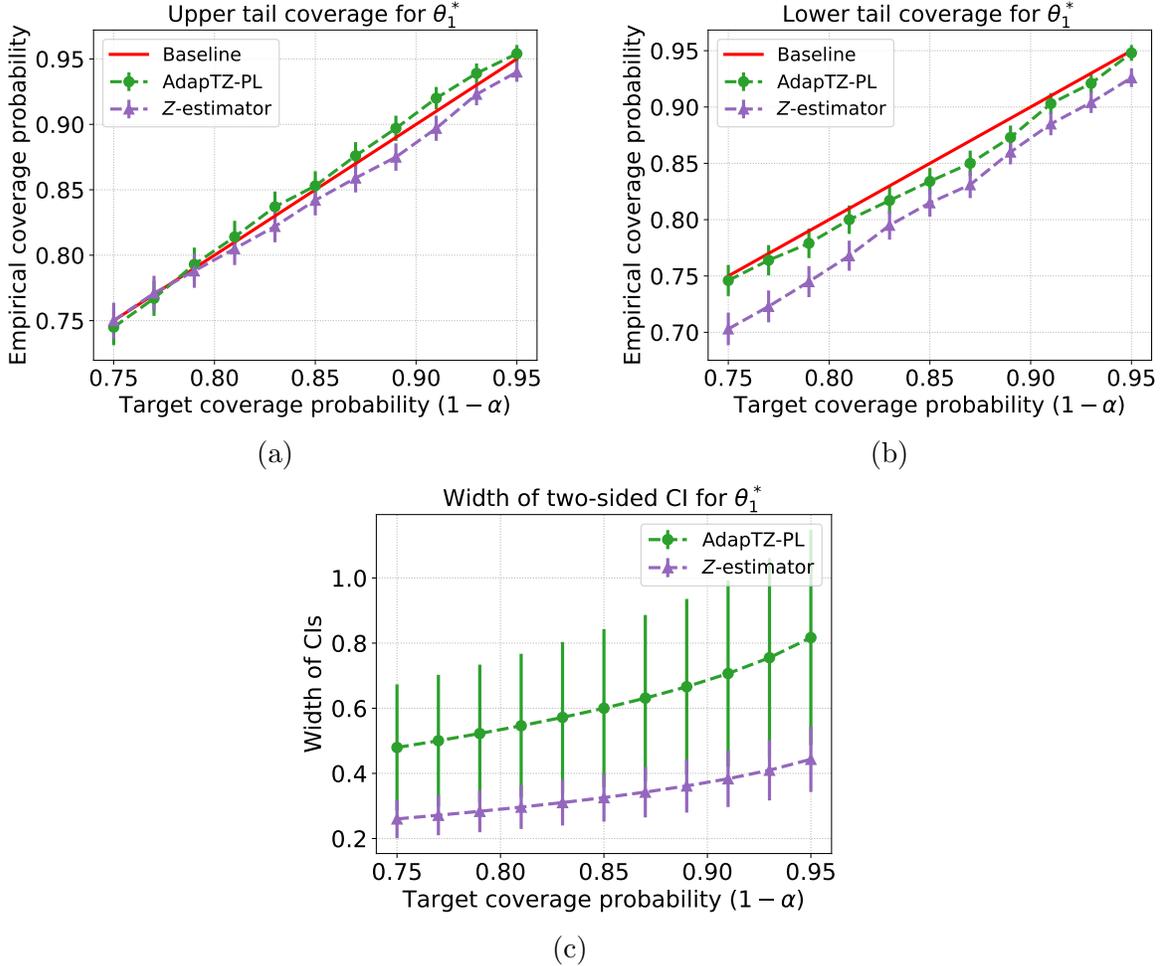

  \begin{center}
\begin{tabular}{ccc}
  \widgraph{0.47\linewidth}{exp/linear/figures/d0_2_d1_1000_n_950_up}
  &&
  \widgraph{0.47\linewidth}{exp/linear/figures/d0_2_d1_1000_n_950_down}
  \\ (a) && (b)
\end{tabular}
\begin{tabular}{c}
  \widgraph{0.47\linewidth}{exp/linear/figures/d0_2_d1_1000_n_950_width}
  \\ (c)
\end{tabular}
\caption{Average coverage and width of confidence intervals for
  $\TrueTargetPar_1$ over $1000$ repetitions of an adaptive linear
  model. The error bars are $\pm 1$ standard error. Parameters:
  $\TargetDim=2, \NuisanceDim=1000, \Numobs=950, \NumIndexOne=475$,
  $C=16$ and $t=0.2$. (a) and (b): coverage of level $1 - \alpha$
  one-sided confidence intervals for $\TrueTargetPar_1$. (c): width of
  level $1 - \alpha$ two-sided confidence intervals for
  $\TrueTargetPar_1$.}\label{fig:high_dim_linear1}
  \end{center}
\end{figure}

In the second experiment, we consider a linear model with
high-dimensional nuisance. Namely, with the choice $(\TargetDim,
\NuisanceDim) = (2, 1000)$, we generate $\Numobs=950$ samples. Similar
to the first experiment, we consider the no margin scenario where
$\TrueTargetPar_1 = \TrueTargetPar_2 = 2$. We also assume the linear
model is sparse, in the sense that $\TrueNuisancePar_i=0$ for $i > 2$
and the first two coordinates of the nuisance parameter vector
$\TrueNuisancePar_1, \TrueNuisancePar_2$ are generated from
$\Normal(0, \IdMat_2)$. We generate the samples in the same way as the
first experiment, but use the LASSO estimator (cf. equation~\ref{eqn:lasso}) in both the data
generating process and to obtain pilot estimates $\EstTargetPar,
\EstNuisancePar$ of the parameters. We choose the Lasso regularization $\lambda=0.05,0.15$ for data generation and the pilot estimate, respectively.

Figure~\ref{fig:high_dim_linear1} compares the coverage probability of
\AlgLong and the standard DML $Z$-estimator. We see that the \AlgLong
procedure achieves proper empirical coverage probability at most
levels.  Similar to the low dimensional case, while the $Z$-estimator
has lower variance, it is downward biased and does not have proper
coverage. We do not provide here the confidence interval derived from
the concentration inequalities in Abbasi-Yadkori et
al.~\cite{abbasi2011improved} since the interval is too wide due to a
$\sqrt{d/n}$ factor inside the bound.

\subsection{Adaptive logistic model}
\label{sec:exp_logistic}
We then demonstrate the usage of \AlgLongGLM method when applied to a
logistic regression model with adaptively collected data. We generate
the data via the procedure described in Section~\ref{sec:exp_linear},
with the following changes:
\begin{enumerate}
\item[(1)] The variables $\iNuisance$ are generated from an
  autoregressive process \mbox{$\iNuisance=\gamma\Nuisance_{i-1} +
    W_i$}, where $\Nuisance_{0} \defn \zerovec,\gamma=0.5$ and $W_i$
  are i.i.d. random variables following $\Normal(0,
  \IdMat_{\NuisanceDim})$.
\item[(2)] As pilot estimators, we compute the maximum likelihood
  estimates $\EstTargetPar$ and $\EstNuisancePar$ of the unknown
  parameters $\TrueTargetPar$ and $\TrueNuisancePar$, respectively.
\item[(3)] The responses $\iResponse$ are Bernoulli random variables
  with mean $g\big(\inprod{\iRegressor}{\TrueTargetPar} +
  \inprod{\iNuisance}{\TrueNuisancePar}\big)$ given $\iHistory,
  \iRegressor, \iNuisance$.
\end{enumerate}
In this example, we investigate a low-dimensional instance with
dimensions $\TargetDim=2$ and $\NuisanceDim=20$, along wit the sample
size $\Numobs = 2000$. Again, we set $\TrueTargetPar_1 =
\TrueTargetPar_2 = 2$, and let $\TrueNuisancePar$ be a fixed vector
generated from $\Normal(0, \IdMat_{\NuisanceDim})$. Moreover, we use
the MLE to generate pilot estimates for $\TrueTargetPar$ and
$\TrueNuisancePar$ based on $\NumIndexOne = \Numobs/2 = 1000$ samples.

From Figure~\ref{fig:low_dim_glm1}, we observe that both \AlgLong and
$Z$-estimator have upper tail coverage over the prespecified
level. However, the $Z$-estimator as well as the MLE fail to achieve
appropriate lower tail coverage. This is consistent with our previous
observations in the linear model. Additionally, it should be noted
that the empirical coverage probability of \AlgLong is not perfectly
aligned with the baseline, likely due to the relatively small sample
size.
\begin{figure}
  \begin{center}
    \begin{tabular}{ccc}
\widgraph{0.47\linewidth}{exp/logistic/figures/d0_2_d1_20_n_2000_up} &&
\widgraph{0.47\linewidth}{exp/logistic/figures/d0_2_d1_20_n_2000_down} \\
(a) && (b)
    \end{tabular}
\begin{tabular}{c}    
  \widgraph{0.47\linewidth}{exp/logistic/figures/d0_2_d1_20_n_2000_width}\\
  (c)
\end{tabular}
\caption{Average coverage and width of confidence intervals for
  $\TrueTargetPar_1$ over $1000$ repetitions of an adaptive logistic
  model. The error bars denote $\pm 1$ standard error. Parameters:
  $\TargetDim=2, \NuisanceDim=20, \Numobs=2000, \NumIndexOne=1000$,
  $C=8$ and $t=0.1$. Panles (a) and (b) give coverage of level $1 -
  \alpha$ one-sided confidence intervals for $\TrueTargetPar_1$. Panel
  (c) shows the width of level $1 - \alpha$ two-sided confidence
  intervals for $\TrueTargetPar_1$.} \label{fig:low_dim_glm1}
  \end{center}
\end{figure}

 Finally, we also experiment with a logistic regression
model with a sparse high-dimensional nuisance component. Concretely,
we generate $\Numobs=950$ samples with the choice
$(\TargetDim,\NuisanceDim)=(2,1000)$. We consider the no margin
scenario where $\TrueTargetPar_i=\TrueTargetPar_2=2$ as in previous
experiments. We assume the logistic regression model is sparse, in the
sense that $\TrueNuisancePar_i=0$ for $i>5$ and
$\TrueNuisancePar_{1:5}=\mathbf{1}_5$. We assume the data are
generated via the same procedure as in the first experiment for the
logistic regression model, but use the LASSO estimator for logistic
regression (cf. equation~\ref{eqn-GLM-lasso}) with penalty $\lambda=0.0025$ in both data generation and to obtain pilot estimates
$\EstTargetPar,\EstNuisancePar$. Moreover, we assume the random
vectors $W_i\in\R^{\NuisanceDim}$ in the autoregressive process are
instead generated in the following way: $W_{ik}\overset{i.i.d.}{\sim}
\Normal(0,\IdMat_5)$ for $k\leq 5$ and $W_{ik}\overset{i.i.d.}{\sim}
\Normal(0,1/\NuisanceDim)$ for $6\leq k\leq \NuisanceDim$.  The
heterogeneous variance of $W_i$ is selected to ensure the norm of the
nuisance component is of order $\bigoh_p(1)$, and the non-sparse
nuisance contribute a non-vanishing and detectable signal to the
response $\iResponse$.

In Figure~\ref{fig:high_dim_glm1}, we see that $\AlgShortGLM$ achieves
upper and lower tail coverage over the prespecified level, while the
naive $Z$-estimator---while it has small variance---exhibits a
downward bias and fails to have proper lower tail coverage. Again,
$\AlgShortGLM$ is not fully aligned with the baseline probably due to
the relatively small sample size in the logistic regression problem.
\begin{figure}[htb]
  \begin{center}
 \begin{tabular}{ccc}
\widgraph{0.47\linewidth}{exp/logistic/figures/d0_2_d1_1000_n_950_up}
&&
\widgraph{0.47\linewidth}{exp/logistic/figures/d0_2_d1_1000_n_950_down}
\\ (a) && (b)
    \end{tabular}
\begin{tabular}{c}    
  \widgraph{0.47\linewidth}{exp/logistic/figures/d0_2_d1_1000_n_950_width}\\ (c)
\end{tabular}
\caption{
{
Average coverage and width of confidence
    intervals for $\TrueTargetPar_1$ over $1000$ repetitions of an
    adaptive logistic model. The error bars denote $\pm 1$ standard
    error. Parameters: $\TargetDim=2, \NuisanceDim=1000, \Numobs=950,
    \NumIndexOne=475$, $C=100$ and $t=0.1$. (a) and (b): coverage of
    level $1 - \alpha$ one-sided confidence intervals for
    $\TrueTargetPar_1$. (c): Width of level $1 - \alpha$ two-sided
    confidence intervals for
    $\TrueTargetPar_1$.}}\label{fig:high_dim_glm1}
  \end{center}
\end{figure}

\color{black}
%

%


\section{Discussion}

In this paper, we studied the problem of constructing confidence
intervals for a low-dimensional target parameters in presence of
high-dimensional or non-parametric nuisance components.  The main
novelty in our work is tackling the challenge of doing so when the
data has been adaptively collected.  We proposed a class of
procedures, known as \AlgLong methods, that are based on adaptive
reweighting of two-stage $Z$-estimators.  We developed versions of
these procedures for the partially linear model, as well as the more
general class of generalized linear models with semi-parametric
nuisances.  Our main results guarantee that, under certain regularity
conditions, there are versions of such estimators that enjoy
asymptotic normality.  Notable features of our analysis include the
fact that (a) we assume only mild ``explorability'' conditions on the
adaptive data collection procedure; and (b) in contrast to prior
state-of-the art~\cite{lai1994asymptotic,lai1982least}, we do not
require any sort of stability condition.

Our work suggests a number of directions for future work.  First, the
results in this paper provide inferential guarantees for a parameter
vector of fixed dimension within a semi-parametric model (in which the
nuisance quantities may be high-dimensional or non-parametric).  It
would interesting to extend our results so as to \emph{also allow} for
the target parameter to be high-dimensional, or more generally to
targets with a non-parametric flavor.  Second, we have provided
asymptotic normality guarantees with certain variances that depend on
the problem instance.  In the semi-parametric literature with
i.i.d. data, there are instance-dependent notions of optimality---in
terms of the smallest variance for $\sqrt{\numobs}$-consistent
estimators---that have been characterized
(e.g.,~\cite{newey1990semiparametric,hahn98role}).  In the more
challenging setting of adaptive data considered here, these notions of
optimality are not well-understood.  It would be interesting to derive
sharp lower bounds for the adaptive models studied here, and to
propose estimators that achieve these bounds.

Third, the construction of our adaptively weighted Z-estimator relies
on knowing the selection probabilities at each round. In some
applications, including experimental design and in bandit experiments,
this assumption is reasonable.  However, for various of observational
studies, this assumption is less realistic, so that designing optimal
procedures that can operate without such knowledge is an important
direction.

\section{Acknowledgments}
{\small{This work was partially supported by Office of Naval Research
    Grant ONR-N00014-21-1-2842 and National Science Foundation grant
    DMS-2311072 to MJW, and funding from the Howard Friesen Chair in
    Engineering at UC Berkeley.}}

\bibliographystyle{plain} 
\bibliography{reference}       

\newpage


\clearpage

\appendix

\addtocontents{toc}{\protect\setcounter{tocdepth}{2}}

\tableofcontents

\clearpage

\section{Proofs of the theorems}
\label{SecProofs}

We give the proofs of our four general results, with
Sections~\ref{proof:thm:linear_new1}
through~\ref{proof:thm:glm_new1_temp_one_arm} devoted to the proofs of
Theorem~\ref{thm:linear_new1} through
Theorem~\ref{thm:glm_new1_temp_one_arm} respectively.


\subsection{Proof of Theorem~\ref{thm:linear_new1}}
\label{proof:thm:linear_new1}

Recalling the definition~\eqref{eq:linear_score} of the score function
$\ScoreFun_i$, note that it is linear in the parameter vectors
$\TargetPar$ and $\FunPar$, and that we have the convenient
decomposition
\begin{align*}
\EmpMean\ScoreFun_i(\TargetPar, \FunPar) = (\EmpMean \LinScaleVec_i
\iRegressor^\top)(\TargetPar - \TrueTargetPar) + \EmpMean
\LinScaleVec_i \iNoise - \EmpMean \LinScaleVec_i (\FunPar(\iNuisance)
- \TrueFunPar(\iNuisance) ),
\end{align*}
where $\LinScaleVec_i \defn \iCovar^{-1/2}(\iRegressor -
\SelectProb_i)$. By the definition of our $Z$-estimator, the pair
$(\SoluTargetPar, \EstFunPar)$ satisfies the condition
$\EmpMean\ScoreFun_i(\SoluTargetPar, \EstFunPar) = 0$.  Re-arranging
this equality and multiplying both sides by $\sqrt{\NumIndexTwo }$
yields
\begin{align}
\label{linear_asymp1}
\sqrt{\NumIndexTwo } (\EmpMean \LinScaleVec_i
\iRegressor^\top)(\SoluTargetPar - \TrueTargetPar) & =
\sqrt{\NumIndexTwo } \big \{ \EmpMean \LinScaleVec_i \iNoise -
\EmpMean \LinScaleVec_i (\EstFunPar(\iNuisance) -
\TrueFunPar(\iNuisance)) \big \}.
\end{align}
We next analyze equation~\eqref{linear_asymp1} via the following three
results which we prove in
Lemma~\ref{lm:asym_1},~\ref{lm:asym_2}\\~and~\ref{lm:asym_3} (see
details in the Appendix.)
\begin{subequations}
\begin{align}
&\EmpMean \sqrt{\NumIndexTwo }\LinScaleVec_i \iNoise \convdist
  \Normal(0, \LinearNoiseVar \IdMat_{\TargetDim})
\label{eqn:asymp-normality} \\
&\EmpMean \sqrt{\NumIndexTwo }\LinScaleVec_i (\EstFunPar(\iNuisance) -
\TrueFunPar(\iNuisance) )\overset{p}{\to}0
\label{eqn:zero-nuisance-term} \\
&\opnorm{\EmpMean \LinScaleVec_i \iRegressor^\top - \EmpMean
  \iCovar^{1/2}}=\liloh_p(\sigma_{\min}(\EmpMean \iCovar^{1/2}))
\label{eqn:changing-the-variance}
\end{align}
\end{subequations}
With these three results at hand, the rest of the proof is
straightforward. Indeed, substituting the
conditions~\eqref{eqn:asymp-normality} and
~\eqref{eqn:zero-nuisance-term} into equation~\eqref{linear_asymp1}
and applying Slutsky's theorem yields
\begin{align}
\label{eqn:asymp_norm_intermediate}  
(\EmpMean \LinScaleVec_i
\iRegressor^\top)\sqrt{\NumIndexTwo}(\SoluTargetPar - \TrueTargetPar)
\convdist \Normal(0, \LinearNoiseVar \IdMat_{\TargetDim}).
\end{align}
This distributional convergence also implies that $\|(\EmpMean
\LinScaleVec_i \iRegressor^\top )\sqrt{\NumIndexTwo }(\SoluTargetPar -
\TrueTargetPar)\|_2=\bigoh_p(1)$ by continuous mapping theorem,
definition of weak convergence and boundedness in
probability. Combining the last implication with the
bound~\eqref{eqn:changing-the-variance} yields
\begin{align*}
  \| (\EmpMean \iCovar^{1/2} \sqrt{\NumIndexTwo }(\SoluTargetPar -
  \TrueTargetPar)\|_2 &\leq \|(\EmpMean \LinScaleVec_i
  \iRegressor^\top)\sqrt{\NumIndexTwo }(\SoluTargetPar -
  \TrueTargetPar)\|_2 + \|(\EmpMean (\LinScaleVec_i \iRegressor^\top -
  \iCovar^{1/2}) \sqrt{\NumIndexTwo }(\SoluTargetPar -
  \TrueTargetPar)\|_2\\ &= \bigoh_p(1) + \liloh_p(\| (\EmpMean
  \iCovar^{1/2}\sqrt{\NumIndexTwo } (\SoluTargetPar -
  \TrueTargetPar)\|_2 ).
\end{align*}
Putting together the pieces we have ${\| (\EmpMean
  \iCovar^{1/2}\sqrt{\NumIndexTwo } (\SoluTargetPar -
  \TrueTargetPar)\|_2=\bigoh_p(1)}$, and consequently we deduce
\begin{align*}
    \|(\EmpMean (\LinScaleVec_i \iRegressor^\top - \iCovar^{1/2})
    \sqrt{\NumIndexTwo }(\SoluTargetPar - \TrueTargetPar)\|_2
    =\liloh_p(1)
\end{align*}
Finally, combining the last result with the convergence
statement~\eqref{eqn:asymp_norm_intermediate} and applying the
Slutsky's theorem, we conclude that
\begin{align*}
(\EmpMean \iCovar^{1/2})\sqrt{\NumIndexTwo }(\SoluTargetPar -
  \TrueTargetPar) \convdist \Normal(0, \LinearNoiseVar
  \IdMat_{\TargetDim}).
\end{align*}
This completes the proof of Theorem~\ref{thm:linear_new1}.

{\paragraph*{Generalization to continuous regressors:} We remark that
  versions of the three key
  Lemmas~\ref{lm:asym_1},~\ref{lm:asym_2}~and~\ref{lm:asym_3} can also
  be established when the regressors $\iRegressor$ take continuous
  values; consequently, there is a generalization of
  Theorem~\ref{thm:linear_new1} to this continuous
  setting. Concretely, an essential component in the proof of the
  lemmas is to use an lower bound condition on the covariance
  $\iCovar$---cf. in particular
  Assumption~\ref{assn-lin-selection-prob-equi} in
  Appendix~\ref{SecTechnicalLemmas} to control the
  $\ell_2$ norm of certain auxiliary quantities. While we show that
  Assumption~\ref{assn-lin-selection-prob}~and~\ref{assn-lin-selection-prob-equi}
  are equivalent in the case of discrete regressors,
  Assumption~\ref{assn-lin-selection-prob-equi} is already assumed in
  the case of continuous regressors as we stated before. Thus, all
  derivations follow from the same arguments. See
  Appendix~\ref{SecAuxLemmas}  for more details on
  these arguments.}

{\paragraph*{Consistency of $\EstLinearNoiseVar$ in
    equation~\eqref{eq:consistent_var} }
Here, we prove that the estimator $\EstLinearNoiseVar$ in
  equation~\eqref{eq:consistent_var} is a consistent estimate of the
  noise variance $\LinearNoiseVar$.  Note that
\begin{align*}
|\EstLinearNoiseVar- \LinearNoiseVar| &= |\EmpMean(\iResponse-
\iRegressor^\top \SoluTargetPar- \EstFunPar(\iNuisance))^2-
\LinearNoiseVar | \\
&=
|\EmpMean(\iNoise+\iRegressor^\top\TrueTargetPar+\TrueFunPar(\iNuisance)-
\iRegressor^\top \SoluTargetPar- \EstFunPar(\iNuisance))^2-
\LinearNoiseVar| \\
& \leq |\EmpMean\iNoise^2+\EmpMean(\iRegressor^\top(\TrueTargetPar-
\SoluTargetPar)+(\TrueFunPar(\iRegressor)-
\EstFunPar(\iNuisance)))^2- \LinearNoiseVar| \\
& \qquad
+2\sqrt{\EmpMean\iNoise^2}\cdot\sqrt{\EmpMean(\iRegressor^\top(\TrueTargetPar-
  \SoluTargetPar) +(\TrueFunPar(\iNuisance)-
  \EstFunPar(\iNuisance)))^2},
\end{align*}
where the third line follows from the Cauchy--Schwarz inequality.

We claim that
\begin{align}
\label{EqnAux}
\EmpMean \iNoise^2 \overset{p}{\to}
\LinearNoiseVar,~~~\text{and}~~~\EmpMean(\iRegressor^\top(\TrueTargetPar-
\SoluTargetPar) + (\TrueFunPar(\iNuisance)- \EstFunPar(\iNuisance)))^2
\overset{p}{\to}0.
\end{align}
Equation~\eqref{eq:consistent_var} follows immediately from the
previous bound and these two auxiliary claims.

To prove the first claim in equation~\eqref{EqnAux}, observe that
\begin{align*}
\EmpMean\iNoise^2 = \CondMean\iNoise^2+ (\EmpMean- \CondMean)\iNoise^2
= \LinearNoiseVar+ \liloh_p(1)\overset{p}{\to}\LinearNoiseVar,
\end{align*}
where the second inequality uses
Assumption~\ref{assn-lin-nuisance-est} and the finite fourth moment
condition of $\iNoise$.  To prove the second claim, note that
\begin{align*}
    & \quad\EmpMean(\iRegressor^\top(\TrueTargetPar- \SoluTargetPar) +
  (\TrueFunPar(\iNuisance)- \EstFunPar(\iNuisance)))^2 \\
  & \leq 2 \EmpMean \big[\iRegressor^\top(\TrueTargetPar-
    \SoluTargetPar)^2 \big] + 2 \EmpMean \big[
    (\TrueFunPar(\iNuisance)- \EstFunPar(\iNuisance))^2 \big]\\
  & \leq 2 \vecnorm{\TrueTargetPar- \SoluTargetPar}{2}^2 + 2 \CondMean
  (\TrueFunPar(\iNuisance)- \EstFunPar(\iNuisance))^2 + 2 (\EmpMean-
  \CondMean) (\TrueFunPar(\iNuisance)- \EstFunPar(\iNuisance))^2 \\
  & = 2 \vecnorm{\TrueTargetPar- \SoluTargetPar}{2}^2 + \liloh_p(1) \\
  & = \liloh_p(1),
\end{align*}
where the third line uses $\vecnorm{\iRegressor}{2}\leq 1$, the fourth
line follows from Assumption~\ref{assn-lin-nuisance-est},
Lemma~\ref{lm:tech_md} and the finite fourth moment condition on
$\TrueFunPar(\iNuisance)- \EstFunPar(\iNuisance)$; the last line uses
equation~\eqref{linear_asymp1}--- \eqref{eqn:changing-the-variance},
the fact that $\EmpMean\iCovar^{1/2}\gtrsim\sqrt{\NumIndexTwo}
\Numobs^{-t}\gtrsim \NumIndexTwo^{1-t}\to\infty$ for some $t<1/2,$ and
noting that $\EmpMean \sqrt{\NumIndexTwo }\LinScaleVec_i \iNoise $ in
equation~\eqref{eqn:asymp-normality} has finite variance.  }

\color{black}

\subsection{Proof of Theorem~\ref{thm:linear_new3}}
\label{proof:thm:linear_new3}

The proof of this theorem is essentially the same as that
of Theorem~\ref{thm:linear_new1} but with a different weighting vector.
Let us introduce the shorthand
\begin{align*}
\LinScaleVecOneArm_{i1} & \defn
\inprod{\ScalePreCondVec_{i1}}{\iRegressor - \SelectProb_i(\iNuisance,
  \iHistory)} \in \real.
\end{align*}
Following a decomposition similar to equation~\eqref{linear_asymp1},
we have
\begin{align}
\label{linear_asymp_one_coord_2}
&(\EmpMean \LinScaleVecOneArm_{i1} \inprod{\iRegressor}{\OneDirect})
\sqrt{\NumIndexTwo }(\SoluOneDirectPar - \TrueOneDirectPar) +
     [\EmpMean \LinScaleVecOneArm_{i1} \iRegressor^\top
       (\IdMat_{\TargetDim} - \OneDirect
       \OneDirect^\top)] \sqrt{\NumIndexTwo }(\EstTargetPar -
     \TrueTargetPar) \notag \\
& = [\EmpMean \sqrt{\NumIndexTwo} \LinScaleVecOneArm_{i1} \iNoise -
       \EmpMean \sqrt{\NumIndexTwo }\LinScaleVecOneArm_{i1}
       (\EstFunPar(\iNuisance) - \TrueFunPar(\iNuisance) )].
\end{align}

We prove the following three results
in Lemma~\ref{lm:asym_21},~\ref{lm:asym_22}~and~\ref{lm:asym_23}
respectively,
 which
analyze the three terms in the last decomposition.
\begin{subequations}
\begin{align}
&\EmpMean \sqrt{\NumIndexTwo }\LinScaleVecOneArm_{i1} \iNoise
  \convdist \Normal(0, \LinearNoiseVar )
\label{eqn:asymp-normality2} \\
&\EmpMean \sqrt{\NumIndexTwo }\LinScaleVecOneArm_{i1}
(\EstFunPar(\iNuisance) - \TrueFunPar(\iNuisance)
)\overset{p}{\to}0
\label{eqn:zero-nuisance-term2} \\
&\Big|\EmpMean \LinScaleVecOneArm_{i1}
\iRegressor^\top\OneDirect - \EmpMean \frac{1
}{\sqrt{\OneDirect^\top\iCovar^{-1}\OneDirect}}\Big|=\liloh_p\Big(\Big|\EmpMean
\frac{1
}{\sqrt{\OneDirect^\top\iCovar^{-1}\OneDirect}}\Big|\Big).\label{eqn:changing-the-variance2}
\end{align}
\end{subequations}

Assuming these three results are given at the moment, plugging
equation~\eqref{eqn:asymp-normality2},~\eqref{eqn:zero-nuisance-term2}
into~\eqref{linear_asymp_one_coord_2} we deduce
\begin{align}
\label{linear_asymp_one_coord_3}
(\EmpMean \LinScaleVecOneArm_{i1} \iRegressor^\top
\OneDirect)\sqrt{\NumIndexTwo }(\SoluOneDirectPar - \TrueOneDirectPar) +
          [\EmpMean \LinScaleVecOneArm_{i1} \iRegressor^\top
            (\IdMat_{\TargetDim} - \OneDirect\OneDirect^\top)]\sqrt{\NumIndexTwo
          }(\EstTargetPar - \TrueTargetPar) \convdist
          \Normal(0, \LinearNoiseVar ).
\end{align}

Moreover, invoking the bound~\eqref{eqn:changing-the-variance2} we
have
\begin{align}
\label{linear_asymp_one_coord_3_prior}
    \Big(\EmpMean \LinScaleVecOneArm_{i1}
    \iRegressor^\top\OneDirect - \EmpMean \frac{1
    }{\sqrt{\OneDirect^\top\iCovar^{-1}\OneDirect}}\Big)\sqrt{\NumIndexTwo
    }(\SoluTargetPar_{1} - \TrueTargetPar_{1})&=\liloh_p\Big(\EmpMean
    \frac{1
    }{\sqrt{\OneDirect^\top\iCovar^{-1}\OneDirect}}\sqrt{\NumIndexTwo
    }(\SoluTargetPar_{1} - \TrueTargetPar_{1})\Big)
\end{align}
Putting
equation~\eqref{linear_asymp_one_coord_3},~\eqref{linear_asymp_one_coord_3_prior}
together, it remains to show
\begin{align}
\EmpMean \LinScaleVecOneArm_{i1} \iRegressor^\top
(\IdMat_{\TargetDim} - \OneDirect\OneDirect^\top)\sqrt{\NumIndexTwo
}(\EstTargetPar - \TrueTargetPar)=\liloh_p(1).
\label{linear_asymp_one_coord_4_prior} 
\end{align} 
\paragraph*{Proof of equation~\eqref{linear_asymp_one_coord_4_prior}}

Observe that $\E (\LinScaleVecOneArm_{i1} \iRegressor^\top \mid
\iHistory)=0$, which implies that $\{ \LinScaleVecOneArm_{i1}
\iRegressor^\top \}_{i \geq 1}$ forms a martingale difference
sequence. We have
\begin{align*}
\E \|\sqrt{\NumIndexTwo } \EmpMean \LinScaleVecOneArm_{i1}
\iRegressor^\top\|_2^2 & = \frac{1}{\NumIndexTwo }\sum_{i=\NumIndexOne
  + 1}^{\Numobs}\E \|\LinScaleVecOneArm_{i1} \iRegressor^\top\|_2^2 \;
\leq \; \frac{1}{\NumIndexTwo} \sum_{i=\NumIndexOne + 1}^{\Numobs}\E
\LinScaleVecOneArm_{i1}^2=1,
\end{align*}
where the last line follows from the bound $\|\iRegressor^\top\|_2\leq
1$ and noting that $\E \LinScaleVecOneArm_{i1}^2=1$.  Thus, we
conclude that ${\sqrt{\NumIndexTwo }\EmpMean \LinScaleVecOneArm_{i1}
  \iRegressor^\top = \bigoh_p(1)}$. Combining this fact with the
assumption that $(\IdMat_{\TargetDim} -
\OneDirect\OneDirect^\top)(\EstTargetPar -
\TrueTargetPar)\overset{p}{\to}0$ yields the claim.

{\paragraph*{Generalizing to continuous regressors} Similarly,
  Lemma~\ref{lm:asym_21},~\ref{lm:asym_22}~and~\ref{lm:asym_23} can be
  established when the regressors $\iRegressor$ take continuous
  values, and therefore Theorem~\ref{thm:linear_new1} can be
  generalized to this setting. Notably, a key component in the proof
  of the lemmas is to obtain lower bounds on the quantities
  $\OneDirect^\top\iCovar^{-1}\OneDirect/\OneDirect^\top\iCovar^{-2}\OneDirect$
  and $1/\sqrt{\OneDirect^\top\iCovar^{-1}\OneDirect}$;
  cf. equations~\eqref{eq:vi_bound_b}~and~\eqref{eq:linear-compare-scale}. While
  we bound these terms through direct calculation using
  equation~\eqref{eq:cov_inverse} in the discrete regressors case,
  here we explicitly assume they are bounded from below.  }


\subsection{Proof of Theorem~\ref{thm:glm_new1_temp}}
\label{proof:thm:glm_new1_temp}

For notational simplicity, we only prove the result when the nuisance
function is linear, i.e., $\TrueFunPar(\Nuisance) =
\inprod{\Nuisance}{\TrueNuisancePar}$ for some $\TrueNuisancePar
\in \NuisanceSpace\in\R^{\NuisanceDim}$. The proof general nuisance
function $\FunPar$ is essentially the same with
$\inprod{\Nuisance}{\NuisancePar}$ replaced by
$\FunPar(\Nuisance)$. See Section~\ref{sec:gen-nuisance-function-proof} for
more details.

For the linear nuisance function $\TrueFunPar(\Nuisance) =
\inprod{\Nuisance}{\TrueNuisancePar}$, the GLM assumptions and
Assumption~\ref{assn-glm-nuisance-est} can be replaced by the
following two simplified versions:
\begin{itemize}
\item[(a)] (Bounded covariates and nuisance) There exist
  $M_{\NuisancePar},M_{\AllNuisancePar}>0$ such that
  $\sup_{\NuisancePar\in\NuisanceSpace}\|\NuisancePar\|_2\leq
  M_{\NuisancePar}$ and $\|(\iRegressor^\top, \iNuisance^\top)\|_2\leq
  D_{x}$;
\item[(b)] (Accuracy of pilot estimates) The pilot estimator
  $\EstNuisancePar\in\NuisanceSpace$ from Step 3 of~\AlgLongGLM\\
  satisfies $\|\EstNuisancePar - \TrueNuisancePar\|_2 =
  \liloh_p(\Numobs^{-1/4})$.
\end{itemize}


\subsubsection{Main argument}

Substituting the definition of $\ScoreFun_i$ from
equation~\eqref{eq:glm_score} into the estimating
equation~\eqref{eqn:non-linear-estimating-eqn}, we find that
\begin{subequations}
\begin{align}
\label{eq:glm_estimating1}    
\quad &\sqrt{\NumIndexTwo }\EmpMean \InvSqrtCovarwhat_i(\iRegressor -
\GlmMeanVecwhat_i)\iNoise \\
\label{eq:glm_estimating2}
&=\sqrt{\NumIndexTwo }\EmpMean \InvSqrtCovarwhat_i(\iRegressor -
\GlmMeanVecwhat_i)(g(\iRegressor^\top\SoluTargetPar +
\iNuisance^\top\EstNuisancePar )-g(\iRegressor^\top\TrueTargetPar +
\iNuisance^\top\TrueNuisancePar )).
  \end{align}
\end{subequations}
Focusing on the preceding equation, we now perform a second order
Taylor series expansion of $g$ around the point $(\EstTargetPar,
\EstNuisancePar )$, thereby we obtain
\begin{align}
\quad &\sqrt{\NumIndexTwo } \EmpMean \InvSqrtCovarwhat_i(\iRegressor -
\GlmMeanVecwhat_i) \iNoise \notag\\
&= \sqrt{\NumIndexTwo }\EmpMean \InvSqrtCovarwhat_i(\iRegressor -
\GlmMeanVecwhat_i) g'\big(\inprod{\iRegressor}{\EstTargetPar} +
\inprod{\iNuisance}{\EstNuisancePar} \big)(\iRegressor -
\GlmMeanVecwhat_i)^\top(\SoluTargetPar - \TrueTargetPar )
\notag \\
\label{eq:glm_taylor_est}
& \quad + \sqrt{\NumIndexTwo }\EmpMean \InvSqrtCovarwhat_i(\iRegressor
- \GlmMeanVecwhat_i)\Big[Q_1 + Q_2 + Q_3 + Q_4\Big],
\end{align}
where
\begin{subequations}
\begin{align*}
Q_1 & \defn g'(\inprod{\iRegressor}{\EstTargetPar} +
\inprod{\iNuisance}{\EstNuisancePar}
)\inprod{\iNuisance}{\EstNuisancePar - \TrueNuisancePar } \\
Q_2 & \defn g'\big(\inprod{\iRegressor}{\EstTargetPar} +
\inprod{\iNuisance}{\EstNuisancePar}
\big)\GlmMeanVecwhat_i^\top(\SoluTargetPar - \TrueTargetPar ) \\
Q_3 & \defn \frac{1}{2}\int^1_{0}\int^1_{0}
g''\big(\inprod{\iRegressor}{\EstTargetPar + r_1r_2(\SoluTargetPar -
  \EstTargetPar )} + \inprod{\iNuisance}{\EstNuisancePar}
\big)|\inprod{\iRegressor}{\SoluTargetPar - \EstTargetPar }|^2 dr_1d
r_2, \quad \mbox{and} \\
Q_4 & \defn - \frac{1}{2}
\int^1_{0}\int^1_{0}g''\big(\inprod{\iRegressor}{\EstTargetPar + r_1
  r_2 (\TrueTargetPar - \EstTargetPar )} +
\inprod{\iNuisance}{\EstNuisancePar + r_1 r_2(\TrueNuisancePar -
  \EstNuisancePar )}\big)\notag\\&\qquad\qquad~~~~~~~~\cdot|\inprod{\iRegressor}{\TrueTargetPar -
  \EstTargetPar } +\inprod{ \iNuisance}{\TrueNuisancePar -
  \EstNuisancePar }|^2 dr_1 d r_2.
\end{align*}
\end{subequations}
We complete the proof by establishing the following three results.
\begin{subequations}
\begin{align}
&\sqrt{\NumIndexTwo }\EmpMean
  \InvSqrtCovarwhat_i(\iRegressor - \GlmMeanVecwhat_i)\iNoise \convdist
  \Normal(0, \IdMat_{\TargetDim})
\label{eqn:glm_clt_temp} 
\end{align}
If $\SoluTargetPar - \TrueTargetPar=\liloh_p(1)$, then we have
\begin{align}
&\sqrt{\NumIndexTwo }\EmpMean
  \InvSqrtCovarwhat_i(\iRegressor - \GlmMeanVecwhat_i)(Q_1 +
  Q_2)\overset{p}{\to}0
\label{eqn:glm_neyman_temp} 
\end{align}
If $\SoluTargetPar - \TrueTargetPar=\liloh_p(\Numobs^{-t})$, then
\begin{align}
& \quad\sqrt{\NumIndexTwo }\EmpMean
  \InvSqrtCovarwhat_i(\iRegressor - \GlmMeanVecwhat_i)(Q_3 +
  Q_4)\notag\\
    &=\liloh_p(1) + \liloh_p(\|\sqrt{\NumIndexTwo }\EmpMean
  \InvSqrtCovarwhat_i(\iRegressor -
  \GlmMeanVecwhat_i)g'\big(\inprod{\iRegressor}{\EstTargetPar} +
  \inprod{\iNuisance}{\EstNuisancePar} \big)(\iRegressor -
  \GlmMeanVecwhat_i)^\top(\SoluTargetPar - \TrueTargetPar )\|_2).
\label{eqn:glm_second_order_temp}
\end{align}
\end{subequations}
We prove the
claims~\eqref{eqn:glm_clt_temp},~\eqref{eqn:glm_neyman_temp}
and~\eqref{eqn:glm_second_order_temp} in
Lemma~\ref{lm:glm_clt_temp},~\ref{lm:glm_neyman_temp}~and~\ref{lm:glm_second_order_temp},
respectively.
  We
also verify in a moment that
\begin{align}
\label{eq:n_t_glm_consistency}
\SoluTargetPar - \TrueTargetPar = \liloh_p(\Numobs^{-t}).
\end{align}
With the last four results at hand, the proof
of Theorem~\ref{thm:glm_new1_temp} is immediate. Indeed, denoting
$\sqrt{\NumIndexTwo }\EmpMean \InvSqrtCovarwhat_i(\iRegressor -
\GlmMeanVecwhat_i)g'\big(\inprod{\iRegressor}{\EstTargetPar} +
\inprod{\iNuisance}{\EstNuisancePar} \big)(\iRegressor -
\GlmMeanVecwhat_i)^\top(\SoluTargetPar - \TrueTargetPar )$ by $Q_0$
and substituting results above into~\eqref{eq:glm_taylor_est} and
using Slusky's theorem yields
\begin{align}
Q_0 + \liloh_p(\opnorm{Q_0}) \convdist \Normal(0,
\IdMat_{\TargetDim})\label{eq:glm_asym_norm2_temp}.
\end{align}
Since $\Normal(0, \IdMat_{\TargetDim})=\bigoh_p(1)$, we have
$Q_0=\bigoh_p(1)$ and hence $\liloh_p(\opnorm{Q_0})=\liloh_p(1)$. This
together with Slusky's theorem yields the result as desired.  It
remains to prove the consistency
condition~\eqref{eq:n_t_glm_consistency}.

\subsubsection{Proof of consistency condition~\eqref{eq:n_t_glm_consistency}}

We use an inductive argument on $k$.  More precisely, we first
establish $\liloh_p(\Numobs^{-(\delta\wedge t)})$-consistency for the
base case $k = 1$.  In the inductive step, we assume that
$\liloh_p(\Numobs^{-(k\delta\wedge t)})$-consistency holds for some $k
\geq 1$, and then prove that it holds at step $(k+1)$---that is,
$\liloh_p(\Numobs^{-((k + 1)\delta\wedge t)})$-consistency holds.

\paragraph*{Base case}
We start by proving $\liloh_p(\Numobs^{-(\delta \wedge
  t)})$-consistency.

Introduce the shorthand $\AllNuisancePar \defn (\AuxiNuisancePar,
\NuisancePar)$, $\EstAllNuisancePar \defn (\EstTargetPar,
\EstNuisancePar )$ for the estimator computed in Step 3 of
\AlgShortGLM, and $\TrueAllNuisancePar \defn (\TrueTargetPar,
\TrueNuisancePar )$.

By the triangle inequality and the relation $\EmpMean
\ScoreFun_i(\SoluTargetPar, \EstAllNuisancePar) = 0$, we have
\begin{align}
\|\CondMean \ScoreFun_i(\SoluTargetPar, \TrueAllNuisancePar)\|_2 &\leq
\|(\CondMean \ScoreFun_i(\SoluTargetPar, \TrueAllNuisancePar) -
\CondMean \ScoreFun_i(\SoluTargetPar, \EstAllNuisancePar)\|_2 +
\|(\CondMean - \EmpMean ) \ScoreFun_i(\SoluTargetPar,
\EstAllNuisancePar)\|_2\notag\\
 & \leq \sup_{\TargetPar \in \TargetSpace}\|\CondMean
\ScoreFun_i(\TargetPar, \TrueAllNuisancePar) - \CondMean
\ScoreFun_i(\TargetPar, \EstAllNuisancePar)\|_2\notag +
\sup_{\TargetPar \in\TargetSpace} \|(\EmpMean - \CondMean )
\ScoreFun_i(\TargetPar, \EstAllNuisancePar)\|_2, \notag\\
\label{eq:glm_consistent_upperbound1}
& =: \; \Re_1 + \Re_2.
\end{align}

Since $\quad\|\CondMean \ScoreFun_i(\TargetPar, \TrueAllNuisancePar) -
\CondMean \ScoreFun_i(\TargetPar, \EstAllNuisancePar)\|_2\leq
L_{\ScoreFun,1}\|\EstAllNuisancePar - \TrueAllNuisancePar\|_2 $ for
some constant $L_{\ScoreFun,1}>0$ by Lemma~\ref{lm:glm_lip_cont_temp},
it follows that $\Re_1=\liloh_p(\Numobs^{-1/4}).$ For $\Re_2$, it
follows from Lemma~\ref{lm:glm_emp_bern} that $\Re_2=\bigoh_p(\log
\Numobs /\sqrt{\Numobs})$.

Combining the results above, we obtain $\|\CondMean
\ScoreFun_i(\SoluTargetPar, \TrueAllNuisancePar)\|_2=\liloh_p(\Numobs^{-1/4})$. On
the other hand,
\begin{align*}
&\|\CondMean \ScoreFun_i(\SoluTargetPar, \TrueAllNuisancePar)\|_2\\ &=
  \|\CondMean
  (\ScoreFun_i(\SoluTargetPar, \TrueAllNuisancePar) - \ScoreFun_i(\TrueTargetPar,
  \TrueAllNuisancePar))\|_2
  \\ &\geq 
  c_\ScoreFun\|\CondMean
  \partial_{\TargetPar}\ScoreFun_i(\TrueTargetPar,
  \TrueAllNuisancePar)(\SoluTargetPar  - \TrueTargetPar )\|_2\wedge
  c_\ScoreFun \Numobs^{-1/4} 
  \\ &\geq
  c_\ScoreFun\sigma_{\min}(\CondMean
  \partial_{\TargetPar}\ScoreFun_i(\TrueTargetPar,
  \TrueAllNuisancePar))\|\SoluTargetPar  - \TrueTargetPar \|_2\wedge
  c_\ScoreFun \Numobs^{-1/4}\\ 
  &\geq
  c_\ScoreFun{m_{\ScoreFun}}\Numobs^{\delta-t}\|\SoluTargetPar
   - \TrueTargetPar \|_2\wedge c_\ScoreFun \Numobs^{-1/4}
\end{align*}
with probability converging to one. Here the inequalities follows from
the identifiability assumptions in Theorem~\ref{thm:glm_new1_temp}. Therefore,
 \begin{align}
    \liloh_p(\Numobs^{-1/4})= \|\CondMean
    \ScoreFun(\SoluTargetPar, \TrueAllNuisancePar)\|_2\geq
    c_\ScoreFun
    m_{\ScoreFun}\Numobs^{\delta-t}\|\SoluTargetPar  - \TrueTargetPar
    \|_2\wedge c_\ScoreFun
    \Numobs^{-1/4} \label{eq:glm_consistent_upperlower1}
 \end{align}
 with probability converging to one. Since $t\leq 1/4$, it follows
 directly that $\|\SoluTargetPar  - \TrueTargetPar
 \|_2=\liloh_p(\Numobs^{ - \delta})=\liloh_p(\Numobs^{-(\delta\wedge
   t)})$.

\paragraph*{Inductive step}

Next we show that given $\SoluTargetPar - \TrueTargetPar =
\liloh_p(\Numobs^{-(k\delta\wedge t)})$, we have $\SoluTargetPar -
\TrueTargetPar =\liloh_p(\Numobs^{-((k + 1)\delta\wedge t)})$. In
fact, it suffices to show that $\|\CondMean
\ScoreFun_i(\SoluTargetPar, \TrueAllNuisancePar)\|_2 = \liloh_p(
\Numobs^{-1/4-(k\delta\wedge t)})$ for any $t \in (k \delta,
1/4]$. This is because combining it with the high probability lower
bound $m_{\ScoreFun}\Numobs^{-t}\|\SoluTargetPar  - \TrueTargetPar \|_2$
from equation~\eqref{eq:glm_consistent_upperlower1} directly gives
$\|\SoluTargetPar  - \TrueTargetPar \|_2 = \liloh_p(\Numobs^{-((k + 1)
  \delta \wedge t)})$ as desired.

Following the same steps as in the upper
bound~\eqref{eq:glm_consistent_upperbound1} and using the result that
$\Re_2=\bigoh_p(\log \Numobs /\sqrt{\Numobs})$, we obtain
\begin{align}
\|\CondMean \ScoreFun_i(\SoluTargetPar, \TrueAllNuisancePar)\|_2 & \leq
\|(\CondMean \ScoreFun_i(\SoluTargetPar, \TrueAllNuisancePar) - \CondMean
\ScoreFun_i(\SoluTargetPar, \EstAllNuisancePar)\| + \|(\CondMean
 - \EmpMean
)\ScoreFun_i(\SoluTargetPar, \EstAllNuisancePar)\|_2\notag\\ 
&\leq \sup_{\TargetPar \in\TargetSpace, \|\TargetPar - \TrueTargetPar
  \|\leq \Numobs^{-(k\delta \wedge t)}}\|\CondMean
\ScoreFun_i(\TargetPar, \TrueAllNuisancePar) - \CondMean
\ScoreFun_i(\TargetPar, \EstAllNuisancePar)\|\notag\\ &\quad +
\sup_{\TargetPar \in\TargetSpace} \|(\EmpMean  - \CondMean
)\ScoreFun_i(\TargetPar, \EstAllNuisancePar)\|_2, \notag\\ &=
\sup_{\TargetPar \in\TargetSpace, \|\TargetPar - \TrueTargetPar \|_2\leq
  \Numobs^{-(k\delta \wedge t)}}\|\CondMean
\ScoreFun_i(\TargetPar, \TrueAllNuisancePar) - \CondMean
\ScoreFun_i(\TargetPar, \EstAllNuisancePar)\|_2 + \bigoh_p(\log \Numobs
/\sqrt{\Numobs}).
 \label{eq:glm_consistent_upperbound2}
\end{align}
Denote $\{\TargetPar \in\TargetSpace, \|\TargetPar - \TrueTargetPar
\|_2\leq \Numobs^{-(k\delta \wedge t)}\}$ by $\mathcal C_k$. Then
\begin{align}
    &\sup_{\mathcal C_k}\|\CondMean \ScoreFun_i(\TargetPar,
  \TrueAllNuisancePar) - \CondMean \ScoreFun_i(\TargetPar,
  \EstAllNuisancePar)\|_2\notag\\ &\leq \sup_{\mathcal C_k}\|\CondMean
  \partial_{\AllNuisancePar}\ScoreFun_i(\TargetPar,
  \TrueAllNuisancePar)(\EstAllNuisancePar -
  \TrueAllNuisancePar)\notag\\ & + \frac{1}{2}(\EstAllNuisancePar -
  \TrueAllNuisancePar)^\top\Big[\int_{0}^{1}\int_{0}^{1} \CondMean
    \partial^2_{\AllNuisancePar}\ScoreFun_i(\TargetPar,
    \TrueAllNuisancePar + r_1 r_2(\EstAllNuisancePar -
    \TrueAllNuisancePar))dr_1 dr_2\Big](\EstAllNuisancePar -
  \TrueAllNuisancePar)\|_2\notag\\ &\lesssim \sup_{\mathcal
    C_k}\|[\CondMean \partial_{\AllNuisancePar}\ScoreFun_i(\TargetPar,
    \TrueAllNuisancePar) - \CondMean
    \partial_{\AllNuisancePar}\ScoreFun_i(\TrueTargetPar,
    \TrueAllNuisancePar)](\EstAllNuisancePar -
  \TrueAllNuisancePar)\|_2 + L_{\ScoreFun,2}\|\EstAllNuisancePar -
  \TrueAllNuisancePar\|_2^2\notag\\ &\leq \sup_{\mathcal
    C_k}L_{\ScoreFun,1}\|\SoluTargetPar - \TrueTargetPar
  \|_2\|\EstAllNuisancePar - \TrueAllNuisancePar\|_2 +
  L_{\ScoreFun,2}\|\EstAllNuisancePar -
  \TrueAllNuisancePar\|_2^2=\liloh_p(\Numobs^{-(k\delta \wedge
    t)-1/4}), \label{eq:glm_consistent_upperbound3}
\end{align}
where $L_{\ScoreFun,1},L_{\ScoreFun,2}>0$ are some constants
introduced in Lemma~\ref{lm:glm_lip_cont_temp}. In the second line we use
Taylor expansion, the second inequality follows from Neyman
orthogonality
$\CondMean\partial_\AllNuisancePar\ScoreFun_i(\TrueTargetPar,
\TrueAllNuisancePar)=0$ and Lemma~\ref{lm:glm_lip_cont_temp}, and the last
line is also due to Lemma~\ref{lm:glm_lip_cont_temp} and the assumption
that $\EstAllNuisancePar -
\TrueAllNuisancePar=\liloh_p(\Numobs^{-1/4})$ and $\SoluTargetPar -
\TrueTargetPar =\liloh_p(\Numobs^{-(k\delta\wedge t)})$.  Since
$(k\delta \wedge t) + 1/4<1/2$, we conclude by
combining~\eqref{eq:glm_consistent_upperbound2}
and~\eqref{eq:glm_consistent_upperbound3} that $\|\CondMean
\ScoreFun_i(\SoluTargetPar,
\TrueAllNuisancePar)\|_2=\liloh_p(\Numobs^{-(k\delta \wedge t)-1/4})$.

\subsubsection{Proof for general nuisance function $\FunPar$}
\label{sec:gen-nuisance-function-proof}
We remark that the proof for general nuisance function $\FunPar$ is
essentially the same with, $\Nuisance^\top\NuisancePar$ replaced by
$\FunPar(\Nuisance)$. This is because in the proof we only invoke our
assumption on the estimation error
$\|\EstFunPar(\Nuisance) - \TrueFunPar(\Nuisance)\|_\infty$ and does not
exploit the specific form of $\FunPar$. However, by assuming a linear
parameterization on $\FunPar$, we can avoid the usage of Gateaux
derivative and hence simplify our notation of the gradient of the
score function.

\subsection{Proof of Theorem~\ref{thm:glm_new1_temp_one_arm}}
\label{proof:thm:glm_new1_temp_one_arm}

The proof of this theorem is similar to the proof
of Theorem~\ref{thm:glm_new1_temp}, and we only prove it for linear nuisance,
i.e.,
$\TrueFunPar(\Nuisance)=\inprod{\Nuisance}{\TrueNuisancePar}$. We only
provide a proof sketch for brevity.

Substituting the definition of $\ScoreFun_{i1}$ into the estimating
equation~\eqref{eq:glm_one_arm_estimating_equation} yields
\begin{align*}
\quad &\sqrt{\NumIndexTwo }\EmpMean
\ScalePreCondVecwhat_{i1}(\iNuisance, \iHistory)(\iRegressor -
\GlmMeanVecwhat_i(\iNuisance, \iHistory))\iNoise \\
& = \sqrt{\NumIndexTwo }\EmpMean \ScalePreCondVecwhat_{i1}(\iRegressor
- \GlmMeanVecwhat_i)\cdot\notag\\
& \quad \Big[g\big(\inprod{\iRegressor}{\OneDirect}\SoluOneDirectPar +
  \iRegressor^\top (\IdMat_{\TargetDim} -
  \OneDirect\OneDirect^\top)\EstTargetPar +
  \inprod{\iNuisance}{\EstNuisancePar}
  \big)-g\big(\inprod{\iRegressor}{\OneDirect}\TrueOneDirectPar +
  \iRegressor^\top (\IdMat_{\TargetDim} -
  \OneDirect\OneDirect^\top)\TrueTargetPar +
  \inprod{\iNuisance}{\TrueNuisancePar} \big)\Big].
\end{align*}
Throughout, we use the shorthand $\EstOneDirectPar \defn
\inprod{\OneDirect}{\EstTargetPar}$.  Performing a second order Taylor
series expansion of $g$ on the right-hand side of the last equation at
$(\EstOneDirectPar, \EstTargetPar, \EstNuisancePar )$, we obtain
\begin{align}
 &\quad\sqrt{\NumIndexTwo }\EmpMean \ScalePreCondVecwhat_{i1}(\iNuisance, \iHistory)(\iRegressor - \GlmMeanVecwhat_i(\iNuisance, \iHistory))\iNoise\\
&=
\sqrt{\NumIndexTwo }\EmpMean \ScalePreCondVecwhat_{i1}(\iRegressor - \GlmMeanVecwhat_i)  g'\big(\inprod{\iRegressor}{\EstTargetPar}  + \inprod{\iNuisance}{\EstNuisancePar}\big)(\iRegressor - \GlmMeanVecwhat_{i})^\top\OneDirect(\SoluOneDirectPar - \TrueOneDirectPar) \notag\\
&\quad+
\sqrt{\NumIndexTwo }\EmpMean \ScalePreCondVecwhat_{i1}(\iRegressor - \GlmMeanVecwhat_i)  \Big[ \Qwtil_1 + \Qwtil_2 + \Qwtil_3 + \Qwtil_4 \Big], \label{eq:glm_taylor_est_one_arm}
\end{align}

where
\begin{align*}
\Qwtil_1 &=
 g'\big(\inprod{\iRegressor}{\EstTargetPar}  + \inprod{\iNuisance}{\EstNuisancePar}\big)
          [\iRegressor^\top(\IdMat_{\TargetDim} - \OneDirect\OneDirect^\top)(\EstTargetPar - \TrueTargetPar) + \inprod{\iNuisance}{\EstNuisancePar
             - \TrueNuisancePar }], \\
\Qwtil_2 &= g'\big(\inprod{\iRegressor}{\EstTargetPar}  + \inprod{\iNuisance}{\EstNuisancePar}\big)\inprod{
\GlmMeanVecwhat_{i}}{\OneDirect}(\SoluOneDirectPar - \TrueOneDirectPar) \\
\Qwtil_3 &= \frac{1}{2}\int^1_{0}\int^1_{0}
g''\big(\inprod{\iRegressor}{\OneDirect}(\EstOneDirectPar + r_1r_2(\SoluOneDirectPar - \EstOneDirectPar)) + \iRegressor^\top(\IdMat_{\TargetDim} - \OneDirect\OneDirect^\top)\EstTargetPar + \inprod{\iNuisance}{\EstNuisancePar}
\big)\notag\\
 &\qquad\qquad~~~~~~~~\cdot\big|\inprod{\iRegressor}{\OneDirect}(\SoluOneDirectPar - \EstOneDirectPar)\big|^2
dr_1d r_2 \\
\Qwtil_4 &=  - \frac{1}{2} \int^1_{0} \int^1_{0}
g''\Big(\inprod{\iRegressor}{\OneDirect}(\EstOneDirectPar  +  r_1
r_2(\TrueOneDirectPar -
\EstOneDirectPar)) 
+
 \iRegressor^\top(\IdMat_{\TargetDim} - \OneDirect\OneDirect^\top)(\EstTargetPar + r_1
r_2(\TrueTargetPar - \EstTargetPar)) 
\notag\\
 &\qquad\qquad~~~~~~~~+\inprod{\iNuisance}{\EstNuisancePar
 + r_1 r_2(\TrueNuisancePar  - \EstNuisancePar )}\Big)\cdot|\inprod{\iRegressor}{\TrueTargetPar -
  \EstTargetPar } +\inprod{ \iNuisance}{\TrueNuisancePar -
  \EstNuisancePar }|^2 dr_1 d r_2.
\end{align*}
Following an argument similar to Lemma~\ref{lm:glm_clt_temp},~\ref{lm:glm_neyman_temp}~and~\ref{lm:glm_second_order_temp},
it can be shown that
\begin{subequations}
\begin{align}
\label{eq:glm_clt_temp_one_arm}
\sqrt{\NumIndexTwo} \EmpMean 
\ScalePreCondVecwhat_{i1}(\iNuisance, \iHistory)(\iRegressor-
\GlmMeanVecwhat_i(\iNuisance, \iHistory))\iNoise& \convdist \Normal(0,1),
\\
\label{eq:glm_neyman_temp_one_arm} 
\sqrt{\NumIndexTwo} \EmpMean \ScalePreCondVecwhat_{i1}
(\iRegressor - \GlmMeanVecwhat_i)(\Qwtil_1 + \Qwtil_2) &
\overset{p}{\to} 0 \\
\label{eq:glm_second_order_temp_one_arm}
\sqrt{\NumIndexTwo }\EmpMean \ScalePreCondVecwhat_{i1}(\iRegressor -
\GlmMeanVecwhat_i) (\Qwtil_3 + \Qwtil_4) & \overset{p}{\to} 0
\end{align}
\end{subequations}
Here the claim~\eqref{eq:glm_second_order_temp_one_arm} requires the
assumption $\SoluOneDirectPar - \TrueOneDirectPar =
\liloh_p(\Numobs^{-t})$.  We can prove this
$\liloh_p(\Numobs^{-t})$-consistency following arguments that are
similar to those used in the proof of Theorem~\ref{thm:glm_new1_temp}.
Concretely, the proof is essentially the same except for replacing
$\ScoreFun_{i}$ with $\ScoreFun_{i1}$ and showing $\CondMean
\ScoreFun_{i1}, \CondMean \partial_\AllNuisancePar\ScoreFun_{i1}$ are
bounded and Lipschitz continuous in $(\OneDirectPar,
\AllNuisancePar)$. This completes the proof
of Theorem~\ref{thm:glm_new1_temp_one_arm}.

\section{Proofs of the corollaries}

Our proofs of the corollaries depend on the following technical lemma.
In stating it, we make use of the shorthand notation of $\iCovariate
\defn (\iRegressor^\top, \iNuisance^\top)^\top$, and define
$\CovariateMa \equiv \sum_{i=1}^{\NumIndexOne} \iCovariate
\iCovariate^\top$.

\begin{lems}
\label{lm:linear_model_exm}
Suppose that
Assumptions~\ref{assn-lin-noise}-- \ref{assn-lin-nuisance-est} hold for
some $t \in [0, 1/4)$, and moreover that
\begin{align*}
 \|\iNuisance\|_2 \leq B \quad \mbox{and} \quad \inf_{P\in\Pclass}
 \sigma_{\min}(\E_{\iNuisance \sim \Pclass} \iNuisance
 \iNuisance^\top) \geq c_P \qquad \mbox{for some $B < \infty$ and $c_P
   > 0$.}
\end{align*}
Then there exists some constant $c_{\CovariateMa} > 0$ such that the
minimum eigenvalue satisfies
\begin{align}
\lim_{n \to \infty} \prob(\sigma_{\min}(\CovariateMa)\geq
c_{\CovariateMa} \NumIndexOne^{1-2t}) \to 1.
  \end{align}
\end{lems}
\noindent We return to prove Lemma~\ref{lm:linear_model_exm} in
Section~\ref{proof:lm:linear_model_exm}.  Here we complete the proofs
of the corollaries using Lemma~\ref{lm:linear_model_exm}.


\subsection{Proof of Corollary~\ref{corr:linear_model_exm}}
\label{proof:corr:linear_model_exm}

In light of Theorem~\ref{thm:linear_new1}, it is sufficient to show
that \mbox{$\|\NuisanceParwhat_{ols} - \TrueNuisancePar\|_2 =
  \liloh_p(1)$.}  In order to do so, we invoke results due to Lai and
Wei~\cite{lai1982least}.  Specifically, denote the vector
$(\iRegressor^\top, \iNuisance^\top)^\top$ by $\iCovariate$ and let
$\CovariateMa\equiv\sum_{i=1}^{\NumIndexOne} \iCovariate
\iCovariate^\top$. By Theorem 1 in Lai and Wei~\cite{lai1982least} it
suffices to show that \mbox{$(\log
  \sigma_{\max}(\CovariateMa)/\sigma_{\min}(\CovariateMa))^{1/2} =
  \liloh_p(1)$.} Since both vectors $\iRegressor$ and $\iNuisance$ are
bounded in $\ell_2-$norm, we have $\log
\sigma_{\max}(\CovariateMa)=\bigoh(\log n)$.  Thus,
Lemma~\ref{lm:linear_model_exm} ensures that for any $t \in [0, 1/2)$,
  we have
  $1/\sigma_{\min}(\CovariateMa)=\bigoh_p(n^{2t-1})=\liloh_p(n^{ -
    \eps})$ for some small $\eps>0$.  Putting together the pieces, we
  conclude that $\|\NuisanceParwhat_{ols} -
  \TrueNuisancePar\|_2=\liloh_p(1)$, as claimed in
  Corollary~\ref{corr:linear_model_exm}.

  
\subsection{Proof of Corollary~\ref{corr:linear_model_exm_2}}
\label{proof:corr:linear_model_exm_2}
In light of Theorem~\ref{thm:linear_new1}, it only remains to show
that \mbox{$\|\NuisanceParwhat_{lasso} - \TrueNuisancePar\|_2 =
  \liloh_p(1)$.}  In order to do so, we exploit results due to Oh et
al.~\cite{oh2021sparsity}.  Define the index set
\begin{align*}
  \SparseSet
  \defn \{1,2, \ldots, \TargetDim\} \cup \{i + \TargetDim \mid
  \TrueNuisancePar_i\neq 0\},
\end{align*}
and introduce the shorthand notation $\iCovariate \defn
(\iRegressor^\top, \iNuisance^\top)^\top$, along with
\begin{align*}
  \CovariateMa \defn
  \sum_{i=1}^{\NumIndexOne}\iCovariate\iCovariate^\top,
  \AllParameter^* \defn (\TargetPar^{*\top}, \qquad
  \NuisancePar^{*\top})^\top, \quad \mbox{and} \quad
  \AllParameterwhat_{lasso} \defn (\EstTargetPar^{\top}_{lasso},
  \EstNuisancePar^{\top}_{lasso})^\top.
\end{align*}
For any vector $\AllParameter\in\R^{\TargetDim + \NuisanceDim}$, we
define the vector $\AllParameter_\SparseSet$ with $j$-th entry
\mbox{$\AllParameter_{j, \SparseSet} \defn \AllParameter_j 1_{j \in
    \SparseSet}$.}  Invoking Lemma~\ref{lm:linear_model_exm} yields
\begin{align*}
  \|\AllParameter_\SparseSet\|_1^2/\NumIndexOne^{2t} \leq |\SparseSet|
  \|\AllParameter_\SparseSet\|_2^2/\NumIndexOne^{2t}\lesssim
  \frac{|\SparseSet|}{\NumIndexOne} \cdot \AllParameter^\top
  \CovariateMa\AllParameter
\end{align*}
for all $\AllParameter$. Consequently, the compatibility condition in
Assumption 3 of the paper~\cite{oh2021sparsity} is satisfied with
$\phi^2_{\NumIndexOne}=c\NumIndexOne^{-2t}$ for some constant $c>0$
with probability converging to one. Thus, we may apply Lemma 1 in the
paper~\cite{oh2021sparsity} (note that the lemma remains true with $B'$ being the upper bound of  $\vecnorm{\iNuisance}{\infty}$  instead of $\vecnorm{\iNuisance}{2}$) to assert that
\begin{align*}
\|\NuisanceParwhat_{lasso} - \TrueNuisancePar\|_2 \leq
\|\AllParameterwhat_{lasso} - \AllParameter^*\|_1 & \leq
\frac{4(\SparseLevel +
  \TargetDim)\lambda_{\NumIndexOne}}{\phi^2_{\NumIndexOne}} \\
&= \frac{8(\SparseLevel + \TargetDim)\NumIndexOne^{2t}\subgauss(B' +
  1)}{c} \sqrt{\frac{2[\log (2/\delta_{\NumIndexOne}) +
      \log(\TargetDim + \NuisanceDim)]}{\NumIndexOne}}\\
& \lesssim (\SparseLevel + \TargetDim) \NumIndexOne^{2t-1/2}
\sqrt{\log (2/\delta_{\NumIndexOne}) + \log(\TargetDim +
  \NuisanceDim)}
\end{align*}
with probability $1 - \delta_{\NumIndexOne} -
\prob(\sigma_{\min}(\CovariateMa)<c\NumIndexOne^{1-2t})$.  Plugging in
$\delta_{\NumIndexOne}=\min\{(\SparseLevel +
\TargetDim)\NumIndexOne^{2t-1/2},\\1/(\TargetDim + \NuisanceDim)\}$ and
$(\SparseLevel + \TargetDim)\sqrt{\log (\TargetDim +
  \NuisanceDim)}=\liloh_p(\NumIndexOne^{1/2-2t})$, and noting that
$\TargetDim$ is fixed, we obtain
\begin{align*}
(\SparseLevel + \TargetDim) \NumIndexOne^{2t-1/2} \sqrt{\log (2 /
    \delta_{\NumIndexOne}) + \log( \TargetDim + \NuisanceDim)}
= \liloh_p( 1).
\end{align*}
From our choice of $\delta_{\NumIndexOne}$
and Lemma~\ref{lm:linear_model_exm}, it follows that $1 -
\delta_{\NumIndexOne} - \prob(\sigma_{\min}(\CovariateMa) < c
\NumIndexOne^{1-2t}) \to 1$. Thus, we conclude that
$\|\NuisanceParwhat_{lasso} - \TrueNuisancePar\|_2 = \liloh_p(1)$, and
this completes the proof of Corollary~\ref{corr:linear_model_exm_2}.

\subsection{Proof of Corollary~\ref{corr:linear_model_exm_3}}
\label{proof:corr:linear_model_exm_3}
The proof is essentially the same as the proof
of Corollary~\ref{corr:linear_model_exm_2}. Recall our notation from the proof
of Corollary~\ref{corr:linear_model_exm_2}.
Invoking Lemma~\ref{lm:linear_model_exm} yields
\begin{align*}
\|\AllParameter_\SparseSet\|_1^2/\NumIndexOne^{2t}
\leq |\SparseSet|
\|\AllParameter_\SparseSet\|_2^2/\NumIndexOne^{2t}\lesssim \frac{
  |\SparseSet|}{\NumIndexOne} \cdot \AllParameter^\top \CovariateMa
\AllParameter
\end{align*}
for all $\AllParameter$. Thus, the compatibility condition in Oh et
al.~\cite{oh2021sparsity} is satisfied with
$\phi^2_{\NumIndexOne}=c\NumIndexOne^{-2t}$ for some constant $c>0$
with probability converging to one. Invoking Lemma 1 in Oh et
al.~\cite{oh2021sparsity} we deduce that
\begin{align*}
    \max\left\{ \|\TargetParwhat_{lasso} - \TrueTargetPar\|_2,
    \|\NuisanceParwhat_{lasso} - \TrueNuisancePar\|_2 \right\}
    & \leq \|\AllParameterwhat_{lasso} - \AllParameter^*\|_1 \leq
    \frac{4 (\SparseLevel + \TargetDim)
      \lambda_{\NumIndexOne}}{l_g\phi^2_{\NumIndexOne}}\\
     & = \frac{8(\SparseLevel + \TargetDim)\NumIndexOne^{2t}\subgauss
      D_x}{c}\sqrt{\frac{2[\log (2/\delta_{\NumIndexOne}) +
          \log(\TargetDim + \NuisanceDim)]}{\NumIndexOne}}\\ &\lesssim
    (\SparseLevel + \TargetDim) \NumIndexOne^{2t-1/2}\sqrt{\log
      (2/\delta_{\NumIndexOne}) + \log(\TargetDim + \NuisanceDim)}
  \end{align*}
with probability at least $1 - \delta_{\NumIndexOne} -
\Prob(\sigma_{\min}(\CovariateMa) < c \NumIndexOne^{1-2t})$.  Making
the substitution $\delta_{\NumIndexOne} \defn \min \{(\SparseLevel +
\TargetDim) \NumIndexOne^{2t-1/4}, 1/(\TargetDim + \NuisanceDim)\}$
and $(\SparseLevel + \TargetDim)\sqrt{\log (\TargetDim +
  \NuisanceDim)} = \liloh_p(\NumIndexOne^{1/4-2t})$, and noting that
$\TargetDim$ is fixed, we obtain
\begin{align*}
 (\SparseLevel + \TargetDim)
  \NumIndexOne^{2t-1/2}\sqrt{\log
    (2/\delta_{\NumIndexOne}) + \log(\TargetDim + \NuisanceDim)}
    =\liloh_p(
  \NumIndexOne^{-1/4}).
\end{align*}   From our choice of $\delta_{\NumIndexOne}$ and Lemma~\ref{lm:linear_model_exm}, it follows that $1 - \delta_{\NumIndexOne} - \prob(\sigma_{\min}(\CovariateMa)<c\NumIndexOne^{1-2t})\to 1$. Putting together the pieces, we conclude that $ \max\left\{ \|\TargetParwhat_{lasso} - \TrueTargetPar\|_2, \|\NuisanceParwhat_{lasso} - \TrueNuisancePar\|_2 \right\} =\liloh_p(\NumIndexOne^{-1/4})$; this completes the proof of Corollary~\ref{corr:linear_model_exm_3}.

{
\subsection{Proof of Corollary~\ref{corr:linear_model_exm_4}}
\label{proof:corr:linear_model_exm_4}
Given Theorem~\ref{thm:linear_new1}, it suffices to prove that \begin{align}
\E_{\iNuisance\sim P}(\EstFunPar(\iNuisance)- \TrueFunPar(\iNuisance
))^2=o_p(1).\label{eq:proof_cor_linear_4_eq1}
\end{align} 
Since $\SelectProb_{i0}\geq c_0 i^{-2t}$ for some $t<1/2$ and $\NumIndexOne=\Numobs/K$ for some constant  $K\geq2$, using Freedman's inequality (see e.g., Lemma 9 in~\cite{agarwal2014taming}),  we have \begin{align}\sum_{i=1}^{\NumIndexOne} \mathbf{1}_{\{\iRegressor=\mathbf{0}\}}\geq \sum_{i=1}^{\NumIndexOne}\SelectProb_{i0}/2\geq c \Numobs^{1-2t}\label{eq:freedman_1}
\end{align} 
with probability converging to $1$ as $\Numobs\to \infty$ for some
constant $c>0$. Since we assume the selection probabilities
$\SelectProb_i$ depend only on $\iHistory$ and $\iNuisance$ are
i.i.d., letting
$(\widetilde\Nuisance_1,\widetilde\Regressor_1,\widetilde\Response_1),\ldots,(\widetilde\Nuisance_{c\Numobs^{1-2t}},
\\ \widetilde\Regressor_{c\Numobs^{1-2t}},\widetilde\Response_{c\Numobs^{1-2t}})$
denote the first $c\Numobs^{1-2t}$ samples in first $\NumIndexOne$
samples such that the corresponding regressor
$\iRegressor=\mathbf{0}$\footnote{We generate additional independent
samples if there are less than $c\Numobs^{1-2t}$ such samples.}, it
can be verified by induction that
$\{\widetilde\iNuisance\}_{i=1}^{c\Numobs^{1-2t}}$ are i.i.d. samples
from $P$
and \begin{align*}\widetilde\iResponse=\TrueFunPar(\widetilde\iNuisance)+\widetilde\iNoise
\end{align*}
for some  i.i.d. noise $\widetilde\iNoise\sim Q$. 
Therefore, as shown in Theorem 6.2 of~\cite{gyorfi2002distribution}, the $k$-nearest neighbor estimator  $\EstFunPar$ with $k\to\infty,k/n^{1-2t}\to 0$ 
based on the samples $\{(\widetilde\iNuisance,\widetilde\iResponse)\}_{i=1}^{c\Numobs^{1-2t}}$
satisfies $\E_{\Nuisance\sim P}(\EstFunPar(\iNuisance)- \TrueFunPar(\iNuisance))^2=o_p(1)$. Equation~\eqref{eq:proof_cor_linear_4_eq1} follows immediately since we can find such i.i.d. samples in the first $\NumIndexOne$ observed samples with probability converging to one as shown in  equation~\eqref{eq:freedman_1}. 
}



\subsection{Proof of Lemma~\ref{lm:linear_model_exm}}
\label{proof:lm:linear_model_exm}
It suffices to show that
\begin{align*}
\lim_{\NumIndexOne \to \infty} \prob(\sigma_{\max}(\CovariateMa^{-1})
\leq \NumIndexOne^{2t-1}/c_{\CovariateMa} ) \to 1 \qquad \mbox{for
  some constant $c_{\CovariateMa}$.}
\end{align*}
Using the Sherman-Woodbury formula for block-partitioned matrix
inverses, we have
\begin{align*}
\CovariateMa^{-1} \equiv
\begin{bmatrix}
\CovariateMa_1 & \CovariateMa_2 \\ \CovariateMa_3 & \CovariateMa_4
\end{bmatrix}
=
\begin{bmatrix}
\IdMat_{\TargetDim} & 0 \\  - \CovariateMa_4^{-1} \CovariateMa_3 &
\IdMat_{\NuisanceDim}
\end{bmatrix}
\begin{bmatrix}
\Big(\CovariateMa_1 - \CovariateMa_2 \CovariateMa_4^{-1}
\CovariateMa_3\Big)^{-1} & 0 \\ 0 & \CovariateMa_4^{-1}
\end{bmatrix}
\begin{bmatrix}
\IdMat_{\TargetDim} &  - \CovariateMa_2 \CovariateMa_4^{-1} \\ 0 &
\IdMat_{\NuisanceDim}
\end{bmatrix},
\end{align*}
where $\CovariateMa_1 = \sum_{i=1}^{\NumIndexOne} \iRegressor
\iRegressor^\top, \CovariateMa_2 = \CovariateMa_3^\top =
\sum_{i=1}^{\NumIndexOne} \iRegressor \iNuisance^\top$ and
$\CovariateMa_4 = \sum_{i=1}^{\NumIndexOne}\iNuisance\iNuisance^\top$.
Since the vectors $\iNuisance's$ are i.i.d. with bounded second
moment, it follows from the boundedness of $\iNuisance$ and
Lemma~\ref{lm:tech_md} that $\opnorm{\CovariateMa_4/\NumIndexOne -
  \PreCondMean \iNuisance \iNuisance^\top}
=\bigoh_p(\NumIndexOne^{-1/2})$. Combining this fact with the lower
bound
\begin{align*}
\sigma_{\min}(\PreCondMean \iNuisance\iNuisance^\top)
\geq 
\inf_{P\in\Pclass}\sigma_{\min}(\E_{\iNuisance\sim
  \Pclass}\iNuisance\iNuisance^\top)\geq c_p>0,
\end{align*}
we obtain $\lim_{\NumIndexOne\to\infty}
\prob(\opnorm{\CovariateMa_4/\NumIndexOne}\geq c_P/2)\to 1$ and hence
$\opnorm{\CovariateMa_4^{-1}}=\bigoh_p(n^{-1})$. Also, it follows from the
boundedness of $\iRegressor$ and $\iNuisance$ that
$\opnorm{\CovariateMa_3}, \opnorm{\CovariateMa_2}=\bigoh_p(n)$.  Combining the results above
we obtain
$\opnorm{\CovariateMa_4^{-1}\CovariateMa_3}=\opnorm{(\CovariateMa_2\CovariateMa_4^{-1})^\top}=\bigoh_p(1)$
and thus \begin{align*}\Big|\mns\Big|\mns\Big|\begin{bmatrix} \IdMat_{\TargetDim} & 0 \\  - \CovariateMa_4^{-1}
  \CovariateMa_3 & \IdMat_{\NuisanceDim}
\end{bmatrix}\Big|\mns\Big|\mns\Big|_{\mathrm{op}}, \Big|\mns\Big|\mns\Big|\begin{bmatrix}
\IdMat_{\TargetDim} &  - \CovariateMa_2 \CovariateMa_4^{-1} \\ 0 &
\IdMat_{\NuisanceDim}
\end{bmatrix}\Big|\mns\Big|\mns\Big|_{\mathrm{op}}=\bigoh_p(1).\end{align*}
Now, by the submultiplicativity of spectral norm and the fact that
$\opnorm{\CovariateMa^{-1}_4}=\bigoh_p(n^{-1})=\liloh_p(n^{2t-1})$, it remains
to show
$\lim_{\Numobs\to\infty}\prob(\opnorm{\Big(\CovariateMa_1 - \CovariateMa_2
\CovariateMa_4^{-1} \CovariateMa_3\Big)^{-1}}\lesssim n^{2t-1})\to
1$, or equivalently,
$\lim_{\Numobs\to\infty}\prob(\sigma_{\min}\Big(\CovariateMa_1 - \CovariateMa_2
\CovariateMa_4^{-1} \CovariateMa_3\Big)\gtrsim n^{1-2t})\to 1$.

For $\CovariateMa_1$, note that by the one-hot property of
$\iRegressor$ we have
$\iRegressor\iRegressor^\top - \diag\{\SelectProb_{i1}, \ldots, \SelectProb_{i\TargetDim}\}$
forms a matrix-valued Martingale difference sequence. Since
$\iRegressor, \SelectProb_i$ are bounded, it follows
from Lemma~\ref{lm:tech_md} that
$\|\CovariateMa_1 - \diag\{\sum_{i=1}^{\NumIndexOne}\SelectProb_{i1}, \ldots, \sum_{i=1}^{\NumIndexOne}
\SelectProb_{i\TargetDim} \} \|_F = \bigoh_p(\NumIndexOne^{1/2})$.

With slight abuse of notation, we denote $(\Regressor_1, \ldots,
\Regressor_{\NumIndexOne})^\top$ by $\RegressorMa$, $(\Nuisance_1,
\ldots, \Nuisance_{\NumIndexOne})^\top$ by $\NuisanceMa$ and
$(\SelectProb_1, \ldots, \SelectProb_{\NumIndexOne})^\top$ by
$\AllSelecProbMa$. Then
$\CovariateMa_2\CovariateMa_4^{-1}\CovariateMa_3=\RegressorMa^\top\HatMa
\RegressorMa$ where the projection matrix $\HatMa \defn
\NuisanceMa(\NuisanceMa^\top\NuisanceMa)^{-1}\NuisanceMa^\top$. Similarly,
$(\iRegressor - \SelectProb_i)\iNuisance^\top$ forms a matrix-valued
martingale difference sequence and $\E \|(\iRegressor -
\SelectProb_i)\iNuisance^\top\|^2_F$ is bounded. It then follows
from Lemma~\ref{lm:tech_md} that $\frobnorm{(\RegressorMa -
  \AllSelecProbMa)^\top\NuisanceMa}=\bigoh_p(\NumIndexOne^{1/2})$. Substituting
this into $\RegressorMa^\top\HatMa\RegressorMa$, we obtain
\begin{align*}
&\quad \opnorm{\RegressorMa^\top \HatMa \RegressorMa -
    \AllSelecProbMa^\top \NuisanceMa(\NuisanceMa^\top
    \NuisanceMa)^{-1} \NuisanceMa^\top \AllSelecProbMa}\\
  &= \opnorm{\RegressorMa^\top\NuisanceMa(\NuisanceMa^\top
    \NuisanceMa)^{-1} \NuisanceMa^\top \RegressorMa -
    \AllSelecProbMa^\top \NuisanceMa(\NuisanceMa^\top\NuisanceMa)^{-1}
    \NuisanceMa^\top \AllSelecProbMa} \\
& = \opnorm{ \AllSelecProbMa^\top
    \NuisanceMa(\NuisanceMa^\top\NuisanceMa)^{-1}\NuisanceMa^\top(\RegressorMa
    - \AllSelecProbMa) + (\RegressorMa -
    \AllSelecProbMa)^\top\NuisanceMa(\NuisanceMa^\top
    \NuisanceMa)^{-1}\NuisanceMa^\top \AllSelecProbMa + (\RegressorMa
    - \AllSelecProbMa)^\top\NuisanceMa(\NuisanceMa^\top
    \NuisanceMa)^{-1} \NuisanceMa^\top (\RegressorMa -
    \AllSelecProbMa)} \\
  & \leq \opnorm{ \AllSelecProbMa^\top
    \NuisanceMa(\NuisanceMa^\top\NuisanceMa)^{-1}
    \NuisanceMa^\top(\RegressorMa - \AllSelecProbMa)} +
  \opnorm{(\RegressorMa - \AllSelecProbMa)^\top \NuisanceMa
    (\NuisanceMa^\top\NuisanceMa)^{-1}\NuisanceMa^\top
    \AllSelecProbMa}  \notag\\
    &\qquad+
    \opnorm{(\RegressorMa - \AllSelecProbMa)^\top
    \NuisanceMa (\NuisanceMa^\top \NuisanceMa)^{-1} \NuisanceMa^\top
    (\RegressorMa - \AllSelecProbMa)} \\
& =\bigoh_p \Big( \NumIndexOne^{1/2}\Big),
\end{align*}
where the last line uses the relations
\begin{align*}
\opnorm{({\NuisanceMa^\top \NuisanceMa})^{-1}} &=
\opnorm{{\CovariateMa}_4^{-1}}=\bigoh_p(n^{-1}), \quad
\opnorm{\NuisanceMa^\top\AllSelecProbMa} = \bigoh(\Numobs),  \text{ and } \\ 
\frobnorm{(\RegressorMa - \AllSelecProbMa)^\top
  \NuisanceMa} &= \bigoh_p(\NumIndexOne^{1/2}).
\end{align*}
Combining the pieces yields
\begin{align*}
 \opnorm{\CovariateMa_1 - \CovariateMa_2 \CovariateMa_4^{-1}
\CovariateMa_3} 
& =  \opnorm{\RegressorMa^\top(\IdMat - \HatMa)\RegressorMa}
\\
& =  \opnorm{\diag \big \{{\sum_{i=1}^{\NumIndexOne}\SelectProb_{i1}},
\ldots, {\sum_{i=1}^{\NumIndexOne} \SelectProb_{i\TargetDim}} \big\} -
\AllSelecProbMa^\top
\NuisanceMa(\NuisanceMa^\top\NuisanceMa)^{-1}\NuisanceMa^\top
\AllSelecProbMa} + \bigoh_p(\Numobs^{1/2})\\
& \geq  \opnorm{
\diag\Big\{{\sum_{i=1}^{\NumIndexOne}\SelectProb_{i1}}, \ldots,{\sum_{i=1}^{\NumIndexOne}\SelectProb_{i\TargetDim}}\Big\} - \AllSelecProbMa^\top
\AllSelecProbMa} +
\bigoh_p(\Numobs^{1/2})\\
& = \opnorm{\sum_{i=1}^{\NumIndexOne} \iCovar}+
\bigoh_p(\Numobs^{1/2})\geq c_0\Numobs^{1-2t} +
\bigoh_p(\Numobs^{1/2}),
\end{align*}
where the first inequality uses the bound $ \opnorm{\NuisanceMa (
  \NuisanceMa^\top\NuisanceMa)^{-1} \NuisanceMa^\top}\leq 1$ combined
with positive definiteness; the last line follows from
Assumption~\ref{assn-lin-selection-prob}. Since $t \in (0, 1/4)$ by
assumption, we have \mbox{$1 - 2 t > 1/2$,} so that the proof is
complete.

\section{Neyman orthogonality and other assumptions}
\label{neyman_ortho}

In this section, we verify several conditions on the  score functions  we construct, including the Neyman orthogonality, and Assumption~\ref{assn-glm-minimum-eig},~\ref{assn-glm-minimum-eig-weak},~\ref{assn-glm-identifiability}~and~~\ref{assn-glm-identifiability-weak} on logistic models.

\subsection{Linear model}
Recalling the 
definition of $\ScoreFun_i$ from~\eqref{eq:linear_score} we have
\begin{subequations}
\begin{align}
\label{eqn:lin_verify_condition_mean}
\E(\ScoreFun_i(\TrueTargetPar, \TrueFunPar)\mid\iHistory) & =
\E_{\iRegressor, \iNuisance, \iNoise}[\iCovar^{-1/2}(\iRegressor -
  \SelectProb_i(\iNuisance, \iHistory))\iNoise\mid\iHistory]=0,
\end{align}
where the second equality uses the fact that
$\E(\iNoise\mid\iRegressor, \iNuisance, \iHistory)=0$. 
Next note that
 \begin{align}
\label{eqn:lin_verify_gradient}
 & \E(\partial_{\FunPar}\ScoreFun_i(\TrueTargetPar, \TrueFunPar)
      [\FunParwbar - \TrueFunPar] \mid \iHistory) \notag \\
& = \E_{\iRegressor, \iNuisance}[ - \iCovar^{-1/2} (\iRegressor -
        \SelectProb_i(\iNuisance, \iHistory))
        (\FunParwbar(\iNuisance) - \TrueFunPar(\iNuisance)) \mid
        \iHistory ] \notag \\
& = \E_{\iNuisance} \E_{\iRegressor}[ - \iCovar^{-1/2} (\iRegressor -
        \SelectProb_i(\iNuisance, \iHistory)) \mid \iHistory,
        \iNuisance](\FunParwbar(\iNuisance) -
      \TrueFunPar(\iNuisance) ) = 0,
\end{align}
\end{subequations}
where the last line follows from $\E[\iRegressor -
  \SelectProb_i(\iNuisance, \iHistory) \mid \iHistory, \iNuisance] =
0$. This verifies the condition~\eqref{eqn:score-nuisance-gradient}.


\subsection{Generalized linear model}

Recalling the definition of the score function $\ScoreFun_i$ from
equation~\eqref{eq:glm_score}, we have
\begin{align}
 \label{eq:glm_verify_mean} 
\E(\ScoreFun_i(\TrueTargetPar, \TrueTargetPar, \TrueFunPar) \mid
\iHistory) & = \E_{\iRegressor, \iNuisance,
  \iNoise}[\InvSqrtCovar^*_{i}(\iRegressor -
  \GlmMeanVec^*_i(\iNuisance, \iHistory)) \iNoise \mid \iHistory] = 0
\end{align}
We write $\ScoreFunwtil_i(\TargetPar, \TrueFunPar, \GlmMeanVec^*_i,
\InvSqrtCovar^*_i)=\ScoreFun_i(\TargetPar , \TrueTargetPar,
\TrueFunPar)$ to represent the explicit dependency of $\ScoreFun_i$ on
$\InvSqrtCovar_i$ and $\GlmMeanVec_i$.  To verify the gradient
conditions~\eqref{eq:no_theta01} and~\eqref{eq:no_theta012} we first
compute the partial derivatives of $\ScoreFunwtil_i$ wrt
$\InvSqrtCovar_i, \GlmMeanVec_i$ and $\FunPar$.  Concretely, for any
$\InvSqrtCovarbar_{i}=\InvSqrtCovarbar_{i}(\iNuisance, \iHistory)$,
\begin{align}
\label{eq:glm_verify_omega}  
& \quad\E(\partial_{\InvSqrtCovar_{i}}\ScoreFunwtil_i(\TrueTargetPar,
\TrueFunPar, \GlmMeanVec^*_i, \InvSqrtCovar^*_i)[\InvSqrtCovarbar_{i}
  - \InvSqrtCovar_{i}^*]\mid\iNuisance, \iHistory)\nonumber \\
& = \E_{\iRegressor, \iNuisance}[(\InvSqrtCovarbar_{i} -
  \InvSqrtCovar_{i}^*)(\iRegressor - \GlmMeanVec^*_i(\iNuisance,
  \iHistory))(\iResponse-g\big(\inprod{\iRegressor}{ \TrueTargetPar} +
  \TrueFunPar(\iNuisance) \big)) \mid\iNuisance, \iHistory]
\nonumber \\
& = \E_{\iRegressor, \iNuisance}[(\InvSqrtCovarbar_{i} -
  \InvSqrtCovar_{i}^*)(\iRegressor - \GlmMeanVec^*_i(\iNuisance,
  \iHistory))\iNoise \mid\iNuisance, \iHistory] = 0.
 \end{align}
Similarly, for any $\GlmMeanVecbar_i=\GlmMeanVecbar_i(\iNuisance,
\iHistory)$,
\begin{align}
& \quad\E(\partial_{\GlmMeanVec_{i}}\ScoreFunwtil_i(\TrueTargetPar,
  \TrueFunPar, \GlmMeanVec^*_i, \InvSqrtCovar^*_i)[\GlmMeanVecbar_{i}
    - \GlmMeanVec_{i}^*]\mid\iNuisance, \iHistory)\nonumber\\ &=
  \E_{\iRegressor, \iNuisance}[ - \InvSqrtCovar_{i}^*(\GlmMeanVecbar_i
    - \GlmMeanVec^*_i)(\iResponse-g\big(\inprod{\iRegressor}{
      \TrueTargetPar} + \TrueFunPar(\iNuisance) \big)) \mid\iNuisance,
    \iHistory] \nonumber\\
\label{eq:glm_verify_m}   
& = \E_{\iRegressor, \iNuisance}[ -
  \InvSqrtCovar_{i}^*(\GlmMeanVecbar_i - \GlmMeanVec^*_i)\iNoise
  \mid\iNuisance, \iHistory]=0.
 \end{align}
 Moreover, holding $\InvSqrtCovar_i, \GlmMeanVec_i$ as fixed, for any
 $\FunParbar = \FunParbar(\iNuisance)$
\begin{align}
& \quad\E(\partial_{\FunPar}\ScoreFunwtil_i(\TrueTargetPar,
  \TrueFunPar, \GlmMeanVec^*_i, \InvSqrtCovar^*_i)[\FunParbar -
    \TrueFunPar]\mid\iNuisance, \iHistory)\nonumber\\ &=
  \E[\InvSqrtCovar^*_{i}(\iRegressor - \GlmMeanVec^*_i(\iNuisance,
    \iHistory))g'\big(\inprod{\iRegressor}{ \TrueTargetPar} +
    \TrueFunPar(\iNuisance) \big)[\FunParbar -
      \TrueFunPar]\mid\iNuisance, \iHistory] \nonumber\\ &=
  \InvSqrtCovar^*_{i}\E[(\iRegressor - \GlmMeanVec^*_i(\iNuisance,
    \iHistory))g'\big(\inprod{\iRegressor}{ \TrueTargetPar} +
    \TrueFunPar(\iNuisance) \big)\mid\iHistory,
    \iNuisance](\FunParbar(\iNuisance) -
  \TrueFunPar(\iNuisance))\nonumber \\
\label{eq:glm_verify_theta1}  
& = \InvSqrtCovar^*_{i} \E_{\iNuisance}( 0 \mid\iNuisance,
\iHistory)(\FunParbar(\iNuisance) - \TrueFunPar(\iNuisance)) = 0.
 \end{align}
Putting the pieces together and applying the chain rule, we obtain
\begin{align}
& \E( \partial_{\AuxiNuisancePar}\ScoreFunwtil_i(\TrueTargetPar,
  \TrueTargetPar, \TrueFunPar ) \mid \iHistory) \notag \\
& = \E( \E( \partial_{\InvSqrtCovar_i} \ScoreFunwtil_i
  \partial_{\AuxiNuisancePar} \InvSqrtCovar_i +
  \partial_{\GlmMeanVec_i} \ScoreFunwtil_i \partial_{\AuxiNuisancePar}
  \GlmMeanVec_i + \partial_{\FunPar} \ScoreFunwtil_i
  \partial_{\AuxiNuisancePar} \FunPar \mid \iNuisance, \iHistory) \mid
  \iHistory)\notag \\
  & =\E (\E( \partial_{\InvSqrtCovar_i} \ScoreFunwtil_i
  \mid\iNuisance, \iHistory) \partial_{\AuxiNuisancePar}
  \InvSqrtCovar_i + \E(\partial_{\GlmMeanVec_i} \ScoreFunwtil_i \mid
  \iNuisance, \iHistory) \partial_{\AuxiNuisancePar} \GlmMeanVec_i +
  \E(\partial_{\FunPar}\ScoreFunwtil_i \mid \iNuisance, \iHistory)
  \partial_{\AuxiNuisancePar} \FunPar \mid \iHistory)\notag \\
\label{eq:glm_verfy_theta01}
  & =0.
 \end{align}
 Similarly, for any $\FunParbar=\FunParbar(\iNuisance)$, we have
 \begin{align} 
\label{eq:glm_verify_theta012}  
 \E(\partial_{\FunPar}\ScoreFun_i(\TrueTargetPar, \TrueTargetPar,
 \TrueFunPar )[\FunParbar - \TrueFunPar]\mid\iHistory) & = 0.
 \end{align}
Therefore, we conclude that $\ScoreFun_i(\TargetPar, \AuxiNuisancePar,
\TrueFunPar)$ is a Neyman orthogonal score function at
$(\TrueTargetPar, \TrueTargetPar, \TrueFunPar )$ with nuisance
$(\AuxiNuisancePar, \FunPar)$.

\subsection{Comments on the assumptions of logistic regression}
\label{sec:logistic-regression}

In this section, we show that
Assumptions~\ref{assn-glm-minimum-eig},~\ref{assn-glm-minimum-eig-weak}~and~\ref{assn-glm-identifiability-weak}
are satisfied in the setting of logistic regression. Moreover,
Assumption~\ref{assn-glm-identifiability} is satisfied in the special
case $\TargetDim=1$.

Let us first verify Assumption~\ref{assn-glm-minimum-eig}.  For
logistic regression, the inverse link function is given by $g(x) =
e^x/(1 + e^x)$, and we have $g'(x)=e^x/(1 +
e^x)^2=\GlmVar(x)$. Therefore, using the definition of
$\InvSqrtCovar_i^*$, we have
\begin{align*}
&\quad\CondMean \InvSqrtCovar^*_{i}(\iRegressor -
  \GlmMeanVec^*_i)g'\big(\inprod{\iRegressor}{ \TrueTargetPar} +
  \TrueFunPar(\iNuisance) \big)(\iRegressor - \GlmMeanVec^*_i)^\top\\
&=\CondMean \InvSqrtCovar^*_{i}(\iRegressor -
\GlmMeanVec^*_i)\GlmVar\big(\inprod{\iRegressor}{ \TrueTargetPar} +
\TrueFunPar(\iNuisance) \big)(\iRegressor - \GlmMeanVec^*_i)^\top 
 = \CondMean \InvSqrtCovar^{*-1}_i\succeq c
 \Numobs^{\delta-t}\IdMat_{\TargetDim}\end{align*} for some $c>0$,
where the last inequality follows from
Lemma~\ref{lm:equiv_glm}. Setting $m_{\ScoreFun,2} = c$, we see that
Assumption~\ref{assn-glm-minimum-eig} on the minimum singular value
holds.

Similarly, for Assumption~\ref{assn-glm-minimum-eig-weak}, it follows
from the definition of $\ScalePreCondVec_{i1}^*$ that
\begin{align*}
&\quad \CondMean \ScalePreCondVec^*_{i1}(\iRegressor -
  \GlmMeanVec^*_i) g'\big(\inprod{\iRegressor}{ \TrueTargetPar} +
  \TrueFunPar(\iNuisance) \big) (\iRegressor -
  \GlmMeanVec^*_{i})^\top\OneDirect\\
 &= \CondMean \ScalePreCondVec^*_{i1}(\iRegressor - \GlmMeanVec^*_i)
  \GlmVar \big(\inprod{\iRegressor}{ \TrueTargetPar} +
  \TrueFunPar(\iNuisance) \big) (\iRegressor - \GlmMeanVec^*_{i})^\top
  \OneDirect
  = \CondMean \tfrac{1}{\sqrt{\OneDirect^\top \InvSqrtCovar^{*,2}_i
      \OneDirect}} \geq c \Numobs^{\delta-t}
\end{align*}
for some $c > 0$, where the last inequality follows from the explicit
formula of $\Covar^{-1}_i$ in equation~\eqref{eq:glm_sigma_inverse}
and Assumption~\ref{assn-glm-prob-weak}.  Choosing $m_{\ScoreFun,2}=c$
yields Assumption~\ref{assn-glm-minimum-eig-weak}.

\newcommand{\GlmDiff}{{{\Delta}}}
\newcommand{\GlmDiffVec}{{\mathbf{\Delta}^{\mathrm{vec}}_i}}
\newcommand{\GlmDiffBar}{{\bar{\Delta}}}
\newcommand{\Indicator}{e}
\newcommand{\InvCompDiagMod}{{\widetilde{\InvCompDiag}}}
To verify Assumption~\ref{assn-glm-identifiability}, we first claim that
\begin{align}
 {\CondMean (\ScoreFun_i(\TargetPar, \TrueTargetPar, \TrueFunPar
 ) - \ScoreFun_i(\TrueTargetPar, \TrueTargetPar, \TrueFunPar ))}={[\CondMean\InvSqrtCovar^{*,-1}_i\InvCompDiagMod_i](\TrueTargetPar- \TargetPar)}
 \label{eq:verify_ide_claim1},
\end{align} 
where $\InvCompDiagMod_i$ are some diagonal matrices satisfying
$c_{\InvCompDiag,1}\IdMat_{\TargetDim}\preceq\InvCompDiagMod_i\preceq
c_{\InvCompDiag,2}\IdMat_{\TargetDim}$ for some constants
$c_{\InvCompDiag,1},c_{\InvCompDiag,2}>0$ that may depend on the
problem parameters. We return to establish this claim at the end of
the proof.  On the other hand, we have
\begin{align*}
\CondMean \partial_{\TargetPar}
 \ScoreFun_i(\TrueTargetPar, \TrueTargetPar, \TrueFunPar
 )(\TrueTargetPar- \TargetPar )
 &=\CondMean
\InvSqrtCovar^*_{i}(\iRegressor - \GlmMeanVec^*_i)g'\big(\inprod{\iRegressor}{
  \TrueTargetPar} + \TrueFunPar(\iNuisance) \big)\iRegressor^\top(\TrueTargetPar- \TargetPar)\\
  &=
  \CondMean
\InvSqrtCovar^*_{i}(\iRegressor - \GlmMeanVec^*_i)g'\big(\inprod{\iRegressor}{
  \TrueTargetPar} + \TrueFunPar(\iNuisance) \big)(\iRegressor -
\GlmMeanVec^*_i)^\top(\TrueTargetPar- \TargetPar)\\
&=[\CondMean
\InvSqrtCovar^{*,-1}_{i}](\TrueTargetPar- \TargetPar).
\end{align*} 
Therefore, it remains to show 
\begin{align}\label{eq:verify_ide}
\opnorm{[\CondMean
\InvSqrtCovar^{*,-1}_{i}][\CondMean\InvSqrtCovar^{*,-1}_i\InvCompDiagMod_i]^{-1}}=O_p(1).
\end{align}
When $\TargetDim=1$, since $\InvSqrtCovar^{*,-1}_i>0$, we have
\begin{align*}
\opnorm{[\CondMean
\InvSqrtCovar^{*,-1}_{i}][\CondMean\InvSqrtCovar^{*,-1}_i\InvCompDiagMod_i]^{-1}}
&=|{[\CondMean
\InvSqrtCovar^{*,-1}_{i}]/[\CondMean\InvSqrtCovar^{*,-1}_i\InvCompDiagMod_i]|}
\leq
|{[\CondMean
\InvSqrtCovar^{*,-1}_{i}]/[\CondMean\InvSqrtCovar^{*,-1}_i\cdot c_{\InvCompDiag,1}]}|\\
&=1/c_{\InvCompDiag,1}=O_p(1). 
\end{align*}Therefore by choosing $c_\ScoreFun=c_{\InvCompDiag,1}$ we have verified Assumption~\ref{assn-glm-identifiability} for logistic models with $\TargetDim=1$.

Lastly, we verify
Assumption~\ref{assn-glm-identifiability-weak}. Through similar
calculations, we find that
\begin{align*} 
\CondMean (\ScoreFun_{i1}(\OneDirectPar, \TrueTargetPar, \TrueFunPar )
- \ScoreFun_{i1}(\TrueOneDirectPar, \TrueTargetPar, \TrueFunPar ))
=\Big[\CondMean
  \tfrac{\OneDirect^\top\InvCompDiagMod_i\OneDirect}{\sqrt{\OneDirect^\top
      \InvSqrtCovar_i^{*,2}
      \OneDirect}}\Big](\TrueOneDirectPar- \OneDirectPar),
\end{align*}
where $\InvCompDiagMod_i$ are diagonal matrices satisfying
$c'_{\InvCompDiag,1}\IdMat_{\TargetDim}\preceq\InvCompDiagMod_i\preceq
c'_{\InvCompDiag,2}\IdMat_{\TargetDim}$ for some constants
$c_{\InvCompDiag,1},c_{\InvCompDiag,2}>0$ that may depend on the
problem parameters. Moreover, we have
\begin{align*}
\CondMean
  \partial_{\OneDirectPar}\ScoreFun_{i1}(\TrueOneDirectPar, \TrueTargetPar,
  \TrueFunPar )(\OneDirectPar - \TrueOneDirectPar)=\Big[\CondMean \frac{1}{\sqrt{\OneDirect^\top \InvSqrtCovar_i^{*,2} \OneDirect}}\Big](\TrueOneDirectPar- \OneDirectPar).
\end{align*}
Since 
\begin{align*}
\Big|\Big[\CondMean \frac{\OneDirect^\top\InvCompDiagMod_i\OneDirect}{\sqrt{\OneDirect^\top \InvSqrtCovar_i^{*,2} \OneDirect}}\Big](\TrueOneDirectPar- \OneDirectPar)\Big|
\geq 
\Big|\Big[\CondMean \frac{c'_{\InvCompDiag,1}\vecnorm{\OneDirect}{2}^2}{\sqrt{\OneDirect^\top \InvSqrtCovar_i^{*,2} \OneDirect}}\Big](\TrueOneDirectPar- \OneDirectPar)\Big|
=
c'_{\InvCompDiag,1}\Big|\Big[\CondMean \frac{1}{\sqrt{\OneDirect^\top \InvSqrtCovar_i^{*,2} \OneDirect}}\Big](\TrueOneDirectPar- \OneDirectPar)\Big|,
\end{align*}
Assumption~\ref{assn-glm-identifiability-weak} follows immediately by choosing $c_\ScoreFun=c'_{\InvCompDiag,1}$.

\paragraph*{Proof of claim~\eqref{eq:verify_ide_claim1}}
Note that we have 
\begin{align*}
 &\qquad{\CondMean (\ScoreFun_i(\TargetPar, \TrueTargetPar,
    \TrueFunPar ) - \ScoreFun_i(\TrueTargetPar, \TrueTargetPar,
    \TrueFunPar ))} \\
& = {\CondMean \InvSqrtCovar_i^{*} (\iRegressor- \GlmMeanVec_i^*) \big(
    g(\inprod{\iRegressor}{\TrueTargetPar}+\TrueFunPar(\iNuisance)) -
    g(\inprod{\iRegressor}{\TargetPar} +
    \TrueFunPar(\iNuisance))\big)} \\
& = {\CondMean\InvSqrtCovar_i^{*,-1}\InvSqrtCovar_i^{*,2}(\iRegressor
    - \GlmMeanVec_i^*) \big(g(\inprod{\iRegressor}{\TrueTargetPar} +
    \TrueFunPar(\iNuisance)) - g(\inprod{\iRegressor}{\TargetPar} +
    \TrueFunPar(\iNuisance))\big)}.
\end{align*}
Define $\GlmDiff_k = \GlmDiff_k(\TargetPar) \defn
g(\inprod{\Indicator_k}{\TrueTargetPar} + \TrueFunPar(\iNuisance)) -
g(\inprod{\Indicator_k}{\TargetPar} + \TrueFunPar(\iNuisance))$ for $k
\in [\TargetDim]$, $\GlmDiff_0:=0$ and write $\GlmDiffVec \defn
(\GlmDiff_1, \ldots, \GlmDiff_{\TargetDim})^\top$. Then we have
\begin{align*}
& \qquad \CondMean \InvSqrtCovar_i^{*,-1} \InvSqrtCovar_i^{*,2}
  (\iRegressor - \GlmMeanVec_i^*) \big(
  g(\inprod{\iRegressor}{\TrueTargetPar} + \TrueFunPar(\iNuisance)) -
  g(\inprod{\iRegressor}{\TargetPar}+\TrueFunPar(\iNuisance)) \big) \\
& = \CondMean \InvSqrtCovar_i^{*,-1} \E(\InvSqrtCovar_i^{*,2}
  (\iRegressor- \GlmMeanVec_i^*) \big(
  g(\inprod{\iRegressor}{\TrueTargetPar}+\TrueFunPar(\iNuisance)) -
  g(\inprod{\iRegressor}{\TargetPar} + \TrueFunPar(\iNuisance)) \big)
  | \iNuisance,\iHistory) \\
& =
  \CondMean\InvSqrtCovar_i^{*,-1}\InvSqrtCovar_i^{*,2}\DiagProbMatrix(\GlmDiffVec- \GlmMeanVecbar^*\GlmDiffBar),
\end{align*}
where $\DiagProbMatrix \defn \diag\{\SelectProb_{i1}, \ldots,
\SelectProb_{i{\TargetDim}}\}, \GlmMeanVecbar^*:=\DiagProbMatrix^{-1}
\GlmMeanVec_i^*$ and $\GlmDiffBar \defn \sum_{j=0}^{\TargetDim}
\SelectProb_{ij} \GlmDiff_j$. Here the last line follows from taking
the conditional expectation over $\iRegressor$. We omit the dependence
on time $i$ in $\SelectProb_{ij}$ for notational simplicity. Moreover,
from equation~\eqref{eq:glm_sigma_inverse} in the proof of
Lemma~\ref{lm:glm_lip_cont_temp2}, we have
\begin{align*}
\InvSqrtCovar_i^{*,2}\DiagProbMatrix(\GlmDiffVec- \GlmMeanVecbar^*\GlmDiffBar)
&=
(\InvCompOne_i+\InvCompTwo_i)\DiagProbMatrix(\GlmDiffVec- \GlmMeanVecbar^*\GlmDiffBar)\\ &=
(\InvCompDiag_i+\InvCompTwo_i\DiagProbMatrix)(\GlmDiffVec- \GlmMeanVecbar^*\GlmDiffBar),
\end{align*}
where $\InvCompDiag_i=\diag\{1/\epsbar^*_1, \ldots,1/\epsbar^*_{\TargetDim}\}$,
$\InvCompOne_i \defn \DiagProbMatrix^{-1}\InvCompDiag_i=\diag\{1/(\SelectProb_1\epsbar^*_1), \ldots,1/(\SelectProb_{\TargetDim
}\epsbar^*_{\TargetDim})\}$, 
\begin{align*}
\InvCompTwo_i:=\InvCompDiag_i\InvCompRow_i\frac{\begin{pmatrix} - \SelectProb_0
    \epsbar^*_0 & \GlmMeanVecbar^*_0 \SelectProb_0\\ 
    \GlmMeanVecbar^*_0 \SelectProb_0 & \sum_{k=1}^{\TargetDim}
    \SelectProb_k 
    \GlmMeanVecbar_k^{*2}/\epsbar^*_k\end{pmatrix}}{(\sum_{k=1}^{\TargetDim
  }\SelectProb_k\GlmMeanVecbar_k^{*2}/
  \epsbar^*_k)\SelectProb_0\epsbar^*_0 + \GlmMeanVecbar_0^{*2}
  \SelectProb_0^2}\InvCompRow_i^\top \InvCompDiag_i
  =
  \frac{1_{\TargetDim}1_{\TargetDim}^\top}{\SelectProb_0\epsbar^*_0}+\frac{\InvCompDiag_i\InvCompRow_i\begin{pmatrix} - \SelectProb_0
    \epsbar^*_0 & \GlmMeanVecbar^*_0 \SelectProb_0\\ 
    \GlmMeanVecbar^*_0 \SelectProb_0 & -
    \SelectProb_0 
    \GlmMeanVecbar_0^{*2}/\epsbar^*_0\end{pmatrix}\InvCompRow_i^\top \InvCompDiag_i}{(\sum_{k=1}^{\TargetDim
  }\SelectProb_k\GlmMeanVecbar_k^{*2}/
  \epsbar^*_k)\SelectProb_0\epsbar^*_0 + \GlmMeanVecbar_0^{*2}
  \SelectProb_0^2},
\end{align*}
$\InvCompRow_i \defn \begin{pmatrix} 
\GlmMeanVecbar_1&\GlmMeanVecbar_2 &\cdots &\GlmMeanVecbar_{\TargetDim
}\\ \epsbar^*_1&\epsbar^*_2 &\cdots &\epsbar^*_{\TargetDim} \\
\end{pmatrix}^\top$, 
 $\epsbar^*_j:=\GlmVar(g\big(\TrueTargetPar_{j} +
\TrueFunPar(\iNuisance)\big)$, and \begin{align*}
\GlmMeanVecbar_0^*:=g'(\TrueFunPar(\iNuisance))/\sum_{k=0}^{\TargetDim}\SelectProb_{ik}g'(\TrueTargetPar_k+\TrueFunPar(\iNuisance)).
\end{align*}
Since for logistic models $\GlmVar(g(s))=g'(s)$ for all $s\in\R$, it
follows
that
\begin{align*}
\epsbar^*_j/\GlmMeanVecbar^*_j =
\sum_{k=0}^{\TargetDim}\SelectProb_{ik} g'(\TrueTargetPar_k +
\TrueFunPar(\iNuisance))
\end{align*}
for $0 \leq j \leq \TargetDim$. Therefore, it can be verified that
$\InvCompTwo_i={1_{\TargetDim}1_{\TargetDim}^\top}/{(\SelectProb_0\epsbar^*_0)}$
and hence
\begin{align*}
&\qquad(\InvCompDiag_i+\InvCompTwo_i\DiagProbMatrix)(\GlmDiffVec-
  \GlmMeanVecbar^*\GlmDiffBar) \\
& = \InvCompDiag_i \GlmDiffVec +
  \frac{\GlmDiffBar}{\SelectProb_0\epsbar^*_0} 1_{\TargetDim}-
  \frac{\GlmDiffBar}{\sum_{k=0}^{\TargetDim}
    \SelectProb_{ik}g'(\TrueTargetPar_k+\TrueFunPar(\iNuisance))}1_{\TargetDim}-
  \frac{(1- \SelectProb_0
    \GlmMeanVecbar_0^*)\GlmDiffBar}{\SelectProb_0\epsbar_0^*}1_{\TargetDim}\\
  & = \InvCompDiag_i\GlmDiffVec.
\end{align*}
Putting the pieces together yields
\begin{align*}
{\CondMean (\ScoreFun_i(\TargetPar, \TrueTargetPar, \TrueFunPar ) -
  \ScoreFun_i(\TrueTargetPar, \TrueTargetPar, \TrueFunPar
  ))}={\CondMean\InvSqrtCovar^{*,-1}_i\InvCompDiag_i\GlmDiffVec}.
\end{align*}
Note that
$\GlmDiff_k=g'(\inprod{\Indicator_k}{\AuxiNuisancePar}+\TrueFunPar(\iNuisance))(\TrueTargetPar_k-
\TargetPar_k)$ for some $\AuxiNuisancePar$ by Taylor expansion.  By
the boundedness assumption on $g',\GlmVar$, we can further write
\begin{align*}
  {\CondMean\InvSqrtCovar^{*,-1}_i \InvCompDiag_i \GlmDiffVec} =
  {\CondMean\InvSqrtCovar^{*,-1}_i \InvCompDiagMod_i(\TrueTargetPar-
    \TargetPar)} =
  {[\CondMean\InvSqrtCovar^{*,-1}_i\InvCompDiagMod_i](\TrueTargetPar -
    \TargetPar)},
\end{align*} 
where $\InvCompDiagMod_i$ are diagonal matrices satisfying
$c_{\InvCompDiag,1}\IdMat_{\TargetDim}\preceq\InvCompDiagMod_i\preceq
c_{\InvCompDiag,2}\IdMat_{\TargetDim}$ for some constants
$c_{\InvCompDiag,1},c_{\InvCompDiag,2}>0$ that may depend on the
problem parameters.

\color{black}

\section{Adaptive estimation of the nuisance function}
 \label{SecAdaNuisance}

In this section, we discuss an alternative construction of an
estimator $\SoluTargetPar$ with potentially better sample efficiency.
Our original procedure is based on splitting the dataset and use the
first $\NumIndexOne$ data points to obtain the nuisance estimate
$\EstFunPar$ (and the target estimate $\EstTargetPar$ for GLMs).
Instead, suppose that at each time $i\in[\Numobs]$, we construct an
estimate $\EstFunPar_i$ (and the target estimate $\EstTargetPar_i$ for
GLMs) using the data collected up to time $i-1$ (starting with
$\EstTargetPar_1=0_{\TargetDim},\EstFunPar_1\equiv0$). For partial
linear models, we then solve
\begin{align}
\label{eq:partial_linear_alter}
\frac{1}\Numobs\sum_{i=1}^\Numobs
\ScoreFun_i(\TargetPar,\EstFunPar_i)=0
\end{align}to compute the estimate $\SoluTargetPar$. For generalized linear
models, we then compute the estimate $\SoluTargetPar$ from the system
 \begin{align}
\label{eq:generalize_linear_alter}
\frac{1}\Numobs\sum_{i=1}^\Numobs
\ScoreFun_i(\TargetPar,\EstTargetPar_i,\EstFunPar_i)=0
\end{align}
Given the use of adaptively updated nuisance estimates, it can be
shown that the estimates $\SoluTargetPar$ exhibit sample efficiency
superior to those obtained in
Algorithm~\ref{algo:DML-linear}~and~\ref{algo:DML-glm}. Namely, we
have the following results\footnote{Similar results can also be proved
for fixed direction inference.} (in contrast to
Theorem~\ref{thm:linear_new1}~and~\ref{thm:glm_new1_temp}).
\begin{cors}
\label{cor:linear_new1_alter} 
Suppose that the Assumptions in Theorem~\ref{thm:linear_new1} are in
force, with Assumption~\ref{assn-lin-nuisance-est} replaced by
\myassumption{{$\mathbf{NUI_{ada}}$}}{assn-lin-nuisance-est-alter}{The
  sequence of estimators $\EstFunPar_i$ obtained from
  equation~\eqref{eq:partial_linear_alter} satisfies
  \begin{align}
\label{eq:assn-lin-nuisance-est_alter}    
\frac{1}{\Numobs} \sum_{i=1}^\Numobs
\E_{\EstFunPar_i,\Nuisance_i}(\EstFunPar_i(\iNuisance)-
\TrueFunPar(\iNuisance))^2 \to 0,
\end{align}
where the expectation is over $(\iHistory,\iNuisance)$.}  Then
estimate~$\SoluTargetPar$ obtained from
equation~\eqref{eq:partial_linear_alter} satisfies
\begin{align}
\label{eq:linear_new1_result_alter}
(\sqrt{\Numobs} {\widehat\E_{\Numobs}} \iCovar^{1/2}) (\SoluTargetPar
- \TrueTargetPar) & \convdist \Normal(0, \LinearNoiseVar
\Id_{\TargetDim}).
\end{align}  
\end{cors}
See the proof in Section~\ref{sec:pf_cor:linear_new1_alter}.
\begin{cors}
\label{cor:glm_new1_alter} 
Suppose that the Assumptions in Theorem~\ref{thm:glm_new1_temp} are in
force, with Assumption~\ref{assn-glm-nuisance-est} replaced by
\myassumption{{$\mathbf{NUI^\prime_{ada}}$}}{assn-glm-nuisance-est-alter}{Suppose
  that all distributions in $\Pclass$ are supported on a set
  $\mathrm{dom}(\Pclass)$ (can be $\R^{\NuisanceDim}$). The estimators
  $\EstTargetPar_i, \EstFunPar_i$ obtained in
  equation~\eqref{eq:generalize_linear_alter} satisfy
\begin{align*}
\frac{1}{\Numobs} \sum_{i=1}^\Numobs\vecnorm{\EstTargetPar_i-
  \TrueTargetPar}{2}^2&=o_p(\Numobs^{-1/2}),~~~ \text{and}
\\ \frac1\Numobs\sum_{i=1}^\Numobs\sup
\limits_{\dummyRV\in\mathrm{dom}(\Pclass)}|\EstFunPar_i(\dummyRV) -
\TrueFunPar(\dummyRV)|^2&=o_p(\Numobs^{-1/2}).
\end{align*}
}
 Then estimate~$\SoluTargetPar$ obtained from equation~\eqref{eq:generalize_linear_alter} satisfies
\begin{align}
\label{eq:glm_new1_result_alter}
({\widehat\E_{\Numobs}} \InvSqrtCovarwhat_{i}(\iRegressor - \GlmMeanVecwhat_i)
g'\big( \inprod{\iRegressor}{\EstTargetPar_i} + \EstFunPar_i(\iNuisance)
\big) (\iRegressor - \GlmMeanVecwhat_i)^\top) \sqrt{\Numobs}
(\SoluTargetPar - \TrueTargetPar) \convdist \Normal(0,
\IdMat_{\TargetDim}).
\end{align}  
\end{cors}
See the proof in Section~\ref{sec:pf_cor:glm_new1_alter}.

  It can be verified that a sufficient condition for~\ref{assn-glm-nuisance-est-alter} is 
\begin{align*}
\lim_{i\to\infty}\E[i^{1+\delta_0}\vecnorm{\EstTargetPar_i- \TrueTargetPar}{2}^4]\to0,~~~\lim_{i\to\infty}\E\big[i^{1+\delta_0}\sup \limits_{\dummyRV\in\mathrm{dom}(\Pclass)}|\EstFunPar_i(\dummyRV) - \TrueFunPar(\dummyRV)|^4\big]\to0
\end{align*} for some constant $\delta_0>0$. Moreover, we remark that the conditions~\ref{assn-lin-nuisance-est-alter},~\ref{assn-glm-nuisance-est-alter} are stronger than~\ref{assn-lin-nuisance-est},~\ref{assn-glm-nuisance-est} since they are made on a sequence of estimators instead of a single estimator obtained from sample splitting.

Compared with the estimators from
equations~\eqref{eq:partial_linear_alter} and
\eqref{eq:generalize_linear_alter}, the estimators described in
Algorithm~\ref{algo:DML-linear}~and~\ref{algo:DML-glm} may have larger
asymptotic variances when using a fixed proportion (instead of a
decreasing proportion) of the data points to compute the prior
estimate $\EstFunPar$ (i.e., $\liminf\NumIndexOne/\Numobs>0$). On the
other hand, the estimators~\eqref{eq:partial_linear_alter}
and~\eqref{eq:generalize_linear_alter} require the calculation of the
nuisance estimate $\EstFunPar_i$ at every time step $i$.  This can be
computationally inefficient when a simple update rule of the nuisance
estimate does not exist.


\subsection{Proof of Corollary~\ref{cor:linear_new1_alter}}
\label{sec:pf_cor:linear_new1_alter}

Corollary~\ref{cor:linear_new1_alter} follows from the same arguments
used to prove Theorem~\ref{thm:linear_new1}, with the objects
$\EstFunPar$, $\NumIndexTwo$ and $\EmpMean$ in all formulas replaced,
respectively by $\EstFunPar_i$, $\Numobs$, and $\widehat{\E_\Numobs}$.
The main difference is to show a counterpart of
equation~\eqref{eqn:zero-nuisance-term}, namely, given
Assumption~\ref{assn-lin-nuisance-est-alter}, we have
\begin{align}
\EmpMeanAll\sqrt{\Numobs}\LinScaleVec_{i}
(\EstFunPar_i(\iNuisance) - \TrueFunPar(\iNuisance)
)\overset{p}{\to}0.\label{eqn:zero-nuisance-term_alter}
\end{align}
Since the remainder of the proofs are largely identical, we only prove
equation~\eqref{eqn:zero-nuisance-term_alter} here.

\paragraph*{Proof of equation~\eqref{eqn:zero-nuisance-term_alter}}
We begin by observing that
\begin{align*}
\E[\LinScaleVec_{i} (\EstFunPar_i(\iNuisance) -
  \TrueFunPar(\iNuisance))|\iHistory,\iNuisance]=\E[\LinScaleVec_{i}|\iHistory,\iNuisance](\EstFunPar_i(\iNuisance)
- \TrueFunPar(\iNuisance)) = 0.
\end{align*}
Consequently, it follows that $\{\LinScaleVec_{i}
(\EstFunPar_i(\iNuisance) - \TrueFunPar(\iNuisance))\}_{i=1}^\Numobs$
forms a martingale difference sequence. Moreover, we have
\begin{align*}
\frac{1}{\Numobs} \sum_{i=1}^\Numobs\E\vecnorm{\LinScaleVec_{i}
  (\EstFunPar_i(\iNuisance) - \TrueFunPar(\iNuisance))}{2}^2 &=
\frac1\Numobs\sum_{i=1}^\Numobs\E[(\EstFunPar_i(\iNuisance) -
  \TrueFunPar(\iNuisance))^2\cdot\E[\vecnorm{\LinScaleVec_{i}
    }{2}^2|\iHistory,\iNuisance]] \\
& = \frac{1}{\Numobs} \sum_{i=1}^\Numobs \E[(\EstFunPar_i(\iNuisance)
  - \TrueFunPar(\iNuisance))^2] \to 0,
\end{align*}
where the expectation in the last line is over $(\iHistory,
\iNuisance)$ and the convergence is due to
Assumption~\ref{assn-lin-nuisance-est-alter}. Therefore,
equation~\eqref{eqn:zero-nuisance-term_alter} follows immediately from
Lemma~\ref{lm:tech_md}.

\subsection{Proof of Corollary~\ref{cor:glm_new1_alter}}
\label{sec:pf_cor:glm_new1_alter}

The proof of Corollary~\ref{cor:glm_new1_alter} is largely identical
to that of Theorem~\ref{thm:glm_new1_temp}, but with the prior
estimate $\EstAllNuisancePar$ replaced by adaptive estimates
$\EstAllNuisancePar_i$. Again, we consider the simple case where the
nuisance component is linear, i.e.,
$\TrueFunPar(\iNuisance)=\inprod{\iNuisance}{\TrueNuisancePar}$, and
write $\AllNuisancePar=(\TargetPar,\NuisancePar)$ (similarly for
$\EstAllNuisancePar_i$ and $\TrueAllNuisancePar$). Note that
Assumption~\ref{assn-glm-nuisance-est-alter} implies that
\begin{align*}
\frac{1}{\Numobs} \sum_{i=1}^\Numobs
\vecnorm{\EstTargetPar_i- \TrueTargetPar}{2}=o_p(\Numobs^{-1/4}),~~
\frac{1}{\Numobs} \sum_{i=1}^\Numobs
\vecnorm{\EstNuisancePar_i- \TrueNuisancePar}{2}=o_p(\Numobs^{-1/4})
\end{align*}
by Cauchy-Schwartz inequality. Moreover, from the proof of
Lemma~\ref{lm:glm_lip_cont_temp} we see that
$\E(\ScoreFun_i(\TargetPar, \AllNuisancePar)\mid\iHistory)$ and
$\E(\partial_\AllNuisancePar\ScoreFun_i(\TargetPar,
\AllNuisancePar)\mid\iHistory)$ are uniformly Lipschitz across all
$i$.

Therefore, it can be verified that one can establish the same results
as in the proof of Theorem~\ref{thm:glm_new1_temp} (and the related
lemmas) but with $\vecnorm{\EstAllNuisancePar -
  \TrueAllNuisancePar}{2}$ and $\vecnorm{\EstAllNuisancePar -
  \TrueAllNuisancePar}{2}^2$ replaced by
$\sum_{i=1}^\Numobs\vecnorm{\EstAllNuisancePar_i -
  \TrueAllNuisancePar}{2}/\Numobs$ and
$\sum_{i=1}^\Numobs\vecnorm{\EstAllNuisancePar_i -
  \TrueAllNuisancePar}{2}^2/\Numobs$,
respectively. Corollary~\ref{cor:glm_new1_alter} then follows
immediately from Assumption~\ref{assn-glm-nuisance-est-alter}. Since
the proofs are essentially the same, we omit them here for simplicity.

\color{black}



\newcommand{\iAugHistory}{{\mathcal{G}_{i-1}}}
\newcommand{\AugHistory}{{\mathcal{G}}}

\section{Inference when $\SelectProb_i$ are unknown}
\label{SecEstSelectProb}
In this section, we study the inference problem when the selection
probabilities $\{\SelectProb_i\}_{i=1}^\Numobs$ are unknown, but we
have access to a sequence of consistent estimators
$\{\EstSelectProb_i\}_{i=1}^\Numobs$.  In this setting, one can
similarly obtain the estimates $\SoluTargetPar$ (or
$\SoluOneDirectPar$) by substituting $\SelectProb_i$ with
$\EstSelectProb_i$ in the calculation of the score functions
$\ScoreFun_i$. We demonstrate that a modified version of
Theorem~\ref{thm:linear_new1} remains valid when
$\{\EstSelectProb_i\}_{i=1}^\Numobs$ closely approximates
$\{\SelectProb_i\}_{i=1}^\Numobs$.  Define $\iEstCovar \defn
\E((\iRegressor - \EstSelectProb_i) (\iRegressor -
\EstSelectProb_i)^\top | \iHistory,\iNuisance)$ and $ \vecnorm{v}{\iCovar^{-1}}\defn \sqrt{v^\top\iCovar^{-1} v}$ for any vector $v\in\R^{\TargetDim}$.  We assume the
sequence of estimators $\{\EstSelectProb_i\}_{i=1}^\Numobs$ satisfy
the following set of convergence assumptions:

\myassumption{{\bf CON}}{assn-select-prob}{
\begin{itemize}
  \item[(a)] 
  { $\EstSelectProb_i\in[0,1]$ and $\EstSelectProb_i\in\sigma(\iNuisance,\iHistory)=:\iAugHistory$ for all $i\in[\Numobs]$,
    i.e., $\EstSelectProb_i$ is calculated using $\iNuisance$, the first $i-1$
    samples and any prior knowledge independent of the collected
    samples. }
  \item[(b)] $\EmpMean
    \vecnorm{\EstSelectProb_i- \SelectProb_i}{\iCovar^{-1}}^2\overset{p}{\to}0$,
    and $(\EmpMean
    \vecnorm{\EstSelectProb_i- \SelectProb_i}{\iCovar^{-1}}^2)\cdot
    (\EmpMean (\EstFunPar(\iNuisance) - \TrueFunPar(\iNuisance)
    )^2)=o_p({1}/{\NumIndexTwo}). $
\item [(c)] $\opnorm{\iEstCovar^{-1/2}\iCovar^{1/2}}\leq\EstCovBound$
  for some $\EstCovBound>0$, and $\EmpMean
  \opnorm{\iEstCovar^{-1/2}\iCovar^{1/2}- \IdMat_{\TargetDim}}^2\overset{p}{\to}0$.
\end{itemize}
}

\begin{cors}
\label{cor:linear_new1_est_select} 
Suppose that
Assumptions~\ref{assn-lin-noise},~\ref{assn-lin-selection-prob}~and~\ref{assn-select-prob}
are in force.  When the selection probabilities $\SelectProb_i$ are
unknown, the estimate~$\SoluTargetPar$ obtained from
equation~\eqref{eq:partial_linear_alter} with
$\{\EstSelectProb_i\}_{i=1}^\Numobs$ replacing
$\{\SelectProb_i\}_{i=1}^\Numobs$ satisfies
\begin{align}
\label{eq:linear_new1_est_select}
(\sqrt{\NumIndexTwo} \EmpMean \EstLinScaleVec_i
\iRegressor^\top)
(\SoluTargetPar - \TrueTargetPar) &  \convdist 
\Normal(0, \LinearNoiseVar \Id_{\TargetDim}). 
\end{align}  
\end{cors}
\noindent See the proof in
Section~\ref{sec:pf_cor:linear_new1_est_select}. \\

\noindent We note that the sole distinction between
equation~\eqref{eq:linear_new1_est_select}
and~\eqref{eq:linear_new1_result} in Theorem~\ref{thm:linear_new1}
lies in the preconditioning matrix on the left-hand
side. Specifically, equation~\eqref{eq:linear_new1_est_select}
substitutes $\EmpMean\iCovar^{1/2}$ with $\EmpMean \EstLinScaleVec_i
\iRegressor^\top$, a matrix computable in the absence of known
$\SelectProb_i$. 
We conjecture that similar conclusions might hold fixed direction
inference and generalized linear models when
$\{\EstSelectProb_i\}_{i=1}^{\Numobs}$ closely approximates
$\{\SelectProb_i\}_{i=1}^{\Numobs}$. We view this as a fertile
direction for future research.

In practice, finding such a sequence of consistent estimators $\{\EstSelectProb_i\}_{i=1}^\Numobs$ is difficult in general. 
Theoretically, it is impossible to do so if without any prior knowledge on the selection probabilities, as in the worst case the dependence of $\SelectProb_i$ on $(\iNuisance,\iHistory)$ can be arbitrarily different across $i\in[\Numobs]$ and we only have one sample $\iRegressor$ to estimate  $\SelectProb_i$ for each $i.$

Nevertheless, consistent estimation may be possible if additional prior knowledge is provided.
For example, if the selection probabilities remain constant over time, i.e., 
$\SelectProb_i(\iNuisance,\iHistory)=\SelectProb(\iNuisance)$ for all $i\in[\Numobs]$ and some function $\SelectProb$, then standard estimation methods such as empirical risk minimization could possibly find $\{\EstSelectProb_i\}_{i=1}^\Numobs$  that satisfies Assumption~\ref{assn-select-prob}, provided that 
 $\SelectProb$ has a benign parametric (or nonparametric) form.
Alternatively, 
if the entire set of selection
probability functions $\{\SelectProb_i(\cdot)\}_{i=1}^{\Numobs}$ (we call this set a selection algorithm) is chosen from
a known finite set of selection algorithms, it may be possible to identify the true selection algorithm based on the observed samples $\{(\iRegressor,\iNuisance)\}_{i=1}^\Numobs$ with probability converging to one as $\Numobs$  increases. 

Going beyond the setting of this work, consistent estimation of $\{\SelectProb_i\}_{i=1}^{\Numobs}$ may be possible if we observe a batch of $K$ i.i.d. trajectories $\{(\Response^{(k)}_i,\Regressor^{(k)}_i,\Nuisance^{(k)}_i)\}_{i=1}^\Numobs$, $k\in[K]$ for some sufficiently large $K$~\cite{zhang2020inference}. In this case, we have $K$ i.i.d. samples $\{(\Regressor^{(k)}_i,\Nuisance^{(k)}_i,\History_i^{(k)})\}_{k=1}^K$ to estimate each $\SelectProb_i.$ Therefore, consistent estimation may be achieved when the batch size $K\to\infty$.


\subsection{Proof of Corollary~\ref{cor:linear_new1_est_select}}
\label{sec:pf_cor:linear_new1_est_select} 

Recalling the vector $\LinScaleVec_i \defn \iCovar^{-1/2} (\iRegressor
- \EstSelectProb_i)$ from the proof of Theorem~\ref{thm:linear_new1},
we define the vector $\EstLinScaleVec_i\defn\iEstCovar^{-1/2}
(\iRegressor - \EstSelectProb_i)$.  Similar to the proof of
Theorem~\ref{thm:linear_new1}, we argue that the pair
$(\SoluTargetPar, \EstFunPar)$ satisfies the equation
\begin{align}
\label{linear_asymp1-2}
\sqrt{\NumIndexTwo } (\EmpMean \EstLinScaleVec_i
\iRegressor^\top)(\SoluTargetPar - \TrueTargetPar) & =
\sqrt{\NumIndexTwo } \big \{ \EmpMean \EstLinScaleVec_i \iNoise -
\EmpMean \EstLinScaleVec_i (\EstFunPar(\iNuisance) -
\TrueFunPar(\iNuisance)) \big \}.
\end{align}

Our proof is based on the following two auxiliary claims:
\begin{subequations}
\begin{align}
&\EmpMean \sqrt{\NumIndexTwo }\EstLinScaleVec_i\iNoise \convdist 
\Normal(0, \LinearNoiseVar \Id_{\TargetDim}),
\label{eqn:asymp-normality-consist} \\
&\EmpMean \sqrt{\NumIndexTwo }\EstLinScaleVec_i
(\EstFunPar(\iNuisance) - \TrueFunPar(\iNuisance) )\overset{p}{\to}0
\label{eqn:zero-nuisance-term-consist}. 
%
\end{align}
\end{subequations}Corollary~\ref{cor:linear_new1_est_select} follows immediately from combining equation~\eqref{eqn:asymp-normality-consist}~and~\eqref{eqn:zero-nuisance-term-consist}. 

\paragraph*{
 Proof of equation~\eqref{eqn:asymp-normality-consist}} 
 The proof is essentially the same as the proof of
Lemma~\ref{lm:asym_1}. We only highlight the differences here.

Recall that we define $\iAugHistory$ to be the $\sigma$-field $\sigma(\iNuisance,\iHistory)$. 
Note that $\{\EstLinScaleVec_i\iNoise\}_{i>\NumIndexOne}$ forms a martingale
difference sequence with repsect to $\{\AugHistory_i\}_{i>\NumIndexOne}$  as $\EstLinScaleVec_i\iNoise\in\AugHistory_i$ and  $\E(\EstLinScaleVec_i\iNoise\mid\iAugHistory)=0$. Therefore, we
may prove equation~\eqref{eqn:asymp-normality-consist} by applying the
central limit theorem for martingale difference sequences.

\paragraph*{Asymptotic covariance}
Observe that
\begin{align*}
  \E (\iNoise^2 \EstLinScaleVec_i \EstLinScaleVec_i ^\top\mid
  \iAugHistory)& =\E (\EstLinScaleVec_i \EstLinScaleVec_i
  ^\top\E(\iNoise^2\mid\iRegressor, \iNuisance,
  \iHistory)\mid\iAugHistory)) \\
& = \E (\LinearNoiseVar \EstLinScaleVec_i \EstLinScaleVec_i^\top \mid
  \iNuisance,\iHistory) = \LinearNoiseVar \iEstCovar^{-1/2}
  (\iCovar+(\SelectProb_i - \EstSelectProb_i) (\SelectProb_i -
  \EstSelectProb_i)^\top) \iEstCovar^{-1/2}.
\end{align*} 
Therefore, the asymptotic variance is given by
\begin{align*}
\frac{1}{\NumIndexTwo} \sum_{i=\NumIndexOne+1}^{\Numobs} \E (\iNoise^2
\EstLinScaleVec_i \EstLinScaleVec_i ^\top\mid \iAugHistory) & =
\LinearNoiseVar\EmpMean
\iEstCovar^{-1/2}(\iCovar+(\SelectProb_i- \EstSelectProb_i)(\SelectProb_i- \EstSelectProb_i)^\top)\iEstCovar^{-1/2} \\
& = \LinearNoiseVar\EmpMean \iEstCovar^{-1/2}\iCovar\iEstCovar^{-1/2}+
\LinearNoiseVar\EmpMean
\iEstCovar^{-1/2}(\SelectProb_i- \EstSelectProb_i)(\SelectProb_i- \EstSelectProb_i^\top)
\iEstCovar^{-1/2} \\
& \overset{p}{\to}\sigma^2\IdMat_{\TargetDim},
\end{align*}
where the last line uses Assumption~\ref{assn-select-prob}~(b) and
Lemma~\ref{lm:select_prob_est_1}.

\paragraph*{Lindeberg condition}
Note that by
Assumption~\ref{assn-lin-selection-prob-equi} we have $\iCovar\succeq c_i \IdMat_{\TargetDim}$, and 
\begin{align}
\label{eq:vi_bound_est}
  \|\EstLinScaleVec_i \|_2^2&\leq \opnorm{\iEstCovar^{-1/2}}^2 \cdot
  \| \iRegressor - \EstSelectProb_i(\iNuisance,
  \iHistory)\|_2^2\notag\\ &\leq
  \opnorm{\iEstCovar^{-1/2}\iCovar^{1/2}}^2 \cdot
  \opnorm{\iCovar^{-1/2}}^2\cdot \| \iRegressor -
  \EstSelectProb_i(\iNuisance, \iHistory)\|_2^2 \leq
  4\EstCovBound^2/c_i,
\end{align}
where the last inequality follows from $\|\iRegressor -
\EstSelectProb_i(\iNuisance, \iHistory)\|_2\leq \|\iRegressor\|_2 + \|
\EstSelectProb_i(\iNuisance, \iHistory)\|_2\leq 2$ and
Assumption~\ref{assn-select-prob}. Thus, it can be verified that
$\{\EstLinScaleVec_i \iNoise\}^\Numobs_{i=\NumIndexOne + 1}$ satisfies
Lindeberg's condition following a similar argument as in the proof of
Lemma~\ref{lm:asym_1}.

Putting together the pieces and invoking the martingale central limit
theorem, we conclude $\EmpMean \sqrt{\NumIndexTwo }\EstLinScaleVec_i
\iNoise \convdist \Normal(0, \LinearNoiseVar \IdMat_{\TargetDim})$.

\paragraph*{Proof of equation~\eqref{eqn:zero-nuisance-term-consist}}
Substituting the relation $\EstLinScaleVec_i = \iEstCovar^{-1/2}
(\iRegressor - \SelectProb_i) + \iEstCovar^{-1/2} (\SelectProb_i -
\EstSelectProb_i)$ into the LHS of
equation~\eqref{eqn:zero-nuisance-term-consist} yields the
decomposition $\EmpMean \sqrt{\NumIndexTwo }\EstLinScaleVec_i
(\EstFunPar(\iNuisance) - \TrueFunPar(\iNuisance) ) \equiv \Term_1 +
\Term_2$, where
\begin{align*}
\Term_1 & \defn \EmpMean \sqrt{\NumIndexTwo
}\iEstCovar^{-1/2}(\iRegressor- \SelectProb_i) (\EstFunPar(\iNuisance)
- \TrueFunPar(\iNuisance) ), \quad \mbox{and} \\
\Term_2 & \defn \EmpMean \sqrt{\NumIndexTwo
}\iEstCovar^{-1/2}(\SelectProb_i- \EstSelectProb_i)
(\EstFunPar(\iNuisance) - \TrueFunPar(\iNuisance) ).
\end{align*}
Note that $\{\iEstCovar^{-1/2}(\iRegressor - \SelectProb_i)
(\EstFunPar(\iNuisance) - \TrueFunPar(\iNuisance)
)\}_{i=\NumIndexOne+1}^{\Numobs}$ is a martingale difference sequence with respect to $\{\AugHistory_i\}_{i=\NumIndexOne+1}^\Numobs$.
Combined with the bound $\E(\vecnorm{\iEstCovar^{-1/2}(\iRegressor - \SelectProb_i)}{2}^2|\iAugHistory)=\tr(\iEstCovar^{-1/2}\iCovar\iEstCovar^{-1/2}) \leq \TargetDim
\EstCovBound^2$, it follows from a similar argument as in the proof of
Lemma~\ref{lm:asym_2} that $\Term_1 \overset{p}{\to} 0$.

Turning to the second term $\Term_2,$ observe that
\begin{multline*}
\EmpMean \sqrt{\NumIndexTwo } \iEstCovar^{-1/2}
(\SelectProb_i- \EstSelectProb_i) (\EstFunPar(\iNuisance) -
\TrueFunPar(\iNuisance) ) \\ \leq \sqrt{\NumIndexTwo } (\EmpMean
\vecnorm{\iEstCovar^{-1/2} (\SelectProb_i- \EstSelectProb_i)}{2}^2
)^{1/2}\cdot (\EmpMean | \EstFunPar(\iNuisance) -
\TrueFunPar(\iNuisance)|^2)^{1/2}.
\end{multline*}
By using Assumption~\ref{assn-select-prob}, we see that $\Term_2
\overset{p}{\to} 0$.  Putting together the results for $\Term_1$ and
$\Term_2$ concludes the proof.


\begin{lems}\label{lm:select_prob_est_1}
Under Assumption~\ref{assn-select-prob}, we have the following result
\begin{subequations}
\begin{align}
\opnorm{\EmpMean  \iEstCovar^{-1/2}\iCovar\iEstCovar^{-1/2}- \IdMat_{\TargetDim}}
&\overset{p}{\to}0, \label{eq:est_select_lm_1}\\
\EmpMean  \vecnorm{\iEstCovar^{-1/2}(\SelectProb_i- \EstSelectProb_i)}{2}^2\leq \EstCovBound^2\EmpMean  \vecnorm{\SelectProb_i- \EstSelectProb_i}{\iCovar^{-1}}^2&\overset{p}{\to}0
 \label{eq:est_select_lm_2}. 
\end{align}
\end{subequations}
\end{lems}

\begin{proof}
We start with the proof of equation~\eqref{eq:est_select_lm_1}. 
Let $s_{i1}\geq s_{i2}\geq\ldots \geq s_{i\TargetDim}$ be the singular values of $\iEstCovar^{-1/2}\iCovar^{1/2}$. Note that Assumption~\ref{assn-select-prob} implies 
\begin{align*}
\EmpMean |s_{ik}-1|^2 \overset{p}{\to} 0
\end{align*} for all $k\in[\TargetDim]$. Therefore,
\begin{align*}
\EmpMean |s^2_{ik}-1|\leq\EmpMean |s_{ik}-1|^2+2\EmpMean |s_{ik}-1|\overset{p}{\to} 0,
\end{align*}
where the last step uses Jensen's inequality. Since the eigenvalues of $\iEstCovar^{-1/2}\iCovar\iEstCovar^{-1/2}$ are $\{s^2_{ik}\}_{k=1}^{\TargetDim}$, it follows that
\begin{align*}
\opnorm{\EmpMean  \iEstCovar^{-1/2}\iCovar\iEstCovar^{-1/2}- \IdMat_{\TargetDim}}
&\leq
\EmpMean\opnorm{  \iEstCovar^{-1/2}\iCovar\iEstCovar^{-1/2}- \IdMat_{\TargetDim}}\\
&\leq \EmpMean \sum_{k=1}^{\TargetDim} |s^2_{ik}-1|
\overset{p}{\to}0.
\end{align*}
To prove the claim~\eqref{eq:est_select_lm_2}, we note that
\begin{align*}
\EmpMean
\vecnorm{\iEstCovar^{-1/2}(\SelectProb_i- \EstSelectProb_i)}{2}^2 &=
\EmpMean
\vecnorm{\iEstCovar^{-1/2}\iCovar^{1/2}\iCovar^{-1/2}(\SelectProb_i- \EstSelectProb_i)}{2}^2 \\
& \leq
\EmpMean\opnorm{\iCovar^{1/2}\iEstCovar^{-1}\iCovar^{1/2}}\cdot
\vecnorm{\SelectProb_i- \EstSelectProb_i}{\iCovar^{-1}}^2\\ 
&\leq
\EstCovBound^2\EmpMean
\vecnorm{\SelectProb_i- \EstSelectProb_i}{\iCovar^{-1}}^2\overset{p}{\to}0,
\end{align*}
where the last step uses Assumption~\ref{assn-select-prob}.

\end{proof}

\color{black}

 
\section{Auxiliary lemmas}
\label{SecAuxLemmas}

In this section, we collect the proofs of various lemmas that were
used in the proof of Theorem~\ref{thm:linear_new1}--
\ref{thm:glm_new1_temp}.


\subsection{Auxiliary lemmas for Theorem~\ref{thm:linear_new1}}
\label{proof:lm:thm:linear_new1}

In this section, we state and prove the auxiliary lemmas used in the
proof of Theorem~\ref{thm:linear_new1}.
\begin{lems}\label{lm:asym_1}
Under the assumptions of Theorem~\ref{thm:linear_new1} we have $\EmpMean \sqrt{\NumIndexTwo }\LinScaleVec_i \iNoise \convdist  \Normal(0, \LinearNoiseVar \IdMat_{\TargetDim})$.
\end{lems}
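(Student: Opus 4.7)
The plan is to apply the martingale central limit theorem (MCLT) for vector-valued martingale difference sequences to the triangular array \mbox{$Z_i \defn \sqrt{\NumIndexTwo}^{-1}\sqrt{\NumIndexTwo}\LinScaleVec_i \iNoise = \LinScaleVec_i\iNoise$} for $i \in \IndexTwo$, normalized by $1/\sqrt{\NumIndexTwo}$. First I would verify that $\{\LinScaleVec_i \iNoise\}_{i \in \IndexTwo}$ is a martingale difference sequence with respect to $\{\iHistory\}$: using the tower property with the inner conditioning on $(\iRegressor, \iNuisance, \iHistory)$, together with the noise condition \ref{assn-lin-noise} giving $\E[\iNoise \mid \iRegressor, \iNuisance, \iHistory] = 0$, yields $\E[\LinScaleVec_i \iNoise \mid \iHistory] = 0$.

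Next I would handle the conditional second-moment condition. Conditioning first on $(\iNuisance, \iHistory)$ renders the matrix $\iCovar$ measurable, so
\begin{align*}
\E[\LinScaleVec_i \LinScaleVec_i^\top \iNoise^2 \mid \iHistory]
 = \E\Big[\iCovar^{-1/2}\,\E\big[(\iRegressor - \SelectProb_i)(\iRegressor - \SelectProb_i)^\top \iNoise^2 \,\big|\, \iHistory, \iNuisance\big]\iCovar^{-1/2} \,\Big|\, \iHistory\Big]
 = \LinearNoiseVar \IdMat_{\TargetDim},
\end{align*}
where the final equality uses the constant conditional variance $\LinearNoiseVar$ from \ref{assn-lin-noise} and the definition $\iCovar = \E[(\iRegressor - \SelectProb_i)(\iRegressor - \SelectProb_i)^\top \mid \iHistory, \iNuisance]$. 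Consequently the normalized conditional covariance is \emph{exactly} $\LinearNoiseVar \IdMat_{\TargetDim}$ for every $\NumIndexTwo$, so no convergence argument is needed here.

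The main obstacle is the conditional Lindeberg condition, which requires controlling $\|\LinScaleVec_i\|_2$ in the regime where the selection probabilities are allowed to decay as $i^{-2t}$ with $t < 1/2$. Here I would use the explicit inverse formula \eqref{eq:cov_inverse}, which shows the diagonal entries of $\iCovar^{-1}$ are $1/\SelectProb_{ij} + 1/\SelectProb_{i0}$ and the off-diagonals are $1/\SelectProb_{i0}$, so that $\opnorm{\iCovar^{-1}} \leq \trace(\iCovar^{-1}) \lesssim \TargetDim \max_{j}(1/\SelectProb_{ij})$. By Assumption \ref{assn-lin-selection-prob} this is $\bigoh(i^{2t})$, hence $\|\LinScaleVec_i\|_2 \lesssim i^{t}$ almost surely. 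Combined with sub-Gaussianity of $\iNoise$, which gives $\E[|\iNoise|^p \mid \iRegressor, \iNuisance, \iHistory] \leq C_p \subgauss^{p}$ for every $p \geq 2$, Markov's inequality yields
\begin{align*}
\frac{1}{\NumIndexTwo}\sum_{i \in \IndexTwo}\E\big[\|\LinScaleVec_i \iNoise\|_2^2\, \Ind\{\|\LinScaleVec_i \iNoise\|_2 > \eps\sqrt{\NumIndexTwo}\}\,\big|\,\iHistory\big]
\;\lesssim\; \frac{1}{\NumIndexTwo (\eps\sqrt{\NumIndexTwo})^{p-2}} \sum_{i \in \IndexTwo} i^{pt}
\;\lesssim\; \frac{\Numobs^{pt - (p-2)/2}}{\eps^{p-2}}.
\end{align*}
Since $t < 1/2$, choosing $p$ large enough that $1/2 - 1/p > t$ makes this tend to zero, establishing Lindeberg.

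Finally, invoking the vector-valued MCLT (e.g.\ Corollary 3.1 of Hall and Heyde) gives $\EmpMean \sqrt{\NumIndexTwo}\LinScaleVec_i \iNoise \convdist \Normal(0, \LinearNoiseVar \IdMat_{\TargetDim})$, as claimed. The routine parts are steps 1 and 2; the technical heart is the spectral bound $\opnorm{\iCovar^{-1}} \lesssim i^{2t}$ combined with the higher-moment Lindeberg argument needed to accommodate the full range $t \in [0, 1/2)$ rather than only $t \in [0, 1/4)$.
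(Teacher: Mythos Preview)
Your proposal is correct and follows the same overall strategy as the paper: verify the martingale difference property, compute the conditional covariance to be exactly $\LinearNoiseVar\IdMat_{\TargetDim}$, and check the Lindeberg condition using an almost-sure bound $\|\LinScaleVec_i\|_2^2 \lesssim i^{2t}$ together with sub-Gaussianity of $\iNoise$.

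The one place where your argument diverges from the paper is the Lindeberg verification. The paper bounds $\|\LinScaleVec_i\|_2^2 \leq 4/c_i$ via the equivalent assumption $\iCovar \succeq c_i\IdMat_{\TargetDim}$ (their Lemma on the equivalence of \ref{assn-lin-selection-prob} and its spectral form), and then controls the truncated second moment by integrating the sub-exponential tail of $\iNoise^2$, obtaining a bound of the form $\Numobs^{2t}\exp(-c\,\eps\,\Numobs^{1-2t}) \to 0$. You instead reach the same spectral bound directly from the explicit inverse \eqref{eq:cov_inverse} and then use a $p$-th moment Markov argument, picking $p > 2/(1-2t)$ to drive $\Numobs^{p(t-1/2)+1} \to 0$. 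Both routes handle the full range $t \in [0,1/2)$; yours is slightly more elementary (no tail integration needed) at the cost of a $t$-dependent choice of $p$, while the paper's exponential bound is uniform in $t$ but uses a bit more structure of sub-Gaussian tails.
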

\begin{proof}
Recall that $\E(\iNoise\mid\iHistory, \iRegressor, \iNuisance)=0$ by
our assumption, and we have $\E(\LinScaleVec_i \iNoise\mid
\iHistory)=\E (\LinScaleVec_i \E(\iNoise\mid\iRegressor, \iNuisance,
\iHistory)\mid\iHistory)=0$, and consequently $\{\LinScaleVec_i
\iNoise\}_{i \geq \NumIndexOne}$ is a martingale difference
sequence. We prove Lemma~\ref{lm:asym_1} by applying the standard
martingale central limit theorem on the sequence $\{\LinScaleVec_i
\iNoise\}_{i \geq \NumIndexOne}$.

\paragraph*{Asymptotic covariance}
Observe that
$$\E (\iNoise^2 \LinScaleVec_i \LinScaleVec_i ^\top\mid \iHistory)=\E
(\LinScaleVec_i \LinScaleVec_i
^\top\E(\iNoise^2\mid\iRegressor, \iNuisance, \iHistory)\mid\iHistory)=\E
(\LinearNoiseVar \LinScaleVec_i \LinScaleVec_i
^\top\mid\iHistory)=\LinearNoiseVar \IdMat_d,$$ where the last equality
follows from
\begin{align*}
\E ( \LinScaleVec_i \LinScaleVec_i^\top\mid\iHistory)
 & = \E
(\iCovar^{-1/2}(\iRegressor - \SelectProb_i(\iNuisance, \iHistory))
(\iRegressor - \SelectProb_i(\iNuisance, \iHistory) )^\top
\iCovar^{-1/2}\mid\iHistory) \\
& = \E(\iCovar^{-1/2}\E ((\iRegressor -
\SelectProb_i(\iNuisance, \iHistory))(\iRegressor - \SelectProb_i(\iNuisance, \iHistory))^\top
\mid\iNuisance, \iHistory)\iCovar^{-1/2} \mid \iHistory) \\
& = \E(\iCovar^{-1/2}\iCovar\iCovar^{-1/2} \mid
\iHistory)=\IdMat_{\TargetDim}.
\end{align*}

\paragraph*{Lindeberg condition}
Note that by
Assumption~\ref{assn-lin-selection-prob-equi} we have $\iCovar\succeq c_i \IdMat_{\TargetDim}$, and 
\begin{align}
\label{eq:vi_bound}
  \|\LinScaleVec_i \|_2^2=\opnorm{\LinScaleVec_i \LinScaleVec_i ^\top} \leq
  \opnorm{\iCovar^{-1/2}} \cdot \|
  \iRegressor - \SelectProb_i(\iNuisance, \iHistory))\|_2^2\cdot
  \opnorm{\iCovar^{-1/2}} \leq 4/c_i,
\end{align}
where the second inequality follows from
$\|\iRegressor - \SelectProb_i(\iNuisance, \iHistory)\|_2\leq
\|\iRegressor\|_2 + \| \SelectProb_i(\iNuisance, \iHistory)\|_2\leq
2$. As a result we have $\LinScaleVec_i \LinScaleVec_i ^\top \preceq
4 \IdMat_{\TargetDim}/c_i$ and we deduce
 \begin{align}
\label{eqn:lindeberg-cond-linear}   
0 \preceq \lim_{\Numobs \to \infty} \frac{1}{\Numobs}
\sum_{i=1}^\Numobs\E(\iNoise^2 \LinScaleVec_i
\LinScaleVec_i^\top{1}_{\{\opnorm{\iNoise^2 \LinScaleVec_i
  \LinScaleVec_i^\top} > \eps{\Numobs}\}}\mid\iHistory) 
  &\preceq
\lim_{\Numobs \to \infty} \frac{4}{\Numobs} \sum_{i=1}^\Numobs
\frac{1}{c_i} \E(\iNoise^2 {1}_{\{\iNoise^2 \geq \eps{\Numobs}
  c_i/4\}} \mid\iHistory) \IdMat_{\TargetDim}\notag\\&=:\Term_0.
\end{align}
Since $\iNoise's$ are sub-Gaussian random variables with common
parameter $\subgauss$ almost surely, ${\iNoise^{2}} 's$ are subexponential
random variables with a common parameter. Therefore, there exists some
constant $K_1>0$ depending on $\subgauss$ such that $\prob(\iNoise^2\geq
s)\leq 2\exp(-s/K_1)$, and hence
\begin{align*}
\E(\iNoise^2 1_{\{\iNoise^2\geq\eps \Numobs c_i/4\}})=\int_{\eps
  \Numobs c_i/4}^\infty \prob(\iNoise^2\geq s)ds
  \leq2\int_{\eps
  \Numobs c_i/4}^\infty \exp(-s/K_1) ds =
2 K_1 \exp \Big(\frac{ - \eps \Numobs c_i}{4K_1} \Big).
\end{align*}
Substituting this into equation~\eqref{eqn:lindeberg-cond-linear},
for $t \in (0, 1/2)$, we have
\begin{align*}
\Term_0 & \lesssim \frac{c_0
  K_1}{\Numobs}\sum_{i=1}^{\Numobs}i^{2t}\exp\Big(\frac{ - \eps
v  \Numobs c_0}{4 K_1 i^{2t}} \Big) \leq
c_0K_1\Numobs^{2t}\exp\Big(\frac{ - \eps \Numobs^{1-2t}
  c_0}{4K_1}\Big)\to 0.
\end{align*}
Note that this implies that $\{\LinScaleVec_i
\iNoise\}^\Numobs_{i=\NumIndexOne + 1}$ satisfies Lindeberg's
condition.

Putting together the pieces and invoking the martingale central limit
theorem, we conclude $\EmpMean \sqrt{\NumIndexTwo }\LinScaleVec_i
\iNoise \convdist \Normal(0, \LinearNoiseVar \IdMat_{\TargetDim})$.

{
\paragraph*{Relaxation of Assumption~\ref{assn-lin-noise}}

Sub-Gaussianity of the noise variables $\{\iNoise\}_{i=1}^\Numobs$ is
not necessary for Theorem~\ref{thm:linear_new1} to hold.  The theorem
relies on Lindeberg's condition, which remains valid even when
Assumption~\ref{assn-lin-noise} is relaxed to the following:
\myassumption{\mbox{{$\mathbf{NOI_w}$}$(\powercoef,
    \LinearNoiseVar)$}}{assn-lin-noise-relax}{Conditioned upon
  $(\iRegressor,\iNuisance, \iHistory)$, each element of the zero-mean
  noise sequence has conditional variance \mbox{${\LinearNoiseVar
      \defn \E[\iNoise^2 \mid \iRegressor, \iNuisance, \iHistory]}$.}
  and satisfies
      \begin{align*}
      \Prob(|\iNoise|\geq s)\leq \frac{c}{s^\powercoef},~~\text{for all } s\geq0~\text{and some constant}~ c>0,
      \end{align*}
    }  
}
\noindent for some $\powercoef> 2/(1-2t)$. \\
 
Recall that the scalar $t
\in [0, 1/2)$ was defined in Assumption~\ref{assn-lin-selection-prob}.
  Note that this relaxed assumption allows for many heavy-tailed noise
  distributions that are not sub-Gaussian, including Cauchy
  distribution, (symmetric) Pareto distribution, etc.

Let us sketch the proof under the relaxed
Assumption~\ref{assn-lin-noise-relax}.  We have
\begin{align*}
\E(\iNoise^2 1_{\{\iNoise^2\geq\eps \Numobs c_i/4\}})=\int_{\eps
  \Numobs c_i/4}^\infty \prob(\iNoise^2\geq s)ds
  \leq2c\cdot \int_{\eps
  \Numobs c_i/4}^\infty s^{- \powercoef/2} ds \lesssim 
(\eps \Numobs c_i)^{1- \powercoef/2}
\end{align*}
Therefore, 
\begin{align*}
\Term_0 
& \lesssim 
\frac{
  1}{\Numobs}\sum_{i=1}^{\Numobs}\frac{1}{c_i}(\eps \Numobs c_i)^{1- \powercoef/2} 
  \lesssim
\sum_{i=1}^{\Numobs}( \Numobs c_i)^{- \powercoef/2}\eps^{1- \powercoef/2}\lesssim  \eps^{1- \powercoef/2}\frac{1}{\Numobs^{\powercoef/2}}\sum_{i=1}^{\Numobs} i^{\powercoef t}\lesssim \eps^{1- \powercoef/2} \Numobs^{\powercoef t+1- \powercoef/2}\to 0
\end{align*}
when $\powercoef> 2/(1-2t)$.  Lindeberg's condition is hence satisfied. 
\end{proof}

\begin{lems}\label{lm:asym_2}
Under the assumptions of Theorem~\ref{thm:linear_new1}  we have $\EmpMean
\sqrt{\NumIndexTwo }\LinScaleVec_i
(\EstFunPar(\iNuisance) - \TrueFunPar(\iNuisance) )\overset{p}{\to}0.$
\end{lems}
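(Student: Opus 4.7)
The plan is to exploit sample splitting together with the conditional mean-zero property of $\LinScaleVec_i$ to recast the target quantity as a normalized martingale sum, then apply a second-moment bound. Writing $\xi_i \defn \LinScaleVec_i\bigl(\EstFunPar(\iNuisance) - \TrueFunPar(\iNuisance)\bigr)$, the expression in the lemma statement equals $\tfrac{1}{\sqrt{\NumIndexTwo}}\sum_{i \in \IndexTwo} \xi_i$. Since $\EstFunPar$ is computed from the samples in $\IndexOne$ alone, it is $\History_{\NumIndexOne+1}$-measurable, and hence $\History_{i-1}$-measurable for every $i \in \IndexTwo$. Combined with $\E[\LinScaleVec_i\mid\History_{i-1},\iNuisance] = \iCovar^{-1/2}\E[\iRegressor - \SelectProb_i\mid\History_{i-1},\iNuisance]=0$, this yields $\E[\xi_i\mid\History_{i-1}]=0$; thus $\{\xi_i\}_{i\in\IndexTwo}$ is a martingale difference sequence adapted to $\{\History_i\}$.

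Next I would bound the conditional second moment. The identity $\E[\LinScaleVec_i \LinScaleVec_i^\top\mid\History_{i-1},\iNuisance] = \iCovar^{-1/2}\iCovar\iCovar^{-1/2}=\IdMat_{\TargetDim}$ gives $\E[\|\LinScaleVec_i\|_2^2\mid\History_{i-1},\iNuisance]=\TargetDim$, so
\[
\E[\|\xi_i\|_2^2 \mid \History_{i-1}] = \TargetDim\cdot\E_{\iNuisance}\bigl[(\EstFunPar(\iNuisance) - \TrueFunPar(\iNuisance))^2 \bigm| \History_{i-1}\bigr] \leq \TargetDim\cdot \eps_{\NumIndexOne}^2,
\]
where $\eps_{\NumIndexOne}^2 \defn \sup_{P\in\Pclass}\E_{v\sim P}[(\EstFunPar(v) - \TrueFunPar(v))^2]$. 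The inequality uses that the conditional distribution of $\iNuisance$ given $\History_{i-1}$ belongs to $\Pclass$, and Assumption~\ref{assn-lin-nuisance-est} gives $\eps_{\NumIndexOne} = \liloh_p(1)$.

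Orthogonality of martingale differences together with the tower property then yields
\[
\E\Bigl[\bigl\|\tfrac{1}{\sqrt{\NumIndexTwo}}\sum_{i\in\IndexTwo}\xi_i\bigr\|_2^2 \,\Big|\, \History_{\NumIndexOne+1}\Bigr]
= \tfrac{1}{\NumIndexTwo}\sum_{i\in\IndexTwo}\E[\|\xi_i\|_2^2\mid \History_{\NumIndexOne+1}]
\leq \TargetDim\,\eps_{\NumIndexOne}^2.
\]
The lemma then follows by conditional Markov's inequality: for any $\delta>0$, $\prob\bigl(\|\tfrac{1}{\sqrt{\NumIndexTwo}}\sum_{i\in\IndexTwo}\xi_i\|_2 \geq \delta\bigr) \leq \E\bigl[\min(\TargetDim\,\eps_{\NumIndexOne}^2/\delta^2,\;1)\bigr]$, which tends to $0$ by bounded convergence since $\eps_{\NumIndexOne}\overset{p}{\to}0$.

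There is no genuine obstacle here: the sample-splitting step quarantines the dependence introduced by adaptive data collection, so the nuisance-estimation error enters only through the $L^2(\Pclass)$ risk controlled by Assumption~\ref{assn-lin-nuisance-est}, and the weighting by $\iCovar^{-1/2}$ cancels exactly against the conditional covariance of $\iRegressor - \SelectProb_i$, so that no $1/c_i$ factor---and thus no dependence on the explorability exponent $t$---ever appears in the second-moment bound.
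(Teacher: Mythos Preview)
Your proof is correct and follows essentially the same route as the paper's: both exploit the martingale-difference structure of $\LinScaleVec_i(\EstFunPar(\iNuisance)-\TrueFunPar(\iNuisance))$ together with the identity $\E[\|\LinScaleVec_i\|_2^2\mid\iNuisance,\F_{i-1}]=\TargetDim$ to obtain a second-moment bound of order $\TargetDim\,\eps_{\NumIndexOne}^2$, then conclude via Markov. The only cosmetic difference is that the paper introduces an explicit truncation event $\mathcal C_{\NumIndexOne,\eps}=\{\eps_{\NumIndexOne}\le\eps\}$ before taking unconditional expectations, whereas you condition on $\F_{\NumIndexOne}$ and invoke bounded convergence; also note that with the paper's convention $\F_k=\sigma(\{x_j,y_j,z_j\}_{j\le k})$ the correct conditioning $\sigma$-field is $\F_{\NumIndexOne}$ rather than $\F_{\NumIndexOne+1}$.
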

\begin{proof}
The proof follows from a standard application of Markov's inequality and utilizes Assumption~\ref{assn-lin-nuisance-est}. Note that 
 \begin{align*}
 \E(\LinScaleVec_i
   (\EstFunPar(\iNuisance) - \TrueFunPar(\iNuisance) )
   \mid\iHistory)=\E[\E(\LinScaleVec_i
     \mid\iHistory, \iNuisance)(\EstFunPar(\iNuisance) - \TrueFunPar(\iNuisance)
     )]=0,
 \end{align*}
 and it follows that $\LinScaleVec_i(\EstFunPar(\iNuisance) -
 \TrueFunPar(\iNuisance) )$ is a martingale difference sequence. Now,
 for any $\eps > 0$, define the event
\begin{align*}
\mathcal C_{\NumIndexOne, \eps} & \defn \{\sup_{P\in\Pclass
}(\E_{\Nuisance\sim P}{[\EstFunPar(\Nuisance) - \TrueFunPar( \Nuisance)]^2)^{1/2}} \leq
\eps \}.
\end{align*}
Note that $\mathcal C_{\NumIndexOne, \eps}\in
\History_{\NumIndexOne}$.  We have
\begin{align*}
\E \|\EmpMean \NumIndexTwo^{{1/2}} 1_{\mathcal C_{\NumIndexOne,
    \eps}}\LinScaleVec_i(\EstFunPar(\iNuisance) -
\TrueFunPar(\iNuisance))\|_2^2 & = \E 1_{\mathcal C_{\NumIndexOne,
    \eps}} \E(\EmpMean \|\LinScaleVec_i(\EstFunPar(\iNuisance) -
\TrueFunPar(\iNuisance))\|_2^2 \mid \History_{\NumIndexOne}) \\
& = \E 1_{\mathcal C_{\NumIndexOne, \eps}}\EmpMean\E(
\|\LinScaleVec_i\|_2^2 (\EstFunPar(\iNuisance) -
\TrueFunPar(\iNuisance))^2 \mid \History_{\NumIndexOne})\\ &=
\TargetDim \E 1_{\mathcal C_{\NumIndexOne,
    \eps}}\E(\EmpMean{[\EstFunPar(\iNuisance) -
    \TrueFunPar(\iNuisance)]^2} \mid \History_{\NumIndexOne})\\
& \leq \TargetDim \eps^2,
\end{align*}
where the third line uses the bound $\E(\|\LinScaleVec_i\|_2^2 \mid
\iNuisance, \iHistory)=\E(\tr(\LinScaleVec_i\LinScaleVec^\top_i)\mid
\iNuisance, \iHistory)=\TargetDim$, whereas the last line follows from
the definition of $\mathcal C_{\NumIndexOne, \eps}$.  Thus, for any
$\delta>0$, it follows from Markov's inequality that
\begin{align*}
\prob(\|\EmpMean \NumIndexTwo^{{1/2}}1_{\mathcal C_{\NumIndexOne,
    \eps}}\LinScaleVec_i(\EstFunPar(\iNuisance) -
\TrueFunPar(\iNuisance))\|_2\geq
\sqrt{\frac{\TargetDim}{\delta}}\eps)\leq \delta.
\end{align*}
Since $\prob({\mathcal C_{\NumIndexOne, \eps}})\to 1$ as
$\NumIndexOne\to\infty$ by Assumption~\ref{assn-lin-nuisance-est}, it
follows that $\prob(\|\EmpMean \NumIndexTwo^{{1/2}}
\LinScaleVec_i(\EstFunPar(\iNuisance) -
\TrueFunPar(\iNuisance))\|_2\geq
\sqrt{\frac{\TargetDim}{\delta}}\eps)\leq 2\delta$ for $\NumIndexTwo $
sufficiently large. Putting together the pieces, we conclude $\EmpMean
\sqrt{\NumIndexTwo}\LinScaleVec_i (\EstFunPar(\iNuisance) -
\TrueFunPar(\iNuisance))\overset{p}{\to}0$.
\end{proof}

\begin{lems}
\label{lm:asym_3}
Under the assumptions of Theorem~\ref{thm:linear_new1}, we have
\begin{align*}
\opnorm{\EmpMean \LinScaleVec_i \iRegressor^\top - \EmpMean \iCovar^{1/2}}
=\liloh_p(\sigma_{\min}(\EmpMean \iCovar^{1/2})).
\end{align*}
\end{lems}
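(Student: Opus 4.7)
The plan is to view $\LinScaleVec_i \iRegressor^\top - \iCovar^{1/2}$ as a martingale difference sequence (MDS) with bounded second moment, apply a standard $L^2$ martingale bound to the numerator, and separately lower bound the denominator $\sigma_{\min}(\EmpMean \iCovar^{1/2})$ using the explicit inverse covariance formula~\eqref{eq:cov_inverse} together with Assumption~\ref{assn-lin-selection-prob}. First I would verify the centering identity: conditioning on $(\iHistory, \iNuisance)$, a direct computation using the one-hot structure of $\iRegressor$ gives $\E[(\iRegressor - \SelectProb_i)\iRegressor^\top \mid \iHistory, \iNuisance] = \diag(\SelectProb_i) - \SelectProb_i\SelectProb_i^\top = \iCovar$, hence $\E[\LinScaleVec_i \iRegressor^\top \mid \iHistory, \iNuisance] = \iCovar^{1/2}$. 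Since $\iCovar^{1/2}$ is $\sigma(\iHistory, \iNuisance)$-measurable, the sequence $M_i \defn \LinScaleVec_i \iRegressor^\top - \iCovar^{1/2}$ is an MDS with respect to $\iHistory$.

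Next I would bound the Frobenius norm moment uniformly in $i$. Using $\|\iRegressor\|_2 \leq 1$, one has $\frobnorm{\LinScaleVec_i \iRegressor^\top}^2 \leq \|\LinScaleVec_i\|_2^2$, and taking conditional expectation yields $\E[\|\LinScaleVec_i\|_2^2 \mid \iHistory, \iNuisance] = \tr(\iCovar^{-1/2} \iCovar \iCovar^{-1/2}) = \TargetDim$. Combined with the trivial bound $\frobnorm{\iCovar^{1/2}}^2 = \tr(\iCovar) \leq 1$, this gives $\E\frobnorm{M_i}^2 \leq C$ for a constant depending only on $\TargetDim$. By orthogonality of martingale differences,
\begin{align*}
\E \frobnorm{\sqrt{\NumIndexTwo }\EmpMean M_i}^2 = \frac{1}{\NumIndexTwo}\sum_{i \in \IndexTwo} \E\frobnorm{M_i}^2 \leq C,
\end{align*}
so that $\opnorm{\EmpMean M_i} \leq \frobnorm{\EmpMean M_i} = \bigoh_p(\NumIndexTwo^{-1/2})$.

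For the denominator, I would use the explicit formula~\eqref{eq:cov_inverse}: writing $D = \diag(\SelectProb_{i1}, \ldots, \SelectProb_{i\TargetDim})$ and $p = \SelectProb_i$, one has $\iCovar^{-1} = D^{-1} + D^{-1} p p^\top D^{-1}/\SelectProb_{i0}$, and since $(D^{-1}p)_j = 1$ for all $j$, $\opnorm{\iCovar^{-1}} \leq \max_j 1/\SelectProb_{ij} + \TargetDim/\SelectProb_{i0}$. Assumption~\ref{assn-lin-selection-prob} then gives $\opnorm{\iCovar^{-1}} \leq (\TargetDim+1)i^{2t}/c_0$, hence $\sigma_{\min}(\iCovar^{1/2}) \geq \sqrt{c_0/(\TargetDim+1)}\, i^{-t}$. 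Using the super-additivity $\sigma_{\min}(A+B) \geq \sigma_{\min}(A) + \sigma_{\min}(B)$ for PSD $A, B$, we obtain $\sigma_{\min}(\EmpMean \iCovar^{1/2}) \geq \EmpMean \sigma_{\min}(\iCovar^{1/2}) \gtrsim \Numobs^{-t}$ (each term satisfies $i^{-t} \geq \Numobs^{-t}$).

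Combining the two bounds yields
\begin{align*}
\frac{\opnorm{\EmpMean M_i}}{\sigma_{\min}(\EmpMean \iCovar^{1/2})} = \frac{\bigoh_p(\NumIndexTwo^{-1/2})}{\Omega(\Numobs^{-t})} = \bigoh_p(\NumIndexTwo^{t - 1/2}) = \liloh_p(1),
\end{align*}
since $t < 1/2$ under Assumption~\ref{assn-lin-selection-prob}. The main subtlety—and arguably the main obstacle—is obtaining a second moment bound on $M_i$ that is \emph{uniform in $i$} rather than one that blows up like $i^{2t}$: the naive worst-case bound $\|\LinScaleVec_i\|_2^2 \leq 4/c_i$ would only give ratio $\bigoh_p(\NumIndexTwo^{2t - 1/2})$ and thus restrict to $t < 1/4$. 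Taking the conditional expectation before applying any worst-case inequality is what recovers the full range $t < 1/2$.
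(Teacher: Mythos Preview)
Your proof is correct and in fact more economical than the paper's. The paper triangulates through the conditional mean $\CondMean \iCovar^{1/2}$, splitting the error into $\opnorm{\EmpMean \LinScaleVec_i \iRegressor^\top - \CondMean \iCovar^{1/2}}$ and $\opnorm{\EmpMean \iCovar^{1/2} - \CondMean \iCovar^{1/2}}$, and then further decomposes the first term via $\LinScaleVec_i \iRegressor^\top = \LinScaleVec_i \LinScaleVec_i^\top \iCovar^{1/2} + \LinScaleVec_i \SelectProb_i^\top$; this yields three separate martingale difference pieces and two applications of Weyl's theorem to pass between $\sigma_{\min}(\CondMean \iCovar^{1/2})$ and $\sigma_{\min}(\EmpMean \iCovar^{1/2})$. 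You instead recognize directly that $M_i = \LinScaleVec_i \iRegressor^\top - \iCovar^{1/2}$ is a single MDS with respect to $\{\History_i\}$ (since $\E[M_i \mid \iHistory, \iNuisance] = 0$ and $\iCovar^{1/2}$ is $(\iHistory,\iNuisance)$-measurable), with the same uniform second-moment bound $\E\frobnorm{M_i}^2 \leq \TargetDim$. For the denominator, the paper establishes $\CondMean \iCovar^{1/2} \succeq c\,\Numobs^{-t}\IdMat_{\TargetDim}$ via the equivalent Assumption~\ref{assn-lin-selection-prob-equi} and then transfers this to $\EmpMean \iCovar^{1/2}$; your use of the explicit inverse~\eqref{eq:cov_inverse} plus super-additivity of $\lambda_{\min}$ for PSD matrices gives the same deterministic lower bound on $\sigma_{\min}(\EmpMean \iCovar^{1/2})$ in one step. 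Both routes arrive at $\bigoh_p(\Numobs^{-1/2})/\Omega(\Numobs^{-t}) = \liloh_p(1)$ for $t<1/2$, and your closing remark about why the conditional-expectation bound (rather than the pointwise $\|\LinScaleVec_i\|_2^2 \leq 4/c_i$) is essential for the full range $t<1/2$ is exactly the right observation.
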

\begin{proof}
Note that
\begin{align}
\label{eqn:triangle-decomp}
\opnorm{\EmpMean \LinScaleVec_i \iRegressor^\top - \EmpMean \iCovar^{1/2}}
 & \leq \opnorm{\EmpMean \LinScaleVec_i \iRegressor^\top - \CondMean
  \iCovar^{1/2}} + \opnorm{\CondMean \iCovar^{1/2} - \EmpMean
  \iCovar^{1/2}}.
\end{align}
We bound the two terms above by proving the following two bounds
\begin{subequations}
\begin{align}
\opnorm{\EmpMean \LinScaleVec_i \iRegressor^\top - \CondMean
  \iCovar^{1/2}} &= \liloh_p(\sigma_{\min}(\CondMean \iCovar^{1/2}))
\label{eqn:term-one} \\
\opnorm{\EmpMean \iCovar^{1/2} - \CondMean \iCovar^{1/2}}
&=\liloh_p(\sigma_{\min}(\EmpMean \iCovar^{1/2}))
\label{eqn:term-two}
\end{align}
\end{subequations}
Taking the last two bounds as given for the moment, we substitute them
into equation~\eqref{eqn:triangle-decomp}, thereby finding that
\begin{align*}
&\quad\opnorm{\EmpMean \LinScaleVec_i \iRegressor^\top - \EmpMean
  \iCovar^{1/2}}\notag\\
   &
    = \liloh_p(\sigma_{\min}(\CondMean \iCovar^{1/2}))
+ \liloh_p(\sigma_{\min}(\EmpMean \iCovar^{1/2}))
\\
& \stackrel{(i)}{\leq} \liloh_p(\sigma_{\min}(\EmpMean \iCovar^{1/2}))
+ \opnorm{\EmpMean \iCovar^{1/2} - \CondMean \iCovar^{1/2}} +
\liloh_p(\sigma_{\min}(\EmpMean \iCovar^{1/2})) \\
& \leq \liloh_p(\sigma_{\min}(\EmpMean \iCovar^{1/2})),
\end{align*}
where the inequality (i) follows from Weyl's theorem (see e.g.,
Theorem 4.3.1 in Horn and Johnson~\cite{horn2012matrix}), and the last
inequality follows from the bound~\eqref{eqn:term-two}. It remains to
prove the bounds~\eqref{eqn:term-one} and~\eqref{eqn:term-two}.

\paragraph*{Proof of the bound~\eqref{eqn:term-one}}

Since $\E(\LinScaleVec_i \SelectProb_i^\top \mid \iNuisance, \iHistory)
= \E (\iCovar^{-1/2} (\iRegressor - \SelectProb_i) \SelectProb_i^\top
\mid \iNuisance, \iHistory)=0$, it follows that $\{\LinScaleVec_i
\SelectProb_i^\top\}_{i=\NumIndexOne + 1}^\Numobs$ is a martingale
difference sequence with respect to the filtration
$\iHistory$. Moreover, note that $\E\frobnorm{\LinScaleVec_i
  \SelectProb_i^\top}^2= \E\|\LinScaleVec_i
\|_2^2\|\SelectProb_i\|_2^2\leq \E\|\LinScaleVec_i \|_2^2=\TargetDim
$. Therefore, we have from Lemma~\ref{lm:tech_md} that $\frobnorm{\EmpMean
  \LinScaleVec_i \SelectProb_i^\top}
=\bigoh(1)/\sqrt{\Numobs}=\liloh_p(\Numobs^{-t})$ for $0 < t <
\tfrac{1}{2}$.  Observe that
\begin{align}
\label{eqn:linear-replace-cov}  
\CondMean \LinScaleVec_i \LinScaleVec_i ^\top\iCovar^{1/2} = \CondMean
\iCovar^{1/2} \succeq \frac{c_0}{\Numobs^t} \IdMat_ {\TargetDim}.
\end{align}

Moreover, the random vectors $\LinScaleVec_i \LinScaleVec_i
^\top\iCovar^{1/2} - \E(\LinScaleVec_i \LinScaleVec_i
^\top\iCovar^{1/2} \mid \iHistory)$ define a martingale difference
sequence, and hence
\begin{align}
\label{eqn:matrix-diff-bound}  
\E \frobnorm{\LinScaleVec_i \LinScaleVec_i ^\top\iCovar^{1/2} -
  \E(\LinScaleVec_i \LinScaleVec_i ^\top\iCovar^{1/2} \mid
  \iHistory)}^2
& \leq \E\frobnorm{\LinScaleVec_i \LinScaleVec_i ^\top
  \iCovar^{1/2}}^2 = \E\frobnorm{\LinScaleVec_i (\iRegressor -
  \SelectProb_i)^\top}^2\notag \\
& = \E\|\LinScaleVec_i \|_2^2 \| \iRegressor - \SelectProb_i \|_2^2
\lesssim \E\|\LinScaleVec_i \|_2^2 = \TargetDim.
\end{align} 
Thus, it follows from Lemma~\ref{lm:tech_md} that $\frobnorm{(\EmpMean -
  \CondMean )\LinScaleVec_i \LinScaleVec_i ^\top\iCovar^{1/2}} =
\bigoh_p(\Numobs^{-1/2}) = \liloh_p(\Numobs^{-t})$ when
$t<1/2$. Combining this with $\frobnorm{ \EmpMean \LinScaleVec_i
  \SelectProb_i^\top}=\liloh_p(\Numobs^{-t})$,
equation~\eqref{eqn:linear-replace-cov}, and noting that $\CondMean
\LinScaleVec_i\LinScaleVec_i^\top\iCovar^{1/2}=\CondMean
\iCovar^{1/2}$, we find that
\begin{align*}
 \opnorm{\EmpMean \LinScaleVec_i \iRegressor^\top - \CondMean
   \iCovar^{1/2}} &\leq \opnorm{\EmpMean \LinScaleVec_i
   \LinScaleVec_i^\top\iCovar^{1/2} - \CondMean
   \iCovar^{1/2}}+\opnorm{\EmpMean \LinScaleVec_i
   \SelectProb_i^\top} \\
   &=\liloh_p(\sigma_{\min}(\CondMean
 \iCovar^{1/2})).
 \end{align*}

\paragraph*{Proof of bound~\eqref{eqn:term-two}}

Since $\iCovar^{1/2} - \E(\iCovar^{1/2}\mid\iHistory)$ is a martingale
difference sequence and
\begin{align*}
\E \frobnorm{\iCovar^{1/2} - \E(\iCovar^{1/2}\mid\iHistory)}^2
& \leq \E \frobnorm{\iCovar^{1/2}\|}^2=\E\tr(\iCovar)\leq \TargetDim,
\end{align*}
we have the bound $\E\frobnorm{(\EmpMean - \CondMean
  )\iCovar^{1/2}}^2\leq \frac{\TargetDim}{\NumIndexTwo
}=\bigoh(\Numobs^{-1})$.

From equation~\eqref{eqn:linear-replace-cov}, we have
\begin{align*}
\opnorm{\EmpMean \iCovar^{1/2} - \CondMean
  \iCovar^{1/2}}=\liloh_p(\Numobs^{-1/2})=\liloh_p(\Numobs^{-t})=\liloh_p(\sigma_{\min}(\CondMean
\iCovar^{1/2})).
\end{align*}
It follows from Weyl's theorem that $\opnorm{\EmpMean \iCovar^{1/2} -
  \CondMean \iCovar^{1/2}} = \liloh_p (\sigma_{\min} (\EmpMean
\iCovar^{1/2}))$.
\end{proof}


\subsection{Auxiliary lemmas for Theorem~\ref{thm:linear_new3}}
\label{proof:lm:thm:linear_new3}

This section is devoted to the proofs of the auxiliary lemmas used in
the proof of Theorem~\ref{thm:linear_new3}.

\begin{lems}
\label{lm:asym_21}
Under the assumptions in Theorem~\ref{thm:linear_new3}, we have $\EmpMean
\sqrt{\NumIndexTwo }\LinScaleVecOneArm_{i1} \iNoise \convdist
\Normal(0, \LinearNoiseVar )$.
\end{lems}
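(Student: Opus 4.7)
\medskip

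\noindent\textbf{Proof proposal.} My plan is to apply the martingale central limit theorem to the scalar sequence $\{\LinScaleVecOneArm_{i1}\iNoise\}_{i=\NumIndexOne+1}^{\Numobs}$, mirroring the argument used for~\Cref{lm:asym_1}. First I would check that this is a martingale difference sequence with respect to $\{\iHistory\}$: since $\LinScaleVecOneArm_{i1}$ is $\sigma(\iHistory,\iRegressor,\iNuisance)$-measurable and $\E(\iNoise\mid\iHistory,\iRegressor,\iNuisance)=0$, tower yields $\E(\LinScaleVecOneArm_{i1}\iNoise\mid\iHistory)=0$. For the asymptotic variance, the paper already records $\E|\inprod{\ScalePreCondVec_{i1}}{\iRegressor - \SelectProb_i}|^2=1$, which follows from the computation
\begin{align*}
\E(\LinScaleVecOneArm_{i1}^2 \mid \iHistory,\iNuisance)
= \ScalePreCondVec_{i1}^\top \Covar_i \ScalePreCondVec_{i1}
= \frac{\OneDirect^\top \Covar_i^{-1}\Covar_i\Covar_i^{-1}\OneDirect}{\OneDirect^\top\Covar_i^{-1}\OneDirect}
= 1.
\end{align*}
Combining with Assumption~\ref{assn-lin-noise}, this gives $\E(\LinScaleVecOneArm_{i1}^2 \iNoise^2\mid \iHistory)=\LinearNoiseVar$, so the predictable quadratic variation is exactly $\LinearNoiseVar$.

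The main technical step is a Lindeberg-type condition, which in turn reduces to an almost-sure upper bound on $|\LinScaleVecOneArm_{i1}|$. Using the explicit formula~\eqref{eq:cov_inverse} for $\iCovar^{-1}$, a direct calculation shows that $\Covar_i^{-1}(e_k-\SelectProb_i)=e_k/\SelectProb_{ik}$ and $\Covar_i^{-1}(0-\SelectProb_i)=-\mathbf{1}/\SelectProb_{i0}$. Consequently,
\begin{align*}
|\LinScaleVecOneArm_{i1}|^2 =
\begin{cases}
\OneDirect_k^2/\big(\SelectProb_{ik}^2 \cdot \OneDirect^\top\Covar_i^{-1}\OneDirect\big), & \iRegressor=e_k,\\
(\sum_j\OneDirect_j)^2/\big(\SelectProb_{i0}^2 \cdot \OneDirect^\top\Covar_i^{-1}\OneDirect\big), & \iRegressor=0.
\end{cases}
\end{align*}
Since $\OneDirect^\top\Covar_i^{-1}\OneDirect \geq \OneDirect_k^2/\SelectProb_{ik}$ and $\OneDirect^\top\Covar_i^{-1}\OneDirect \geq (\sum_j\OneDirect_j)^2/\SelectProb_{i0}$, both cases yield $|\LinScaleVecOneArm_{i1}|^2 \leq 1/\SelectProb_{ik}$ (resp.\ $1/\SelectProb_{i0}$), and in case $k\notin \SuppDirect$ we have $\OneDirect_k=0$ so $\LinScaleVecOneArm_{i1}=0$. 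Hence Assumption~\ref{assn-lin-selection-prob-weak} gives the almost-sure bound $|\LinScaleVecOneArm_{i1}|^2 \leq i^{2t}/c_0$ uniformly. This is the step I expect to be the main obstacle, since it crucially uses the fact that the weaker selection condition is imposed precisely on $\SuppDirect\cup\{0\}$---exactly the indices that contribute to $\LinScaleVecOneArm_{i1}$.

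With the bound $|\LinScaleVecOneArm_{i1}|^2 \leq i^{2t}/c_0$ in hand, I would verify the Lindeberg condition by the same subexponential tail argument as in the proof of~\Cref{lm:asym_1}: using that $\iNoise^2$ is conditionally subexponential to bound $\E(\iNoise^2\mathbf{1}_{\{\iNoise^2\geq \eps\NumIndexTwo c_i/1\}}\mid\iHistory) \leq 2K_1\exp(-\eps\NumIndexTwo c_i/K_1)$ for a constant $K_1$ depending on $\subgauss$, and summing over $i$ to see that the resulting sum is $\bigoh(\Numobs^{2t}\exp(-c\Numobs^{1-2t}))\to 0$ for $t<1/2$. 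Applying the martingale CLT to $\{\LinScaleVecOneArm_{i1}\iNoise\}$ with limiting conditional variance $\LinearNoiseVar$ then yields the claimed convergence $\EmpMean\sqrt{\NumIndexTwo}\LinScaleVecOneArm_{i1}\iNoise\convdist \Normal(0,\LinearNoiseVar)$.
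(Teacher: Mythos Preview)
Your proposal is correct and follows the same overall architecture as the paper: verify the martingale difference property, compute the conditional variance $\E(\LinScaleVecOneArm_{i1}^2\iNoise^2\mid\iHistory)=\LinearNoiseVar$, establish an almost-sure bound $\LinScaleVecOneArm_{i1}^2\lesssim 1/c_i$, and then run the same subexponential tail argument as in~\Cref{lm:asym_1} to verify Lindeberg's condition.

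The one point of genuine difference is how you obtain the almost-sure bound on $\LinScaleVecOneArm_{i1}^2$. The paper bounds $\LinScaleVecOneArm_{i1}^2\leq\|\ScalePreCondVec_{i1}\|_2^2\cdot\|\iRegressor-\SelectProb_i\|_2^2$ and then controls the ratio $\OneDirect^\top\iCovar^{-2}\OneDirect/\OneDirect^\top\iCovar^{-1}\OneDirect$ entrywise using the explicit formula~\eqref{eq:cov_inverse}. Your route is more direct and sharper: you exploit the discrete support of $\iRegressor$ to compute $\iCovar^{-1}(\iRegressor-\SelectProb_i)$ exactly in each case, and then use the clean identity $\OneDirect^\top\iCovar^{-1}\OneDirect=\sum_{j}\OneDirect_j^2/\SelectProb_{ij}+(\sum_j\OneDirect_j)^2/\SelectProb_{i0}$ to read off $\LinScaleVecOneArm_{i1}^2\leq 1/\SelectProb_{ik}$ (resp.\ $1/\SelectProb_{i0}$). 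This makes the role of the weaker selection condition~\ref{assn-lin-selection-prob-weak} completely transparent: coordinates $k\notin\SuppDirect$ contribute nothing, so only the lower bounds on $\SelectProb_{ik}$ for $k\in\SuppDirect\cup\{0\}$ are needed. The paper's entrywise ratio argument reaches the same conclusion but is less explicit about why only $\SuppDirect\cup\{0\}$ matters, and it implicitly treats the cross terms $\OneDirect_j\OneDirect_k$ as if they were nonnegative. Your computation avoids that issue entirely.
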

\begin{proof}
We follow an argument very similar to that used in
proving Lemma~\ref{lm:asym_1}: we show $\{\LinScaleVecOneArm_{i1} \iNoise
\}_{i \geq 1}$ is a martingale difference sequence, so that a standard
martingale central limit theorem can be applied.

It follows from straightforward calculations that
$\LinScaleVecOneArm_{i1} \iNoise$ is a martingale difference sequence.
Moreover, we have
\begin{align*}
\E (\iNoise^2 {{\LinScaleVecOneArm_{i1}}^2}\mid \iHistory)=\E
({\LinScaleVecOneArm_{i1}}^2 \E(\iNoise^2\mid\iRegressor, \iNuisance,
\iHistory) \mid \iHistory)
= \E (\LinearNoiseVar {\LinScaleVecOneArm_{i1}}^2 \mid\iHistory) =
\LinearNoiseVar,
\end{align*}
where the last equality follows from the relation
\begin{align*}
\E{\LinScaleVecOneArm_{i1}}^2 & = \E\ \frac{\OneDirect^\top
  \iCovar^{-1}(\iRegressor - \SelectProb_{i})(\iRegressor -
  \SelectProb_{i})^\top\iCovar^{-1}\OneDirect}{{\OneDirect^\top
    \iCovar^{-1} \OneDirect}} = 1.
\end{align*}

We now verify the Lindeberg condition. First observe that
$\SelectProb_{ik}\gtrsim c_i$ for all $k\in\SuppDirect\cup\{0\}$, and
hence
\begin{align}
\label{eq:vi_bound_b}
\LinScaleVecOneArm_{i1}^2 & \leq \|\ScalePreCondVec_{i1}\|_2^2 \cdot
\| \iRegressor - \SelectProb_i(\iNuisance, \iHistory)\|_2^2
\notag \\
& \lesssim
\frac{\OneDirect^\top\iCovar^{-2}\OneDirect}{{\OneDirect^\top
    \iCovar^{-1} \OneDirect}}
\notag \\
& = \frac{\sum_{j,k=1}^{\TargetDim}\OneDirect_j\Covar^{-2}_{i,jk}
  \OneDirect_k}{\sum_{j,k=1}^{\TargetDim}\OneDirect_j\Covar^{-1}_{i,jk}\OneDirect_k}
\lesssim \frac{1}{c_i},
\end{align}
where the second inequality follows from $\| \iRegressor -
\SelectProb_i(\iNuisance, \iHistory)\|_2\leq \|\iRegressor\|_2 + \|
\SelectProb_i(\iNuisance, \iHistory)\|_2\leq 2$ and the definition of
$\ScalePreCondVec_{i1}$, the last inequality is due to the fact that
for any $j,k\in\SuppDirect$
\begin{align*}
\frac{\Covar^{-2}_{i,jk}}{\Covar^{-1}_{i,jk}} & =
\frac{\sum_{l=1}^{\TargetDim}(\gamma_i +
  1_{\{j=l\}}\frac{1}{\SelectProb_{j}})(\gamma_i +
  1_{\{l=k\}}\frac{1}{\SelectProb_{k}})}{\gamma_i +
  1_{\{j=k\}}\frac{1}{\SelectProb_{j}}}
 \leq \TargetDim (\gamma_i + \frac{1}{\SelectProb_{j}} +
 \frac{1}{\SelectProb_{k}})\lesssim \frac{1}{c_i}
 \end{align*} 
by the expression~\eqref{eq:cov_inverse}.  Therefore, we have the
bound $\iNoise^2 \LinScaleVecOneArm_{i1}^2 \leq \iNoise^2/c_i $, and
for any $\eps>0$,
\begin{align}
\label{eqn:lindeberg-cond-linear2}  
0 \leq
\lim_{\Numobs\to\infty}\frac{1}{\Numobs}\sum_{i=1}^\Numobs\E(\iNoise^2
\LinScaleVecOneArm_{i1}^2 {1}_{\{|\iNoise^2
  \LinScaleVecOneArm_{i1}^2|>\eps{\Numobs}\}}\mid\iHistory) &\lesssim
\lim_{\Numobs\to\infty} \frac{1}{\Numobs}
\sum_{i=1}^\Numobs\frac{1}{c_i}\E(\iNoise^2 {1}_{\{\iNoise^2 \geq
  \eps{\Numobs}c_i\}} \mid\iHistory) \to 0,
\end{align}
where the convergence follows from the sub-Gaussianity of $\iNoise$,
and the same argument used in proving
equation~\eqref{eqn:lindeberg-cond-linear} in Lemma~\ref{lm:asym_1}.  This
implies that $\{\LinScaleVecOneArm_{i1}
\iNoise\}^\Numobs_{i=\NumIndexOne + 1}$ satisfies Lindeberg's
condition.

Putting together the pieces and applying the martingale central limit
theorem, we conclude $\EmpMean \sqrt{\NumIndexTwo
}\LinScaleVecOneArm_{i1} \iNoise \convdist \Normal(0, \LinearNoiseVar
)$.
\end{proof}

\begin{lems}
\label{lm:asym_22}
Under the assumptions of Theorem~\ref{thm:linear_new3}, we have
\begin{align*}
\EmpMean \sqrt{\NumIndexTwo }\LinScaleVecOneArm_{i1}
(\EstFunPar(\iNuisance) - \TrueFunPar(\iNuisance) )\overset{p}{\to} 0.
\end{align*}
\end{lems}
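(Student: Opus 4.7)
}

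The plan is to mirror the argument used for Lemma~\ref{lm:asym_2}, exploiting the facts that the pilot $\EstFunPar$ is measurable with respect to $\History_{\NumIndexOne}\subseteq\iHistory$ for every $i\geq\NumIndexOne+1$, and that the conditional distribution of $\iNuisance$ given $\iHistory$ lies in $\Pclass$. First I would observe that, because $\ScalePreCondVec_{i1}$ is $(\iHistory,\iNuisance)$-measurable and $\E[\iRegressor-\SelectProb_i\mid\iHistory,\iNuisance]=0$, we have $\E[\LinScaleVecOneArm_{i1}\mid\iHistory,\iNuisance]=0$, whence
\[
\E\bigl[\LinScaleVecOneArm_{i1}(\EstFunPar(\iNuisance)-\TrueFunPar(\iNuisance))\mid\iHistory\bigr]=0,
\]
so $\{\LinScaleVecOneArm_{i1}(\EstFunPar(\iNuisance)-\TrueFunPar(\iNuisance))\}_{i\geq\NumIndexOne+1}$ is a scalar martingale difference sequence adapted to $\{\iHistory\}$.

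Next I would compute the conditional second moment. A direct calculation using the definition $\ScalePreCondVec_{i1}=(\iCovar^{-1}\OneDirect)/\sqrt{\OneDirect^\top\iCovar^{-1}\OneDirect}$ gives
\[
\E\bigl[\LinScaleVecOneArm_{i1}^{2}\mid\iHistory,\iNuisance\bigr]
=\ScalePreCondVec_{i1}^\top\iCovar\ScalePreCondVec_{i1}
=\frac{\OneDirect^\top\iCovar^{-1}\iCovar\,\iCovar^{-1}\OneDirect}{\OneDirect^\top\iCovar^{-1}\OneDirect}=1.
\]
Combined with the MDS property, this yields the orthogonality identity
\[
\E\bigl\|\sqrt{\NumIndexTwo}\,\EmpMean\,\LinScaleVecOneArm_{i1}(\EstFunPar(\iNuisance)-\TrueFunPar(\iNuisance))\bigr\|^{2}
=\EmpMean\E\bigl[(\EstFunPar(\iNuisance)-\TrueFunPar(\iNuisance))^{2}\bigm|\iHistory\bigr].
\]

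Finally I would bring in Assumption~\ref{assn-lin-nuisance-est}. For any $\eps>0$, define the $\History_{\NumIndexOne}$-measurable event
\[
\mathcal C_{\NumIndexOne,\eps}\defn\bigl\{\sup_{P\in\Pclass}(\E_{v\sim P}|\EstFunPar(v)-\TrueFunPar(v)|^{2})^{1/2}\leq\eps\bigr\},
\]
whose probability tends to one. Since the conditional law of $\iNuisance$ given $\iHistory$ belongs to $\Pclass$, on $\mathcal C_{\NumIndexOne,\eps}$ each summand is bounded in conditional expectation by $\eps^{2}$, giving $\E[\mathbf 1_{\mathcal C_{\NumIndexOne,\eps}}\|\sqrt{\NumIndexTwo}\,\EmpMean\,\LinScaleVecOneArm_{i1}(\EstFunPar-\TrueFunPar)\|^{2}]\leq\eps^{2}$. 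Markov's inequality and $\Prob(\mathcal C_{\NumIndexOne,\eps})\to 1$ then deliver the stated convergence in probability. I do not expect any genuine obstacle here: the whole argument is essentially a one-dimensional version of Lemma~\ref{lm:asym_2}, and the only point that requires care is verifying that $\E[\LinScaleVecOneArm_{i1}^{2}\mid\iHistory,\iNuisance]=1$ so that no additional $\TargetDim$ factor appears on the right-hand side.
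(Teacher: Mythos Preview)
Your proposal is correct and follows essentially the same route as the paper: the paper's proof of Lemma~\ref{lm:asym_22} simply observes that the argument of Lemma~\ref{lm:asym_2} goes through verbatim once one replaces $\LinScaleVec_i$ by $\LinScaleVecOneArm_{i1}$ and uses $\E[\LinScaleVecOneArm_{i1}^{2}\mid\iNuisance,\iHistory]=1$, which is exactly what you do. One small slip: your displayed ``orthogonality identity'' equates a deterministic left-hand side with the random quantity $\EmpMean \E[(\EstFunPar-\TrueFunPar)^2\mid\iHistory]$; you need an outer expectation on the right, but your subsequent indicator-truncated bound is stated correctly and the argument is unaffected.
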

\begin{proof}
  Note that the proof of Lemma~\ref{lm:asym_2} only exploits the
  boundedness condition $\E \|\LinScaleVec_i \|^2=\TargetDim$.
  Moreover, we have shown $\E \LinScaleVecOneArm_{i1} ^2=1$ in the
  proof of Lemma~\ref{lm:asym_21}. Thus, this lemma can be established by
  following exactly the same argument used to prove Lemma~\ref{lm:asym_2},
  with $\LinScaleVec_i$ replaced by $\LinScaleVecOneArm_{i1}$.
\end{proof}

\begin{lems}
\label{lm:asym_23}
Under the assumptions of Theorem~\ref{thm:linear_new3} we have
\begin{align*}
\Big|\EmpMean \LinScaleVecOneArm_{i1} \iRegressor^\top \OneDirect -
\EmpMean \frac{1}{\sqrt{\OneDirect^\top\iCovar^{-1} \OneDirect}} \Big|
=\liloh_p \Big( \Big |\EmpMean
\frac{1}{\sqrt{\OneDirect^\top\iCovar^{-1}\OneDirect}} \Big |\Big).
\end{align*}
\end{lems}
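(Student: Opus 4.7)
The heart of the matter is that $\LinScaleVecOneArm_{i1}\iRegressor^\top\OneDirect$ has a \emph{computable} conditional mean given $(\iHistory,\iNuisance)$, and this mean is exactly the target quantity $\tfrac{1}{\sqrt{\OneDirect^\top\iCovar^{-1}\OneDirect}}$. My plan is therefore to turn the difference into a martingale average of bounded variance, and then to contrast it against a deterministic lower bound on $\EmpMean\tfrac{1}{\sqrt{\OneDirect^\top\iCovar^{-1}\OneDirect}}$.

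First I would write $\iRegressor^\top\OneDirect=(\iRegressor-\SelectProb_i)^\top\OneDirect+\SelectProb_i^\top\OneDirect$ and exploit the fact that $\ScalePreCondVec_{i1}$ is measurable with respect to $\sigma(\iHistory,\iNuisance)$ together with $\E[\iRegressor-\SelectProb_i\mid\iHistory,\iNuisance]=0$. This yields
\begin{align*}
\E[\LinScaleVecOneArm_{i1}\iRegressor^\top\OneDirect\mid\iHistory,\iNuisance]
= \ScalePreCondVec_{i1}^\top\iCovar\OneDirect
= \frac{\OneDirect^\top\iCovar^{-1}\iCovar\OneDirect}{\sqrt{\OneDirect^\top\iCovar^{-1}\OneDirect}}
= \frac{1}{\sqrt{\OneDirect^\top\iCovar^{-1}\OneDirect}},
\end{align*}
where in the last step I used $\|\OneDirect\|_2=1$. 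Hence the residuals $W_i \defn \LinScaleVecOneArm_{i1}\iRegressor^\top\OneDirect - \tfrac{1}{\sqrt{\OneDirect^\top\iCovar^{-1}\OneDirect}}$ form a martingale difference sequence with respect to the enlarged filtration $\{\sigma(\iHistory,\iNuisance)\}_{i\ge 1}$.

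Next I would bound the second moment of each $W_i$ uniformly in $i$. Using $|\iRegressor^\top\OneDirect|\le 1$ together with the identity $\E[\LinScaleVecOneArm_{i1}^2\mid\iHistory,\iNuisance]=\ScalePreCondVec_{i1}^\top\iCovar\ScalePreCondVec_{i1}=1$ already established in the proof of~\Cref{lm:asym_21}, I obtain $\E W_i^2 \le 2$. Orthogonality of martingale differences (or a direct appeal to~\Cref{lm:tech_md}) then gives $\E|\EmpMean W_i|^2 \le 2/\NumIndexTwo$, so that
\begin{align*}
\Big|\EmpMean \LinScaleVecOneArm_{i1}\iRegressor^\top\OneDirect - \EmpMean\tfrac{1}{\sqrt{\OneDirect^\top\iCovar^{-1}\OneDirect}}\Big|
= |\EmpMean W_i| = \bigoh_p(\Numobs^{-1/2}).
\end{align*}

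The last ingredient is a deterministic lower bound on the denominator, which is where Assumption~\ref{assn-lin-selection-prob-weak} enters. From the closed-form expression~\eqref{eq:cov_inverse}, the quadratic form splits as
\begin{align*}
\OneDirect^\top\iCovar^{-1}\OneDirect
= \sum_{j\in\SuppDirect}\frac{\OneDirect_j^2}{\SelectProb_{ij}}
+ \frac{1}{\SelectProb_{i0}}\Big(\sum_{j\in\SuppDirect}\OneDirect_j\Big)^2,
\end{align*}
and Assumption~\ref{assn-lin-selection-prob-weak} bounds each inverse probability on the right-hand side by $i^{2t}/c_0\le \Numobs^{2t}/c_0$. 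Consequently $\tfrac{1}{\sqrt{\OneDirect^\top\iCovar^{-1}\OneDirect}}\gtrsim \Numobs^{-t}$ for every $i\le\Numobs$, so $\EmpMean\tfrac{1}{\sqrt{\OneDirect^\top\iCovar^{-1}\OneDirect}}\gtrsim \Numobs^{-t}$ deterministically. Since $t<1/2$, the martingale bound $\bigoh_p(\Numobs^{-1/2})$ is of strictly smaller order than $\Numobs^{-t}$, yielding $|\EmpMean W_i|=\liloh_p(\EmpMean\tfrac{1}{\sqrt{\OneDirect^\top\iCovar^{-1}\OneDirect}})$, as required. The only mildly delicate piece is the conditional-mean computation above; the rest is routine martingale accounting together with the explicit block structure of $\iCovar^{-1}$.
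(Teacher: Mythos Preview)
Your proof is correct and in fact cleaner than the paper's. The key difference is the choice of conditioning: you condition on the finer $\sigma$-field $\sigma(\iHistory,\iNuisance)$, under which the conditional mean of $\LinScaleVecOneArm_{i1}\iRegressor^\top\OneDirect$ is \emph{exactly} $\tfrac{1}{\sqrt{\OneDirect^\top\iCovar^{-1}\OneDirect}}$, so the difference is a single martingale difference sequence $\{W_i\}$ with uniformly bounded second moment. The paper instead inserts the coarser intermediate term $\CondMean \tfrac{1}{\sqrt{\OneDirect^\top\iCovar^{-1}\OneDirect}}$ (conditioning only on $\iHistory$), which forces a triangle-inequality split into two pieces $T_1$ and $T_2$, with $T_1$ then further decomposed via $\iRegressor=(\iRegressor-\SelectProb_i)+\SelectProb_i$; each fragment is controlled separately as an $\bigoh_p(\Numobs^{-1/2})$ martingale term. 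Both routes ultimately rest on the same deterministic lower bound $\EmpMean\tfrac{1}{\sqrt{\OneDirect^\top\iCovar^{-1}\OneDirect}}\gtrsim\Numobs^{-t}$ coming from Assumption~\ref{assn-lin-selection-prob-weak} and the explicit form~\eqref{eq:cov_inverse}. The paper's detour through $\CondMean$ mirrors the structure of~\Cref{lm:asym_3} in the full-vector case, which is presumably why the authors chose it; your direct route avoids that overhead and gives a sharper variance bound ($\E W_i^2\le 1$, in fact, since $W_i$ is a centered version of a variable with second moment at most $1$).
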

\begin{proof}  
We have
\begin{multline*}
\Big|\EmpMean \LinScaleVecOneArm_{i1} \iRegressor^\top\OneDirect -
\EmpMean \frac{1 }{\sqrt{\OneDirect^\top\iCovar^{-1}\OneDirect}}\Big|
\leq \Big|\EmpMean \LinScaleVecOneArm_{i1} \iRegressor^\top\OneDirect
- \CondMean \frac{1
}{\sqrt{\OneDirect^\top\iCovar^{-1}\OneDirect}}\Big| \\
+ |\EmpMean \frac{1 }{\sqrt{\OneDirect^\top\iCovar^{-1}\OneDirect}} -
\CondMean \frac{1 }{\sqrt{\OneDirect^\top\iCovar^{-1}\OneDirect}}| =
\Term_1 + \Term_2
\end{multline*}
We show that both $\Term_1$ and $\Term_2$ are bounded by
$\liloh_p\Big(\Big|\EmpMean \frac{1
}{\sqrt{\OneDirect^\top\iCovar^{-1}\OneDirect}}\Big|\Big)$.

\paragraph*{Bound on $\Term_1$}
Since $\E(\LinScaleVecOneArm_{i1} \SelectProb_{i}^\top\mid\iNuisance,
\iHistory)=\E(\ScalePreCondVec_{i1}(\iRegressor -
\SelectProb_i)\SelectProb_{i}^\top\mid\iNuisance, \iHistory)=0$,
$\{\LinScaleVecOneArm_{i1} \SelectProb_{i}^\top\}_{i=\NumIndexOne +
  1}^\Numobs$ is a martingale difference sequence w.r.t.
$\iHistory$. Since $\E|\LinScaleVecOneArm_{i1} \SelectProb_{i}^\top
|^2= \E|\LinScaleVecOneArm_{i1} |^2|\SelectProb_{i}^\top|^2\leq \E
|\LinScaleVecOneArm_{i1} |^2= 1 $, it follows directly
from Lemma~\ref{lm:tech_md} that $\EmpMean \LinScaleVecOneArm_{i1}
\SelectProb_{i}^\top = \bigoh_p(\Numobs^{-1/2})$. Under the assumption
$\SelectProb_{ik} \gtrsim i^{-2t}$ for all $k \in \SuppDirect \cup
\{0\}$, it follows from the expression of $\iCovar$ from
equation~\eqref{eq:cov_inverse} that
$\frac{1}{\sqrt{\OneDirect^\top\iCovar^{-1}\OneDirect}}\gtrsim
\Numobs^{-t}$ and thus
\begin{align}
\label{eq:linear-compare-scale}  
\bigoh_p(\Numobs^{-1/2}) = \liloh_p(\Numobs^{-t}) =
\liloh_p\Big(\Big|\EmpMean \frac{1}{\sqrt{\OneDirect^\top \iCovar^{-1}
    \OneDirect}} \Big| \Big).
\end{align}
Note that
\begin{align}
\label{eqn:linear-replace-cov2}  
\CondMean \LinScaleVecOneArm_{i1} (\iRegressor - \SelectProb_{i})^\top
\OneDirect= \CondMean \frac{\OneDirect^\top \iCovar^{-1}(\iRegressor -
  \SelectProb_{i})(\iRegressor -
  \SelectProb_{i})^\top\OneDirect}{\sqrt{\OneDirect^\top\iCovar^{-1}\OneDirect}}=
\CondMean \frac{1 }{\sqrt{\OneDirect^\top\iCovar^{-1}\OneDirect}}.
\end{align}
and $\LinScaleVecOneArm_{i1} (\Regressor_{i} - \SelectProb_{i})^\top -
\E(\LinScaleVecOneArm_{i1} (\Regressor_{i} -
\SelectProb_{i})^\top\mid\iHistory)$ is a martingale difference
sequence with
\begin{align}
\E\|\LinScaleVecOneArm_{i1}(\Regressor_{i} - \SelectProb_{i})^\top -
\E(\LinScaleVecOneArm_{i1} (\Regressor_{i} -
\SelectProb_{i})^\top\mid\iHistory)\|_2^2
\label{eqn:matrix-diff-bound2}
&\leq\E\|\LinScaleVecOneArm_{i1} (\Regressor_{i} -
\SelectProb_{i})^\top\|_2^2\notag\\ &= \E\|\Regressor_{i} -
\SelectProb_{i}\|_2^2\LinScaleVecOneArm_{i1}^2\lesssim 1,
\end{align} 
it follows that $\E\|(\EmpMean - \CondMean )\LinScaleVecOneArm_{i1}
(\Regressor_{i} - \SelectProb_{i})^\top\|_2^2=\bigoh(\Numobs^{-1})$
and hence $(\EmpMean - \CondMean )\LinScaleVecOneArm_{i1}
(\Regressor_{i} - \SelectProb_{i})^\top=\bigoh_p(\Numobs^{-1/2})$.
Combining this with $\EmpMean \LinScaleVecOneArm_{i1}
\SelectProb_{i}^\top=\bigoh_p(\Numobs^{-1/2})$
and~\eqref{eq:linear-compare-scale},~\eqref{eqn:linear-replace-cov2}
yields
\begin{align*}
\Big|\EmpMean \LinScaleVecOneArm_{i1} \Regressor_{i}^\top\OneDirect-
\CondMean \frac{1 }{\sqrt{\OneDirect^\top\iCovar^{-1}\OneDirect}}
\Big|
=\bigoh_p(\Numobs^{-1/2})=\liloh_p\Big(\EmpMean  \frac{1
}{\sqrt{\OneDirect^\top\iCovar^{-1}\OneDirect}}\Big).
\end{align*}

\paragraph*{Bound on $\Term_2$}
Since $ \frac{1}{\sqrt{\OneDirect^\top\iCovar^{-1}\OneDirect}} - \E
(\frac{1 }{\sqrt{\OneDirect^\top \iCovar^{-1}
    \OneDirect}}\mid\iHistory)$ is a martingale difference sequence
and
\begin{align*}
\E\Big|\frac{1 }{\sqrt{\OneDirect^\top\iCovar^{-1}\OneDirect}} -
\E(\frac{1
}{\sqrt{\OneDirect^\top\iCovar^{-1}\OneDirect}}\mid\iHistory)\Big|^2
& \leq\E\Big|\frac{1 }{\sqrt{\OneDirect^\top\iCovar^{-1}\OneDirect}}\Big|^2=\E
\|\OneDirect\|_2^2\|\iCovar\|_2\lesssim 1,
\end{align*}
we have $\E|(\EmpMean - \CondMean )\frac{1
}{\sqrt{\OneDirect^\top\iCovar^{-1}\OneDirect}}|^2\lesssim \frac{1
}{\NumIndexTwo }=\bigoh(\Numobs^{-1})$. Therefore,
\begin{align*}
\Big|\EmpMean\frac{1 }{\sqrt{\OneDirect^\top\iCovar^{-1}\OneDirect}} -
\CondMean \frac{1 }{\sqrt{\OneDirect^\top\iCovar^{-1}\OneDirect}}\Big| =
\bigoh_p(\Numobs^{-1/2})=\liloh_p\Big(\Big|\EmpMean \frac{1
}{\sqrt{\OneDirect^\top\iCovar^{-1}\OneDirect}}\Big|\Big),
\end{align*}
which concludes the proof.
 
\end{proof}


\subsection{Auxiliary lemmas for Theorem~\ref{thm:glm_new1_temp}}
\label{proof:lm:thm:glm_new1_temp}

\begin{lems}[Upper bound on $\opnorm{\InvSqrtCovar_i}$]
\label{lm:equiv_glm}
Under the assumptions of Theorem~\ref{thm:glm_new1_temp}, we have
\begin{align*}
\opnorm{\InvSqrtCovarwhat_{i}} \leq \frac{1}{\sqrt{\cwtil_i}} \qquad
\text{and} \qquad
\opnorm{\InvSqrtCovar^*_i} \leq \frac{1}{\sqrt{\cwtil_i}},
\end{align*}
where $\cwtil_i = \cwtil_0/i^{2t}$ and $\cwtil_0 = m_{\eps}
c_0/(\TargetDim + 2)$.
\end{lems}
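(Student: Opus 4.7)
The plan is to reduce both operator-norm bounds to a single lower bound on the minimum eigenvalue of the true conditional covariance $\iCovar$ of $\iRegressor$, and then exploit the explicit inverse formula~\eqref{eq:cov_inverse} together with the selection-probability lower bounds in Assumption~\ref{assn-glm-prob}. Since $\InvSqrtCovar_i$ is defined (via~\eqref{eqn:glm-covmat}) as the inverse square root of
\begin{align*}
M_i \defn \E\!\bigl[\GlmVar\bigl(g(\inprod{\iRegressor}{\AuxiNuisancePar}+\FunPar(\iNuisance))\bigr)(\iRegressor-\GlmMeanVec_i)(\iRegressor-\GlmMeanVec_i)^\top\,\big|\,\iNuisance,\iHistory\bigr],
\end{align*}
we have $\opnorm{\InvSqrtCovar_i}^2=1/\lambda_{\min}(M_i)$, and the same identity holds with $\InvSqrtCovar_i^*$ (resp. $\InvSqrtCovarwhat_i$) when $(\AuxiNuisancePar,\FunPar)$ is replaced by $(\TrueTargetPar,\TrueFunPar)$ (resp.\ $(\EstTargetPar,\EstFunPar)$). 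Hence it suffices to establish a \emph{uniform} (in the choice of $(\AuxiNuisancePar,\FunPar)$) lower bound $\lambda_{\min}(M_i)\geq \cwtil_i$.

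First I would use the GLM assumption $\GlmVar(x)\geq m_\eps$ on the relevant range of $x$ to obtain the matrix inequality
\begin{align*}
M_i \succeq m_\eps\,\E\!\bigl[(\iRegressor-\GlmMeanVec_i)(\iRegressor-\GlmMeanVec_i)^\top\,\big|\,\iNuisance,\iHistory\bigr].
\end{align*}
Next, applying the shift identity $\E[(X-c)(X-c)^\top]=\mathrm{Cov}(X)+(c-\E X)(c-\E X)^\top$ with $X=\iRegressor$ conditionally on $(\iNuisance,\iHistory)$ and $c=\GlmMeanVec_i$, the rank-one correction is positive semidefinite, so the conditional expectation on the right is $\succeq \iCovar$. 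This yields $M_i\succeq m_\eps\,\iCovar$, reducing the problem to lower-bounding $\lambda_{\min}(\iCovar)$ independently of $(\AuxiNuisancePar,\FunPar)$; this is the key observation that lets a single bound cover both $\InvSqrtCovar_i^*$ and $\InvSqrtCovarwhat_i$.

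It remains to show $\lambda_{\min}(\iCovar)\geq c_0/((\TargetDim+2)i^{2t})$. The explicit formula~\eqref{eq:cov_inverse} gives $\iCovar^{-1}=D^{-1}+\gamma_i\,\mathbf{1}\mathbf{1}^\top$ with $D=\diag(\SelectProb_{i1},\ldots,\SelectProb_{i\TargetDim})$ and $\gamma_i=1/\SelectProb_{i0}$. By the triangle inequality for the operator norm,
\begin{align*}
\opnorm{\iCovar^{-1}} \;\leq\; \max_{1\leq j\leq \TargetDim}\frac{1}{\SelectProb_{ij}} \;+\; \frac{\TargetDim}{\SelectProb_{i0}}.
\end{align*}
Assumption~\ref{assn-glm-prob} gives $\SelectProb_{ij}\geq c_0/i^{2(t-\delta)}\geq c_0/i^{2t}$ for $j=1,\ldots,\TargetDim$ and $\SelectProb_{i0}\geq \czerotil\geq c_0/i^{2t}$ (possibly after shrinking $c_0$ once to $\min(c_0,\czerotil)$, which is absorbed into the definition of $\cwtil_0$). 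Thus $\opnorm{\iCovar^{-1}}\leq (1+\TargetDim)i^{2t}/c_0\leq (\TargetDim+2)i^{2t}/c_0$, which inverts to $\lambda_{\min}(\iCovar)\geq c_0/((\TargetDim+2)i^{2t})$. Combining with $M_i\succeq m_\eps\iCovar$ yields $\lambda_{\min}(M_i)\geq \cwtil_0/i^{2t}=\cwtil_i$, and taking square roots of the inverse gives both displayed bounds.

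The only mild subtlety is the bookkeeping of the two distinct lower bounds $c_0/i^{2(t-\delta)}$ and $\czerotil$ appearing in Assumption~\ref{assn-glm-prob}; the routine fix is to replace $c_0$ by $\min(c_0,\czerotil)$ at the outset so that a single rate $c_0/i^{2t}$ controls all $\TargetDim+1$ selection probabilities, after which the operator-norm bookkeeping above is elementary.
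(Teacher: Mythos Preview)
Your proposal is correct and follows essentially the same route as the paper: lower-bound the variance weight by $m_\eps$, use the shift identity to replace $\GlmMeanVec_i$ by the true conditional mean $\SelectProb_i$, and then lower-bound $\lambda_{\min}(\iCovar)$. The only difference is in the last step: the paper invokes the pre-packaged \Cref{lm:equiv_ass}(b) (which itself is proved via the explicit inverse and a Frobenius-norm bound), whereas you bound $\opnorm{\iCovar^{-1}}$ directly from~\eqref{eq:cov_inverse} using the operator-norm triangle inequality; both yield the same $(\TargetDim+2)$ constant. Your remark about absorbing $\min(c_0,\czerotil)$ into $c_0$ is the same bookkeeping the paper needs (implicitly) to apply \Cref{lm:equiv_ass}(b) with a uniform lower bound on all $\TargetDim+1$ selection probabilities.
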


\begin{proof}
We only prove the result for $\InvSqrtCovarwhat_i$. The result for
$\InvSqrtCovar_i^*$ can be shown similarly.  Recall
\begin{align*}
\InvSqrtCovarwhat_i & = [\E(\GlmVar
  \big(g\big(\inprod{\iRegressor}{\EstTargetPar} +
  \inprod{\iNuisance}{\EstNuisancePar} \big)\big)(\iRegressor -
  \GlmMeanVecwhat_i)(\iRegressor-
  \GlmMeanVecwhat_i)^\top\mid\iNuisance, \iHistory)]^{-1/2}
\end{align*}
by definition, and it suffices to show \begin{align*}\E(\GlmVar
  \big(g\big(\inprod{\iRegressor}{\EstTargetPar} +
  \inprod{\iNuisance}{\EstNuisancePar} \big)\big)(\iRegressor -
  \GlmMeanVecwhat_i)(\iRegressor -
  \GlmMeanVecwhat_i)^\top\mid\iNuisance, \iHistory)\succeq \cwtil_i
  \IdMat_{\TargetDim}.\end{align*}  Note that
\begin{align*}
\quad & \E(\GlmVar \big(g\big(\inprod{\iRegressor}{\EstTargetPar} +
\inprod{\iNuisance}{\EstNuisancePar} \big)\big)(\iRegressor -
\GlmMeanVecwhat_i)(\iRegressor -
\GlmMeanVecwhat_i)^\top\mid\iNuisance, \iHistory) \\
& \succeq m_\eps \E((\iRegressor - \GlmMeanVecwhat_i)(\iRegressor -
\GlmMeanVecwhat_i)^\top \mid \iNuisance, \iHistory) \\
& \succeq m_\eps \E((\iRegressor- \SelectProb_i)(\iRegressor -
\SelectProb_i)^\top\mid\iNuisance, \iHistory),
\end{align*}
where the first inequality follows from the assumption that
$\GlmVar(x)\geq m_\eps$ and the second inequality is due to the fact
that $\E(\iRegressor\mid\iNuisance,
\iHistory)=\SelectProb_i$. In Lemma~\ref{lm:equiv_ass} we show
$\E((\iRegressor- \SelectProb_i)(\iRegressor -
\SelectProb_i)^\top\mid\iNuisance, \iHistory)\succeq c_i/(\TargetDim +
2)\IdMat_{\TargetDim}$. Putting together the pieces yields
\begin{align*}
\E(\GlmVar \big(g\big(\inprod{\iRegressor}{\EstTargetPar} +
\inprod{\iNuisance}{\EstNuisancePar} \big)\big)(\iRegressor -
\GlmMeanVecwhat_i)(\iRegressor -
\GlmMeanVecwhat_i)^\top\mid\iNuisance, \iHistory) \succeq \cwtil_i
\IdMat_{\TargetDim},
\end{align*}
where $\cwtil_i=m_{\eps}c_i/(\TargetDim + 2)$. This completes the
proof.
\end{proof}

\begin{lems}\label{lm:glm_clt_temp}
Under the assumptions in Theorem~\ref{thm:glm_new1_temp}, we have
\begin{align*}
\sqrt{\NumIndexTwo }\EmpMean \InvSqrtCovarwhat_i(\iRegressor -
\GlmMeanVecwhat_i)\iNoise \convdist \Normal(0, \IdMat_{\TargetDim
}).\end{align*}
\end{lems}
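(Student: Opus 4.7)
}

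The plan is to apply a multivariate martingale central limit theorem to the triangular array
\begin{align*}
X_i \defn \tfrac{1}{\sqrt{\NumIndexTwo}}\InvSqrtCovarwhat_i(\iRegressor - \GlmMeanVecwhat_i)\iNoise, \qquad i = \NumIndexOne + 1, \dots, \Numobs,
\end{align*}
so that $\sum_{i \in \IndexTwo} X_i$ is exactly the quantity of interest. First I will verify the martingale-difference property. Since the pilot estimators $(\EstTargetPar, \EstNuisancePar)$ are $\History_{\NumIndexOne}$-measurable and both $\InvSqrtCovarwhat_i$ and $\GlmMeanVecwhat_i$ depend on the data only through $(\iNuisance, \iHistory)$, conditioning on $\{\History_{i-1}, \iNuisance, \iRegressor\}$ makes them deterministic; the GLM noise assumption gives $\E[\iNoise \mid \History_{i-1}, \iRegressor, \iNuisance] = 0$, so $\E[X_i \mid \History_{i-1}] = 0$ by the tower property.

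Next I would verify the conditional covariance condition $\sum_{i \in \IndexTwo} \E[X_i X_i^\top \mid \History_{i-1}] \overset{p}{\to} \IdMat_{\TargetDim}$. The key observation is that by the very definition~\eqref{eqn:glm-covmat} of $\InvSqrtCovar^*_i$, for the \emph{starred} version we have exactly
\begin{align*}
\InvSqrtCovar^*_i \E\big[ \iNoise^2 (\iRegressor - \GlmMeanVec^*_i)(\iRegressor - \GlmMeanVec^*_i)^\top \bigm| \iNuisance, \iHistory \big] \InvSqrtCovar^*_i = \IdMat_{\TargetDim},
\end{align*}
so $\E\big[\tfrac{1}{\NumIndexTwo}\InvSqrtCovar^*_i(\iRegressor - \GlmMeanVec^*_i)(\iRegressor - \GlmMeanVec^*_i)^\top \InvSqrtCovar^*_i \iNoise^2 \mid \History_{i-1}\big] = \tfrac{1}{\NumIndexTwo}\IdMat_{\TargetDim}$ after integrating out $\iNuisance$. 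It then remains to replace the hatted quantities by their starred counterparts. Using Assumption~\ref{assn-glm-nuisance-est} ($\Numobs^{-1/4}$-consistent pilots), the smoothness of $g, g', \GlmVar$ from Section~\ref{sec:standard-glm-assumption}, and a Lipschitz-continuity-style bound analogous to~\Cref{lm:glm_lip_cont_temp} applied to the maps $(\AuxiNuisancePar, \FunPar) \mapsto \GlmMeanVec_i$ and $(\AuxiNuisancePar, \FunPar) \mapsto \InvSqrtCovar_i$, combined with a matrix-inverse-square-root perturbation inequality and the upper bound $\opnorm{\InvSqrtCovar^*_i} \vee \opnorm{\InvSqrtCovarwhat_i} \lesssim i^{t}$ from~\Cref{lm:equiv_glm}, I would show that the replacement contributes an error of order $o_p(\Numobs^{3t - 1/4}) = o_p(1)$ when $t \leq 1/4$.

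Finally, I will check the conditional Lindeberg condition $\sum_{i \in \IndexTwo} \E[\|X_i\|_2^2 \mathbf{1}\{\|X_i\|_2 > \varepsilon\} \mid \History_{i-1}] \overset{p}{\to} 0$ for every $\varepsilon > 0$. Using $\|\iRegressor - \GlmMeanVecwhat_i\|_2 \leq 2$ (since regressors lie in $\{e_1, \dots, e_{\TargetDim}, \zerovec\}$), the spectral bound $\opnorm{\InvSqrtCovarwhat_i} \leq 1/\sqrt{\cwtil_i} \lesssim i^{t}$ from~\Cref{lm:equiv_glm}, and the conditional sub-Gaussianity of $\iNoise$ with parameter $\subgauss$, one obtains $\|X_i\|_2 \lesssim i^{t}|\iNoise|/\sqrt{\NumIndexTwo}$, and the truncated tail bound reduces to the same sub-Gaussian integral over $\{\iNoise^2 \gtrsim \varepsilon^2 \NumIndexTwo \, i^{-2t}\}$ that was handled in the linear-model calculation~\eqref{eqn:lindeberg-cond-linear}; this tends to zero provided $t < 1/2$, which holds a fortiori since $t \leq 1/4$. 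Combining the martingale-difference property, conditional variance, and Lindeberg conditions, the martingale CLT (e.g., Theorem~3.2 of Hall and Heyde) yields $\sum_{i \in \IndexTwo} X_i \convdist \Normal(0, \IdMat_{\TargetDim})$. The main technical obstacle is the replacement step in the conditional-variance computation, because the $O(i^t)$ magnification of both $\InvSqrtCovarwhat_i$ and $\InvSqrtCovar^*_i$ eats into the $\Numobs^{-1/4}$ accuracy of the pilot estimates in a way that is only barely enough when $t$ saturates the bound $1/4$.
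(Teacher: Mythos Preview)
Your martingale-difference and Lindeberg verifications are correct and essentially identical to the paper's. The gap is in the conditional-covariance step.

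First, the arithmetic is wrong: $o_p(\Numobs^{3t-1/4})$ is \emph{not} $o_p(1)$ for $t \leq 1/4$; at $t = 1/4$ the exponent is $1/2 > 0$. You correctly sense that the $i^t$ magnification is the obstacle, but it is not ``barely enough'' --- it actually fails. More fundamentally, the plan to replace $(\InvSqrtCovarwhat_i, \GlmMeanVecwhat_i)$ by $(\InvSqrtCovar^*_i, \GlmMeanVec^*_i)$ is the wrong decomposition. The map $(\AuxiNuisancePar, \FunPar) \mapsto \InvSqrtCovar_i$ is \emph{not} Lipschitz with a constant independent of $i$; the paper only establishes (\Cref{lm:glm_lip_cont_temp2}) that $\InvSqrtCovar_i \DiagProbMatrix$ is uniformly Lipschitz, which does not help when $\InvSqrtCovar_i$ appears on both sides of the covariance. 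A direct matrix-inverse-square-root perturbation gives $\opnorm{\InvSqrtCovarwhat_i - \InvSqrtCovar^*_i} \lesssim \sigma_{\min}(\Covar_i)^{-3/2}\,o_p(\Numobs^{-1/4}) \lesssim i^{3t}\,o_p(\Numobs^{-1/4})$, and after multiplying by another $\opnorm{\InvSqrtCovar_i} \lesssim i^t$ and averaging, the error is $o_p(\Numobs^{4t-1/4})$, which diverges for any $t > 1/16$.

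The paper avoids this entirely by \emph{not} perturbing $\InvSqrtCovarwhat_i$ or $\GlmMeanVecwhat_i$. Writing
\begin{align*}
\E\big[\InvSqrtCovarwhat_i(\iRegressor-\GlmMeanVecwhat_i)(\iRegressor-\GlmMeanVecwhat_i)^\top\InvSqrtCovarwhat_i\,\iNoise^2 \,\big|\, \iHistory\big]
= \E\big[\InvSqrtCovarwhat_i(\iRegressor-\GlmMeanVecwhat_i)\,\GlmVar\big(g(\inprod{\iRegressor}{\TrueTargetPar} + \inprod{\iNuisance}{\TrueNuisancePar})\big)\,(\iRegressor-\GlmMeanVecwhat_i)^\top\InvSqrtCovarwhat_i \,\big|\, \iHistory\big],
\end{align*}
it swaps only the \emph{scalar} $\GlmVar\big(g(\inprod{\iRegressor}{\TrueTargetPar} + \inprod{\iNuisance}{\TrueNuisancePar})\big)$ for $\GlmVar\big(g(\inprod{\iRegressor}{\EstTargetPar} + \inprod{\iNuisance}{\EstNuisancePar})\big)$. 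After this swap the expression is exactly $\IdMat_{\TargetDim}$ by the definition~\eqref{eqn:glm-covmat} of $\InvSqrtCovarwhat_i$, which is built from the \emph{hatted} parameters. The swap error is at most $L_\eps L_g D_x\,\|\EstAllNuisancePar - \TrueAllNuisancePar\|_2 \cdot \E\big[\|\InvSqrtCovarwhat_i(\iRegressor-\GlmMeanVecwhat_i)\|_2^2 \mid \iHistory\big]$, and the crucial point is that this last expectation is \emph{uniformly bounded} by $\TargetDim/m_\eps$ via the self-normalizing identity~\eqref{eq:glm_bounded_oxm}. Hence the covariance error is $O_p(\|\EstAllNuisancePar - \TrueAllNuisancePar\|_2) = o_p(1)$ with no $i^t$ factor at all, and no trade-off against the pilot rate is needed.
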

\begin{proof}
Similar to Lemma~\ref{lm:asym_1}, the idea of this proof is to apply a
Martingale version of the central limit theorem on the sequence
$\InvSqrtCovarwhat_i(\iNuisance, \iHistory)(\iRegressor-
\GlmMeanVecwhat_i)\iNoise$.  By definition, in the generalized linear
model $Y=g(X^\top\AllNuisancePar) + \eps$, the distribution of $\eps$
depends on the value of $X^\top\AllNuisancePar$.  Since
$\E(\iNoise\mid\iRegressor, \iNuisance)=0$,
$\InvSqrtCovarwhat_i(\iNuisance, \iHistory)(\iRegressor-
\GlmMeanVecwhat_i)\iNoise$ is a martingale difference sequence.

\paragraph*{Asymptotic covariance}
Note that
\begin{align*}
&\quad\E(\|\InvSqrtCovarwhat_i(\iRegressor -
  \GlmMeanVecwhat_i)\iNoise\|_2^2\mid\iHistory)\\
&= \E(\InvSqrtCovarwhat_i(\iRegressor -
  \GlmMeanVecwhat_i)\GlmVar\big(g\big(\inprod{\iRegressor}{\TrueTargetPar}
  + \inprod{\iNuisance}{\TrueNuisancePar} \big)\big)(\iRegressor -
  \GlmMeanVecwhat_i)^\top\InvSqrtCovarwhat_i^\top\mid\iHistory)\\
&= \E(\InvSqrtCovarwhat_i\E((\iRegressor -
\GlmMeanVecwhat_i)\GlmVar\big(g\big(\inprod{\iRegressor}{\TrueTargetPar}
+ \inprod{\iNuisance}{\TrueNuisancePar} \big)\big)(\iRegressor -
\GlmMeanVecwhat_i)^\top\mid\iNuisance,
\iHistory)\InvSqrtCovarwhat_i^\top\mid\iHistory)\\
&= \E(\InvSqrtCovarwhat_i\E((\iRegressor -
\GlmMeanVecwhat_i)\GlmVar\big(g\big(\inprod{\iRegressor}{\EstTargetPar}
+ \inprod{\iNuisance}{\EstNuisancePar} \big)\big)(\iRegressor -
\GlmMeanVecwhat_i)^\top\mid\iNuisance,
\iHistory)\InvSqrtCovarwhat_i^\top\mid\iHistory)\\
& + \bigoh_p(L_\eps L_g D_{x} \|\TrueAllNuisancePar -
\EstAllNuisancePar\|_2\E(\|\InvSqrtCovarwhat_i(\iRegressor -
\GlmMeanVecwhat_i)\|_2^2\mid\iHistory))\\
&=\IdMat_{\TargetDim} + \bigoh_p\Big(\frac{L_\eps L_g
  D_{x}}{m_\eps}\|\TrueAllNuisancePar - \EstAllNuisancePar\|_2
\E(\|\InvSqrtCovarwhat_i(\iRegressor -
\GlmMeanVecwhat_i)\|_2^2\GlmVar\big(g\big(\inprod{\iRegressor}{\EstTargetPar}
+ \inprod{\iNuisance}{\EstNuisancePar} \big)\big)\mid\iHistory)\Big)\\
&=\IdMat_{\TargetDim} + \bigoh_p(\TargetDim\|\TrueAllNuisancePar -
\EstAllNuisancePar\|_2)=\IdMat_{\TargetDim} + \liloh_p(1).
\end{align*}
where the third equation follows from triangle inequality combined
with the Lipschitz continuity of $\GlmVar,g$, and the boundedness of
$\AllNuisancePar$. The fourth equation is due to the definition of
$\InvSqrtCovarwhat_i$ and the lower bound assumption, $\GlmVar(x) \geq
m_\eps.$ The last line uses the definition of $\InvSqrtCovarwhat_i$
and the consistency assumption of $\EstTargetPar, \EstNuisancePar
$. Note that $\liloh_p(1)$ in the last line are the same for all $i$.
It follows from properties of Martingale difference sequences that
$\E\|\EmpMean \InvSqrtCovarwhat_i(\iRegressor -
\GlmMeanVecwhat_i)\iNoise\|_2^2\to \IdMat_{\TargetDim}$. Thus, the
lemma is implied by the Martingale central limit theory for $\EmpMean
\sqrt{\NumIndexTwo }\InvSqrtCovarwhat_i(\iRegressor -
\GlmMeanVecwhat_i)\iNoise$ and it remains to verify Lindeberg's
condition.

\paragraph*{Lindeberg condition}
We proceed by first bounding
$\opnorm{\InvSqrtCovarwhat_i(\iRegressor-
\GlmMeanVecwhat_i)\iNoise^2(\iRegressor-
\GlmMeanVecwhat_i)^\top\InvSqrtCovarwhat_i^\top}$. Specifically,
\begin{align*}
\opnorm{\iNoise^2\InvSqrtCovarwhat_{i}(\iRegressor-
\GlmMeanVecwhat_i)(\iRegressor-
\GlmMeanVecwhat_i)^\top\InvSqrtCovarwhat^{\top}_{i}}
&\leq
\iNoise^2\opnorm{\InvSqrtCovarwhat_{i}}^2
\|(\iRegressor - \GlmMeanVec^*_i)\|_2^2\leq \frac{4}{\cwtil_i}
\iNoise^2,
\end{align*}
where $\cwtil_i=\cwtil_0/i^{2t}$ and $\cwtil_0=m_\eps c_0/(\TargetDim  + 2)$. The last inequality follows
from Lemma~\ref{lm:equiv_glm}, and the fact that $\|\iRegressor-
\GlmMeanVecwhat_i\|_2\leq \|\iRegressor\|_2 + \|\GlmMeanVecwhat_i\|_2\leq 2$.
Therefore $\InvSqrtCovarwhat_i(\iRegressor-
\GlmMeanVecwhat_i)\iNoise^2(\iRegressor-
\GlmMeanVecwhat_i)^\top\InvSqrtCovarwhat_i^\top \preceq 4
\iNoise^2\IdMat_{\TargetDim}/\cwtil_i $ and for any $\eps>0$,
\begin{align}
0&\preceq
\lim_{\Numobs\to\infty}\frac{1}{\Numobs}
\sum_{i=1}^\Numobs\E(\InvSqrtCovarwhat_i(\iRegressor
- \GlmMeanVecwhat_i) \iNoise^2(\iRegressor - \GlmMeanVecwhat_i)^\top \InvSqrtCovarwhat_i^\top
           {1}_{\{\opnorm{\InvSqrtCovarwhat_i(\iRegressor-
             \GlmMeanVecwhat_i)\iNoise^2(\iRegressor-
             \GlmMeanVecwhat_i)^\top\InvSqrtCovarwhat_i^\top}
          >\eps{\Numobs}\}}\mid\iHistory)\notag\\
\label{eqn:lindeberg-cond-glm_temp}           
& \preceq \lim_{\Numobs \to \infty} \frac{4}{\Numobs}
\sum_{i=1}^\Numobs \frac{1}{\cwtil_i} \E(\iNoise^2
    {1}_{\{\iNoise^2 \geq \eps{\Numobs} \cwtil_i/4 \}}
    \mid\iHistory) \IdMat_{\TargetDim}.
\end{align}

Since $\iNoise$ are sub-Gaussian random variables (conditioned on
$\iRegressor, \iNuisance, \iHistory$) with common parameter
$\subgauss$ almost surely, it follows that $\iNoise^2$ are
subexponential random variables with a common parameter. Therefore,
there exists some constant $K_1>0$ depending on $\subgauss$ such that
$\prob(\iNoise^2\geq s)\leq 2\exp(-s/K_1)$ and hence
\begin{align*}
\E(\iNoise^2 1_{\{\iNoise^2\geq\eps \Numobs \cwtil_i/4\}}) = \int_{\eps
  \Numobs \cwtil_i/4}^\infty \prob(\iNoise^2 \geq s) ds
\leq 2 \int_{\eps \Numobs \cwtil_i/4}^\infty
\exp(-s/K_1)ds=2K_1\exp\Big(\frac{ - \eps \Numobs
  \cwtil_i}{4K_1}\Big).
\end{align*}
Substituting this into equation~\eqref{eqn:lindeberg-cond-glm_temp},
for any $t \in (0, 1/2)$, we have
\begin{align*}
& \lim_{\Numobs\to\infty}
  \frac{4}{\Numobs}\sum_{i=1}^\Numobs\frac{1}{\cwtil_i}\E(\iNoise^2
       {1}_{\{\iNoise^2 \geq
         \eps{\Numobs}\cwtil_i/4\}}\mid\iHistory)\\
&\lesssim\frac{\cwtil_0K_1}{\Numobs} \sum_{i=1}^{\Numobs} i^{2t} \exp
       \Big(\frac{ - \eps \Numobs \cwtil_0}{4 K_1 i^{2t}} \Big)\\
& \leq \cwtil_0 K_1 \Numobs^{2t} \exp \Big( \frac{ - \eps
         \Numobs^{1-2t} \cwtil_0}{4K_1 }\Big) \to 0.
\end{align*}
Thus, Lindeberg's condition is satisfied, so that the proof is
complete.
\end{proof}

\begin{lems}\label{lm:glm_neyman_temp}
Under the assumptions in Theorem~\ref{thm:glm_new1_temp} and suppose $\|\SoluTargetPar  - \TrueTargetPar\|_2 =\liloh_p(1)$,  we have 
 \begin{align*}
 \|\sqrt{\NumIndexTwo }\EmpMean \InvSqrtCovarwhat_i(\iRegressor - \GlmMeanVecwhat_i) g'\big(\inprod{\iRegressor}{\EstTargetPar}  + \inprod{\iNuisance}{\EstNuisancePar} \big)\GlmMeanVecwhat_i^\top(\SoluTargetPar  - \TrueTargetPar )\|_2
 &\overset{p}{\to}0\\
 \|\sqrt{\NumIndexTwo }\EmpMean \InvSqrtCovarwhat_i(\iRegressor - \GlmMeanVecwhat_i) g'\big(\inprod{\iRegressor}{\EstTargetPar}  + \inprod{\iNuisance}{\EstNuisancePar} \big)\iNuisance^\top(\EstNuisancePar  - \TrueNuisancePar )\|_2
 &\overset{p}{\to}0
 \end{align*}
    
\end{lems}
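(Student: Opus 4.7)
The plan is to handle both displays using two shared ingredients: an exact Neyman-orthogonality identity that turns each summand into a conditional martingale difference, and a variance-stabilization bound that controls the relevant second moments uniformly in $i$. The orthogonality identity follows directly from the definition~\eqref{eqn:glm-mean-vector} of $\GlmMeanVecwhat_i$ and reads
\begin{align*}
\E\bigl[(\iRegressor - \GlmMeanVecwhat_i)\, g'\bigl(\inprod{\iRegressor}{\EstTargetPar} + \inprod{\iNuisance}{\EstNuisancePar}\bigr)\,\bigm|\,\iNuisance, \iHistory\bigr] = 0.
\end{align*}
For the stabilization bound the plan is to sandwich $\InvSqrtCovarwhat_i^{-2}$ between $m_\eps$ and $M_\eps$ times the conditional covariance $\E[(\iRegressor - \GlmMeanVecwhat_i)(\iRegressor - \GlmMeanVecwhat_i)^\top \mid \iNuisance, \iHistory]$---immediate from $m_\eps \le \GlmVar \le M_\eps$---and then use the trace identity $\tr(\InvSqrtCovarwhat_i^{2}\cdot \InvSqrtCovarwhat_i^{-2}) = \TargetDim$ to deduce
\begin{align*}
\E\bigl[\|\InvSqrtCovarwhat_i(\iRegressor - \GlmMeanVecwhat_i)\|_2^2 \,\bigm|\, \iNuisance, \iHistory\bigr] \le \TargetDim/m_\eps.
\end{align*}
Two further pointwise bounds are useful: $\|\GlmMeanVecwhat_i\|_2 \le 1$, because $\GlmMeanVecwhat_i$ is a vector of weighted averages of the one-hot entries of $\iRegressor$ whose components sum to at most one, and $|g'| \le L_g$ from the Lipschitz assumption on $g$.

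For the first display, define $A_i \defn \InvSqrtCovarwhat_i(\iRegressor - \GlmMeanVecwhat_i)\, g'(\cdot)\, \GlmMeanVecwhat_i^\top$. Since $\InvSqrtCovarwhat_i$ and $\GlmMeanVecwhat_i$ are both measurable with respect to $(\iHistory, \iNuisance)$, pulling them outside the conditional expectation and invoking the orthogonality identity yields $\E[A_i \mid \iHistory] = 0$, so $\{A_i\}_{i \in \IndexTwo}$ is a matrix-valued martingale-difference array. Combining the stabilization bound with the uniform controls on $|g'|$ and $\|\GlmMeanVecwhat_i\|_2$ produces $\E\|A_i\|_F^2 \le L_g^2 \TargetDim/m_\eps$ uniformly in $i$, so the Frobenius-orthogonality of martingale differences gives $\E\|\sqrt{\NumIndexTwo}\,\EmpMean A_i\|_F^2 = O(1)$ and hence $\opnorm{\sqrt{\NumIndexTwo}\,\EmpMean A_i} = O_p(1)$. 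The submultiplicative inequality then yields
\begin{align*}
\bigl\|\sqrt{\NumIndexTwo}\,\EmpMean A_i\,(\SoluTargetPar - \TrueTargetPar)\bigr\|_2 \;\le\; \opnorm{\sqrt{\NumIndexTwo}\,\EmpMean A_i}\;\|\SoluTargetPar - \TrueTargetPar\|_2 \;=\; O_p(1)\cdot o_p(1),
\end{align*}
closing out the first display under the hypothesis $\SoluTargetPar - \TrueTargetPar = o_p(1)$.

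For the second display, set $\xi_i \defn \InvSqrtCovarwhat_i(\iRegressor - \GlmMeanVecwhat_i)\, g'(\cdot)\,\inprod{\iNuisance}{\EstNuisancePar - \TrueNuisancePar}$. The scalar factor $\inprod{\iNuisance}{\EstNuisancePar - \TrueNuisancePar}$ is $(\iHistory, \iNuisance)$-measurable because $\EstNuisancePar$ is $\History_{\NumIndexOne}$-measurable, so the orthogonality identity again gives $\E[\xi_i \mid \iHistory] = 0$ and $\{\xi_i\}_{i \in \IndexTwo}$ is a vector-valued martingale difference. Conditioning on $\History_{\NumIndexOne}$ freezes $\EstNuisancePar$; the stabilization bound together with $\|\iNuisance\|_2 \le D_x$ then gives
\begin{align*}
\E\bigl[\|\xi_i\|_2^2 \,\bigm|\, \History_{\NumIndexOne}\bigr] \;\le\; L_g^2\,(\TargetDim/m_\eps)\, D_x^2\,\|\EstNuisancePar - \TrueNuisancePar\|_2^2,
\end{align*}
and the conditional martingale $L^2$ identity yields $\E[\|\sqrt{\NumIndexTwo}\,\EmpMean \xi_i\|_2^2 \mid \History_{\NumIndexOne}] \lesssim \|\EstNuisancePar - \TrueNuisancePar\|_2^2$; this is $o_p(1)$ by Assumption~\ref{assn-glm-nuisance-est}, and a conditional Markov argument delivers the desired $\sqrt{\NumIndexTwo}\,\EmpMean \xi_i \overset{p}{\to} 0$.

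The hardest part will be verifying the stabilization inequality carefully with the \emph{estimated} whitener $\InvSqrtCovarwhat_i$ in place of its oracle counterpart $\InvSqrtCovar^*_i$: the lower bound $\GlmVar(g(\cdot)) \ge m_\eps$ must hold uniformly over the range of arguments $|\inprod{\iRegressor}{\EstTargetPar} + \inprod{\iNuisance}{\EstNuisancePar}| \le D_x M_\TargetPar + M_\FunPar$ that can arise when the pilot pair $(\EstTargetPar, \EstNuisancePar)$ is plugged in. This uniform control is precisely what the standard GLM assumptions in~\Cref{sec:standard-glm-assumption} provide, so the remaining work is careful bookkeeping rather than genuinely new analysis.
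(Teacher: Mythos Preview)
Your proposal is correct and follows essentially the same route as the paper: the orthogonality identity $\E[(\iRegressor-\GlmMeanVecwhat_i)g'(\cdot)\mid \iNuisance,\iHistory]=0$ yields a martingale-difference structure, and the stabilization bound $\E[\|\InvSqrtCovarwhat_i(\iRegressor-\GlmMeanVecwhat_i)\|_2^2\mid\iNuisance,\iHistory]\le \TargetDim/m_\eps$ (which is precisely the paper's equation~\eqref{eq:glm_bounded_oxm}) controls the second moments, so that the matrix part is $O_p(1)$ and the consistency of $\SoluTargetPar$ and $\EstNuisancePar$ closes the argument. The only cosmetic difference is in the second display: the paper factors out $\iNuisance^\top$ and shows the resulting $\TargetDim\times\NuisanceDim$ matrix is $O_p(1)$ in Frobenius norm before multiplying by $\|\EstNuisancePar-\TrueNuisancePar\|_2$, whereas you keep the scalar $\inprod{\iNuisance}{\EstNuisancePar-\TrueNuisancePar}$ inside and condition on $\History_{\NumIndexOne}$; both are equivalent and equally short.
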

\begin{proof}
Since $\|\EstNuisancePar - \TrueNuisancePar\|_2
=\liloh_p(\Numobs^{-1/4})$, $\|\SoluTargetPar - \TrueTargetPar\|_2
=\liloh_p(1)$ are consistent, it suffices to show that
\begin{align}
 \label{eq:glm_lm_neyma\NumIndexOne_temp}  
 \frobnorm{\sqrt{\NumIndexTwo }\EmpMean
   \InvSqrtCovarwhat_i(\iRegressor - \GlmMeanVecwhat_i)
   g'\big(\inprod{\iRegressor}{\EstTargetPar} +
   \inprod{\iNuisance}{\EstNuisancePar}
   \big)\GlmMeanVecwhat_i^\top}=\bigoh_p(1) \\
\label{eq:glm_lm_neyma\NumIndexOne1_temp} 
 \frobnorm{\sqrt{\NumIndexTwo }\EmpMean
   \InvSqrtCovarwhat_i(\iRegressor - \GlmMeanVecwhat_i)
   g'\big(\inprod{\iRegressor}{\EstTargetPar} +
   \inprod{\iNuisance}{\EstNuisancePar}
   \big)\iNuisance^\top}=\bigoh_p(1).
\end{align}
Since $\E(\InvSqrtCovarwhat_i(\iRegressor - \GlmMeanVecwhat_i)
g'\big(\inprod{\iRegressor}{\EstTargetPar} +
\inprod{\iNuisance}{\EstNuisancePar} \big) \mid\iNuisance,
\iHistory)=0$ by definition of $\GlmMeanVecwhat_i$, it follows
directly that $\{\InvSqrtCovarwhat_i(\iRegressor - \GlmMeanVecwhat_i)
g'\big(\inprod{\iRegressor}{\EstTargetPar} +
\inprod{\iNuisance}{\EstNuisancePar}
\big)\GlmMeanVecwhat_i^\top\}_{i=1}^\Numobs$ is a Martingale
difference sequence.  Note that
\begin{align}
    \E\frobnorm{\InvSqrtCovarwhat_i(\iRegressor - \GlmMeanVecwhat_i) g'\big(\inprod{\iRegressor}{\EstTargetPar}  + \inprod{\iNuisance}{\EstNuisancePar} \big)\GlmMeanVecwhat_i^\top}^2
    \leq
     L^2_{g} \E\|\InvSqrtCovarwhat_i(\iRegressor - \GlmMeanVecwhat_i)\|_2^2 \|\GlmMeanVecwhat_i^\top\|_2^2\lesssim \frac{L^2_g \TargetDim}{m_\eps}=\bigoh(1), \label{eq:glm_lm_neyma\NumIndexOne2_temp}
\end{align}
where the first inequality uses the fact that $|g'|\leq L_g$, which is
implied by the standard assumptions on GLM. The second inequality
follows from $\|\GlmMeanVecwhat_i\|_2, \|\iRegressor\|_2\leq 1$ and,
 \begin{align}
      \E \|\InvSqrtCovarwhat_i(\iRegressor - \GlmMeanVecwhat_i)\|_2^2
      &=  \E \|\InvSqrtCovarwhat_i(\iRegressor - \GlmMeanVecwhat_i)\GlmVar\big(\inprod{\iRegressor}{\EstTargetPar}  + \inprod{\iNuisance}{\EstNuisancePar} \big)^{1/2}/\GlmVar\big(\inprod{\iRegressor}{\EstTargetPar}  + \inprod{\iNuisance}{\EstNuisancePar} \big)^{1/2}\|_2^2\notag\\
      &\leq \frac{1}{m_\eps} \E \|\InvSqrtCovarwhat_i(\iRegressor - \GlmMeanVecwhat_i)\GlmVar\big(\inprod{\iRegressor}{\EstTargetPar}  + \inprod{\iNuisance}{\EstNuisancePar} \big)^{1/2}\|_2^2\notag\\
      &=
      \frac{1}{m_\eps} \E \tr(\InvSqrtCovarwhat_i(\iRegressor - \GlmMeanVecwhat_i)\GlmVar\big(\inprod{\iRegressor}{\EstTargetPar}  + \inprod{\iNuisance}{\EstNuisancePar} \big)(\iRegressor - \GlmMeanVecwhat_i)^\top\InvSqrtCovarwhat_i)
      =\frac{\TargetDim}{m_\eps}=\bigoh(1), \label{eq:glm_bounded_oxm}
    \end{align}  
    where the second line uses the definition of $\InvSqrtCovarwhat_i$
    and $\GlmMeanVecwhat_i$. The
    bound~\eqref{eq:glm_lm_neyma\NumIndexOne2_temp} immediately
    implies the bound~\eqref{eq:glm_lm_neyma\NumIndexOne_temp}. Since
    we assume $\|\iNuisance\|_2$ is bounded, the
    bound~\eqref{eq:glm_lm_neyma\NumIndexOne1_temp} follows from
    similar arguments as above with $\GlmMeanVecwhat_i^\top$ replaced
    by $\iNuisance^\top$.
\end{proof}

\begin{lems}\label{lm:glm_second_order_temp}
In addition to the assumptions of Theorem~\ref{thm:glm_new1_temp} suppose
that $\|\SoluTargetPar  - \TrueTargetPar\|_2 =\liloh_p(\Numobs^{-t})$. Then we
have
\begin{align*}
\quad &\sqrt{\NumIndexTwo }\EmpMean
\InvSqrtCovarwhat_i(\iRegressor - \GlmMeanVecwhat_i)(Q_3 +
Q_4) \\
& = \liloh_p(1) + \liloh_p(\|\sqrt{\NumIndexTwo }\EmpMean 
\InvSqrtCovarwhat_i(\iRegressor - \GlmMeanVecwhat_i) g'\big(\inprod{\iRegressor}{\EstTargetPar}  + \inprod{\iNuisance}{\EstNuisancePar} \big) (
\iRegressor - \GlmMeanVecwhat_i)^\top (\SoluTargetPar -
\TrueTargetPar ) \|_2).
\end{align*}
\end{lems}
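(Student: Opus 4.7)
The plan is to apply the second-order Taylor remainder bound to $Q_3$ and $Q_4$, split the resulting quadratic forms into pieces that depend only on the pilot errors and a single piece that is genuinely quadratic in $\SoluTargetPar-\TrueTargetPar$, and then match the former against the $\liloh_p(1)$ summand and the latter against the $\liloh_p(\|\cdot\|_2)$ summand on the right hand side. First, since $g'$ is $L_{g'}$-Lipschitz on the bounded range imposed by the standard GLM assumptions, $|g''|\leq L_{g'}$, and hence
\begin{align*}
|Q_3|\leq\tfrac{L_{g'}}{2}\inprod{\iRegressor}{\SoluTargetPar-\EstTargetPar}^2,\qquad |Q_4|\leq\tfrac{L_{g'}}{2}\bigl(\inprod{\iRegressor}{\TrueTargetPar-\EstTargetPar}+\inprod{\iNuisance}{\TrueNuisancePar-\EstNuisancePar}\bigr)^2.
\end{align*}
Writing $\SoluTargetPar-\EstTargetPar=(\SoluTargetPar-\TrueTargetPar)+(\TrueTargetPar-\EstTargetPar)$ and using $(a+b)^2\leq 2a^2+2b^2$, I would dominate $Q_3+Q_4$ by a constant multiple of $T_i^{\ast}+T_i^{\mathrm{pil}}$, where $T_i^{\ast}\defn\inprod{\iRegressor}{\SoluTargetPar-\TrueTargetPar}^2$ and $T_i^{\mathrm{pil}}\defn\inprod{\iRegressor}{\EstTargetPar-\TrueTargetPar}^2+\inprod{\iNuisance}{\EstNuisancePar-\TrueNuisancePar}^2\leq D_x^2\|\EstAllNuisancePar-\TrueAllNuisancePar\|_2^2=\liloh_p(\Numobs^{-1/2})$ uniformly in $i$ by the pilot-accuracy hypothesis.

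Next I would apply the deterministic estimate $\|\EmpMean\InvSqrtCovarwhat_i(\iRegressor-\GlmMeanVecwhat_i)T_i\|_2\leq\max_i|T_i|\cdot\EmpMean\|\InvSqrtCovarwhat_i(\iRegressor-\GlmMeanVecwhat_i)\|_2$ to each of $T_i\in\{T_i^\ast,T_i^{\mathrm{pil}}\}$. The second factor is $\bigoh_p(1)$ by the second-moment estimate $\E\|\InvSqrtCovarwhat_i(\iRegressor-\GlmMeanVecwhat_i)\|_2^2\leq\TargetDim/m_\eps$ already established in equation~\eqref{eq:glm_bounded_oxm}. Consequently the $T_i^{\mathrm{pil}}$-contribution is $\sqrt{\NumIndexTwo}\cdot\liloh_p(\Numobs^{-1/2})\cdot\bigoh_p(1)=\liloh_p(1)$, while the $T_i^\ast$-contribution is at most $D_x\sqrt{\NumIndexTwo}\,\|\SoluTargetPar-\TrueTargetPar\|_2^2\cdot\bigoh_p(1)=\sqrt{\NumIndexTwo}\,\|\SoluTargetPar-\TrueTargetPar\|_2\,\liloh_p(\Numobs^{-t})$ after invoking the hypothesis $\|\SoluTargetPar-\TrueTargetPar\|_2=\liloh_p(\Numobs^{-t})$.

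To recognize the $T_i^\ast$-contribution as $\liloh_p$ of the right hand side of the claim, I would transfer Assumption~\ref{assn-glm-minimum-eig} from the population matrix at the truth to the empirical plug-in matrix $M\defn\EmpMean\InvSqrtCovarwhat_i(\iRegressor-\GlmMeanVecwhat_i)g'(\inprod{\iRegressor}{\EstTargetPar}+\inprod{\iNuisance}{\EstNuisancePar})(\iRegressor-\GlmMeanVecwhat_i)^\top$ in two sub-steps: a matrix-valued martingale concentration for $\EmpMean-\CondMean$ controlled via the uniform bound on $\opnorm{\InvSqrtCovarwhat_i}$ from~\Cref{lm:equiv_glm}, together with a Lipschitz perturbation of size $\liloh_p(\Numobs^{-1/4})$ for the plug-in error of $\InvSqrtCovarwhat_i,\GlmMeanVecwhat_i,g'$ in $(\EstTargetPar,\EstNuisancePar)$. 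Both perturbations being $\liloh_p(\Numobs^{\delta-t})$ yields $\sigma_{\min}(M)\gtrsim\Numobs^{\delta-t}$ with probability tending to one, and hence $\|\sqrt{\NumIndexTwo}\,M(\SoluTargetPar-\TrueTargetPar)\|_2\gtrsim\sqrt{\NumIndexTwo}\,\Numobs^{\delta-t}\|\SoluTargetPar-\TrueTargetPar\|_2$. Since $\liloh_p(\Numobs^{-t})=\liloh_p(\Numobs^{\delta-t})$ for any $\delta>0$, the $T_i^\ast$-bound above is $\liloh_p$ of this quantity, completing the argument.

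The main obstacle will be this final transfer of the minimum-singular-value bound from the population matrix to the empirical plug-in $M$: both the sampling fluctuation and the plug-in perturbation must be controlled at a rate strictly faster than $\Numobs^{\delta-t}$, which is precisely what forces the coupling between $t\leq 1/4$, the pilot rate $\Numobs^{-1/4}$ from Assumption~\ref{assn-glm-nuisance-est}, and the selection-rate exponent $2(t-\delta)$ in Assumption~\ref{assn-glm-prob}.
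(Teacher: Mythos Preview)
Your proposal is correct and follows essentially the same route as the paper: bound $|Q_3|,|Q_4|$ via $|g''|\le L_{g'}$, split into a pilot-error piece (which is $\liloh_p(1)$ after multiplying by $\sqrt{\NumIndexTwo}$ and averaging against $\InvSqrtCovarwhat_i(\iRegressor-\GlmMeanVecwhat_i)$) and a $\|\SoluTargetPar-\TrueTargetPar\|_2^2$ piece, and then compare the latter to the right-hand side by transferring the singular-value lower bound from the population matrix to $M$ (this transfer is exactly the content of~\Cref{lm:glm_consistent_zero_temp} in the paper).

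One small point: for the sampling fluctuation $\EmpMean-\CondMean$ of the Jacobian matrix, you propose to control it via the uniform bound $\opnorm{\InvSqrtCovarwhat_i}\lesssim i^{t-\delta}$ from~\Cref{lm:equiv_glm}. That works, but the paper instead re-uses the second-moment identity $\E\|\InvSqrtCovarwhat_i(\iRegressor-\GlmMeanVecwhat_i)\|_2^2\le \TargetDim/m_\eps$ (the same bound you already cited for $\EmpMean\|\InvSqrtCovarwhat_i(\iRegressor-\GlmMeanVecwhat_i)\|_2=\bigoh_p(1)$) to get the cleaner $\bigoh_p(\Numobs^{-1/2})$ rate, which is comfortably $\liloh_p(\Numobs^{\delta-t})$ without any further bookkeeping of exponents.
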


\begin{proof}
Since $g'$ is $L_{g'}$-Lipschitz by assumption, it follows that
$|g^{''}|\leq L_{g'}$. Thus, we have
\begin{align*}
\quad & \sqrt{\NumIndexTwo }\EmpMean 
\InvSqrtCovarwhat_i(\iRegressor - \GlmMeanVecwhat_i)(Q_3 + Q_4) \\
& =\frac{\sqrt{\NumIndexTwo }}{2}\EmpMean \int^1_{0}\int^1_{0}
g''\big(\inprod{\iRegressor}{\EstTargetPar + r_1r_2(\SoluTargetPar
 - \EstTargetPar )} +\inprod{ \iNuisance}{\EstNuisancePar }\big) \big|\inprod{\iRegressor}{
\SoluTargetPar  - \EstTargetPar }|^2 dr_1 dr_2 \\
&  - \frac{\sqrt{\NumIndexTwo}}{2}\EmpMean
\int^1_{0}\int^1_{0} \left\{ g^{''}\big(\inprod{\iRegressor}{\EstTargetPar +
r_1r_2(\TrueTargetPar  - \EstTargetPar )} + \inprod{\iNuisance}{
\EstNuisancePar + r_1 r_2(\TrueNuisancePar  - \EstNuisancePar )}\big) \right. \\
& \left. \qquad \qquad \qquad \qquad \qquad \qquad  \big|\inprod{
\iRegressor}{\TrueTargetPar  - \EstTargetPar } +
\inprod{\iNuisance}{\TrueNuisancePar  - \EstNuisancePar }\big|^2 \right\} dr_1 dr_2 \\
& = \bigoh(L_{g'}\sqrt{\NumIndexTwo}
\sup_i|\inprod{\iRegressor}{\SoluTargetPar  - \EstTargetPar }|^2) +
\bigoh(L_{g'}|\sqrt{\NumIndexTwo} \sup_i\inprod{\iNuisance}{
\EstNuisancePar  - \TrueNuisancePar} |^2) \\
& \qquad \qquad  + 
\bigoh(L_{g'}|\sqrt{\NumIndexTwo }\sup_i\inprod{\iRegressor}{\EstTargetPar
- \TrueTargetPar }|^2) \\
& = \bigoh_p(\sqrt{\NumIndexTwo } \|\SoluTargetPar - \TrueTargetPar
\|_2^2) + \bigoh_p(\sqrt{\NumIndexTwo } \|\EstNuisancePar -
\TrueNuisancePar \|_2^2) + \bigoh_p(\sqrt{\NumIndexTwo }\|\EstTargetPar
 - \TrueTargetPar \|_2^2)\\
& =\liloh_p(\Numobs^{1/2-t} \|\SoluTargetPar  - \TrueTargetPar \|_2) +
\liloh_p(1),
\end{align*}
where the second equation uses $|g^{''}|\leq L_{g'}$, the third
equation uses the boundedness assumption of $\iRegressor, \iNuisance$
and the fact that $\|\SoluTargetPar - \EstTargetPar \|_2^2\leq 2
(\|\SoluTargetPar - \TrueTargetPar \|_2^2 + \|\EstTargetPar -
\TrueTargetPar \|_2^2$). The last line follows from the
$\Numobs^{-t}$-consistency of $\SoluTargetPar $ and
$\Numobs^{-1/4}$-consistency of $\EstTargetPar, \EstNuisancePar
$. Denote $\EmpMean \InvSqrtCovarwhat_i(\iRegressor-
\GlmMeanVecwhat_i)g'(\iRegressor^\top \EstTargetPar + \iNuisance^\top
\EstNuisancePar ) (\iRegressor - \GlmMeanVecwhat_i)^\top$ by $Z_0$,
and by Lemma~\ref{lm:glm_consistent_zero_temp} we have
$\prob(\sigma_{\min}(Z_0)\geq c_{\min}\Numobs^{\delta-t})\to 1$ for
some $c_{\min}>0$. Thus we have $\prob(\|\sqrt{\NumIndexTwo }
Z_0(\SoluTargetPar - \TrueTargetPar ) \|_2\geq c_{\min}\Numobs^{1/2 +
  \delta-t} \|\SoluTargetPar - \TrueTargetPar \|_2) \to 1$, and we
conclude
\begin{align*}
\liloh_p(\Numobs^{1/2-t} \|\SoluTargetPar  - \TrueTargetPar \|_2)
=
\liloh_p(c_{\min}\Numobs^{1/2 + \delta-t}\|\SoluTargetPar
 - \TrueTargetPar \|_2)= \liloh_p(\sqrt{\NumIndexTwo }\|Z_0(\SoluTargetPar
 - \TrueTargetPar ) \|_2),
\end{align*}
which completes the proof.
\end{proof}


\begin{lems}
\label{lm:glm_consistent_zero_temp}
Under the assumptions in Theorem~\ref{thm:glm_new1_temp}, we have for some
$c_{\min}>0$ that
\begin{align}
  \mathbb \prob( \sigma_{\min}(\EmpMean
  \InvSqrtCovarwhat_i(\iRegressor -
  \GlmMeanVecwhat_i)g'\big(\inprod{\iRegressor}{\EstTargetPar}
  +\inprod{ \iNuisance}{ \EstNuisancePar} \big)(\iRegressor-
  \GlmMeanVecwhat_i)^\top)&\geq c_{\min} \Numobs^{\delta-t})\to
  1, \label{eq:consistent_lm0_1}
\end{align} 
as $\Numobs\to\infty$.
\end{lems}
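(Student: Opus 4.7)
The plan is to reduce the lower bound to an operator-norm deviation via Weyl's inequality. Let
\begin{align*}
Z_0 &\defn \EmpMean \InvSqrtCovarwhat_i(\iRegressor - \GlmMeanVecwhat_i) g'\big(\inprod{\iRegressor}{\EstTargetPar}+\inprod{\iNuisance}{\EstNuisancePar}\big) (\iRegressor - \GlmMeanVecwhat_i)^\top,\\
Z_0^* &\defn \CondMean \InvSqrtCovar^*_i(\iRegressor - \GlmMeanVec^*_i) g'\big(\inprod{\iRegressor}{\TrueTargetPar}+\TrueFunPar(\iNuisance)\big)(\iRegressor - \GlmMeanVec^*_i)^\top.
\end{align*}
Assumption~\ref{assn-glm-minimum-eig} already guarantees $\sigma_{\min}(Z_0^*)\geq m_{\ScoreFun}\Numobs^{\delta - t}$ on an event of probability tending to one, so by Weyl's theorem $\sigma_{\min}(Z_0)\geq m_{\ScoreFun}\Numobs^{\delta - t}-\opnorm{Z_0 - Z_0^*}$ on the same event, and the entire claim reduces to showing $\opnorm{Z_0 - Z_0^*}=\liloh_p(\Numobs^{\delta - t})$.

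\textbf{Decomposition into concentration and plug-in pieces.} I would introduce the intermediate matrix $\widetilde Z_0 \defn \CondMean \InvSqrtCovarwhat_i(\iRegressor - \GlmMeanVecwhat_i) g'\big(\inprod{\iRegressor}{\EstTargetPar}+\inprod{\iNuisance}{\EstNuisancePar}\big)(\iRegressor - \GlmMeanVecwhat_i)^\top$ and split $Z_0 - Z_0^* = (Z_0-\widetilde Z_0) + (\widetilde Z_0 - Z_0^*)$. The first term is the centered empirical average of a matrix-valued martingale difference sequence with respect to $\{\iHistory\}_i$: by Lemma~\ref{lm:equiv_glm} the predictable envelope satisfies $\opnorm{\InvSqrtCovarwhat_i}\leq 1/\sqrt{\cwtil_i}\lesssim i^{t}$, while $\|\iRegressor - \GlmMeanVecwhat_i\|_2\leq 2$ and $|g'|\leq L_g$, so each summand has operator norm $\bigoh(i^{t})$ almost surely. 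A matrix Freedman/Azuma bound then yields $\opnorm{Z_0-\widetilde Z_0}=\bigoh_p(\Numobs^{t-1/2}\sqrt{\log \Numobs})$, which is $\liloh_p(\Numobs^{\delta - t})$ because $t\leq 1/4$ and $\delta>0$ force $2t - \delta<1/2$. For the second, plug-in term, I would invoke the uniform-in-$i$ Lipschitz continuity of the map $\AllNuisancePar \mapsto \CondMean[\InvSqrtCovar_i(\AllNuisancePar)(\iRegressor - \GlmMeanVec_i(\AllNuisancePar))\, g'(\cdot)(\iRegressor - \GlmMeanVec_i(\AllNuisancePar))^\top]$ supplied by Lemma~\ref{lm:glm_lip_cont_temp}; combining this with $\|\EstAllNuisancePar - \TrueAllNuisancePar\|_2=\liloh_p(\Numobs^{-1/4})$ from Assumption~\ref{assn-glm-nuisance-est} gives $\opnorm{\widetilde Z_0 - Z_0^*}=\liloh_p(\Numobs^{-1/4})$, which is $\liloh_p(\Numobs^{\delta - t})$ since $t\leq 1/4\leq \delta + 1/4$.

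\textbf{Main obstacle.} The principal technical difficulty is that the weighting $\InvSqrtCovar_i$ is allowed to grow like $i^{t}$ because Assumption~\ref{assn-glm-prob} only enforces $\SelectProb_{ik}\gtrsim i^{-2(t - \delta)}$; as a consequence the summands in the martingale deviation $Z_0-\widetilde Z_0$ are unbounded as $i\to\infty$, and a naive chain-rule bound on the plug-in error would pick up a factor $i^{3t}$ from differentiating the matrix square root. The first issue is handled cleanly by pairing the predictable envelope from Lemma~\ref{lm:equiv_glm} with a variance-sensitive matrix concentration inequality, since $\sum_i i^{2t}\lesssim \Numobs^{2t+1}$ and $t\leq 1/4$ leaves enough room against the $\Numobs^{\delta - t}$ budget. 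The second is the truly delicate step: it is Lemma~\ref{lm:glm_lip_cont_temp} that extracts the cancellations making the Lipschitz constant of $\CondMean[\cdot]$ in $(\AuxiNuisancePar, \FunPar)$ uniform in $i$, and without this uniform control the plug-in error would blow up with $\Numobs^{t}$ and potentially overwhelm the minimum-eigenvalue lower bound.
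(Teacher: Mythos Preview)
Your overall strategy—Weyl's inequality, a martingale concentration piece, and a plug-in Lipschitz piece—matches the paper's proof exactly. Two points deserve comment.

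\textbf{Concentration step.} You control $Z_0-\widetilde Z_0$ via a matrix Freedman/Azuma bound using the envelope $\opnorm{\InvSqrtCovarwhat_i}\lesssim i^{t}$, obtaining $\bigoh_p(n^{t-1/2}\sqrt{\log n})$. This is valid and suffices because $2t-\delta<1/2$. The paper instead exploits the self-normalization identity $\E\big[\|\InvSqrtCovarwhat_i(\iRegressor-\GlmMeanVecwhat_i)\|_2^2\mid\iHistory\big]\leq \TargetDim/m_\eps$ (equation~\eqref{eq:glm_bounded_oxm}) to get a uniform-in-$i$ second-moment bound, and then applies the elementary Lemma~\ref{lm:tech_md} to obtain the sharper $\bigoh_p(n^{-1/2})$ without any $n^{t}$ loss or matrix concentration machinery. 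Your route works but is heavier than necessary.

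\textbf{Lipschitz step.} Here there is a genuine mis-citation. Lemma~\ref{lm:glm_lip_cont_temp} establishes Lipschitz continuity of $\CondMean\ScoreFun_i(\TargetPar,\AllNuisancePar)$ and $\CondMean\partial_{\AllNuisancePar}\ScoreFun_i(\TargetPar,\AllNuisancePar)$, but the matrix you need is (up to sign and a term that vanishes in conditional expectation) $\CondMean\partial_{\TargetPar}\ScoreFun_i$, which that lemma does not cover. The paper handles this \emph{inside} the proof of Lemma~\ref{lm:glm_consistent_zero_temp} by the factorization $\InvSqrtCovar_i=\InvSqrtCovar_i\DiagProbMatrix\cdot\DiagProbMatrix^{-1}$: Lemma~\ref{lm:glm_lip_cont_temp2} shows $\InvSqrtCovar_i\DiagProbMatrix$ is bounded and Lipschitz uniformly in $i$, and an explicit computation (equation~\eqref{eq:glm_consist_matrix}) shows that $\E[\DiagProbMatrix^{-1}(\iRegressor-\GlmMeanVec_i)g'(\cdot)(\iRegressor-\GlmMeanVec_i)^\top\mid\iNuisance,\iHistory]$ is bounded and Lipschitz as well. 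This is exactly the ``cancellation'' you allude to in your obstacle discussion; you have correctly identified it as the delicate step, but the supporting result you cite does not actually deliver it.
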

\begin{proof}
Since the vectors $\iResponse, \iRegressor, \iNuisance$ are
independent of $\EstTargetPar, \EstNuisancePar $ conditioned on
$\History_{\NumIndexOne}$, we can without loss of generality treat
$\EstTargetPar, \EstNuisancePar $ as nonrandom variables. Next note
that $U_i \defn \InvSqrtCovarwhat_i(\iRegressor -
\GlmMeanVecwhat_i)g'\big(\inprod{\iRegressor}{\EstTargetPar} +\inprod{
  \iNuisance}{ \EstNuisancePar} \big)(\iRegressor -
\GlmMeanVecwhat_i)^\top- \E(\InvSqrtCovarwhat_i(\iRegressor -
\GlmMeanVecwhat_i)g'\big(\inprod{\iRegressor}{\EstTargetPar} +\inprod{
  \iNuisance}{ \EstNuisancePar} \big)(\iRegressor -
\GlmMeanVecwhat_i)^\top\mid\iHistory)$ forms a martingale difference
sequence, and moreover, we have
\begin{align*}
\E \frobnorm{U_i}^2 & \leq\E\frobnorm{\InvSqrtCovarwhat_i(\iRegressor
  - \GlmMeanVecwhat_i)g'\big(\inprod{\iRegressor}{\EstTargetPar}
  +\inprod{ \iNuisance}{ \EstNuisancePar} \big)(\iRegressor -
  \GlmMeanVecwhat_i)^\top}^2
    \;\\
    & \leq 
     \; \TargetDim \E\|\InvSqrtCovarwhat_i(\iRegressor -
    \GlmMeanVecwhat_i)\|^2_2
    g'\big(\inprod{\iRegressor}{\EstTargetPar} +\inprod{ \iNuisance}{
      \EstNuisancePar} \big)^2\|(\iRegressor -
    \GlmMeanVecwhat_i)^\top\|_2^2 \\
& \lesssim L_g^2\TargetDim \E\|\InvSqrtCovarwhat_i(\iRegressor -
    \GlmMeanVecwhat_i)\|^2_2=\bigoh(1),
\end{align*}
where the third inequality is due to $|g'|\leq L_g$ and $\|\iRegressor -
\GlmMeanVecwhat_i\|_2\leq 2$, and the last equality uses
equation~\eqref{eq:glm_bounded_oxm}. Thus it follows
from Lemma~\ref{lm:tech_md} that $\frobnorm{\EmpMean U_i} =
\bigoh_p(1/\sqrt{\Numobs})$. Using Weyl's theorem, we have
\begin{align*}
    &\quad\sigma_{\min}(\EmpMean \InvSqrtCovarwhat_i(\iRegressor -
  \GlmMeanVecwhat_i)g'\big(\inprod{\iRegressor}{\EstTargetPar}
  +\inprod{ \iNuisance}{ \EstNuisancePar} \big)(\iRegressor-
  \GlmMeanVecwhat_i)^\top)\\
    &\geq \sigma_{\min}(\CondMean \InvSqrtCovarwhat_i(\iRegressor -
  \GlmMeanVecwhat_i)g'\big(\inprod{\iRegressor}{\EstTargetPar}
  +\inprod{ \iNuisance}{ \EstNuisancePar} \big)(\iRegressor-
  \GlmMeanVecwhat_i)^\top)- \opnorm{\EmpMean U_i}\\
    &=\sigma_{\min}(\CondMean \InvSqrtCovarwhat_i(\iRegressor -
  \GlmMeanVecwhat_i)g'\big(\inprod{\iRegressor}{\EstTargetPar}
  +\inprod{ \iNuisance}{ \EstNuisancePar} \big)(\iRegressor-
  \GlmMeanVecwhat_i)^\top) + \bigoh_p(\Numobs^{-1/2}).
\end{align*}
Since $\delta-t>-1/2$, it remains to show there exists some $\cwtil_{\min}$  such that 
\begin{align}
\label{eq:consistent_lm0_2}  
\prob( \sigma_{\min}(\CondMean \InvSqrtCovarwhat_i(\iRegressor-
\GlmMeanVecwhat_i)
g'\big(\inprod{\iRegressor}{\EstTargetPar} +\inprod{ \iNuisance}{
  \EstNuisancePar} \big)(\iRegressor- \GlmMeanVecwhat_i)^\top) & \geq
\cwtil_{\min} \Numobs^{\delta-t}) \to 1
\end{align}
Recall our notation $\AllNuisancePar=(\AuxiNuisancePar,
\NuisancePar)$, $\TrueAllNuisancePar=(\TrueTargetPar, \NuisancePar)$
and $\EstAllNuisancePar=(\EstTargetPar, \EstNuisancePar )$.  Let
$\DiagProbMatrix \defn \diag\{\SelectProb_{i1}, \ldots,
\SelectProb_{i\TargetDim}\}$,
\begin{align*}
U_i(\AllNuisancePar)
& \defn \E(\InvSqrtCovar_i(\iRegressor-
\GlmMeanVec_i)g'\big(\inprod{\iRegressor}{\AuxiNuisancePar} +
\inprod{\iNuisance}{ \NuisancePar}\big)(\iRegressor-
\GlmMeanVec_i)^\top\mid\iHistory).
\end{align*}
We claim that $\EmpMean U_i(\AllNuisancePar)$ is Lipschitz in
$\AllNuisancePar$ with some constant parameter $L_{U}$ for now, i.e.,
$\opnorm{\EmpMean U_i(\AllNuisancePar^a) - \EmpMean
  U_i(\AllNuisancePar^b)}\leq L_U\|\AllNuisancePar^a -
\AllNuisancePar^b\|_2$ for any $\AllNuisancePar^a,
\AllNuisancePar^b\in\TargetSpace\times\NuisanceSpace$. Then it follows
from Weyl's theorem (see e.g., Theorem 4.3.1 in Horn and
Johnson~\cite{horn2012matrix}) again that
\begin{align*}
& \quad \sigma_{\min}(\CondMean \InvSqrtCovarwhat_i(\iRegressor -
  \GlmMeanVecwhat_i) g'\big(\inprod{\iRegressor}{\EstTargetPar}
  +\inprod{ \iNuisance}{ \EstNuisancePar} \big)(\iRegressor-
  \GlmMeanVecwhat_i)^\top) \\
& = \sigma_{\min}(\EmpMean U_i(\EstAllNuisancePar))\\
& \geq \sigma_{\min}(\EmpMean U_i(\TrueAllNuisancePar)) - \opnorm{\EmpMean
  U_i(\EstAllNuisancePar) - \EmpMean U_i(\TrueAllNuisancePar)} \\
  & \geq \sigma_{\min}(\EmpMean
  U_i(\TrueAllNuisancePar))-L_U\|\EstAllNuisancePar -
  \TrueAllNuisancePar\|_2 \\
& \geq m_{\ScoreFun,2}\Numobs^{\delta-t} + \liloh_p(\Numobs^{-t})
\end{align*}
with probability converging to one. Here in the last line we use the
assumption on the gradient of the score function
(Assumption~\ref{assn-glm-minimum-eig}). Therefore,
equation~\eqref{eq:consistent_lm0_2} holds by choosing
$\cwtil_{\min}=m_{\ScoreFun,2}/2$ and hence concludes the proof.

Now, it remains to prove $\EmpMean U_i(\AllNuisancePar)$ is Lipschitz
in $\AllNuisancePar$. By definition
\begin{align*}
\EmpMean U_i(\AllNuisancePar)
&= \InvSqrtCovar_i\E((\iRegressor- \GlmMeanVec_i)
g'\big(\inprod{\iRegressor}{\AuxiNuisancePar} +\inprod{ \iNuisance}{
  \NuisancePar}\big) (\iRegressor- \GlmMeanVec_i)^\top\mid\iHistory)
\\ &= \InvSqrtCovar_i
\DiagProbMatrix\E(\E(\DiagProbMatrix^{-1}(\iRegressor- \GlmMeanVec_i)
g'\big(\inprod{\iRegressor}{\AuxiNuisancePar} +\inprod{ \iNuisance}{
  \NuisancePar}\big)(\iRegressor- \GlmMeanVec_i)^\top\mid\iNuisance,
\iHistory)\mid\iHistory).
\end{align*}
In Lemma~\ref{lm:glm_lip_cont_temp2} we will show that
$\InvSqrtCovar_i\DiagProbMatrix$ is bounded and Lipschitz in
$\AllNuisancePar$. Thus, it suffices to show
$\E(\DiagProbMatrix^{-1}(\iRegressor- \GlmMeanVec_i)
g'\big(\inprod{\iRegressor}{\AuxiNuisancePar} +\inprod{ \iNuisance}{
  \NuisancePar}\big) (\iRegressor- \GlmMeanVec_i)^\top\mid\iNuisance,
\iHistory)$ is bounded and Lipschitz in $\AllNuisancePar$ since the
multiplication of two bounded Lipschitz functions is bounded and
Lipschitz. In fact, this quantity can be computed
directly. Concretely, we have
\begin{align}
& \quad \E(\DiagProbMatrix^{-1}(\iRegressor-
  \GlmMeanVec_i)g'\big(\inprod{\iRegressor}{\AuxiNuisancePar}
  +\inprod{ \iNuisance}{ \NuisancePar}\big)(\iRegressor-
  \GlmMeanVec_i)^\top\mid\iNuisance, \iHistory)\notag \\
  & =\begin{pmatrix} g'_1 & 0 & 0 & \cdots & 0 \\ 0 & g'_2 & 0 &
  \cdots & 0 \\ 0 & 0 & g'_3 & \cdots & 0 \\ \vdots & \vdots & \vdots
  & \ddots & \vdots \\ 0 & 0 & 0 & \cdots & g'_{\TargetDim}
   \end{pmatrix} - \begin{pmatrix} g'_1 \\ g'_2 \\ \vdots
      \\ g'_{\TargetDim}
  \end{pmatrix}
  \GlmMeanVec_i^\top - \GlmMeanVecbar_i\begin{pmatrix}
  \SelectProb_1g^{'}_1 \\ \SelectProb_2g^{'}_2 \\ \vdots
  \\ \SelectProb_{\TargetDim}g^{'}_{\TargetDim}\end{pmatrix}^\top + (
  \sum_{j=0}^{\TargetDim}\SelectProb_jg'_j) \GlmMeanVecbar_i
  \GlmMeanVec_i^\top, \label{eq:glm_consist_matrix}
\end{align}
where $ \GlmMeanVecbar_i \defn \DiagProbMatrix^{-1} \GlmMeanVec_i$,
$g'_j \defn g'(\AuxiNuisancePar_{j} + \iNuisance^\top\NuisancePar)$
(here we additionally define $\AuxiNuisancePar_{0}=0$) and
$\SelectProb_j \defn \SelectProb_{ij}$ for $j=0, \ldots, \TargetDim
$. It follows immediately from our assumptions on $g'$, definition of
$\GlmMeanVec_i$ and the proof of Lemma~\ref{lm:glm_lip_cont_temp} that the
matrix in equation~\eqref{eq:glm_consist_matrix} is bounded and
Lipschitz in $\AllNuisancePar$. This completes the proof.
\end{proof}


\begin{lems}[Empirical error]\label{lm:glm_emp_bern}
Under the assumptions of Theorem~\ref{thm:glm_new1_temp}, we have
\begin{align*}
    \sup_{\TargetPar \in\TargetSpace} \|(\EmpMean - \CondMean
    )\ScoreFun(\TargetPar, \EstAllNuisancePar)\|_2 =
    \bigoh_p(\frac{\log \Numobs }{\sqrt{\Numobs}}).
\end{align*}
\end{lems}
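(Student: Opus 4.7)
The plan is a standard empirical-process argument: combine a pointwise martingale concentration bound for fixed $\TargetPar$ with a covering-number argument over the bounded set $\TargetSpace \subset \R^{\TargetDim}$. Because the pilot estimator $\EstAllNuisancePar$ is built on $\IndexOne$ and the supremum runs over the disjoint hold-out block $i \in \IndexTwo$, we may condition on $\History_{\NumIndexOne}$ and treat $\EstAllNuisancePar$ as deterministic throughout. Under this conditioning, for each fixed $\TargetPar$, the sequence $\ScoreFun_i(\TargetPar, \EstAllNuisancePar) - \E[\ScoreFun_i(\TargetPar, \EstAllNuisancePar) \mid \iHistory]$ for $i \in \IndexTwo$ is a martingale difference sequence with respect to the filtration $\{\iHistory\}$.

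First I would control the moments of each term. Writing $\iResponse - g(\inprod{\iRegressor}{\TargetPar} + \EstFunPar(\iNuisance)) = \iNoise + [g(\inprod{\iRegressor}{\TrueTargetPar} + \TrueFunPar(\iNuisance)) - g(\inprod{\iRegressor}{\TargetPar} + \EstFunPar(\iNuisance))]$, the bounded-covariate and bounded-link assumptions from~\Cref{sec:standard-glm-assumption} together with Lemma~\ref{lm:equiv_glm} yield the deterministic bound $\|\ScoreFun_i(\TargetPar, \EstAllNuisancePar)\|_2 \lesssim (1 + |\iNoise|) \cdot \opnorm{\InvSqrtCovar^*_i} \lesssim (1 + |\iNoise|)\, i^{t}$, while the conditional-variance calculation used in the proof of~\Cref{lm:glm_clt_temp} gives $\E[\|\ScoreFun_i\|_2^{2} \mid \iHistory] \lesssim \TargetDim$. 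With these two bounds in hand, a vector-valued Freedman (or matrix Bernstein) inequality applied coordinate-wise shows that, for each fixed $\TargetPar$ and any $\delta \in (0,1)$, with probability at least $1-\delta$,
\begin{align*}
\|(\EmpMean - \CondMean)\ScoreFun_i(\TargetPar, \EstAllNuisancePar)\|_2 \lesssim \sqrt{\frac{\TargetDim \log(\TargetDim/\delta)}{\NumIndexTwo}} + \frac{\Numobs^{t} \log(\TargetDim/\delta)}{\NumIndexTwo}.
\end{align*}
Since $t \leq 1/4$ the first (sub-Gaussian) term dominates, giving a pointwise rate of $\bigoh_p(\sqrt{\log(1/\delta)/\Numobs})$.

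Next I would upgrade this to a uniform bound by a standard $\eps$-net argument on the bounded set $\TargetSpace \subset \R^{\TargetDim}$ with $\TargetDim$ fixed. Differentiating the score yields $\partial_{\TargetPar}\ScoreFun_i(\TargetPar, \EstAllNuisancePar) = -\InvSqrtCovar^*_i(\iRegressor - \GlmMeanVec^*_i) g'(\inprod{\iRegressor}{\TargetPar} + \EstFunPar(\iNuisance))\iRegressor^\top$, whose operator norm is bounded by $L_g \opnorm{\InvSqrtCovar^*_i} \|\iRegressor - \GlmMeanVec^*_i\|_2 \|\iRegressor\|_2 \lesssim \Numobs^{t}$. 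Hence $\TargetPar \mapsto \EmpMean\ScoreFun_i(\TargetPar, \EstAllNuisancePar)$ and $\TargetPar \mapsto \CondMean\ScoreFun_i(\TargetPar, \EstAllNuisancePar)$ are each Lipschitz in $\TargetPar$ with a deterministic constant $L = \bigoh(\Numobs^{t})$. Taking an $\eps$-net of $\TargetSpace$ at resolution $\eps = \Numobs^{-2}$ requires $\bigoh(\Numobs^{2\TargetDim})$ points; applying the pointwise bound with $\delta = \Numobs^{-2\TargetDim-1}$ and a union bound gives $\bigoh_p(\sqrt{\log \Numobs / \Numobs})$ at every net point, while the Lipschitz slack is $L\eps = \bigoh(\Numobs^{t-2}) = \liloh_p(\Numobs^{-1/2})$. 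Combining the two contributions yields $\sup_{\TargetPar \in \TargetSpace}\|(\EmpMean-\CondMean)\ScoreFun_i(\TargetPar, \EstAllNuisancePar)\|_2 = \bigoh_p(\log \Numobs/\sqrt{\Numobs})$ as claimed.

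The main technical obstacle is verifying that the pointwise concentration really has variance proxy of order $\TargetDim/\NumIndexTwo$ rather than a worse $\Numobs^{2t-1}$: this requires using the \emph{predictable} quadratic variation $\sum_i \E[\|\ScoreFun_i\|_2^2 \mid \iHistory] \lesssim \NumIndexTwo$ (which is bounded thanks to Lemma~\ref{lm:equiv_glm} and the identity $\E[\InvSqrtCovar^*_i(\iRegressor-\GlmMeanVec^*_i)(\iRegressor-\GlmMeanVec^*_i)^\top \InvSqrtCovar^{*\top}_i \mid \iHistory] \preceq (\E\iNoise^2)^{-1} \cdot \text{const} \cdot I$), rather than a worst-case per-term bound. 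Once this distinction is respected, the Bernstein term $\Numobs^{t}\log \Numobs/\NumIndexTwo$ is of lower order and the whole argument goes through.
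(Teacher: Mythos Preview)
Your high-level strategy---pointwise martingale concentration plus an $\eps$-net over the bounded set $\TargetSpace$---is the same as the paper's, and your Lipschitz/covering step is essentially identical. The gap is in the pointwise concentration. Freedman/Bernstein inequalities require the martingale increments to be almost surely bounded by a predictable constant $R$, and you implicitly take $R\asymp \Numobs^{t}$ when you write the ``Bernstein term $\Numobs^{t}\log\Numobs/\NumIndexTwo$''. But your own bound reads $\|\ScoreFun_i\|_2 \lesssim (1+|\iNoise|)\,i^{t}$, and $\iNoise$ is only conditionally sub-Gaussian, not bounded; so the increments are \emph{not} a.s.\ bounded and standard Freedman does not apply as stated. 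Your final paragraph correctly stresses that the predictable quadratic variation is $O(\NumIndexTwo)$ rather than $O(\NumIndexTwo\,\Numobs^{2t})$, but that controls the \emph{variance} term in the Bernstein bound, not the \emph{boundedness} term, which is where the gap lies.

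The paper closes this gap by splitting $\iMartdiff(\TargetPar)=\ScoreFun_i(\TargetPar,\EstAllNuisancePar)-\E[\ScoreFun_i(\TargetPar,\EstAllNuisancePar)\mid\iHistory]$ into
\[
\iMartdiffa \defn \InvSqrtCovarwhat_i(\iRegressor-\GlmMeanVecwhat_i)\,\iNoise
\qquad\text{and}\qquad
\iMartdiffb(\TargetPar)\defn \iMartdiff(\TargetPar)-\iMartdiffa.
\]
The unbounded piece $\iMartdiffa$ does \emph{not} depend on $\TargetPar$, so no union bound over the net is required for it; a single second-moment argument (Lemma~\ref{lm:tech_md}) already yields $\|\EmpMean\iMartdiffa\|_2=\bigoh_p(\Numobs^{-1/2})$. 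The residual $\iMartdiffb(\TargetPar)$ is genuinely a.s.\ bounded---since $g$ is bounded on the relevant compact domain one has $\|\iMartdiffb(\TargetPar)\|_2\lesssim M_g\,i^{t-\delta}$---so the per-term Bernstein MGF bound is legitimate, and the union bound over a net of mesh $1/\NumIndexTwo$ goes through exactly as you outline. You could instead patch your argument by truncating $\iNoise$ at level $C\sqrt{\log\Numobs}$ and controlling the truncation error, but the paper's decomposition is the cleaner route and is the missing idea in your proposal.
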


\begin{proof}
Since $\EstAllNuisancePar\in \History_{\NumIndexOne}\in\iHistory$, it is
independent of $\iResponse, \iRegressor, \iNuisance$ conditioned on
$\History_{\NumIndexOne}$. Therefore, we can view $\EstAllNuisancePar$
as fixed and prove the desired result for all
$\EstAllNuisancePar$. Since $g$ is Lipschitz and $\iRegressor,
\iNuisance, \TargetPar, \NuisancePar$ are all bounded, it follows that
$g$ is also bounded. We denote $\sup_{x\in D_x,(\TargetPar,
  \NuisancePar)\in \TargetSpace\times \NuisanceSpace}|g|$ by $M_g$.

Define $\iMartdiff(\TargetPar) \defn \ScoreFun(\TargetPar,
\EstAllNuisancePar) - \E (\ScoreFun(\TargetPar,
\EstAllNuisancePar)\mid\iHistory)$ and decompose
$\iMartdiff(\TargetPar)$ into $\iMartdiffa + \iMartdiffb(\TargetPar)$,
where
\begin{align*}
\iMartdiffa
& \defn \InvSqrtCovarwhat_i(\iRegressor - \GlmMeanVecwhat_i)\iNoise,
\\ \iMartdiffb(\TargetPar)& \defn \InvSqrtCovarwhat_i(\iRegressor -
\GlmMeanVecwhat_i)[g\big(\inprod{\iRegressor}{\TrueTargetPar}
  +\inprod{ \iNuisance}{
    \TrueNuisancePar}\big)-g\big(\inprod{\iRegressor}{\TargetPar}
  +\inprod{ \iNuisance}{ \EstNuisancePar} \big)], \\& -
\E(\InvSqrtCovarwhat_i(\iRegressor -
\GlmMeanVecwhat_i)[g\big(\inprod{\iRegressor}{\TrueTargetPar}
  +\inprod{ \iNuisance}{
    \TrueNuisancePar}\big)-g\big(\inprod{\iRegressor}{\TargetPar}
  +\inprod{ \iNuisance}{ \EstNuisancePar} \big)]\mid\iHistory).
\end{align*} 
It suffices to show $\|\EmpMean \iMartdiffa\|_2=\bigoh_p(\log \Numobs
/\sqrt{\Numobs})$ and $\sup_{\TargetPar \in\TargetSpace}\|\EmpMean
\iMartdiffb (\TargetPar)\|_2=\bigoh_p(\log \Numobs /\sqrt{\Numobs})$.
Note that $\iMartdiff(\TargetPar), \iMartdiffa,
\iMartdiffb(\TargetPar)$ are all martingale difference sequences for
any $\TargetPar \in \TargetSpace$.  Moreover,
\begin{align*}
    \E (\|\iMartdiffa\|_2^2\mid\iHistory)
    &=\E (\|\InvSqrtCovarwhat_i(\iRegressor -
    \GlmMeanVecwhat_i)\iNoise\|_2^2\mid\iHistory)\\
      &= \E (\|\InvSqrtCovarwhat_i(\iRegressor -
    \GlmMeanVecwhat_i)\|_2^2\GlmVar\big(g\big(\inprod{\iRegressor}{\TrueTargetPar}
    +\inprod{ \iNuisance}{ \TrueNuisancePar}\big)\big)\mid\iHistory)\\
      & \leq \E (\|\InvSqrtCovarwhat_i(\iRegressor -
    \GlmMeanVecwhat_i)\|_2^2\mid\iHistory) M_\eps=\bigoh(1),
\end{align*}
where the second line follows from calculation of the expectation
conditional on $\iRegressor, \iNuisance$, and the last equality uses
equation~\eqref{eq:glm_bounded_oxm}. Thus, it follows immediately from
Lemma~\ref{lm:tech_md} that $\|\EmpMean
\iMartdiffa\|_2=\bigoh_p(1/\sqrt{\Numobs})=\bigoh_p(\log \Numobs
/\sqrt{\Numobs})$.

Similarly, we have
\begin{align*}
       \E (\|\iMartdiffb(\TargetPar)\|_2^2\mid\iHistory)
    &\leq\E (\|\InvSqrtCovarwhat_i(\iRegressor -
       \GlmMeanVecwhat_i)[g\big(\inprod{\iRegressor}{\TrueTargetPar}
         +\inprod{ \iNuisance}{
           \TrueNuisancePar}\big)-g\big(\inprod{\iRegressor}{\TargetPar}
         +\inprod{ \iNuisance}{
           \EstNuisancePar}\big)]\|_2^2\mid\iHistory)\\
      &\lesssim \E (\|\InvSqrtCovarwhat_i(\iRegressor -
       \GlmMeanVecwhat_i)\|_2^2\mid\iHistory)M_g^2=\bigoh(1),
\end{align*}
and
 \begin{align*}
     \|\iMartdiffb(\TargetPar)\|_2 &\leq
     \opnorm{\InvSqrtCovarwhat_i}\|(\iRegressor -
     \GlmMeanVecwhat_i)\|_2
     |g\big(\inprod{\iRegressor}{\TrueTargetPar} +\inprod{
       \iNuisance}{
       \TrueNuisancePar}\big)-g\big(\inprod{\iRegressor}{\TargetPar}
     +\inprod{ \iNuisance}{ \EstNuisancePar}\big)|\\
     & + \E(\opnorm{\InvSqrtCovarwhat_i}\|(\iRegressor -
      \GlmMeanVecwhat_i)\|_2\big|g\big(\inprod{\iRegressor}{\TrueTargetPar}
      +\inprod{ \iNuisance}{
        \TrueNuisancePar}\big)-g\big(\inprod{\iRegressor}{\TargetPar}
      +\inprod{ \iNuisance}{
        \EstNuisancePar}\big)\big|\mid\iHistory)\\
     &\lesssim M_g i^{t - \delta},
 \end{align*}
 where in the last line we used $|g|\leq M_g$, $\|\iRegressor -
 \GlmMeanVecwhat_i\|_2\leq 2$ and Lemma~\ref{lm:equiv_glm}.  Since $
 \|\iMartdiffb(\TargetPar)\|_2$ is bounded by $i^{t - \delta}$ and
 have variance bounded by some constant, there exist some constants
 $\BernVarb, \BernBoundb$ such that $\iMartdiffb^j(\TargetPar)$ is a
 Bernstein type random variable with parameter $(\BernVarb,
 \BernBoundb \Numobs^{t - \delta})$ for each entry
 $\iMartdiffb^j(j=1,2, \ldots, \TargetDim )$. Therefore, it follows
 from (for example Proposition 2.10 in
 Wainwright~\cite{wainwright2019high}) that
 $\E(e^{\lambda\iMartdiffb^j(\TargetPar)}\mid\iHistory)\leq
 e^{\frac{\lambda^2\BernVarb/2}{1 - \BernBoundb \Numobs^{t -
       \delta}|\lambda|}}$ for all $|\lambda|< 1/(\BernBoundb
 \Numobs^{t - \delta})$. This implies
 \begin{align*}
 \E e^{\lambda\EmpMean \iMartdiffb^j(\TargetPar)}
 \leq \E \prod_{i=\NumIndexOne + 1}^{\Numobs}\E
 (e^{\lambda\iMartdiffb^j(\TargetPar)/\NumIndexTwo }\mid\iHistory)
 \leq e^{\frac{\lambda^2\BernVarb/2}{\NumIndexTwo (1 - \Bernbwtil
     \Numobs^{t - \delta-1}|\lambda|)}}
 \end{align*}
for all $|\lambda|< \NumIndexTwo ^{1 + \delta-t}/\Bernbwtil$, where
$\Bernbwtil$ is some constant depending on $\BernBoundb$ and the ratio
between $\NumIndexTwo $ and $\Numobs$.

Let $\CoverSet(\eps)$ be a $\eps$-covering of $\TargetSpace$ in
$\|\cdot\|_2$. From standard results, we can find such a set with
$|\CoverSet (\eps)|\lesssim \frac{1}{\eps^{\TargetDim}}$. Choosing
$\eps=1/\NumIndexTwo $, we get $|\CoverSet (1/\NumIndexTwo )|\lesssim
\NumIndexTwo ^{\TargetDim}$.  For any $\TargetPar \in\TargetSpace$,
let $\pi(\TargetPar)$ denote a point in $\CoverSet(1/\NumIndexTwo )$
such that $\|\TargetPar - \pi(\TargetPar)\|_2< 1/\NumIndexTwo $. Using
a discretization argument, we get
  \begin{align}
      \sup_{\TargetPar \in \TargetSpace}|\EmpMean
      \iMartdiffb^j(\TargetPar)|&\leq \sup_{\TargetPar
        \in \TargetSpace}[|\EmpMean \iMartdiffb^j(\pi(\TargetPar))| +
        |\EmpMean [\iMartdiffb^j(\pi(\TargetPar)) -
          \iMartdiffb^j(\TargetPar)] | ] \notag \\
& \leq \sup_{\TargetPar \in \CoverSet(1/\NumIndexTwo )}|\EmpMean
      \iMartdiffb^j(\TargetPar)| + \sup_{\|\TargetPar^a -
        \TargetPar^b\|_2\leq 1/\NumIndexTwo }| \EmpMean
                 [\iMartdiffb^j( \TargetPar^a) -
                   \iMartdiffb^j(\TargetPar^b)]| \label{eq:one_step_discretization}
  \end{align}
For the first term in equation~\eqref{eq:one_step_discretization}, we
have
 \begin{align*}
      \E \sup_{\TargetPar \in \CoverSet(1/\NumIndexTwo )}|\EmpMean
      \iMartdiffb^j(\TargetPar)|
      & \leq \log (\E e^{\lambda\sup_{\TargetPar \in
          \CoverSet(1/\NumIndexTwo )}|\EmpMean
        \iMartdiffb^j(\TargetPar)|})/\lambda\\ & \leq \log (\E
      e^{\lambda\sup_{\TargetPar \in \CoverSet(1/\NumIndexTwo
          )}\EmpMean \iMartdiffb^j(\TargetPar)} + e^{ -
        \lambda\sup_{\TargetPar \in \CoverSet(1/\NumIndexTwo
          )}\EmpMean \iMartdiffb^j(\TargetPar)})/\lambda\\
        &\leq \log \Big(2|\CoverSet(1/\NumIndexTwo
      )|e^{\frac{\lambda^2\BernVarb/2}{\NumIndexTwo (1 - \Bernbwtil
          \Numobs^{t - \delta-1}|\lambda|)}} \Big) /\lambda \\
       &\lesssim \TargetDim \log \NumIndexTwo /\lambda +
          {\frac{\lambda\BernVarb/2}{\NumIndexTwo (1 - \Bernbwtil
              \Numobs^{t - \delta-1}|\lambda|)}}
 \end{align*}
 for all $|\lambda|< \NumIndexTwo ^{1 +
   \delta-t}/\Bernbwtil$. Choosing $\lambda =\sqrt{\NumIndexTwo }
 <\NumIndexTwo ^{1 + \delta-t}/\Bernbwtil$ yields
  \begin{align*}
     \E \sup_{\TargetPar \in \CoverSet(1/\NumIndexTwo )}|\EmpMean
     \iMartdiffb^j(\TargetPar)|
     & \lesssim \TargetDim \frac{\log \NumIndexTwo
     }{\sqrt{\NumIndexTwo }} + \frac{1}{\sqrt{\NumIndexTwo }}\lesssim
     \TargetDim \frac{\log \NumIndexTwo }{\sqrt{\NumIndexTwo }}.
  \end{align*}
Thus, we have shown that $\sup_{\TargetPar \in
  \CoverSet(1/\NumIndexTwo )}|\EmpMean
\iMartdiffb^j(\TargetPar)|=\bigoh_p(\log \Numobs
/\sqrt{\Numobs})$. For the discretization error (the second term in
equation~\eqref{eq:one_step_discretization}), using the definition of
$\iMartdiffb$ we obtain
 \begin{align*}
 &\quad\big|\EmpMean [\iMartdiffb^j(\TargetPar^a) -
   \iMartdiffb^j(\TargetPar^b)]\big|
   \\&= \Big|(\EmpMean - \CondMean
 )\InvSqrtCovarwhat_{ij\cdot}(\iRegressor -
 \GlmMeanVecwhat_i)\big[g\big(\inprod{\iRegressor}{\TargetPar^b}
   + \inprod{\iNuisance}{\EstNuisancePar
   }\big)-g\big(\inprod{\iRegressor}{\TargetPar^a} +\inprod{
     \iNuisance}{\EstNuisancePar} \big)\big]\Big|\\
     &\leq (\EmpMean + \CondMean
     )\opnorm{\InvSqrtCovarwhat_{i}}\|(\iRegressor -
     \GlmMeanVecwhat_i)\|_2\big|g\big(\inprod{\iRegressor}{\TargetPar^b}
     + \inprod{\iNuisance}{\EstNuisancePar
     }\big)-g\big(\inprod{\iRegressor}{\TargetPar^a} +\inprod{
       \iNuisance}{\EstNuisancePar} \big)\big|\\ &\leq (\EmpMean +
     \CondMean )\opnorm{\InvSqrtCovarwhat_{i}}\|(\iRegressor -
     \GlmMeanVecwhat_i)\|_2L_gD_x\|\TargetPar^a -
     \TargetPar^b\|_2\\ &\lesssim (\EmpMean + \CondMean ) i^{t -
       \delta}\|\TargetPar^a - \TargetPar^b\|_2\leq \Numobs^{t -
       \delta}\|\TargetPar^a - \TargetPar^b\|_2,
 \end{align*}
 where the fourth line uses the Lipschitz continuity of $g$, the fact
 that $\|\iRegressor- \GlmMeanVecwhat_i\|_2\leq 2$ and
 $\|\InvSqrtCovarwhat_{i}\|_2 \lesssim i^{t - \delta}$. Thus, we have
 the bound
\begin{align*} 
 \sup_{\|\TargetPar^a - \TargetPar^b\|_2\leq 1/\NumIndexTwo}
 \big|\EmpMean [\iMartdiffb^j(\TargetPar^a) -
   \iMartdiffb^j(\TargetPar^b)]\big|\lesssim \Numobs^{t -
   \delta}\|\TargetPar^a - \TargetPar^b\|_2\leq \Numobs^{t - \delta}
 \NumIndexTwo^{-1}=\bigoh(\Numobs^{t - \delta-1})=o(\Numobs^{-1/2}).
\end{align*}
Putting together the pieces, we find that $\sup_{\TargetPar
  \in \TargetSpace}|\EmpMean \iMartdiffb^j(\TargetPar)|=\bigoh_p(\log
\Numobs /\sqrt{\Numobs})$, and thus\\ \mbox{$ \sup_{\TargetPar
  \in \TargetSpace}|\EmpMean \iMartdiffb(\TargetPar)|=\bigoh_p(\log
\Numobs /\sqrt{\Numobs})$.}

\end{proof}


\begin{lems}[Lipschitz continuity of $\CondMean \ScoreFun, \CondMean \partial_{\AllNuisancePar}\ScoreFun$]
\label{lm:glm_lip_cont_temp}
Under the assumptions given in Theorem~\ref{thm:glm_new1_temp}, $\CondMean
\ScoreFun_i(\TargetPar, \AllNuisancePar)$ and $\CondMean
\partial_{\AllNuisancePar}\ScoreFun_i(\TargetPar, \AllNuisancePar)$
are Lipschitz in $(\TargetPar, \AllNuisancePar)$ with parameters
$L_{\ScoreFun,1},L_{\ScoreFun,2}>0$ that depend only on the constants
from Theorem~\ref{thm:glm_new1_temp}.
\end{lems}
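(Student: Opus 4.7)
The plan is to decompose the score function into a product of several factors, each of which is bounded and Lipschitz in $(\TargetPar, \AllNuisancePar)$ as a function of the random data, and then to invoke the fact that the conditional expectation $\CondMean$ preserves these properties. Recall that
\[
\ScoreFun_i(\TargetPar,\AllNuisancePar)
= \InvSqrtCovar_{i}(\AllNuisancePar)\,
(\iRegressor-\GlmMeanVec_i(\AllNuisancePar))\,
\bigl\{\iResponse-g\big(\inprod{\iRegressor}{\TargetPar}+\FunPar(\iNuisance)\big)\bigr\},
\]
where $\GlmMeanVec_i$ and $\InvSqrtCovar_i$ are defined via conditional expectations depending on $\AllNuisancePar=(\AuxiNuisancePar,\NuisancePar)$.

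First I would verify that each building block is bounded and Lipschitz in $\AllNuisancePar$ uniformly in $(\iRegressor,\iNuisance)$. The key inputs are the standard GLM assumptions from~\Cref{sec:standard-glm-assumption}: $g,g',g''$ are bounded and Lipschitz on the compact set $\{|x|\le M_\FunPar+D_\Regressor M_\TargetPar\}$, the conditional variance $\GlmVar$ and its derivatives are likewise Lipschitz and bounded from above and below by $M_\eps,m_\eps>0$, and $|g'|\ge l_g>0$. Using these,
\[
\AllNuisancePar\mapsto \E\bigl(\iRegressor\,g'(\inprod{\iRegressor}{\AuxiNuisancePar}+\FunPar(\iNuisance))\bigm|\iNuisance,\iHistory\bigr)
\quad\text{and}\quad
\AllNuisancePar\mapsto \E\bigl(g'(\inprod{\iRegressor}{\AuxiNuisancePar}+\FunPar(\iNuisance))\bigm|\iNuisance,\iHistory\bigr)
\]
are both bounded and Lipschitz, and the denominator is bounded below by $l_g$, so $\GlmMeanVec_i(\AllNuisancePar)$ is bounded and Lipschitz. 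An analogous argument, using that $\GlmVar\circ g$ is Lipschitz (composition of Lipschitz functions on a compact range) together with the Lipschitz dependence of $\GlmMeanVec_i$ just established, shows that the inner matrix
\[
A_i(\AllNuisancePar)\defn \E\bigl(\GlmVar(g(\inprod{\iRegressor}{\AuxiNuisancePar}+\FunPar(\iNuisance)))(\iRegressor-\GlmMeanVec_i)(\iRegressor-\GlmMeanVec_i)^\top\bigm|\iNuisance,\iHistory\bigr)
\]
is Lipschitz in $\AllNuisancePar$. A lower bound on $A_i(\AllNuisancePar)$ of order $\cwtil_i\succeq \cwtil_0 i^{-2t}\IdMat_\TargetDim$ follows exactly as in~\Cref{lm:equiv_glm}. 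Then $\InvSqrtCovar_i(\AllNuisancePar)=A_i(\AllNuisancePar)^{-1/2}$ is bounded in operator norm by $\cwtil_i^{-1/2}$ and, via the standard matrix identity for the derivative of the inverse square root (together with the uniform lower bound on the eigenvalues of $A_i$), it is Lipschitz on the parameter space with a Lipschitz constant independent of the data.

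Next I would show that $\ScoreFun_i$ itself is Lipschitz in $(\TargetPar,\AllNuisancePar)$. The factor $\iResponse-g(\inprod{\iRegressor}{\TargetPar}+\FunPar(\iNuisance))$ is Lipschitz in $\TargetPar$ (with constant $L_g D_\Regressor$) since $g$ is Lipschitz and $\|\iRegressor\|_2\le D_\Regressor$; the remaining factors depend only on $\AllNuisancePar$ (and the random data) and are bounded Lipschitz by the previous paragraph. Using the elementary fact that a product of uniformly bounded Lipschitz functions is Lipschitz, with constants depending only on the individual Lipschitz constants and uniform bounds, yields a deterministic $L_{\ScoreFun,1}$ such that
\[
\bigl\|\ScoreFun_i(\TargetPar^a,\AllNuisancePar^a)-\ScoreFun_i(\TargetPar^b,\AllNuisancePar^b)\bigr\|_2
\le L_{\ScoreFun,1}\bigl(\|\TargetPar^a-\TargetPar^b\|_2+\|\AllNuisancePar^a-\AllNuisancePar^b\|_2\bigr)
\]
pointwise in the data. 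Taking $\CondMean$ preserves the inequality by Jensen's inequality applied to the conditional expectation of the $\|\cdot\|_2$-norm, giving the Lipschitz claim for $\CondMean\ScoreFun_i$.

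For $\CondMean\partial_{\AllNuisancePar}\ScoreFun_i$ the strategy is identical but uses one more order of smoothness. Writing out $\partial_{\AllNuisancePar}\ScoreFun_i$ via the chain rule produces a finite sum of products involving $\partial_{\AllNuisancePar}\InvSqrtCovar_i$, $\partial_{\AllNuisancePar}\GlmMeanVec_i$, $g'$, and factors already handled. Each new factor is bounded and Lipschitz in $(\TargetPar,\AllNuisancePar)$: this is where we use that $g''$ is Lipschitz and bounded, that $\GlmVar'$ is Lipschitz, and that the matrix inverse square root is $C^2$ on the set $\{A:A\succeq \cwtil_i\IdMat\}$ (so the derivative $\partial_{\AllNuisancePar}\InvSqrtCovar_i$ is itself Lipschitz in $\AllNuisancePar$). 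Combining by the same ``bounded Lipschitz $\times$ bounded Lipschitz'' principle yields a constant $L_{\ScoreFun,2}>0$, and passing through $\CondMean$ completes the proof. The only genuine obstacle is the bookkeeping around $\partial_{\AllNuisancePar}\InvSqrtCovar_i$, where one must combine the $C^2$-smoothness of $A\mapsto A^{-1/2}$ on positive-definite matrices with the uniform lower bound $A_i\succeq\cwtil_i\IdMat_\TargetDim$ to control the Lipschitz constant; all other pieces reduce to routine chain-rule computation.
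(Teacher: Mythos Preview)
Your argument has a genuine gap at the step where you claim $\InvSqrtCovar_i(\AllNuisancePar)=A_i(\AllNuisancePar)^{-1/2}$ is bounded and Lipschitz ``with a Lipschitz constant independent of the data.'' The only eigenvalue lower bound available is $A_i(\AllNuisancePar)\succeq \cwtil_i\,\IdMat_{\TargetDim}$ with $\cwtil_i\asymp i^{-2(t-\delta)}\to 0$ (cf.~\Cref{lm:equiv_glm} and Assumption~\ref{assn-glm-prob}). On the set $\{A\succeq \cwtil_i\IdMat\}$, the map $A\mapsto A^{-1/2}$ has operator-norm bound $\cwtil_i^{-1/2}$ and Lipschitz constant of order $\cwtil_i^{-3/2}$, both of which blow up with $i$. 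Consequently your pointwise bound on $\|\ScoreFun_i(\TargetPar^a,\AllNuisancePar^a)-\ScoreFun_i(\TargetPar^b,\AllNuisancePar^b)\|_2$ yields an $i$-dependent constant, and passing through $\CondMean$ does not cure this: the lemma requires $L_{\ScoreFun,1},L_{\ScoreFun,2}$ to depend only on the fixed constants of~\Cref{thm:glm_new1_temp}, hence to be uniform in $i$. (A secondary issue is that your ``product of bounded Lipschitz functions'' step uses the factor $\iResponse-g(\cdot)$, but $\iResponse$ is only sub-Gaussian, not bounded, so the pointwise Lipschitz constant is random as well.)

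The paper obtains $i$-uniform constants by exploiting a cancellation that is invisible at the pointwise level: one first integrates over $\iRegressor$ (and $\iNoise$) conditional on $(\iNuisance,\iHistory)$, which makes each coordinate of $\E_{\iRegressor}(\iRegressor-\GlmMeanVec_i)(\cdots)$ carry an explicit factor $\SelectProb_{ij}$. Writing $\DiagProbMatrix=\diag(\SelectProb_{i1},\ldots,\SelectProb_{i\TargetDim})$, the conditional score becomes $(\InvSqrtCovar_i\DiagProbMatrix)\cdot\bigl[\DiagProbMatrix^{-1}\E_{\iRegressor}(\iRegressor-\GlmMeanVec_i)(\cdots)\bigr]$, and both factors are shown to be bounded and Lipschitz uniformly in $i$. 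The nontrivial part is that $\InvSqrtCovar_i\DiagProbMatrix$ and $\partial_{\AllNuisancePar}(\InvSqrtCovar_i\DiagProbMatrix)$ are uniformly bounded Lipschitz; this is the content of~\Cref{lm:glm_lip_cont_temp2} and~\Cref{lm:glm_lip_cont_temp3}, where $\InvSqrtCovar_i^2$ is decomposed as a diagonal part $\InvCompOne_i$ (which blows up but commutes with $\DiagProbMatrix$) plus a bounded Lipschitz remainder $\InvCompTwo_i$, and $\sqrt{\InvCompOne_i+\InvCompTwo_i}-\sqrt{\InvCompOne_i}$ is controlled via a matrix Taylor expansion of the square root. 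Your sketch would need this $\DiagProbMatrix$-cancellation idea (or an equivalent) to close the gap.
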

\begin{proof}
By definition of $\CondMean $, it suffices to show
$\E(\ScoreFun_i(\TargetPar, \AllNuisancePar)\mid\iHistory)$ and
$\E(\partial_\AllNuisancePar\ScoreFun_i(\TargetPar,
\AllNuisancePar)\mid\iHistory)$ are uniformly Lipschitz across all
$i$.

\subsubsection*{Lipschitz continuity of $\E(\ScoreFun_i(\TargetPar, \AllNuisancePar) \mid \iHistory)$}

Plugging the definition of $\ScoreFun_i$ into
$\E(\ScoreFun_i(\TargetPar, \AllNuisancePar)\mid\iHistory)$, we
obtain,
\begin{align*}
  \E(\ScoreFun_i(\TargetPar, \AllNuisancePar)\mid\iHistory)&=
  \E(\InvSqrtCovar_i(\iRegressor -
  \GlmMeanVec_i)\big(\iResponse-g\big(\inprod{\iRegressor}{\TargetPar}
  + \inprod{\iNuisance}{\NuisancePar}\big)\big)\mid\iHistory),
\end{align*}
where
\begin{align*} \GlmMeanVec_i&=\GlmMeanVec_i(\iNuisance, \iHistory)
 \equiv \E(\iRegressor g'\big(\inprod{\iRegressor}{\AuxiNuisancePar}
  + \inprod{\iNuisance}{\NuisancePar}\big)\mid\iNuisance,
  \iHistory)[\E( g'\big(\inprod{\iRegressor}{\AuxiNuisancePar} +
    \inprod{\iNuisance}{\NuisancePar}\big)\mid\iNuisance,
    \iHistory)]^{-1}, \\
\InvSqrtCovar_i &= \InvSqrtCovar_i(\iNuisance, \iHistory)  \equiv {}
             [\E(\iNoise^2(\iRegressor - \GlmMeanVec_i(\iNuisance,
               \iHistory))(\iRegressor - \GlmMeanVec_i(\iNuisance,
               \iHistory))^\top\mid\iNuisance, \iHistory)]^{-1/2} \\
& = [\E(\GlmVar \big(g\big(\inprod{\iRegressor}{\AuxiNuisancePar} +
               \inprod{\iNuisance}{\NuisancePar}\big)\big)(\iRegressor
               - \GlmMeanVec_i(\iNuisance, \iHistory))(\iRegressor -
               \GlmMeanVec_i(\iNuisance,
               \iHistory))^\top\mid\iNuisance, \iHistory)]^{-1/2}.
\end{align*}
We remark here that $\GlmMeanVec_i, \InvSqrtCovar_i$ both depend on
$\AllNuisancePar=(\AuxiNuisancePar, \NuisancePar)$.  Due to the fact
that the expectation of $L$-Lipschitz functions is still
$L$-Lipschitz, it remains to show $\E(\ScoreFun_i(\TargetPar,
\AllNuisancePar)\mid\iNuisance, \iHistory)$ is Lipschitz in
$(\TargetPar, \AllNuisancePar)$ with parameter independent of $i$ and
$\iNuisance$. From now on in this proof, we use Lipschitz in
$(\TargetPar, \AllNuisancePar)$ to refer to Lipschitz in $(\TargetPar,
\AllNuisancePar)$ with parameter which does not depend on $i$.
Equivalently, it remains to show
\begin{align*}
& \E_{\iRegressor, \iNoise}\InvSqrtCovar_i(\iRegressor -
  \GlmMeanVec_i(\iNuisance, \iHistory)) \big(\iResponse-
  g\big(\inprod{\iRegressor}{\TargetPar} +
  \inprod{\iNuisance}{\NuisancePar}\big)\big)\\
& = \InvSqrtCovar_i\E_{\iRegressor}(\iRegressor -
  \GlmMeanVec_i(\iNuisance, \iHistory))
  \big(g\big(\inprod{\iRegressor}{\TrueTargetPar} +
  \inprod{\iNuisance}{\TrueNuisancePar}\big) -
  g\big(\inprod{\iRegressor}{\TargetPar} +
  \inprod{\iNuisance}{\NuisancePar}\big)\big)
\end{align*}
is Lipschitz. Here we abuse the notation $\E_{\iRegressor, \iNoise},
\E_{\iRegressor}$ to denote the expectation conditioned on
$\iNuisance, \iHistory$.  Adopt the shorthand notation $\SelectProb_j,
\GlmMeanVec_j, \epsbar_j$ for $\SelectProb_{ij}(\iNuisance,
\iHistory)$, $\GlmMeanVec_{ij}(\iNuisance, \iHistory)$,
$\GlmVar(g\big(\AuxiNuisancePar_{j} +
\inprod{\iNuisance}{\NuisancePar}\big)$ $j=0,1, \ldots, \TargetDim $
respectively (we additionally define $\AuxiNuisancePar_{0} \defn
0$). Since the conditional expectation is over $\iRegressor$ and
$\iNoise$, it follows that $\SelectProb_j, \GlmMeanVec_j, \epsbar_j$
can be viewed as fixed quantities conditioned on $\iNuisance,
\iHistory$. Also, $\SelectProb_j$ does not depend on the parameters
$(\TargetPar, \AllNuisancePar)$ while $\GlmMeanVec_j, \epsbar_j$ are
functions of $\AllNuisancePar$. Define
$\DiagProbMatrix=\diag\{\SelectProb_{1}, \SelectProb_{2}, \ldots,
\SelectProb_{\TargetDim}\}$.  By some algebraic calculations, we
obtain that $\GlmMeanVec_i(\iNuisance, \iHistory)$ is a vector with
the $j$--th entry equals
\begin{align}
\label{eq:m_j_value}  
\SelectProb_{j} g'\big(\AuxiNuisancePar_{j} +
\inprod{\iNuisance}{\NuisancePar}\big)/\sum_{k=0}^{\TargetDim}
\SelectProb_{k} g'\big(\AuxiNuisancePar_{k} +
\inprod{\iNuisance}{\NuisancePar}\big).
\end{align}
Define $\GlmMeanVecbar_i\equiv \DiagProbMatrix^{-1}\GlmMeanVec_{i}$ be
the normalized version of $\GlmMeanVec_i$.  Since we have assumed
$L_{g}\geq|g'|\geq l_g>0$, $g'$ is $L_{g'}$ Lipschitz and
$\|(\iRegressor^\top, \iNuisance^\top)\|_2\leq D_x$, it follows that
$g'\big(\AuxiNuisancePar_{j} + \inprod{\iNuisance}{\NuisancePar}\big)$, 
$\sum_{k=0}^{\TargetDim} \SelectProb_{k} g'\big(\AuxiNuisancePar_{k} +
\inprod{\iNuisance}{\NuisancePar}\big)$ are both Lipschitz and the
second term is also bounded between $l_g$ and $L_g$. Therefore, it
follows that both $\GlmMeanVec_j$ and $\GlmMeanVecbar_j$ are bounded
and Lipschitz in $(\TargetPar, \AllNuisancePar)$.

Moreover, it can be verified that the $j$-th entry of
$\E_{\iRegressor}(\iRegressor - \GlmMeanVec_i(\iNuisance, \iHistory))
\big[g\big(\inprod{\iRegressor}{\TrueTargetPar} + \inprod{
    \iNuisance}{\TrueNuisancePar}
  \big)-g\big(\inprod{\iRegressor}{\TargetPar} + \inprod{
    \iNuisance}{\TrueNuisancePar} \big)\big]$ equals
\begin{align*}
 & \SelectProb_j \big(g\big(\TrueTargetPar_{j} +
  \inprod{\iNuisance}{\TrueNuisancePar}\big) - g\big(\TargetPar_{j} +
  \inprod{\iNuisance}{\NuisancePar}\big)\big) - \GlmMeanVec_j
  \E_{\iRegressor} (g\big(\inprod{\iRegressor}{\TrueTargetPar} +
  \inprod{\iNuisance}{\TrueNuisancePar}\big)-g\big(\inprod{\iRegressor}{\TargetPar}
  + \inprod{\iNuisance}{\NuisancePar}\big)) \\
  & = \SelectProb_j\Big [g\big(\TrueTargetPar_{j} +
       \inprod{\iNuisance}{\TrueNuisancePar}
       \big)-g\big(\TargetPar_{j} + \inprod{
         \iNuisance}{\NuisancePar}\big) -
       \GlmMeanVecbar_j\E_{\iRegressor}[g\big(\inprod{\iRegressor}{\TrueTargetPar}
         +
         \inprod{\iNuisance}{\TrueNuisancePar}\big)-g\big(\inprod{\iRegressor}{\TargetPar}
         + \inprod{\iNuisance}{\NuisancePar}\big)]\Big].
\end{align*}
Since $g, \GlmMeanVecbar_j$ are both bounded (the boundedness of $g$
follows from the Lipschitz continuity of $g$ and boundedness of
$\TargetSpace\times\NuisanceSpace$, $(\iRegressor, \iNuisance)$) and
Lipschitz in $(\TargetPar, \AllNuisancePar)$, it follows directly that
the quantity inside the bracket in the second line is bounded and
Lipschitz in $(\TargetPar, \AllNuisancePar)$.  Therefore,
$\DiagProbMatrix^{-1}\E_{\iRegressor}(\iRegressor -
\GlmMeanVec_i(\iNuisance, \iHistory))\big [ g
  \big(\inprod{\iRegressor}{\TrueTargetPar} +
  \inprod{\iNuisance}{\TrueNuisancePar}\big) -
  g\big(\inprod{\iRegressor}{\TargetPar} +
  \inprod{\iNuisance}{\NuisancePar}\big)\big]$ is bounded and
Lipschitz in $(\TargetPar,
\AllNuisancePar)$. Since Lemma~\ref{lm:glm_lip_cont_temp2} shows
$\InvSqrtCovar_i \DiagProbMatrix$ is bounded and Lipschitz in
$(\TargetPar, \AllNuisancePar)$, the desired result follows as the
multiplication of two bounded Lipschitz functions is bounded and
Lipschitz.


\subsubsection*{Lipschitz continuity of $\E(\partial_{\AllNuisancePar}\ScoreFun_i(\TargetPar, \AllNuisancePar)\mid\iHistory)$}

Define
\begin{align*}
\Term_1 & \defn \DiagProbMatrix^{-1} \E_{\iRegressor}(\iRegressor -
\GlmMeanVec_i)\big(g\big(\inprod{\iRegressor}{\TrueTargetPar } +
\inprod{\iNuisance}{\TrueNuisancePar} \big) - g
\big(\inprod{\iRegressor}{\TargetPar } +
\inprod{\iNuisance}{\NuisancePar} \big)\big) \\
\Term_2 & \defn \DiagProbMatrix^{-1}\partial_\AllNuisancePar
\GlmMeanVec_i \E_{\iRegressor} \big( g
\big(\inprod{\iRegressor}{\TrueTargetPar } +
\inprod{\iNuisance}{\TrueNuisancePar} \big) - g
\big(\inprod{\iRegressor}{\TargetPar } +
\inprod{\iNuisance}{\NuisancePar} \big)\big) \\
\Term_3 & \defn \DiagProbMatrix^{-1}\E_{\iRegressor}(\iRegressor -
\GlmMeanVec_i) g'(\inprod{\iRegressor}{\TargetPar } +
\inprod{\iNuisance}{\NuisancePar})(0_{\TargetDim},
\iNuisance^\top)\partial_{\AllNuisancePar}\NuisancePar.
\end{align*}
Substituting the expression of the partial derivative into
$\E(\partial_{\AllNuisancePar}\ScoreFun_i( \TargetPar,
\AllNuisancePar) \mid \iNuisance, \iHistory)$, we obtain
\begin{align}
&\qquad \E(\partial_\AllNuisancePar\ScoreFun_i(\TargetPar,
  \AllNuisancePar) \mid \iNuisance , \iHistory)\notag\\
  & = \E(\partial_\AllNuisancePar\InvSqrtCovar_i(\iRegressor -
  \GlmMeanVec_i)\big(\iResponse - g \big(
  \inprod{\iRegressor}{\TargetPar} + \inprod{\iNuisance}{\NuisancePar}
  \big) \big)\mid\iNuisance , \iHistory)\notag\\
    & \quad - \E(\InvSqrtCovar_i\partial_\AllNuisancePar \GlmMeanVec_i
  \big( \iResponse - g\big(\inprod{\iRegressor}{\TargetPar} +
  \inprod{\iNuisance}{\NuisancePar}\big)\big)\mid\iNuisance ,
  \iHistory) \notag\\
& \quad- \E(\InvSqrtCovar_i(\iRegressor -
  \GlmMeanVec_i)g'\big(\inprod{\iRegressor}{\TargetPar} +
  \inprod{\iNuisance}{\NuisancePar}\big)(\iNuisance^\top,0_{\TargetDim})\partial_{\AllNuisancePar}\NuisancePar\mid\iNuisance
  , \iHistory) \notag\\
& = \partial_\AllNuisancePar\InvSqrtCovar_i\DiagProbMatrix\Term_1 -
  \InvSqrtCovar_i\DiagProbMatrix\Term_2 -
  \InvSqrtCovar_i\DiagProbMatrix\Term_3.    \label{eq:lip_decomp_review}
\end{align}
Since Lemma~\ref{lm:glm_lip_cont_temp2} shows that
$\InvSqrtCovar_i\DiagProbMatrix,
\partial_\AllNuisancePar\InvSqrtCovar_i\DiagProbMatrix$ are bounded
and Lipschitz in $(\TargetPar, \AllNuisancePar)$, it remains to show
that $\Term_1, \Term_2, \Term_3$ are all bounded and Lipschitz in
$(\TargetPar, \AllNuisancePar)$. For $\Term_1, \Term_3$, after some
basic algebraic calculations we obtain the $j$-th entry of each term
\begin{align*}
   \Term_{1j}
   &= g\big(\TrueTargetPar_{j} + \inprod{\iNuisance}{\TrueNuisancePar}
   \big)-g\big(\TargetPar_{j} + \inprod{\iNuisance}{\NuisancePar}\big)
   - \GlmMeanVecbar_{ij}\E_{\iRegressor}(g\big(\TrueTargetPar_{j} +
   \inprod{\iNuisance}{\TrueNuisancePar} \big)-g\big(\TargetPar_{j} +
   \inprod{\iNuisance}{\NuisancePar}\big)), \\ \Term_{3j}
   &= \big[g'\big(\TargetPar_{j} +
     \inprod{\iNuisance}{\NuisancePar}\big) - \GlmMeanVecbar_{ij}
     \E_{\iRegressor}g'\big(\inprod{\iRegressor}{\TargetPar} +
     \inprod{\iNuisance}{\NuisancePar})(0_{\TargetDim},
     \iNuisance^\top)\big]\partial_{\AllNuisancePar}\NuisancePar.
\end{align*}
Since the functions $g'$ and $\GlmMeanVecbar_i$ are bounded and
Lipschitz, $\iNuisance$ is bounded and
$\partial_\AllNuisancePar\NuisancePar=({\bf 0}_{\NuisanceDim \times
  \TargetDim}, \IdMat_{\NuisanceDim })^\top$, it follows directly that
$\Term_{1j}, \Term_{3j}$ are bounded and Lipschitz in $(\TargetPar,
\AllNuisancePar)$. For $\Term_2$, we also consider the $j$-th entry
$\Term_{2j}$. Use shorthand $g'_k,g^{''}_k$ for
$g'\big(\AuxiNuisancePar_{k} +
\inprod{\iNuisance}{\NuisancePar}\big),g^{''}\big(\AuxiNuisancePar_{k}
+ \inprod{\iNuisance}{\NuisancePar}\big)$ respectively. We have from
equation~\eqref{eq:m_j_value} and some derivative calculations that
the $j$-th entry of
$\DiagProbMatrix^{-1}\partial_{\NuisancePar}\GlmMeanVec_i$
\begin{align*}
\partial_{\NuisancePar}\GlmMeanVec_{ij}/\SelectProb_j&=\partial_{\NuisancePar}\GlmMeanVecbar_{ij}
= [(\sum_{k=0}^{\TargetDim}\SelectProb_kg'_k)g^{''}_j-g'_j(\sum_{k=0}^{\TargetDim}\SelectProb_kg^{''}_k)]\iNuisance^\top/(\sum_{k=0}^{\TargetDim}\SelectProb_kg'_k)^2.
\end{align*}
Since $g_k' \geq l_g>0$ for all $k$, it follows that $(\sum_{k=0}^{\TargetDim}\SelectProb_kg'_k)^2\geq l_g^2$. Combining this with the assumption that $g'_k$ is Lipschitz, we have  $1/(\sum_{k=0}^{\TargetDim}\SelectProb_kg'_k)^2$ is bounded and Lipschitz. Moreover, since $g_k^{''}$ is bounded and Lipschitz and $\iNuisance$ is bounded by our assumption, it follows that $\partial_{\NuisancePar}\GlmMeanVecbar_i$ is bounded and Lipschitz in $(\TargetPar, \AllNuisancePar)$. Since $\E_{\iRegressor}\big(g\big(\inprod{\iRegressor}{\TrueTargetPar}  + \inprod{\iNuisance}{\TrueNuisancePar}\big)-g\big(\inprod{\iRegressor}{\TargetPar} + \inprod{\iNuisance}{\NuisancePar}
\big)\big)$ is also bounded and Lipschitz due to the boundedness and Lipschitz continuity of g, it follows that $\Term_2$ is bounded and Lipschitz in $(\TargetPar, \AllNuisancePar)$. The proof is hence completed.

\end{proof}

\begin{lems}[Lipschitz continuity of $\InvSqrtCovar_i\DiagProbMatrix, \partial_\AllNuisancePar\InvSqrtCovar_i\DiagProbMatrix$]\label{lm:glm_lip_cont_temp2} Under the assumption in Theorem~\ref{thm:glm_new1_temp} and notations in Lemma~\ref{lm:glm_lip_cont_temp}, we have $\InvSqrtCovar_i\DiagProbMatrix$ and $\partial_\AllNuisancePar\InvSqrtCovar_i\DiagProbMatrix$ are both bounded and Lipschitz continuous in $(\TargetPar, \AllNuisancePar)$.
\end{lems}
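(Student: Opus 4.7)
The plan is to reduce all claims to a rescaled matrix with spectral bounds that are uniform in $i$. Since $\InvSqrtCovar_i$ depends only on $\AllNuisancePar$ (not on $\TargetPar$), Lipschitzness in $(\TargetPar, \AllNuisancePar)$ is the same as Lipschitzness in $\AllNuisancePar$ with constants that do not depend on $i$. Writing $M_i := \InvSqrtCovar_i^{-2}$, I introduce the conjugated matrix $\tilde M_i := \DiagProbMatrix^{-1/2} M_i \DiagProbMatrix^{-1/2}$, which satisfies the key identity $\DiagProbMatrix M_i^{-1} \DiagProbMatrix = \DiagProbMatrix^{1/2} \tilde M_i^{-1} \DiagProbMatrix^{1/2}$. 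Since $\opnorm{\InvSqrtCovar_i \DiagProbMatrix}^2 = \opnorm{\DiagProbMatrix M_i^{-1} \DiagProbMatrix} \leq \opnorm{\tilde M_i^{-1}}$, the whole problem reduces to showing that $\tilde M_i$ has spectrum confined to a fixed compact subset of $(0, \infty)$ and is $C^{1,1}$-smooth in $\AllNuisancePar$, both uniformly in $i$.

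For the spectral bound, the one-hot structure of $\iRegressor$ yields the explicit expansion
\begin{align*}
\tilde M_i = \SelectProb_{i0}\,\epsbar_0\,\bar m \bar m^\top + \sum_{k=1}^{\TargetDim} \epsbar_k\,(e_k - \sqrt{\SelectProb_{ik}}\,\bar m)(e_k - \sqrt{\SelectProb_{ik}}\,\bar m)^\top,
\end{align*}
where $\bar m := \DiagProbMatrix^{-1/2}\GlmMeanVec_i$ and $\epsbar_k := \GlmVar(g(\AuxiNuisancePar_k + \inprod{\iNuisance}{\NuisancePar}))$ (with $\AuxiNuisancePar_0 := 0$). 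For a unit vector $w$, setting $a := \bar m^\top w$ and $s := \sum_{k=1}^{\TargetDim}\sqrt{\SelectProb_{ik}}\,w_k$, this reduces via $\GlmVar \geq m_\eps$ to $w^\top \tilde M_i w \geq m_\eps(a^2 - 2as + 1) = m_\eps((a-s)^2 + 1 - s^2)$. Cauchy--Schwarz gives $s^2 \leq 1 - \SelectProb_{i0}$, and Assumption~\ref{assn-glm-prob} supplies $\SelectProb_{i0} \geq \czerotil$, so $\sigma_{\min}(\tilde M_i) \geq m_\eps \czerotil$ uniformly. The matching upper bound $\opnorm{\tilde M_i} \leq C$ is immediate from $\epsbar_k \leq M_\eps$, $\sqrt{\SelectProb_{ik}} \leq 1$, and uniform boundedness of $\bar m$ (whose denominator $\sum_k \SelectProb_{ik} g'_k$ is bounded below by $l_g$). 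Boundedness of $\InvSqrtCovar_i \DiagProbMatrix$ follows at once.

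For Lipschitz continuity in $\AllNuisancePar$, the map $\AllNuisancePar \mapsto \tilde M_i$ is itself uniformly Lipschitz: each of $\epsbar_k$, $g'_k$, and $\bar m_j = \sqrt{\SelectProb_{ij}}\,g'_j / \sum_k \SelectProb_{ik} g'_k$ is Lipschitz with constant depending only on $L_g, L_{g'}, L_\eps, l_g, M_\eps, D_x$, and the $\SelectProb_{ik}$ enter only through factors $\sqrt{\SelectProb_{ik}} \leq 1$ that cannot amplify anything. Since $\tilde M_i$ now takes values in the compact set of positive definite matrices with spectrum in $[m_\eps \czerotil, C]$, standard matrix-analytic perturbation results make the maps $A \mapsto A^{-1}$, $A \mapsto A^{-1/2}$, and the Cholesky map $A \mapsto \tilde L$ (with $\tilde L \tilde L^\top = A$) uniformly Lipschitz on that range. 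I realize $\InvSqrtCovar_i \DiagProbMatrix$ through $\tilde L^{-\top} \DiagProbMatrix^{1/2}$, which is a valid right square root of $\DiagProbMatrix M_i^{-1} \DiagProbMatrix$ (differing from the symmetric choice only by an orthogonal factor that has uniformly bounded derivative on the same range). Right-multiplication by $\DiagProbMatrix^{1/2}$, whose entries lie in $[0,1]$, is a contraction that does not inflate Lipschitz constants, so Lipschitzness transports cleanly. The claim for $\partial_\AllNuisancePar \InvSqrtCovar_i \DiagProbMatrix$ follows by a further differentiation, using the $C^2$-smoothness of $\GlmVar, g, g'$ granted by the standard GLM assumptions.

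The main obstacle is the coupling between the ill-conditioned factors: $\InvSqrtCovar_i$ blows up like $c_i^{-1/2}$ componentwise while $\DiagProbMatrix$ contracts like $c_i$, and neither factor individually admits a uniform Lipschitz or even boundedness constant. All the work is concentrated in the spectral lower bound on $\tilde M_i$, which simultaneously certifies uniform boundedness of the product and places $\tilde M_i$ in a compact positive-definite range on which matrix inversion and square-root become Lipschitz; once that is in hand, the rest is bookkeeping with standard perturbation inequalities.
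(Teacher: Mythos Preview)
Your conjugation idea is sound for \emph{boundedness}: the identity $\DiagProbMatrix M_i^{-1}\DiagProbMatrix = \DiagProbMatrix^{1/2}\tilde M_i^{-1}\DiagProbMatrix^{1/2}$ together with your clean spectral lower bound $\sigma_{\min}(\tilde M_i)\ge m_\eps\czerotil$ does give $\opnorm{\InvSqrtCovar_i\DiagProbMatrix}^2\le\opnorm{\tilde M_i^{-1}}\le (m_\eps\czerotil)^{-1}$ uniformly in $i$. That part is arguably tidier than the paper's route.

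The gap is in the Lipschitz step. You correctly show that $\tilde M_i$ lives in a fixed compact positive-definite set and is uniformly Lipschitz in $\AllNuisancePar$, so any smooth matrix function of $\tilde M_i$ (inverse, symmetric square root, Cholesky factor) is uniformly Lipschitz. But $\InvSqrtCovar_i\DiagProbMatrix = M_i^{-1/2}\DiagProbMatrix$ is \emph{not} a function of $\tilde M_i$ alone: the symmetric square root does not commute through the conjugation $M_i = \DiagProbMatrix^{1/2}\tilde M_i\DiagProbMatrix^{1/2}$. Writing $\InvSqrtCovar_i\DiagProbMatrix = U\cdot(\tilde M_i^{-1/2}\DiagProbMatrix^{1/2})$ with $U$ orthogonal, one finds $U = M_i^{-1/2}\DiagProbMatrix^{1/2}\tilde M_i^{1/2}$, which depends on $\AllNuisancePar$ through $M_i^{-1/2}$ itself. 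Your assertion that this $U$ ``has uniformly bounded derivative on the same range'' is precisely the statement to be proved: controlling $\partial_\AllNuisancePar U$ requires controlling $(M_i^{-1/2})'\DiagProbMatrix^{1/2}$, and the naive bound on $(M_i^{-1/2})'$ blows up like $\sigma_{\min}(M_i)^{-3/2}\sim c_i^{-3/2}$. The needed cancellation with the small entries of $\DiagProbMatrix^{1/2}$ is real, but it is not a consequence of the polar-decomposition perturbation theory you invoke (those bounds scale with $1/\sigma_{\min}$ of the factored matrix, and here $\sigma_{\min}(N_i)\sim c_i$ is small). Also, as written, $\tilde L^{-\top}\DiagProbMatrix^{1/2}$ does not satisfy $B^\top B = N_i$; you presumably mean $\tilde L^{-1}\DiagProbMatrix^{1/2}$.

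The paper tackles exactly this cancellation, but by a completely different route: it computes $\InvSqrtCovar_i^2=\Covar_i^{-1}$ explicitly via Woodbury as $\InvCompOne_i+\InvCompTwo_i$, with $\InvCompOne_i=\DiagProbMatrix^{-1}\InvCompDiag_i$ a diagonal matrix carrying all the blow-up and $\InvCompTwo_i$ uniformly bounded and Lipschitz. The entire difficulty is then isolated in showing that $\sqrt{\InvCompOne_i+\InvCompTwo_i}-\sqrt{\InvCompOne_i}$ and its $\AllNuisancePar$-derivative are bounded and Lipschitz, which the paper does by a term-by-term Taylor expansion of the matrix square root around $\InvCompOne_i$ (their \Cref{lm:glm_lip_cont_temp3}). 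Once that is in hand, $\InvSqrtCovar_i\DiagProbMatrix = (\sqrt{\InvCompOne_i+\InvCompTwo_i}-\sqrt{\InvCompOne_i})\DiagProbMatrix + \sqrt{\InvCompOne_i}\DiagProbMatrix$ is a sum of two bounded Lipschitz pieces. Your proposal would need an argument of comparable depth to close the orthogonal-factor step; as it stands, that step is asserted rather than proved.
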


\begin{proof}
The Lipschitz continuity w.r.t. $\TargetPar$ is obvious, since $\InvSqrtCovar_i$ only depends on $\AllNuisancePar=(\AuxiNuisancePar, \NuisancePar)$. It remains to show Lipschitz continuity in $\AllNuisancePar$. Likewise, we say a function is Lipschitz in $\AllNuisancePar$ if the Lipschitz parameter is some constant depending only on the constants defined in Theorem~\ref{thm:glm_new1_temp} but not depending on $i$.
Define
\begin{align*}
    \Covar_i
& \defn 
\E(\iNoise^2(\iRegressor - \GlmMeanVec_i)(\iRegressor - \GlmMeanVec_i)^\top\mid\iNuisance, \iHistory)\\
&= 
\E(\GlmVar \big(g\big(\inprod{\iRegressor}{\AuxiNuisancePar} + \inprod{\iNuisance}{\NuisancePar}\big)\big)(\iRegressor - \GlmMeanVec_i)(\iRegressor - \GlmMeanVec_i)^\top\mid\iNuisance, \iHistory).
\end{align*}
Again, we remark that $\Covar_i$ is implicitly depending on $\AllNuisancePar$. Since $\GlmVar (g(\iRegressor^\top\AuxiNuisancePar + \iNuisance^\top\NuisancePar))\leq M_\eps$, $\|\iRegressor\|_2\leq 1$, $\|\GlmMeanVec_i\|_2\leq 1$, it follows that \begin{align*}
\opnorm{\Covar_i}\leq \E(\GlmVar \big(g\big(\inprod{\iRegressor}{\AuxiNuisancePar} + \inprod{\iNuisance}{\NuisancePar}\big)\big)\cdot\|\iRegressor - \GlmMeanVec_i\|_2^2\mid\iNuisance, \iHistory)\leq 4M_\eps.\end{align*}
By some algebraic calculations, we obtain
\begin{align*}
   \Covar_i=\begin{pmatrix}\SelectProb_1 {\epsbar}_1&0&0&\cdots&0\\
   0&\SelectProb_2\epsbar_2&0&\cdots&0\\
   0&0&\SelectProb_3\epsbar_3&\cdots&0\\
  \vdots&\vdots&\vdots&\ddots&\vdots\\
   0&0&0&\cdots&\SelectProb_{\TargetDim}\epsbar_{\TargetDim}\\\end{pmatrix}  - \begin{pmatrix}
   \SelectProb_1\epsbar_1\\
   \SelectProb_2\epsbar_2\\\vdots\\\SelectProb_{\TargetDim}\epsbar_{\TargetDim}\end{pmatrix}\GlmMeanVec_i^\top - \GlmMeanVec_i\begin{pmatrix}
   \SelectProb_1\epsbar_1\\
   \SelectProb_2\epsbar_2\\\vdots\\\SelectProb_{\TargetDim}\epsbar_{\TargetDim}\end{pmatrix}^\top + 
  (\sum_{j=0}^{\TargetDim}\SelectProb_j\epsbar_j) \GlmMeanVec_i\GlmMeanVec_i^\top .
\end{align*}
Moreover, calculating the inverse of $\Covar_i$ using Woodbury's identity, we obtain
\begin{align}
\InvSqrtCovar_i^2=\Covar_i^{-1}&=\InvCompOne_i + \InvCompDiag_i\InvCompRow_i\frac{\begin{pmatrix} - \SelectProb_0
    \epsbar_0 & \GlmMeanVecbar_0 \SelectProb_0\\ 
    \GlmMeanVecbar_0 \SelectProb_0 & \sum_{k=1}^{\TargetDim}
    \SelectProb_k 
    \GlmMeanVecbar_k^2/\epsbar_k\end{pmatrix}}{(\sum_{k=1}^{\TargetDim
  }\SelectProb_k\GlmMeanVecbar_k^2/
  \epsbar_k)\SelectProb_0\epsbar_0 + \GlmMeanVecbar_0^2
  \SelectProb_0^2}\InvCompRow_i^\top \InvCompDiag_i\label{eq:glm_sigma_inverse}\\
& =:\InvCompOne_i(\AllNuisancePar) + \InvCompTwo_i(\AllNuisancePar)\notag
\end{align}
where $\InvCompDiag_i=\diag\{1/\epsbar_1, \ldots,1/\epsbar_{\TargetDim}\}$,
$\InvCompOne_i \defn \DiagProbMatrix^{-1}\InvCompDiag_i=\diag\{1/(\SelectProb_1\epsbar_1), \ldots,1/(\SelectProb_{\TargetDim
}\epsbar_{\TargetDim})\}$, and $\InvCompRow_i \defn \begin{pmatrix} 
\GlmMeanVecbar_1&\GlmMeanVecbar_2 &\cdots &\GlmMeanVecbar_{\TargetDim
}\\ \epsbar_1&\epsbar_2 &\cdots &\epsbar_{\TargetDim} \\
\end{pmatrix}^\top$.
Since we assume $\SelectProb_0 \geq \cwtil_0>0$, it follows that
\begin{align*}(\sum_{k=1}^{\TargetDim}\SelectProb_k\GlmMeanVecbar_k^2/
\epsbar_k)\SelectProb_0\epsbar_0 + \GlmMeanVecbar_0^2
\SelectProb_0^2\geq \GlmMeanVecbar_0^2 \SelectProb_0^2\geq
(l_g/L_g)^2\czerotil^2.\end{align*} Therefore, $1/[(\sum_{k=1}^{\TargetDim
  }\SelectProb_k\GlmMeanVecbar_k^2/
  \epsbar_k)\SelectProb_0\epsbar_0 + \GlmMeanVecbar_0^2
  \SelectProb_0^2]$ is bounded and Lipschitz in
$\AllNuisancePar$. Similarly, we can verify that
$\InvCompDiag_i, \InvCompRow_i, \SelectProb_0\epsbar_0, \GlmMeanVecbar_0 \SelectProb_0,
\sum_{k=1}^{\TargetDim}\SelectProb_k\GlmMeanVecbar_k^2/\epsbar_k$
are all bounded and Lipschitz. It then follows that
$\InvCompTwo_i(\AllNuisancePar)$ is bounded and Lipschitz in
$\AllNuisancePar$ . Unfortunately, $\InvCompOne_i(\AllNuisancePar)$ is not
necessarily Lipschitz in $\AllNuisancePar$ since $\SelectProb_{i}$ may
not be lower bounded by some constant. However, it follows from Lemma~\ref{lm:glm_lip_cont_temp3} that
$\sqrt{\InvCompOne_i(\AllNuisancePar) + \InvCompTwo_i(\AllNuisancePar)} - \sqrt{\InvCompOne_i(\AllNuisancePar)}$
is bounded and Lipschitz in $\AllNuisancePar$. Since
$\InvCompOne_i(\AllNuisancePar)\DiagProbMatrix=\InvCompDiag_i(\AllNuisancePar)$ is bounded
and Lipschitz in $\AllNuisancePar$ and $\opnorm{\DiagProbMatrix}\leq 1$, it
follows that
$\InvSqrtCovar_i\DiagProbMatrix=\sqrt{\InvCompOne_i(\AllNuisancePar) + \InvCompTwo_i(\AllNuisancePar)}\DiagProbMatrix=(\sqrt{\InvCompOne_i(\AllNuisancePar) + \InvCompTwo_i(\AllNuisancePar)} - \sqrt{\InvCompOne_i(\AllNuisancePar)})\DiagProbMatrix + \sqrt{\InvCompOne_i(\AllNuisancePar)}\DiagProbMatrix$
is bounded and Lipschitz in $\AllNuisancePar$. Similarly, Lemma~\ref{lm:glm_lip_cont_temp3} shows
$\partial_{\AllNuisancePar}[\sqrt{\InvCompOne_i(\AllNuisancePar) + \InvCompTwo_i(\AllNuisancePar)} - \sqrt{\InvCompOne_i(\AllNuisancePar)}]$
is bounded and Lipschitz in
$\AllNuisancePar$. Moreover, \begin{align*}
\partial_\AllNuisancePar\sqrt{\InvCompOne_i(\AllNuisancePar)}\DiagProbMatrix=\diag\{ - \frac{\sqrt{\SelectProb_1}\epsbar_1^{'}(\AllNuisancePar)}{2\sqrt{\epsbar_1^3}}, \cdots, - \frac{\sqrt{\SelectProb_{\TargetDim
    }}\epsbar_{\TargetDim
    }^{'}(\AllNuisancePar)}{2\sqrt{\epsbar_{\TargetDim
      }^3}}\}.\end{align*} Since $\SelectProb_k\leq 1$,
$m_\eps\leq\epsbar_k\leq M_\eps$ and $\epsbar_k'$ is Lipschitz in
$\AllNuisancePar$ for all $k$, it follows that
$\partial_{\AllNuisancePar}\sqrt{\InvCompOne_i(\AllNuisancePar)}
\DiagProbMatrix$ is bounded and Lipschitz in
$\AllNuisancePar$. Therefore, we obtain
$\partial_\AllNuisancePar\InvSqrtCovar_i
\DiagProbMatrix=\partial_{\AllNuisancePar}[\sqrt{\InvCompOne_i(\AllNuisancePar) + \InvCompTwo_i(\AllNuisancePar)} - \sqrt{\InvCompOne_i(\AllNuisancePar)}]\DiagProbMatrix + \partial_{\AllNuisancePar}\sqrt{\InvCompOne_i(\AllNuisancePar)}\DiagProbMatrix$
is bounded and Lipschitz in $\AllNuisancePar$.\end{proof}

The following result uses the notation previously introduced in Lemma~\ref{lm:glm_lip_cont_temp}~and~\ref{lm:glm_lip_cont_temp2}.
\begin{lems}[Lipschitz continuity]
\label{lm:glm_lip_cont_temp3} 
Under the assumptions of Theorem~\ref{thm:glm_new1_temp}, the
quantities
\begin{align*}
\sqrt{\InvCompOne_i(\AllNuisancePar)  +  \InvCompTwo_i(\AllNuisancePar)} -
\sqrt{\InvCompOne_i(\AllNuisancePar)}, \quad
\partial_\AllNuisancePar[\sqrt{\InvCompOne_i(\AllNuisancePar)  + 
    \InvCompTwo_i(\AllNuisancePar)} - \sqrt{\InvCompOne_i(\AllNuisancePar)} ]
\end{align*}
are both bounded and Lipschitz in $\AllNuisancePar$.
\end{lems}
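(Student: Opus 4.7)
The central difficulty is that $\InvCompOne_i(\AllNuisancePar)$ itself is not Lipschitz in $\AllNuisancePar$: its diagonal entries $1/(\SelectProb_k \epsbar_k)$ and their derivatives $-\partial_\AllNuisancePar \epsbar_k / (\SelectProb_k \epsbar_k^2)$ blow up as $\SelectProb_k \to 0$. So we cannot appeal to the operator-Lipschitz property of the matrix square root applied separately to $\InvCompOne + \InvCompTwo$ and to $\InvCompOne$; the cancellation present in the difference has to be made explicit.

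The plan is to use the Cauchy-type integral representation
\[
\sqrt{M} \;=\; \frac{1}{\pi}\int_0^\infty M(tI + M)^{-1}\,\frac{dt}{\sqrt{t}}, \qquad M \succ 0,
\]
together with the resolvent identity $(tI+A)^{-1}-(tI+B)^{-1}=(tI+A)^{-1}(B-A)(tI+B)^{-1}$ to write the difference as
\[
\sqrt{\InvCompOne + \InvCompTwo}-\sqrt{\InvCompOne}
\;=\; \frac{1}{\pi}\int_0^\infty R_t^{(1)}\,\InvCompTwo\,R_t^{(2)}\,\sqrt{t}\,dt,
\]
where $R_t^{(1)} \defn (tI+\InvCompOne)^{-1}$ and $R_t^{(2)} \defn (tI+\InvCompOne+\InvCompTwo)^{-1}=(tI+\Covar_i^{-1})^{-1}$. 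From the computation preceding Lemma~\ref{lm:glm_lip_cont_temp2} one has $\InvCompOne \succeq (1/M_\eps)\IdMat_{\TargetDim}$ and $\InvCompOne+\InvCompTwo = \Covar_i^{-1}\succeq (1/(4M_\eps))\IdMat_{\TargetDim}$, together with $\opnorm{\InvCompTwo}\lesssim 1$ (the bounded Lipschitz property already established). These yield the uniform integrable majorant $\opnorm{R_t^{(1)}\InvCompTwo R_t^{(2)}}\sqrt{t}\lesssim \sqrt{t}/(t+c_0)^2$, which gives the boundedness of $\sqrt{\InvCompOne+\InvCompTwo}-\sqrt{\InvCompOne}$ uniformly in $\AllNuisancePar$ and $i$.

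Next I would differentiate under the integral sign in $\AllNuisancePar$. This produces three terms: one in which the derivative falls on $R_t^{(1)}$, one on $\InvCompTwo$, and one on $R_t^{(2)}$. The last two are immediate, since $\InvCompTwo$ and $\Covar_i^{-1}$ are bounded and Lipschitz in $\AllNuisancePar$ by Lemma~\ref{lm:glm_lip_cont_temp2}, while $R_t^{(2)}$ and $R_t^{(1)}$ have the eigenvalue-based majorants above. The critical term is
\[
-R_t^{(1)}(\partial_\AllNuisancePar \InvCompOne) R_t^{(1)}\,\InvCompTwo\, R_t^{(2)}.
\]
Here the key observation is that $R_t^{(1)}$ and $\partial_\AllNuisancePar \InvCompOne$ are both diagonal, and the $k$-th diagonal entry of $R_t^{(1)}(\partial_\AllNuisancePar \InvCompOne) R_t^{(1)}$ equals $-\SelectProb_k(\partial_\AllNuisancePar \epsbar_k)/(1+t\SelectProb_k\epsbar_k)^2$, in which the offending $1/\SelectProb_k$ singularity has cancelled. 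Using the standard-GLM Lipschitz continuity of $\epsbar_k$ and the change of variables $s = t\SelectProb_k\epsbar_k$, the resulting $t$-integral is bounded by a constant multiple of $\sqrt{\SelectProb_k \epsbar_k}\lesssim 1$, uniformly in $\SelectProb_k, \epsbar_k$ and $i$. This establishes boundedness of $\partial_\AllNuisancePar[\sqrt{\InvCompOne+\InvCompTwo}-\sqrt{\InvCompOne}]$ and, in turn, Lipschitz continuity of the original difference.

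Lipschitz continuity of the derivative itself follows by exactly the same strategy applied to the second derivative: differentiate the three-term expression once more, producing terms involving either second derivatives of $\epsbar_k$ (and $\InvCompTwo$, $\Covar_i^{-1}$) or products of two resolvent derivatives. The second derivatives of $\epsbar_k$ are controlled by the Lipschitz continuity of $\GlmVar'$ (the $L_{\eps'}$ assumption in~\Cref{sec:standard-glm-assumption}), and every nuisance factor $1/\SelectProb_k$ that appears is absorbed into one of the resolvents sitting next to it, exactly as in the first-order cancellation. Integrating in $t$ via the same change of variables gives a uniform bound, which yields Lipschitz continuity of $\partial_\AllNuisancePar[\sqrt{\InvCompOne+\InvCompTwo}-\sqrt{\InvCompOne}]$. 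The main obstacle is bookkeeping: verifying that in every term produced by differentiation, each factor of $\partial_\AllNuisancePar \InvCompOne$ (which carries a $1/\SelectProb_k$) is sandwiched between two factors of $R_t^{(1)}$ whose product contributes a $\SelectProb_k$, so that the change-of-variables argument applies coordinate-wise.
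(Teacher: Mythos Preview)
Your argument is correct and takes a genuinely different route from the paper. The paper works with the heat-semigroup (Laplace) representation $[\nabla A\cdot B]=\int_0^\infty e^{-t\sqrt{A}}Be^{-t\sqrt{A}}\,dt$, first performing a truncation $(\InvCompOne,\InvCompTwo)\mapsto(\Ctil,\DelTil)$ so that $\opnorm{\DelTil}\le\gamma\,\sigma_{\min}(\Ctil)$ with $\gamma<1$, then expanding $\sqrt{\Ctil+\DelTil}-\sqrt{\Ctil}$ as a Taylor series whose coefficients are summed via a Catalan-number identity (following Del~Moral--Niclas). For the Lipschitz statements the paper differentiates $\sqrt{F(x)}$ directly through the semigroup formula and decomposes the result into nine terms $T_1,\dots,T_9$, each handled by the diagonal bound $\opnorm{e^{-t\sqrt{\Ctil}}\DiagProbMatrix^{-1/2}}\lesssim\sum_j e^{-t/\sqrt{\SelectProb_j\epsbar_j}}/\sqrt{\SelectProb_j}$. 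Your resolvent (Cauchy) representation together with the exact identity $R_t^{(1)}-R_t^{(2)}=R_t^{(1)}\InvCompTwo\,R_t^{(2)}$ is cleaner: it dispenses with both the truncation step and the series combinatorics, and the diagonal cancellation you isolate, namely $\opnorm{(\partial_\AllNuisancePar\InvCompOne)R_t^{(1)}}=\max_k\big|\partial_\AllNuisancePar\epsbar_k\big|\cdot a_k/[\epsbar_k(t+a_k)]\lesssim 1$ with $a_k=1/(\SelectProb_k\epsbar_k)$, is exactly the resolvent analogue of the paper's semigroup bound. One small sharpening of your bookkeeping remark: at second order you also meet factors of $\partial_\AllNuisancePar\InvCompOne$ adjacent only to $R_t^{(2)}$ (from differentiating $R_t^{(2)}$), and there the needed $R_t^{(1)}$ is obtained from the expansion $R_t^{(2)}=R_t^{(1)}(I-\InvCompTwo R_t^{(2)})$, after which $\opnorm{(\partial_\AllNuisancePar\InvCompOne)R_t^{(2)}}\lesssim 1$ follows and integrability goes through as in your first-order step. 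The paper's approach is more explicit; yours is more structural and reusable.
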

\begin{proof}
For notational simplicity, we drop the dependence of each quantity on
$i$. In this proof, we say a quantity is bounded if it is bounded by
some constant only depends on the constants defined
in Theorem~\ref{thm:glm_new1_temp} but not on $i$. Similarly, we use
$\lesssim$ to denote $\leq$ up to some constant (may or may not)
depend on the quantities defined in Theorem~\ref{thm:glm_new1_temp}. Also, we
say a function is Lipschitz in $\AllNuisancePar$ if the Lipschitz
parameter only depends on the constants defined
in Theorem~\ref{thm:glm_new1_temp}.


\subsubsection{Boundedness of $\sqrt{\InvCompOne_i(\AllNuisancePar)  +  \InvCompTwo_i(\AllNuisancePar)} -
\sqrt{\InvCompOne_i(\AllNuisancePar)}$ and
$\partial_\AllNuisancePar[\sqrt{\InvCompOne_i(\AllNuisancePar) +
    \InvCompTwo_i(\AllNuisancePar)} -
  \sqrt{\InvCompOne_i(\AllNuisancePar)}]$}

By definition,
$\sigma_{\min}(\InvSqrtCovar^2)=\opnorm{\Covar^{-1}}\geq
1/(4M_\eps)$. Combining this with the fact that $\InvSqrtCovar^2 =
\InvCompOne(\AllNuisancePar) + \InvCompTwo(\AllNuisancePar)$,
$\InvCompTwo(\AllNuisancePar)$ is bounded, the diagonal matrix
$\InvCompOne(\AllNuisancePar)$ has minimum eigenvalue lower bounded by
some constant, it follows that there exists some sufficient large
constant $c_T > 0$ such that $c_t \IdMat_{\TargetDim
}\preceq\InvSqrtCovar_{i, \text{trun}}^2(c_T)$ for some constant $c_t
> 0$, where $\InvSqrtCovar_{\text{trun}}^2(c_T)$ is a matrix the same
as $\InvSqrtCovar_{}^2$ except for replacing each diagonal term
$\InvSqrtCovar^2_{kk}$ with $\InvSqrtCovar^2_{kk}\wedge
c_T$. W.l.o.g., since the off-diagonal terms of $\InvSqrtCovar^2$ are
bounded, we can choose $c_T$ sufficiently large such that
$\opnorm{\InvSqrtCovar_{\text{trun}}^2(c_T)}\leq \tfrac{3}{2} c_T$.

Now, define $\Ctil(\AllNuisancePar) \defn \diag\{\InvSqrtCovar_{11}^2
\vee c_T, \ldots, \InvSqrtCovar_{\TargetDim \TargetDim}^2\vee c_T\}$ and
$\DelTil(\AllNuisancePar) \defn \InvSqrtCovar^2_{\text{trun}}(c_T) -
c_T \IdMat_{\TargetDim}$. Then we have $\InvSqrtCovar^2 =
\Ctil(\AllNuisancePar)  +  \DelTil(\AllNuisancePar)$, and
\begin{align*}
\opnorm{ \InvCompTwo(\AllNuisancePar)}\leq \max \{ 0.5 c_T, c_T - c_t\} \leq
\max\{0.5, (c_T - c_t)/c_T\} \sigma_{\min}(\Ctil(\AllNuisancePar))=:
\gamma \sigma_{\min}(\Ctil(\AllNuisancePar))
\end{align*}
for some constant $\gamma<1$. Moreover,
$\Ctil(\AllNuisancePar)\DiagProbMatrix, \DelTil(\AllNuisancePar)$ are
bounded and Lipschitz in $\AllNuisancePar$.

Expanding $\sqrt{\Ctil(\AllNuisancePar)  +  \DelTil(\AllNuisancePar)}$
at $\Ctil(\AllNuisancePar)$ using Taylor expansion (this can be done
since $\sigma_{\min}(\InvCompOne(\AllNuisancePar)) \\>
\opnorm{\DelTil(\AllNuisancePar)}$), we obtain
\begin{align*}
 \sqrt{\Ctil(\AllNuisancePar)  +  \DelTil(\AllNuisancePar)} -
 \sqrt{\Ctil(\AllNuisancePar)} 
 &
 =\sum_{k=1}^\infty\frac{1}{k!}[\nabla^k \Ctil(\AllNuisancePar)\cdot
   \DelTil_i(\AllNuisancePar)],
\end{align*}
where $[\nabla \Ctil(\AllNuisancePar)\cdot
  \DelTil(\AllNuisancePar)]=\int_{0}^\infty e^{-t
  \sqrt{\Ctil(\AllNuisancePar)}}\DelTil(\AllNuisancePar)e^{-t
  \sqrt{\Ctil(\AllNuisancePar)}}dt$, and the higher order derivatives
are defined iteratively via
\begin{align*}
    &\quad[\nabla^k \Ctil(\AllNuisancePar)\cdot \DelTil(\AllNuisancePar)]\\&= -
    \Big[\nabla \Ctil(\AllNuisancePar)\cdot \Big(\sum_{p +
        q=k-2}\frac{k!}{(p + 1)!(q + 1)!}[\nabla^{p + 1}
        \Ctil(\AllNuisancePar)\cdot
        \DelTil(\AllNuisancePar)][\nabla^{q + 1}
        \Ctil(\AllNuisancePar)\cdot
        \DelTil(\AllNuisancePar)]\Big)\Big].
\end{align*}
From results due to Moral and Niclas~\cite{del2018taylor} (see, in
particular, their equation (4) and the proof of Theorem 1.1), we
establish $\opnorm{\nabla^{k + 1} \Ctil(\AllNuisancePar)\cdot
  \DelTil(\AllNuisancePar)}\leq c_T^{1/2}k!{2k\choose k} 2^{-(2k +
  1)}\gamma^{k + 1/2}$ for $k\geq 0$.  Moreover, define
\begin{align*}
    \HighOrdMa_{k + 1}
    & \defn \sum_{p + q=k-1}\frac{(k + 1)!}{(p + 1)!(q + 1)!}[\nabla^{p + 1} \Ctil(\AllNuisancePar)\cdot  \DelTil(\AllNuisancePar)][\nabla^{q + 1} \Ctil(\AllNuisancePar)\cdot  \DelTil(\AllNuisancePar)].\end{align*} 
    Then
\begin{align*}
    \opnorm{\HighOrdMa_{k + 1}} &\leq c_T(k + 1)! \sum_{p + q=k-1} {2p\choose p}
       {2q\choose q} 2^{-2k}\gamma^{k}/[(p + 1)(q + 1)]\\ &= c_T
       {2k\choose k} 2^{-2k}\gamma^{k}k!,
\end{align*}
where the second line follows from Segner's Recurrence Formula of Catalan
numbers~\cite{koshy2008catalan}.  
Since ${2k\choose k} 2^{-(2k + 1)}\asymp 1/\sqrt{k}$ by Stirling's
formula and $\gamma<1$, we have
\begin{align*}
\opnorm{\sum_{k=1}^\infty\frac{1}{k!}[\nabla^k \Ctil(\AllNuisancePar)\cdot
  \DelTil(\AllNuisancePar)]}\leq
\sum_{k=0}^{\infty}\frac{1}{(k + 1)!}\opnorm{[\nabla^{k + 1}
  \Ctil(\AllNuisancePar)\cdot \DelTil(\AllNuisancePar)]} \lesssim
c_T^{1/2}\sum_{k=0}^{\infty}k^{-3/2} \gamma^{k + 1/2}
\end{align*}
is bounded by some constant which does not depend on $i$ and hence
$\sqrt{\Ctil(\AllNuisancePar)  +  \DelTil(\AllNuisancePar)} -
\sqrt{\Ctil(\AllNuisancePar)}$ is also bounded.
In fact, we have a stronger result. Note that
\begin{align*}
\opnorm{\frac{1}{k!}[\nabla^k \Ctil(\AllNuisancePar)\cdot
  \DelTil(\AllNuisancePar)] \DiagProbMatrix^{-1/2}}
  &=
   \opnorm{\frac{1}{k!}
\Big [\nabla \Ctil(\AllNuisancePar)\cdot \HighOrdMa_k\Big]
\DiagProbMatrix^{-1/2}}\\
& \leq
\frac{1}{k!}  \int_{0}^\infty \opnorm{e^{-t
  \sqrt{\Ctil(\AllNuisancePar)}}}\opnorm{\HighOrdMa_k}\opnorm{e^{-t
  \sqrt{\Ctil(\AllNuisancePar)}} \DiagProbMatrix^{-1/2}}dt \\
& \leq \sum_{j=1}^{\TargetDim} \frac{1}{k!}  \int_{0}^\infty \opnorm{\HighOrdMa_k}
e^{-t \sqrt{1/(\SelectProb_j\epsbar_j)}} (1/\SelectProb_j)^{-1/2} dt
\\
& \lesssim \frac{\TargetDim \opnorm{\HighOrdMa_k}}{2k!}\lesssim \TargetDim c_T
\frac{\gamma^{k-1}}{k^{3/2}}.
\end{align*}

It follows directly from Taylor expansion that
$[\sqrt{\Ctil(\AllNuisancePar)  + 
    \DelTil(\AllNuisancePar)} - \sqrt{\Ctil(\AllNuisancePar)}]
\DiagProbMatrix^{-1/2}$ is bounded. Thus,
$[\sqrt{\Ctil(\AllNuisancePar)  +  \DelTil(\AllNuisancePar)} -
  \sqrt{\Ctil(\AllNuisancePar)}] \DiagProbMatrix^{-1/2}
F(\AllNuisancePar)$ is bounded for any bounded function $F$.

The boundedness of
$\partial_\AllNuisancePar[\sqrt{\InvCompOne_i(\AllNuisancePar) +
    \InvCompTwo_i(\AllNuisancePar)} -
  \sqrt{\InvCompOne_i(\AllNuisancePar)}]$ follows directly from the
boundedness of $\frac{\partial \AllNuisancePar}{\partial x}$ and from
the Lipschitz continuity of $\sqrt{\InvCompOne_i(\AllNuisancePar) +
  \InvCompTwo_i(\AllNuisancePar)} -
\sqrt{\InvCompOne_i(\AllNuisancePar)}$ which we prove next.


\subsubsection{Lipschitz continuity of $\sqrt{\InvCompOne_i(\AllNuisancePar)  +  \InvCompTwo_i(\AllNuisancePar)} -
\sqrt{\InvCompOne_i(\AllNuisancePar)}$ and
$\partial_\AllNuisancePar[\sqrt{\InvCompOne_i(\AllNuisancePar) +
    \InvCompTwo_i(\AllNuisancePar)} -
  \sqrt{\InvCompOne_i(\AllNuisancePar)}]$} Note that
$\Ctil(\AllNuisancePar), \DelTil(\AllNuisancePar)]$ depend on
$\AllNuisancePar$ through $\iRegressor^\top \AuxiNuisancePar +
\iNuisance^\top\NuisancePar$ and we assume $\|(\iRegressor^\top,
\iNuisance^\top)\|_2\leq D_x$. With an abuse of notation, we use $x$
to denote the scalar $\inprod{\iRegressor}{\AuxiNuisancePar} +
\inprod{\iNuisance}{\NuisancePar}$, and define the function
\begin{align}
\label{eqn:dx-defn}
d(x)
\defn \sqrt{\Ctil(x)  +  \DelTil(x)} - \sqrt{\Ctil(x)}
\end{align}
In order to prove the claimed Lipschitz properties it now suffices to show that the functions $d'(x),d^{''}(x)$ are both bounded by some
constant. (Note that we still have $\Ctil(x) \DiagProbMatrix,
\DelTil(x)$ are Lipschitz in $x$ and $|x|\leq M_\AllNuisancePar D_x$.)

\subsubsection{Boundedness of $d'(x)$}

Using the formula of the first order derivative, we obtain
\begin{align*}
& \Big|\mns\Big|\mns\Big|[\sqrt{\Ctil(x) + \DelTil(x)} -
    \sqrt{\Ctil(x)}]' \Big|\mns\Big|\mns\Big|_{\mathrm{op}} \\
& = \Big|\mns\Big|\mns\Big|\int e^{-t \sqrt{\Ctil(x) + \DelTil(x)}
  }(C'(x) + \DelTil'(x)) e^{-t \sqrt{\Ctil(x) + \DelTil(x)} } dt -
  \int e^{-t \sqrt{\Ctil(x)}} C'(x)e^{-t \sqrt{\Ctil(x)}}
  dt\Big|\mns\Big|\mns\Big|_{\mathrm{op}}\\
& \leq \Big|\mns\Big|\mns\Big|\int e^{-t \sqrt{\Ctil(x) + \DelTil(x)}
  } \DelTil'(x) e^{-t \sqrt{\Ctil(x) + \DelTil(x)} }
  dt\Big|\mns\Big|\mns\Big|_{\mathrm{op}} \\
  & +
  2\Big|\mns\Big|\mns\Big|\int (e^{-t \sqrt{\Ctil(x) +
      \DelTil(x)}}-e^{-t \sqrt{\Ctil(x)}}) C'(x)e^{-t \sqrt{\Ctil(x)}}
  dt\Big|\mns\Big|\mns\Big|_{\mathrm{op}} \\
  & + \Big|\mns\Big|\mns\Big|\int (e^{-t \sqrt{\Ctil(x) +
      \DelTil(x)}}-e^{-t \sqrt{\Ctil(x)}}) C'(x)(e^{-t \sqrt{\Ctil(x)
      + \DelTil(x)}}-e^{-t \sqrt{\Ctil(x)}})
  dt\Big|\mns\Big|\mns\Big|_{\mathrm{op}} \\
  & \: = : \: \Term_1 + 2\Term_2
  + \Term_3.
\end{align*}
We now bound the terms $\Term_1, \Term_2$ and $\Term_3$ individually.
For $\Term_1$, we have,
\begin{align*}
\Term_1\leq \int_{0}^\infty \opnorm{e^{-t \sqrt{\Ctil(x) + \DelTil(x)}
}}\opnorm{\DelTil'(x)}\opnorm{e^{-t \sqrt{\Ctil(x) + \DelTil(x)} }} dt
\lesssim \frac{\opnorm{\DelTil'(x)}}{\sigma_{\min}(\InvSqrtCovar^2)},
\end{align*}
which is bounded by our assumption.

For $\Term_2$ and $\Term_3$, note that
\begin{align}
   &\opnorm{(e^{-t \sqrt{\Ctil(x)  +  \DelTil(x)}}-e^{-t
    \sqrt{\Ctil(x)}})\Ctil'(x)^{1/2}}\notag \\
& = \opnorm{\int_0^1 e^{-st \sqrt{\Ctil(x) + \DelTil(x)}}
  [\sqrt{\Ctil(x) + \DelTil(x)} - \sqrt{\Ctil(x)}]e^{-(1-s)t
    \sqrt{\Ctil(x)}})ds \Ctil'(x)^{1/2}}\notag\\ 
    &\leq \int_0^1 \left\{
  \opnorm{e^{-st \sqrt{\Ctil(x) + \DelTil(x)}}} \opnorm{ [\sqrt{\Ctil(x)  + 
      \DelTil(x)} - \sqrt{ \Ctil(x)}] \DiagProbMatrix^{-1/2}} \right. \\
  & \qquad \qquad \qquad \qquad \qquad \qquad \times   \left.  \opnorm{\DiagProbMatrix^{1/2}e^{-(1-s)t \sqrt{\Ctil(x)}}
  \Ctil'(x)^{1/2}} \right\} ds \notag \\
\label{eq:glm_lip_cont_error1}  
& \lesssim \opnorm{ [\sqrt{\Ctil(x)  +  \DelTil(x)} - \sqrt{\Ctil(x)}]
\DiagProbMatrix^{-1/2}} e^{-t \min \{\sqrt{c_T},
  \sqrt{\sigma_{\min}(\InvSqrtCovar^2)}\}} =: v_1 e^{-v_2t},
\end{align}
where the first equation is due to the decomposition
$e^{-A}-e^{-B}=\int_{0}^{1} e^{-s A}(B-A) e^{-(1-s) B}d s$ and the
last line follows from the fact that $\DiagProbMatrix^{1/2}
\Ctil'(x)^{1/2}$ is bounded. Also,
\begin{align}
  &\opnorm{e^{-t \sqrt{\Ctil(x)}} \Ctil'(x)^{1/2}} \notag\\
& \leq \opnorm{e^{-t \sqrt{\Ctil(x)}}\DiagProbMatrix^{-1/2}}
  \opnorm{\DiagProbMatrix^{1/2}\Ctil'(x)^{1/2}}\notag\\
  & \lesssim \opnorm{e^{-t \sqrt{\Ctil(x)}}\DiagProbMatrix^{-1/2}}\notag
  \\
\label{eq:glm_lip_cont_error2}  
& \lesssim \sum_{j=1}^{\TargetDim}
e^{-t(1/\sqrt{\SelectProb_j\epsbar_j})}/\sqrt{\SelectProb_j}.
\end{align}
\textbf{Remark.}  From the derivations we see that results in
equations~\eqref{eq:glm_lip_cont_error1}
and~\eqref{eq:glm_lip_cont_error2} hold in general with
$\Ctil(x)^{1/2}$ replaced by some diagonal matrix function $F(x)$
which satisfies the property that $\DiagProbMatrix^{1/2}F(x)$ is
bounded. For example, we can let $F(x)=[\sqrt {\Ctil(x)}]'$.

Combining the above two results, we obtain
\begin{align*}
 \Term_2
 &\leq \int_0^{\infty} v_1 e^{-v_2t} \sum_{j=1}^{\TargetDim}
 e^{-t (1/(\SelectProb_j\epsbar_j))}/\SelectProb_j dt \\ 
 &\lesssim
 \int_0^{\infty} \sum_{j=1}^{\TargetDim} e^{-t
   (1/\sqrt{\SelectProb_j\epsbar_j})}/\sqrt{\SelectProb_j} dt
   \leq
 \TargetDim \sqrt{M_\eps}=\bigoh(1) 
 \\ 
 \Term_3
 &\leq
  \int_0^{\infty} v_1^2
 e^{-2v_2t}dt= v_1^2/(2v_2)=\bigoh(1).
\end{align*}
Therefore, we conclude that $\opnorm{[\sqrt{\Ctil(x)  +  \DelTil(x)} -
  \sqrt{\Ctil(x)}]'}$ is bounded, and therefore\\ $[\sqrt{\Ctil(x)  + 
    \DelTil(x)} - \sqrt{\Ctil(x)}]$ is Lipschitz.


\subsubsection{Boundedness of $d''(x)$}
Next, we show that $d^{''}(x)$ is also bounded. First, for any matrix
function $\MatFun(x) \in \mathcal S_{\TargetDim}^{ + }$, we have
\begin{align*}
&\quad\sqrt{\MatFun(x)}^{''} \\
&=
 \int_0^{\infty} e^{-t\sqrt{\MatFun(x)}} \MatFun^{''}(x)
e^{-t\sqrt{\MatFun(x)}} dt \\
& \qquad \qquad \qquad \qquad   - 2 \int_{0}^\infty e^{-t\sqrt{\MatFun(x)}} \Big(
\int_0^{\infty} e^{-t\sqrt{\MatFun(x)}} \MatFun'(x) e^{-t\sqrt{\MatFun(x)}} dt \Big)^2
e^{-t \sqrt{\MatFun(x)}} dt
\end{align*}
Therefore,
\begin{align*}
&\Big|\mns\Big|\mns\Big|[\sqrt{\Ctil(x)  +  \DelTil(x)} - \sqrt{\Ctil(x)}]^{''}\Big|\mns\Big|\mns\Big|_{\mathrm{op}}\\
& \leq \Big|\mns\Big|\mns\Big| \int_0^{\infty} e^{-t\sqrt{\Ctil(x)  + 
      \DelTil(x)}} [ \Ctil^{''}(x)  + 
    \DelTil^{''}(x) ] e^{-t\sqrt{\Ctil(x) + \DelTil(x)}} dt \\
    & \qquad \qquad \qquad \qquad\qquad \qquad  -
  \int_0^{\infty} e^{-t\sqrt{\Ctil(x)}}[
    \Ctil^{''}(x)]e^{-t\sqrt{\Ctil(x)}} dt\Big|\mns\Big|\mns\Big|_{\mathrm{op}} \\
&  +  2 \Big|\mns\Big|\mns\Big|\int_{0}^\infty \left\{ e^{-t\sqrt{\Ctil(x)  +  \DelTil(x)}}
  \Big(\int_0^{\infty} e^{-t\sqrt{\Ctil(x) +  \DelTil(x)}} [\Ctil'(x)
     +  \DelTil'(x)]e^{-t\sqrt{\Ctil(x) + \DelTil(x)}} dt\Big)^2 \right. 
\\     
  & \qquad \qquad \qquad \qquad \qquad  \left.e^{-t\sqrt{\Ctil(x) + \DelTil(x)}} \right\} dt\Big.\\
  & \Big.   - \int_{0}^\infty e^{-t \sqrt{\Ctil(x)}}
  \Big(\int_0^{\infty} e^{-t\sqrt{\Ctil(x)}}\Ctil'(x)
  e^{-t\sqrt{\Ctil(x)}} dt\Big)^2 e^{-t\sqrt{\Ctil(x)}} dt
  \Big|\mns\Big|\mns\Big|_{\mathrm{op}} \; =: \; \Term_4 + 2\Term_5.
\end{align*}
For $\Term_4$, we can prove its boundedness using the same argument we
used to show the boundedness of $[\sqrt{\Ctil(x)  +  \DelTil(x)} -
  \sqrt{\Ctil(x)}]^{'}$. The only difference is that we replace
$\Ctil^{'}(x), \DelTil^{'}(x)$ with $\Ctil^{''}(x), \DelTil^{''}(x)$
respectively. Note that in our proof, we only used the property that
$\DelTil^{'}(x)$ and $\DiagProbMatrix \Ctil^{'}(x)$ are bounded. Thus,
the same lines follow because both $\DiagProbMatrix\Ctil^{''}(x)$ and
$\DelTil^{''}(x)$ are bounded.

For simplicity, we drop the dependence of $\Ctil$ and $\DelTil$ on $x$
sometimes when the meaning is clear.
Define 
\begin{align*}
\Term_6&\defn 
\Big|\mns\Big|\mns\Big|\int_{0}^\infty e^{-t\sqrt{\Ctil  +  \DelTil}}
\sqrt{\Ctil}'(\sqrt{\Ctil  +  \DelTil}' - \sqrt{\Ctil}')
e^{-t\sqrt{\Ctil + \DelTil}} dt \Big|\mns\Big|\mns\Big|_{\mathrm{op}}  \\
\Term_7&\defn 
 \Big|\mns\Big|\mns\Big|\int_{0}^\infty
e^{-t\sqrt{\Ctil  +  \DelTil}} (\sqrt{\Ctil  + \DelTil}' - \sqrt{\Ctil}')^2
e^{-t\sqrt{\Ctil + \DelTil}} dt\Big|\mns\Big|\mns\Big|_{\mathrm{op}}\\
\Term_8&\defn
 \Big|\mns\Big|\mns\Big|\int_{0}^\infty (e^{-t\sqrt{\Ctil +
    \DelTil}}-e^{-t\sqrt{\Ctil}}) [\sqrt{\Ctil}']^2 e^{-t\sqrt{\Ctil}}
dt\Big|\mns\Big|\mns\Big|_{\mathrm{op}} \\
\Term_9&\defn 
 \Big|\mns\Big|\mns\Big|\int_{0}^\infty (e^{-t \sqrt{\Ctil + \DelTil}} -
e^{-t\sqrt{\Ctil}}) [\sqrt{\Ctil}']^2 (e^{-t\sqrt{\Ctil +
    \DelTil}}-e^{-t\sqrt{\Ctil}}) dt\Big|\mns\Big|\mns\Big|_{\mathrm{op}}. \\
\end{align*} For $\Term_5$, we have from the
triangle inequality that
\begin{align*}
{\Term_5} 
& = \Big|\mns\Big|\mns\Big| \int_{0}^\infty e^{-t \sqrt{\Ctil  +  \DelTil}}
\Big[\sqrt{\Ctil + \DelTil}'\Big]^2 e^{-t \sqrt{\Ctil  +  \DelTil}} dt
 - \int_{0}^\infty e^{-t \sqrt{\Ctil}} \Big[ \sqrt{\Ctil}'\Big]^2
e^{-t \sqrt{\Ctil}} dt\Big|\mns\Big|\mns\Big|_{\mathrm{op}}\\
& \leq \Big|\mns\Big|\mns\Big|\int_{0}^\infty e^{-t \sqrt{\Ctil  +  \DelTil}}
\sqrt{\Ctil + \DelTil}'(\sqrt{\Ctil  +  \DelTil}' - \sqrt{\Ctil}')
e^{-t\sqrt{\Ctil + \DelTil}} dt\Big|\mns\Big|\mns\Big|_{\mathrm{op}}\\
&  +  \Big|\mns\Big|\mns\Big|\int_{0}^\infty e^{-t \sqrt{\Ctil  +  \DelTil}} (\sqrt{\Ctil
   +  \DelTil}' - \sqrt{\Ctil}')\sqrt{\Ctil  +  \DelTil}' e^{-t\sqrt{\Ctil  + 
    \DelTil}} dt\Big|\mns\Big|\mns\Big|_{\mathrm{op}}\\
&  + \Big|\mns\Big|\mns\Big|\int_{0}^\infty e^{-t\sqrt{\Ctil + \DelTil}}
[\sqrt{\Ctil}']^2 e^{-t\sqrt{\Ctil + \DelTil}} dt - \int_{0}^\infty e^{-t
  \sqrt{\Ctil}} [\sqrt{\Ctil}']^2 e^{-t\sqrt{\Ctil}} dt \Big|\mns\Big|\mns\Big|_{\mathrm {op}} \\
&\lesssim\Term_6 + \Term_7 + \Term_8 + \Term_9.
\end{align*}
We now turn to bounding $\Term_i$ for $i \in \{6, 7, 8, 9 \}$.

\paragraph*{Bounds on $\Term_6$ and $\Term_7$}
Beginning with $\Term_6$, we have
\begin{align*}
 \Term_6 & \leq \int_{0}^\infty \opnorm{e^{-t\sqrt{\Ctil + \DelTil}}
 \sqrt{\Ctil}'} \opnorm{(\sqrt{\Ctil + \DelTil}' - \sqrt{\Ctil}')}
 \opnorm{e^{-t\sqrt{\Ctil + \DelTil}} }dt \\
 & \lesssim \int_{0}^\infty \opnorm{ (e^{-t \sqrt{\Ctil + 
     \DelTil}}-e^{-t\sqrt{\Ctil}})\sqrt{\Ctil}'} + \opnorm{e^{-t
   \sqrt{\Ctil}} \sqrt{\Ctil}'} dt \\
& \lesssim \int_0^\infty v_1 e^{-v_2t} dt  +  \sum_{j=1}^{\TargetDim}
 \int_{0}^\infty e^{-t (1/\sqrt{\SelectProb_j 
     \epsbar_j})}/\sqrt{\SelectProb_j} dt \\
& = \TargetDim \sqrt{M_\eps} + \frac{v_1}{v_2} = \bigoh(1),
\end{align*}
where in the second line we used the fact that $\opnorm{(\sqrt{\Ctil  + 
  \DelTil}' - \sqrt{\Ctil}')}$ is bounded,
$\opnorm{e^{-t\sqrt{\Ctil + \DelTil}} }\\
\leq
e^{-t\sigma_{\min}({\InvSqrtCovar^2})^{1/2}}\leq 1$ and the last line
follows from equation~\eqref{eq:glm_lip_cont_error1}
and~\eqref{eq:glm_lip_cont_error2}, along with the subsequent remarks.
Similarly, we have
\begin{align*}
 \Term_7 & \leq \int_{0}^\infty \opnorm{e^{-t\sqrt{\Ctil + \DelTil}}} \opnorm{
 (\sqrt{\Ctil  +  \DelTil}' - \sqrt{\Ctil}')}^2
 \opnorm{e^{-t\sqrt{\Ctil + \DelTil}}} dt \\
& \lesssim \int_0^\infty e^{-2 t \sigma_{\min}
   ({\InvSqrtCovar^2})^{1/2}} dt =\bigoh(1),
\end{align*}
where the second line follows from $\opnorm{e^{-t \sqrt{\Ctil  +  \DelTil}}
}\leq e^{-t \sigma_{\min}({\InvSqrtCovar^2})^{1/2}}\leq 1$ and the
boundedness of $\opnorm{(\sqrt{\Ctil  +  \DelTil}' - \sqrt{\Ctil}')}$.  

\paragraph*{Bounds on $\Term_8$ and $\Term_9$}
For
$\Term_8$, we have
\begin{align*}
 \Term_8 & \leq \int_{0}^\infty \opnorm{ (e^{-t\sqrt{\Ctil + \DelTil}} - e^{-t
   \sqrt{\Ctil}}) \sqrt{\Ctil}'} \opnorm{\sqrt{\Ctil}' e^{-t\sqrt{\Ctil}}
 }dt \\
& \lesssim \int_{0}^\infty (v_1e^{-v_2t}) (\sum_{j=1}^{\TargetDim}
 e^{-t (1/\sqrt{\SelectProb_j\epsbar_j})}/\sqrt{\SelectProb_j}) dt \\
 & \lesssim \sum_{j=1}^{\TargetDim} \int_{0}^\infty e^{-t
   (1/\sqrt{\SelectProb_j\epsbar_j})}/\sqrt{\SelectProb_j} dt =
 \bigoh(1),
\end{align*}
where the second line follows from
equation~\eqref{eq:glm_lip_cont_error1}
and~\eqref{eq:glm_lip_cont_error2}. Finally, we bound $\Term_9$ as
\begin{align*}
\Term_9 & \leq \int_{0}^\infty \opnorm{ (e^{-t\sqrt{\Ctil +
    \DelTil}}-e^{-t\sqrt{\Ctil}}) \sqrt{\Ctil}'}\opnorm{ \sqrt{\Ctil}'
(e^{-t\sqrt{\Ctil + \DelTil}} - e^{-t\sqrt{\Ctil}})}dt\\ &\lesssim
\int_{0}^\infty (v_1 e^{-v_2t})^2 dt=\frac{v_1^2}{v_2}=\bigoh(1),
\end{align*}
where we again use equation~\eqref{eq:glm_lip_cont_error1} in the
second line. Therefore, we have shown that both $\Term_4$ and
$\Term_5$ are bounded and hence $\opnorm{d^{''}(x)}=\opnorm{[\sqrt{\Ctil(x) +
    \DelTil(x)} - \sqrt{\Ctil(x)}]^{''}}$ is bounded, i.e.,
$[\sqrt{\Ctil(x) + \DelTil(x)} - \sqrt{\Ctil(x)}]^{'}$ is Lipschitz in
$x$. This completes the proof.
\end{proof}


\section{Technical lemmas and their proofs}
\label{SecTechnicalLemmas}

This section is devoted several technical lemmas used in our
proofs.

\subsection{Martingale difference sequence}

We begin with an auxiliary result on martingale difference sequences.
It applies to either vectors or matrices, and we use $\|\cdot\|_F$ to
indicate the Frobenius norm in either case, equivalent to the
Euclidean norm in the vector case.
\begin{lems}
\label{lm:tech_md}
Let $\{D_i\}_{i \geq 1}$ be a martingale difference sequence with
respect to the filtration $\{\History_i \}_{i \geq 1}$ (i.e., $\E (D_i
\mid \iHistory) = 0$ for all $i \geq 1$). If $\frac{1}{\Numobs}
\sum_{i= 1}^\Numobs \E \|D_i\|_F^2 \stackrel{(*)}{=} \bigoh(1)$, then
\begin{align*}
  \frac{1}{\sqrt \Numobs } \sum_{i=1}^{\Numobs} D_i = \bigoh_p(1).
\end{align*}
\end{lems}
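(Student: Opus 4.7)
The plan is to establish boundedness in probability via a second-moment computation combined with Markov's inequality. The key observation is that a martingale difference sequence is orthogonal in $L^2$: for any $i < j$, the tower property gives
\[
\E \langle D_i, D_j \rangle_F = \E \big[ \langle D_i, \E(D_j \mid \History_{j-1}) \rangle_F \big] = 0,
\]
since $D_i$ is $\History_{j-1}$-measurable and $\E(D_j \mid \History_{j-1}) = 0$. Here $\langle A, B\rangle_F = \trace(A^\top B)$ is the inner product inducing the Frobenius norm (which reduces to the Euclidean inner product in the vector case).

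Given this orthogonality, the plan is to expand the squared Frobenius norm of the partial sum and note that all cross terms vanish in expectation:
\[
\E \Big\| \sum_{i=1}^{\Numobs} D_i \Big\|_F^2 = \sum_{i=1}^{\Numobs} \E \|D_i\|_F^2 + 2 \sum_{i<j} \E \langle D_i, D_j\rangle_F = \sum_{i=1}^{\Numobs} \E \|D_i\|_F^2.
\]
Applying the hypothesis $(*)$ then gives $\E \big\| \sum_{i=1}^{\Numobs} D_i \big\|_F^2 = \bigoh(\Numobs)$, and dividing by $\Numobs$ yields $\E \big\| \Numobs^{-1/2} \sum_{i=1}^{\Numobs} D_i \big\|_F^2 = \bigoh(1)$.

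To conclude, I would invoke Markov's inequality applied to the nonnegative random variable $\big\| \Numobs^{-1/2} \sum_{i=1}^{\Numobs} D_i \big\|_F^2$. For any $\eps > 0$, one can choose $M$ large enough that $\Prob\big( \big\| \Numobs^{-1/2} \sum_{i=1}^{\Numobs} D_i \big\|_F \geq M \big) \leq M^{-2} \E \big\| \Numobs^{-1/2} \sum_{i=1}^{\Numobs} D_i \big\|_F^2 \leq \eps$, uniformly in $\Numobs$, which is precisely the statement $\Numobs^{-1/2} \sum_{i=1}^{\Numobs} D_i = \bigoh_p(1)$.

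There is no real obstacle here; the only mild care required is to verify that the orthogonality identity extends verbatim to the matrix case under the Frobenius inner product, which it does since $\trace$ is linear and commutes with conditional expectation entrywise. The argument is standard and entirely elementary once the $L^2$-orthogonality of martingale differences is in hand.
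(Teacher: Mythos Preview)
Your proof is correct and essentially identical to the paper's own argument: both establish $L^2$-orthogonality of the martingale differences via the tower property, expand the squared Frobenius norm to reduce to $\sum_i \E\|D_i\|_F^2$, and then invoke the hypothesis together with Markov's inequality (the paper phrases the last step as ``by properties of boundedness in probability''). There is no meaningful difference in approach.
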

\noindent As a special case, the assumption (*) in the above statement
holds, for example, when the second moments $\E\|D_i\|_F^2$ are
uniformly bounded.

\begin{proof}
By properties of boundedness in probability, it suffices to
prove\ that
\begin{align*}
  \frac{1}{\Numobs} \E \| \sum_{i=1}^\Numobs D_i\|_F^2 = \bigoh(1).
\end{align*}
Since $\{D_i\}_{i \geq 1}$ is a martingale difference sequence, we
have
\begin{align*}
\E \tr(D_i D_j^\top)= \E \E(\tr(D_i D_j^\top\mid\History_{j-1}) = \E
\tr(D_i\E( D_j^\top\mid\History_{j-1}))=0. \qquad \mbox{for all $i <
  j$,}
\end{align*}
and as a consequence,
\begin{align*}
\E\|\sum_{i=1}^\Numobs D_i\|_F^2/n=\E \sum_{i,j=1}^\Numobs
\tr(D_iD_j^\top)/n=\E \sum_{i=1}^\Numobs \|D_i\|_F^2/n=\bigoh(1).
\end{align*}
\end{proof}

\subsection{Equivalent condition of Assumption~\ref{assn-lin-selection-prob}}
\label{sec:Equiv-cond-of-SELt}

 Assumption~\ref{assn-lin-selection-prob} is equivalent to the following
assumption on the minimum singular value of the covariance matrix
$\iCovar$.
\begin{enumerate}[label=\myasslabel{(A2b)}]
\item \label{assn-lin-selection-prob-equi} There exists constants $c_0
  > 0$ and $t \in [0, \tfrac{1}{2})$ such that the conditional
    covariance matrix
    \begin{align*}
\iCovar \defn \E \big[ (\iRegressor -
  \SelectProb_i(\iNuisance, \iHistory)) (\iRegressor -
  \SelectProb_i(\iNuisance, \iHistory))^\top \mid \iHistory, \iNuisance
  \big]
    \end{align*}
 satisfies
 \begin{align}
\label{eq:a3a}      
\iCovar \succeq c_i \IdMat_{\TargetDim} = \frac{c_0}{i^{2t}}
\IdMat_{\TargetDim} \quad \mbox{for all $i = 1, 2, \ldots$.}
\end{align} 
\end{enumerate}
Specifically, we have
\begin{lems}[Equivalence of Assumption~\ref{assn-lin-selection-prob} and~\ref{assn-lin-selection-prob-equi}]
\label{lm:equiv_ass}
Given $\SelectProb_0, \SelectProb_1, \ldots, \SelectProb_{\TargetDim}>0$ such
that $\SelectProb_0 + \SelectProb_1 + \cdots + \SelectProb_{\TargetDim}=1$.  Let,
$\Covar\in\R^{{\TargetDim}\times {\TargetDim}}$ with $\Covar_{jj}=(1 -
\SelectProb_j)\SelectProb_j$ and $\Covar_{jk}= -
\SelectProb_j\SelectProb_k$ for $j\neq k$.
\begin{enumerate}
\item[(a)] If there exists some constant $c_0>0$ such that $\Covar\succeq
  c_0\IdMat_{\TargetDim}$, then $\SelectProb_{j}\geq c_0$ for all $j=0,1,
  \ldots,\TargetDim$.
 \item[(b)] If there exists some constant $c_0>0$ such that
   $\SelectProb_{j}\geq c_0$ for $j= 0, 1, \ldots,\TargetDim$, then
   $\Covar\succeq c_0\IdMat_{\TargetDim}/({\TargetDim} + 2).$
\end{enumerate}
\end{lems}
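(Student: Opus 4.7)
The plan is to establish the two implications separately via Rayleigh-quotient arguments: part (a) would use well-chosen test vectors against the positive-semidefinite ordering, and part (b) would rest on a variance-decomposition identity that isolates the role of $\SelectProb_0$.

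For part (a), I would test $\Covar \succeq c_0 \IdMat_k$ against the standard basis vector $\stdBasis_j$ to obtain $\Covar_{jj} = \SelectProb_j(1-\SelectProb_j) \geq c_0$, and then invoke the elementary observation that if $\SelectProb_j < c_0$, then $(1-\SelectProb_j)\SelectProb_j < c_0(1-\SelectProb_j) \leq c_0$, a contradiction; hence $\SelectProb_j \geq c_0$ for each $j = 1,\dots,k$. To handle the missing index $0$, I would test against $v = \mathbf{1}_k$ and compute $\mathbf{1}^\top \Covar \mathbf{1} = (1-\SelectProb_0)\SelectProb_0$; the ordering then forces $(1-\SelectProb_0)\SelectProb_0 \geq c_0 k \geq c_0$, and the same contrapositive yields $\SelectProb_0 \geq c_0$.

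For part (b), the workhorse will be the algebraic identity
\begin{align*}
v^\top \Covar v = \sum_{i=1}^{k}\SelectProb_i(v_i - m)^2 + \SelectProb_0(1-\SelectProb_0)\, m^2, \qquad m \defn \frac{\sum_i \SelectProb_i v_i}{1 - \SelectProb_0},
\end{align*}
obtained by completing the square. Applying $\SelectProb_i \geq c_0$ to the first term, loosening to the unconstrained minimum over $t \in \R$ of $c_0\|v - t\mathbf{1}\|^2 + \SelectProb_0(1-\SelectProb_0) t^2$, and then using $(\sum_i v_i)^2 \leq k\|v\|^2$, I would arrive at
\begin{align*}
v^\top \Covar v \;\geq\; \frac{c_0\,\SelectProb_0(1-\SelectProb_0)}{c_0 k + \SelectProb_0(1-\SelectProb_0)}\,\|v\|^2.
\end{align*}
The normalization $\sum_{i=0}^{k}\SelectProb_i = 1$ together with $\SelectProb_i \geq c_0$ forces $c_0 \leq 1/(k+1)$ and $\SelectProb_0 \in [c_0,\,1-c_0]$, so $\SelectProb_0(1-\SelectProb_0) \geq c_0(1-c_0)$; monotonicity of $t \mapsto c_0 t/(c_0 k + t)$ in $t>0$ then reduces the right-hand side to $c_0(1-c_0)/(k+1-c_0)\,\|v\|^2$, and the elementary check $(1-c_0)(k+2) \geq k+1-c_0 \Longleftrightarrow c_0 \leq 1/(k+1)$ delivers the desired bound $c_0\|v\|^2/(k+2)$.

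The main subtlety I anticipate is that the quick route---writing $\Covar = D^{1/2}(\IdMat_k - qq^\top)D^{1/2}$ with $q_i = \sqrt{\SelectProb_i}$ and noting $\IdMat_k - qq^\top \succeq \SelectProb_0 \IdMat_k$---only delivers $\Covar \succeq \SelectProb_0 c_0 \IdMat_k \succeq c_0^2 \IdMat_k$, which is strictly weaker than $c_0/(k+2)\IdMat_k$ whenever $c_0 < 1/(k+2)$. Extracting the correct $1/(k+2)$-dependence seems to genuinely require separating the ``mean-shift'' mode along $\mathbf{1}$ (whose variance is controlled by the binomial factor $\SelectProb_0(1-\SelectProb_0)$) from the modes centered at $m$ (controlled by $c_0$), and the bookkeeping that converts the resulting two-term lower bound into the clean factor $c_0/(k+2)$---in particular the step that downgrades $\SelectProb_0(1-\SelectProb_0)$ to $c_0(1-c_0)$ uniformly in $\SelectProb_0$---is where I expect the most delicate manipulation.
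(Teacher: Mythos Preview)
Your argument is correct. Part (a) is essentially identical to the paper's: it too tests against $\stdBasis_j$ to get $\SelectProb_j(1-\SelectProb_j)\geq c_0$ and against $\mathbf{1}$ to get $\SelectProb_0(1-\SelectProb_0)\geq kc_0$, then concludes $\SelectProb_i\geq c_0$.

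Part (b), however, takes a genuinely different route. The paper exploits the explicit closed form for $\Covar^{-1}$ (equation~\eqref{eq:cov_inverse}, with diagonal entries $1/\SelectProb_j+1/\SelectProb_0$ and off-diagonal entries $1/\SelectProb_0$) and bounds $\lambda_{\min}(\Covar)=\opnorm{\Covar^{-1}}^{-1}\geq \frobnorm{\Covar^{-1}}^{-1}$; computing the Frobenius norm and applying AM--GM yields $\frobnorm{\Covar^{-1}}^2\leq k(k+3)/c_0^2<(k+2)^2/c_0^2$, and the result follows in two lines. Your approach instead works directly with the Rayleigh quotient via the variance-decomposition identity $v^\top\Covar v=\sum_i \SelectProb_i(v_i-m)^2+\SelectProb_0(1-\SelectProb_0)m^2$, then minimizes a quadratic in $t$ and uses $c_0\leq 1/(k+1)$ to close the constant. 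The paper's argument is shorter because the inverse was already available elsewhere in the paper; your argument is more self-contained (it never needs $\Covar^{-1}$) and arguably more transparent about \emph{why} the bound degrades like $1/(k+2)$: the loss comes from the Cauchy--Schwarz step $(\sum_i v_i)^2\leq k\|v\|^2$ along the $\mathbf{1}$ direction, not from any crude matrix-norm inequality. Both approaches yield exactly the same constant.
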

The equivalence of Assumption~\ref{assn-lin-selection-prob}
and~\ref{assn-lin-selection-prob-equi} follows directly from
Lemma~\ref{lm:equiv_ass}. Later in the proofs of auxiliary lemmas, we
also invoke Assumption~\ref{assn-lin-selection-prob-equi} instead
of~\ref{assn-lin-selection-prob}.

\begin{proof}
We split our proof into the two parts of the lemma.

\paragraph*{Proof of part (a)}
Since $\Covar_{jj}\geq\lambda_{\min}(\Covar)\geq c_0$, it follows that
$\SelectProb_j(1 - \SelectProb_j)\geq c_0$ and therefore
$\SelectProb_j>c_0$ for $j\geq1$.  Moreover, since $\TargetDim c_0=c_0\|\mathbf
1\|_2^2\leq \mathbf 1^\top\Covar \mathbf 1=\SelectProb_0(1 -
\SelectProb_0)$, we have $\SelectProb_0>{\TargetDim}c_0>c_0$.

\paragraph*{Proof of part (b)}
Note that
\begin{align*}
\lambda_{\min}(\Covar) = \big(\opnorm{\Covar^{-1}} \big)^{-1} & \geq
\big( \frobnorm{\Covar^{-1}} \big)^{-1} \\
& \stackrel{(j)}{=} \big ( \sqrt{{\TargetDim}(\TargetDim-1) \frac{1}{\SelectProb_0^2}} +
\sum_{j=1}^{\TargetDim} (\frac{1}{\SelectProb_0} + \frac{1}{\SelectProb_j})^2
\big)^{-1} \\
& > 1/\sqrt{\TargetDim(\TargetDim + 1) \frac{1}{\SelectProb_0^2} + 2\sum_{j=1}^{\TargetDim}
  \frac{1}{\SelectProb_j^2}}\\
& > \frac{c_0}{\TargetDim + 2},
\end{align*}
where step (i) follows from the explicit expression of
$\Covar^{-1}$~\eqref{eq:cov_inverse}. It then follows that
$\Covar\succeq c_1\IdMat_{\TargetDim}$ for $c_1=c_0/(\TargetDim + 2)$.
\end{proof}

\end{document}